\documentclass[12pt]{article}
\usepackage[utf8]{inputenc}

\usepackage[left=1cm, top=1cm, right=1cm, bottom=1cm, includeheadfoot, headheight=0pt, a4paper]{geometry}
\usepackage{fancyhdr}
\usepackage{lastpage}
\usepackage{amsmath}
\usepackage{amssymb}
\usepackage{graphicx}
\usepackage{enumitem}
\usepackage{multicol}
\usepackage{amsmath}
\usepackage{amsfonts}
\usepackage[utf8]{inputenc}
\usepackage{amsthm}
\usepackage{tikz-cd}
\usepackage{geometry}
\usepackage{mathtools}
\usepackage{tikz}
\usepackage{comment}
\usepackage{url}
\usetikzlibrary{patterns}
\usepackage{svg}
\usepackage{xcolor}
\usepackage[T1]{fontenc}
\usepackage[utf8]{inputenc}
\usepackage{float}
\usepackage{subcaption}

\newtheorem{thm}{Theorem}[section]
\newtheorem{lemma}[thm]{Lemma}

\newtheorem{prop}[thm]{Proposition}
\newtheorem{cor}[thm]{Corollary}

\newtheorem{conj}[thm]{Conjecture}
\newtheorem{prob}[thm]{Problem}
\theoremstyle{remark}
\newtheorem{remark}[thm]{Remark}
\newtheorem{ex}[thm]{Example}
\renewcommand{\thefootnote}{\fnsymbol{footnote}}

\newcommand\blfootnote[1]{%
  \begingroup
  \renewcommand\thefootnote{}\footnote{#1}%
  \addtocounter{footnote}{-1}%
  \endgroup
}

\title{The type and cardinality of minimal presentations of \\ numerical semigroups with embedding dimension four }

\author{Kazimierz Chomicz}
\date{September 2026}
\begin{document}

\maketitle

\abstract{ 
For a numerical semigroup $S$ with embedding dimension four, we study the relationship between its type $t(S)$ and the cardinality of its minimal presentations $\eta(S)$. Using an approach based on the geometry of the Ap{\'e}ry set, we prove that $4t(S) + 5 \geq \eta(S) \geq t(S)-11$. This resolves a problem of Moscariello and Sammartano asking whether $t(S)$ is bounded by a function of $\eta(S)$, and improves the previously known bound $9t(S) + 4 \geq \eta(S)$ due to Bresinsky.
}

\section{Introduction}

\blfootnote{\text{Keywords: numerical semigroup, embedding dimension, Ap{\'e}ry set, type, minimal presentation.}}
\blfootnote{\text{2020 Mathematics Subject Classification: 20M13, 20M14}}

Let $\mathbb{N}$ denote the set of nonnegative integers. For $k> 0$ and $d_0 ,d_1, \dots, d_k \in \mathbb{N}$ such that $\gcd(d_0,d_1, \dots, d_k) = 1$, the set
$$ S = \langle d_0,d_1, \dots, d_k \rangle  = \{ \lambda_0 d_0 + \lambda_1 d_1 + \dots + \lambda_k d_k \mid \lambda_i \in \mathbb{N}\},$$
is called a \emph{numerical semigroup} generated by $\{d_0,d_1,\dots,d_k\}$. Equivalently, $S$ is a submonoid of $(\mathbb{N},+)$ such that $\mathbb{N} \setminus S$ is finite (see \cite[Lemma 2.1]{Rosales2009}). If no proper subset of $\{d_0,d_1,\dots,d_k\}$ generates $S$, then we say that $S$ is minimally generated by $\{d_0,d_1,\dots,d_k\}$. The cardinality of the minimal set of generators of $S$ is called the \emph{embedding dimension} of $S$ and is denoted $e(S)$. 

The largest integer that is not in $S$ is called the \emph{Frobenius number} and is denoted $F(S)$. An integer $a \notin S$ is called a \emph{pseudo-Frobenius number} of $S$ if $a + s \in S$ for all $s \in S \setminus \{0\}$. The set of all pseudo-Frobenius numbers of $S$ is denoted $PF(S)$. The cardinality of $PF(S)$ is called the \emph{type} of $S$ and is denoted $t(S)$. 

If $S$ is minimally generated by $\{d_0,d_1,\dots,d_k\}$, then we have a homomorphism $\varphi : \mathbb{N}^{k+1} \to S$ defined by $\varphi(z_0, \dots, z_k) = z_0d_0 + \dots + z_kd_k$. 
The \emph{kernel congruence} of $\varphi$ is $\ker(\varphi) = \{ (a,b) \in \mathbb{N}^{k+1} \times \mathbb{N}^{k+1} \mid \varphi(a) = \varphi(b) \}$. 
It is well known that $S$ is isomorphic to the monoid $\mathbb{N}^{k+1} / \ker(\varphi)$. A minimal set of generators of the congruence $\ker(\varphi)$ is called a \emph{minimal presentation} of $S$. Minimal presentations of $S$ are not unique, but their cardinality depends only on $S$ and is denoted by $\eta(S)$. 

Both type and minimal presentations are central to the study of numerical semigroups (see \cite{AssiDAnnaGarciaSanchez2020,BarucciDobbsFontana1997,Froberg1987,MoscarielloSammartano2025,Rosales2009}).
In the study of minimal presentations, the invariant $\eta(S)$ has attracted considerable interest (see \cite{Elmacioglu2024,MoscarielloSammartanominpresen2025,MoscarielloSammartano2025,Rosales2009,Stamate2018}). 

A numerical semigroup $S$ is \emph{symmetric} if for every $x \in \mathbb{Z} \setminus S$ we have $F(S) - x \in S$. This is equivalent to $t(S)=1$ \cite{Kunz1970}. It is well known that if $e(S)=2$, then $t(S) = \eta(S)=1$. The case $e(S)=3$ is completely settled in the following theorem.

\begin{thm}\label{theorem_embd3}
    \textup{\cite{Herzog1970}} Let $S$ be a numerical semigroup with $e(S)=3$. If $S$ is symmetric, then $\eta(S)=2$ and $t(S)=1$. If $S$ is not symmetric, then $\eta(S)=3$ and $t(S)=2$.
\end{thm}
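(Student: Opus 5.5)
We prove Herzog's theorem with Apéry sets and a direct analysis of the relations among the generators. Write $S=\langle d_0,d_1,d_2\rangle$ minimally generated. For each $i$ let $c_i$ be the least positive integer such that $c_id_i \in \langle d_j,d_k\rangle$, where $\{i,j,k\}=\{0,1,2\}$. Then
$$c_0d_0 = r_{01}d_1 + r_{02}d_2,\quad c_1d_1 = r_{10}d_0 + r_{12}d_2,\quad c_2d_2 = r_{20}d_0+r_{21}d_1,$$
with all $r_{ij}\ge 0$. Minimality of the generating set forces $c_i\ge 2$.

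The first step is the dichotomy. Either some $r_{ij}=0$, or all $r_{ij}>0$. In the second case we show $c_0=r_{10}+r_{20}$, $c_1=r_{01}+r_{21}$ and $c_2=r_{02}+r_{12}$. The argument is the classical one: adding the second and third relations gives a relation involving $d_0$ with coefficient $r_{10}+r_{20}$. Comparing it with the minimality of $c_0$ and the lattice of relations gives the identity, using that the relation lattice has rank $2$ and determinant considerations.

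In the case all $r_{ij}>0$ (non-symmetric), the three binomials are the $2\times 2$ minors of a $2\times 3$ matrix. We would show these three relations generate $\ker\varphi$ and that none is redundant. Non-redundancy holds because each relation has a pure power $c_id_i$ on one side and mixed positive terms on the other, so no two of the others can produce it in the required degree. Hence $\eta(S)=3$. In the other case, say $r_{01}=0$, we get $c_0d_0=c_2d_2$ up to relabeling, and $S$ is a gluing $S=\langle d_0,d_2\rangle$-type extension. Then two relations suffice, so $\eta(S)=2$.

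For the type we compute the Apéry set $\mathrm{Ap}(S,d_0)$ as the set $\{a d_1+bd_2 : 0\le a<c_1,\ 0\le b<c_2,\ \text{not }(a\ge r_{21},\ b\ge r_{12})\}$ in the non-symmetric case. This is an L-shaped staircase in $\mathbb{N}^2$. Its maximal elements with respect to $\le_S$ correspond to the two outer corners $(c_1-1,r_{12}-1)$ and $(r_{21}-1,c_2-1)$. Since $PF(S)=\{w-d_0: w \text{ maximal in } \mathrm{Ap}(S,d_0)\}$, this gives $t(S)=2$. In the gluing case the Apéry set is a rectangle with a single maximal element, so $t(S)=1$. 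This matches symmetry via Kunz's criterion, which was stated in the introduction.

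The main obstacle is establishing the staircase description of the Apéry set. Concretely, every element of $\mathrm{Ap}(S,d_0)$ has a unique representation $ad_1+bd_2$ in that region, with no collisions. This requires the identity $c_0=r_{10}+r_{20}$, hence $|\text{region}|=c_1c_2-(c_1-r_{21})(c_2-r_{12})=d_0$. We would verify that count via the determinant of the relation lattice. Alternatively, we could cite Herzog directly, since the statement is attributed to \cite{Herzog1970}.
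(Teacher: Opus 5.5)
The paper gives no proof of this statement; it only cites \cite{Herzog1970}, which is your stated fallback. Your self-contained route breaks at the step you call central: the staircase is cut at the wrong point. The quadrant removed from the box $[0,c_1)\times[0,c_2)$ must come from the minimal relation of $d_0$ itself, $c_0d_0=r_{01}d_1+r_{02}d_2$ (the two-dimensional analogue of Lemma~\ref{lemma_procedure_1}). So in the non-symmetric case
\[
\mathrm{Ap}(S,d_0)=\{ad_1+bd_2 \mid 0\le a<c_1,\ 0\le b<c_2,\ \text{not }(a\ge r_{01} \text{ and } b\ge r_{02})\},
\]
with corners $(c_1-1,r_{02}-1)$ and $(r_{01}-1,c_2-1)$. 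Herzog's identities give $r_{01}=c_1-r_{21}$ and $r_{02}=c_2-r_{12}$, so $d_0=c_1c_2-r_{12}r_{21}$, not $c_1c_2-(c_1-r_{21})(c_2-r_{12})$.

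Your version already fails for $S=\langle 5,6,7\rangle$. There $c_1=2$, $c_2=3$, $(r_{10},r_{12})=(1,1)$, $(r_{20},r_{21})=(3,1)$, $(r_{01},r_{02})=(1,2)$; all $r_{ij}>0$ and Herzog's identities hold. Your region has $c_1c_2-(c_1-r_{21})(c_2-r_{12})=4\neq 5=d_0$ points and misses $13=d_1+d_2\in\mathrm{Ap}(S,5)$. Your corner $(c_1-1,r_{12}-1)=(1,0)$ has label $6$, which is not maximal since $6+7=13\in\mathrm{Ap}(S,5)$. The actual maximal elements are $13$ and $14$, so $PF(S)=\{8,9\}$.

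Once corrected, the staircase does not need $c_0=r_{10}+r_{20}$ at all. It follows from $r_{10},r_{20}>0$ by the sign-case argument of Theorem~\ref{theoremprocedure}, and $d_0$ is then just its cardinality. To read off $t(S)=2$ you also need two facts. First, every factorization of an Ap\'ery element lies in the staircase (again since $r_{10},r_{20}>0$), so the geometric corners are exactly the $\le_S$-maximal elements. Second, $0<r_{01}<c_1$ and $0<r_{02}<c_2$, so there are exactly two corners.

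Apart from this, the load-bearing claims are asserted rather than proved.
\begin{itemize}
\item Rank and determinant considerations alone only show that the $3\times 3$ matrix of the three relations is singular. They do not give $c_0=r_{10}+r_{20}$; for that you need a minimality argument: first show $r_{21}<c_1$ and $r_{12}<c_2$, then exclude $r_{10}+r_{20}>c_0$.
\item For $\eta(S)=3$, ``we would show these three relations generate $\ker\varphi$'' is the entire content, and the non-redundancy heuristic is not an argument. A clean route is Theorem~\ref{theorem_minimalpresentation}: show each $c_id_i$ has exactly two $\mathcal{R}$-classes, the three values are distinct, and there are no other Betti elements. The last point can be read off the corrected staircase, as in Theorem~\ref{theorembettifind_primary}.
\item The dichotomy is not well posed, since the $r_{ij}$ need not be unique. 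Moreover, ``$r_{01}=0$ gives $c_0d_0=c_2d_2$'' is false. For $(d_0,d_1,d_2)=(12,15,10)$ you may take $60=0\cdot 15+6\cdot 10$, yet $c_2d_2=30=c_1d_1\neq 60$. What holds is that $c_id_i=c_jd_j$ for some $i\neq j$. The $c_1\times c_2$ rectangle picture of $\mathrm{Ap}(S,d_0)$ then requires relabelling so that $d_0$ is one of these two generators.
\end{itemize}
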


When $e(S) \geq 4$, the situation becomes much more complicated. Bresinsky \cite{Bresinsky1975onprimeideals} found a family of numerical semigroups with $e(S) = 4$ whose $\eta(S)$ can be arbitrarily large, while Backelin \cite[p.\ 75]{Froberg1987} found a family of numerical semigroups with $e(S) = 4$ whose $t(S)$ can be arbitrarily large.

In this paper, we study $t(S)$ and $\eta(S)$ for numerical semigroups with embedding dimension four. In \cite{Bresinsky1988}, Bresinsky established the following bound.

\begin{thm}\label{theoremtypebre}
    \textup{\cite[Theorem 7]{Bresinsky1988}} If $S$ is a numerical semigroup with $e(S) = 4$, then $  9 t(S) + 4 \geq \eta(S)$.
\end{thm}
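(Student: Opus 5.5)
The plan is to pass to the Apéry set $\mathrm{Ap}(S,d_0)=\{w\in S \mid w-d_0\notin S\}$, which has $d_0$ elements, and to encode it geometrically as a subset of $\mathbb{N}^3$. For each $w\in \mathrm{Ap}(S,d_0)$, every factorization of $w$ has $z_0=0$. Among these I would choose the lexicographically smallest expression $w=z_1d_1+z_2d_2+z_3d_3$, and set $D=\{(z_1,z_2,z_3)\}\subset\mathbb{N}^3$, so that $|D|=d_0$.

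The first step is to check that $D$ is an order ideal, i.e.\ closed under coordinatewise decrease. If $u\le v\in D$ and $u$ were not the lex-minimal factorization of its value, then replacing $u$ by something lex-smaller inside $v$ would produce a lex-smaller factorization of $v$, or a factorization of $v$ using $d_0$. Both are impossible.

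Both invariants can then be read off $D$.
\begin{itemize}
\item Type. Pseudo-Frobenius numbers are exactly $w-d_0$ for $w$ maximal in $\mathrm{Ap}(S,d_0)$ with respect to $\le_S$. Such a $w$ has a representative in $D$ that is maximal in the coordinatewise order. The converse fails: a coordinatewise-maximal point of $D$ need not give an $\le_S$-maximal element. So $t(S)$ counts a subset of the corners of $D$, and I must control the surplus.
\item Minimal presentations. A generating set of $\ker\varphi$ is obtained from:
  \begin{itemize}
  \item the relations $c = c'$ for each minimal element $c$ of $\mathbb{N}^3\setminus D$ (the outer corners), where $c'$ is the representative of $\varphi(c)$, namely an element of $D$ plus a multiple of $e_0$;
  \item the relations expressing $c_i d_i$ in terms of $d_0$ and the others, for the pure powers.
  \end{itemize}
  This is a Gröbner-type argument: reduce any factorization to its normal form using these moves. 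Hence $\eta(S)$ is at most the number of outer corners of $D$.
\end{itemize}

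The heart of the proof is a combinatorial count in three dimensions: the number of outer corners of $D$ is at most $9t(S)+4$. I would attach to each outer corner $c$ a point of $D$ that is $\le_S$-maximal in the Apéry set. Start from $c-e_i\in D$ for a coordinate $i$ with $c_i>0$, and push it up inside $D$ to an element $m$ that is maximal with respect to $\le_S$. Then I would show that each such $m$ receives at most $9$ outer corners: three directions for $m+e_i$, times at most three ways the path can turn.

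The leftover corners need separate treatment. These are those lying on the coordinate axes and planes, where $c-e_i$ may collapse, and they account for the additive constant $4$. The argument uses that $e(S)=4$, so that only $d_1,d_2,d_3$ are involved, together with the fact that two different points of $D$ have distinct values modulo $d_0$.

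The main obstacle I expect is exactly this charging step. Outer corners of $D$ adjacent to coordinatewise-maximal points that are not $\le_S$-maximal must be rerouted to genuine pseudo-Frobenius elements without overloading any single one. To handle these points I would first exploit the relation $c=c'$. If $m$ is coordinatewise maximal but $m+e_i$ has normal form $m'+k e_0$ with $k=0$, then $\varphi(m+e_i)$ again lies in the Apéry set, and $m'\in D$ dominates $m$ in $\le_S$. Following such chains gives a map to $\le_S$-maximal elements. Its fibers must then be bounded using the lex-minimal choice, which forces $m'$ to be lex-smaller and prevents chains from merging too often.
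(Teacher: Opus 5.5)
\textbf{The proposal has a genuine gap: the step that bounds the corners of $D$ by a multiple of $t(S)$ is missing, and the accounting of the outer corners is wrong.}

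\textbf{What is sound, and what goes wrong with the outer corners.} Your reduction works. $D$ is an order ideal, and every $\le_S$-maximal Ap{\'e}ry element is represented by a corner of $D$. The normal-form argument also works: order each fibre first by the exponent of $d_0$ (larger counts as smaller), then lexicographically. This shows that the pairs attached to the outer corners generate $\ker\varphi$, so $\eta(S)$ is at most the number of outer corners.

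Now look at where those outer corners lie. Let $c$ be an outer corner with $\varphi(c)=kd_0+\varphi(\bar c)$ and $\bar c\in D$. Then $c$ and $(k,\bar c)$ have disjoint supports. A common index $i$ would make $c-e_i\in D$ either a non-Ap{\'e}ry label (if $k>0$) or a non-lex-minimal representative (if $k=0$). Two interior outer corners $c,c'$ are also impossible: they would give $(c-c')^+,(c'-c)^+\in D$ with labels differing by a nonnegative multiple of $d_0$.

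So apart from the three axis corners and at most one interior corner, \emph{every} outer corner lies on a coordinate plane. There are at least $\eta(S)-4$ of these, which is unbounded by Bresinsky's examples. They cannot be ``leftovers'' absorbed into the constant $4$; they are essentially the whole count, the analogue of the paper's $(2,2)$-type points $V^*$. They should instead be handled by a stair-cube count as in Lemma~\ref{lemma_type_suppthm_upperbound}, which bounds them by about twice the number of corners of $D$. Everything then reduces to bounding the number of corners of $D$ by a multiple of $t(S)$.

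\textbf{The missing step: corners of $D$ versus $t(S)$.} Your charging scheme does not close this gap. The corners of $D$ that are not $\le_S$-maximal are exactly what must be controlled. The chain map $w\mapsto w+d_i$ inside $\mathrm{Ap}(S,d_0)$ has no bounded fibres. That each step is lex-decreasing only guarantees termination. For example, when $S$ is symmetric every Ap{\'e}ry element lies $\le_S$-below $F(S)+d_0$. Then all chains end at the same element, and ``bounding the fibres'' is the original problem again. The figure $9=3\times 3$ is not derived from anything. Which corners are genuine cannot be read off the shape of $D$, so this step must use the arithmetic of $d_0,\dots,d_3$, and your proposal uses none.

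\textbf{How the paper handles this.} The paper does not reprove Bresinsky's theorem. Its stronger bound $\eta(S)\le 4t(S)+5$ implies it (since $t(S)\ge 1$), and its proof supplies exactly this ingredient for a carefully chosen L-shape:
\begin{itemize}
\item It first normalizes the generators into the primary/secondary form of Lemma~\ref{lemma2cases}.
\item It then takes $T=R\cap\{z\le a_{33}\}$. All non-genuine corners of $T$ lie in the layer $z=a_{33}-1$, directly below vertical columns of $R$ that end at genuine corners of $R$.
\item Two such columns with the same label would yield a relation forcing a coordinate $\ge a_{22}=b_{22}$. Hence $t'\le 2t(S)$ (Lemma~\ref{lemma_type-corners_primary}), and $t'\le t(S)+1$ in the secondary case (Lemma~\ref{lemma_type-corners_secondary}).
\end{itemize}
Without an analogue of this corner lemma for your lexicographic staircase, the proof does not go through.
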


In \cite{MoscarielloSammartano2025}, the authors, inspired by the above bound, proposed the following problem.

\begin{prob}\label{problemtype}
    \textup{\cite[Problem 25]{MoscarielloSammartano2025}} Let $S$ be a numerical semigroup with $e(S) = 4$. Is $t(S)$ bounded by a function of $\eta(S)$?
\end{prob}

Resolving Problem \ref{problemtype} also settles a general
problem on the boundedness of Betti numbers \cite[Problem 24]{MoscarielloSammartano2025} in the case $e(S)=4$, as noted in the same paper.

The main result of this paper is the following theorem, which settles Problem \ref{problemtype} and improves the bound of Theorem \ref{theoremtypebre}.

\begin{thm}\label{theorem_type_main}
    If $S$ is a numerical semigroup with $e(S) = 4$, then $4 t(S) +5 \geq \eta(S) \geq t(S) - 11$. 
\end{thm}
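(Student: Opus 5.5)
We plan to work in the Apéry set $\mathrm{Ap}(S,d_0)$ with respect to the first generator $d_0$. We view each element $w$ of the Apéry set as a lattice point $(a_1,a_2,a_3)\in\mathbb{N}^3$ with $w=a_1d_1+a_2d_2+a_3d_3$, chosen as a suitable canonical (say lexicographically minimal) representative. The image of this set is then a finite order ideal $\mathcal{A}\subset\mathbb{N}^3$ (a "staircase") of cardinality $d_0$. In this language both invariants become combinatorial:
\begin{itemize}
\item the pseudo-Frobenius numbers correspond to the maximal elements of $\mathrm{Ap}(S,d_0)$ with respect to $\le_S$, shifted by $-d_0$, so $t(S)$ is the number of maximal corners of $\mathcal{A}$ (up to identifications coming from distinct representations of the same integer);
\item the minimal relations are governed by the minimal points of the complement $\mathbb{N}^3\setminus\mathcal{A}$ (the "outer corners"), i.e.\ the points where a relation $\sum a_id_i \equiv \sum b_id_i$ involving $d_0$ must first appear, together with relations among $d_1,d_2,d_3$ alone.
\end{itemize}
Concretely, I would first prove a lemma that $\eta(S)$ equals, up to an additive constant, the number of outer corners of $\mathcal{A}$, and that $t(S)$ equals the number of inner corners. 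This is a Betti-number count for the monomial-like ideal associated with $\mathcal{A}$.

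The key structural input is specific to embedding dimension four. Since each $d_i$ has only finitely many minimal expressions, the staircase $\mathcal{A}$ is cut out by very few "critical" relations. Following Bresinsky's analysis, for each $i\in\{1,2,3\}$ there is a minimal $c_i$ with $c_id_i$ expressible using the other generators. These binomials bound $\mathcal{A}$ inside a box, and the remaining relations must come in structured families. I would show that the boundary of $\mathcal{A}$ is a union of at most a bounded number of "staircase strips" (2-dimensional staircases lying on faces or on planes of constant coordinate). Along each strip, inner and outer corners alternate, as for a planar staircase, where the counts differ by one. This alternation is the source of both linear bounds. Summing over strips, each inner corner is adjacent to at most four outer corners, and each outer corner is adjacent to at least one inner corner. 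The constants $5$ and $-11$ absorb the bounded number of strips, their endpoints, and the relations not involving $d_0$.

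For the upper bound $\eta(S)\le 4t(S)+5$, I would charge each minimal relation to a maximal element of the Apéry set lying just below its outer corner. Then I would show that each pseudo-Frobenius number is charged at most four times (three coordinate directions plus one "diagonal" from the shifted critical binomial), with at most five uncharged relations coming from the critical binomials themselves. For the lower bound, I would do the reverse charging, sending each inner corner to an outer corner on the same strip. The obstacle is proving that the charging is injective except for a bounded number of exceptions. Indeed, in dimension three a staircase can have many inner corners and few outer corners, which is exactly the Backelin phenomenon. So the main difficulty will be the geometric classification lemma: showing that, because $\mathcal{A}$ arises from a four-generated semigroup (in particular, from the fact that $\mathcal{A}$ tiles $\mathbb{Z}^3$ modulo the lattice of relations), its boundary has only a bounded number of strips. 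For that step I would first try Bresinsky's case split on the shape of the critical binomials (whether the $c_id_i$ relations are "symmetric-type" or not) and handle each configuration of the tiling separately.
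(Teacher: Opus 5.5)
Your architecture (encode $\textup{Ap}(S,d_0)$ as an order ideal $\mathcal{A}\subset\mathbb{N}^3$, compare its corners with $t(S)$ and its outer corners with $\eta(S)$, then count geometrically) is also the paper's. However, the two comparison statements you plan to prove ``first'' carry most of the weight, and one of them is false. A corner $a$ of $\mathcal{A}$ only gives $a+e_i\notin\mathcal{A}$, while $w+d_i$ may still lie in the Apéry set through another factorization. So you only get $t(S)\le\#\{\text{corners of }\mathcal{A}\}$, and equality need not hold. The count of pseudo-Frobenius numbers by corner labels is exact for the figure $R$ of all representations, not for an $L$-shape. The paper bridges the two with Lemma~\ref{lemma_cornerlabelRT} and the bounds $t(S)\le t'\le 2t(S)$ (primary) and $t'\le t(S)+1$ (secondary) of Lemmas~\ref{lemma_type-corners_primary} and~\ref{lemma_type-corners_secondary}, which use the specific choice $T=R\cap\{z\le a_{33}\}$.

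On the other side, passing to the monomial initial ideal gives $\eta(S)\le\#\{\text{outer corners}\}$ for free, by upper semicontinuity of Betti numbers, but not the converse. So in each direction of the theorem exactly one needed inequality runs against semicontinuity, and neither is in your sketch. For $\eta(S)\le 4t(S)+5$ you must bound the number of corners of the $L$-shape by $2t(S)$. This factor $2$, times at most two stair cubes per corner, is where the $4$ comes from, not ``three directions plus a diagonal''. For $\eta(S)\ge t(S)-11$ you must show that all but boundedly many outer corners give distinct Betti elements, each with exactly two $\mathcal{R}$-classes. This does hold for the paper's $T$, but proving it is the whole of Section~\ref{section_minimal_presentations} together with Lemmas~\ref{lemma_bettiT_primary}--\ref{lemma_U0_VN}. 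Calling it ``a Betti-number count for the monomial-like ideal'' does not supply it.

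The geometric step is also left open: you say it is the main difficulty and defer it to a case analysis you have not carried out. The mechanism you sketch (boundedly many strips, corners alternating along each, at most four outer corners per corner) is not what makes the count work. The missing idea is structural. First normalize the generators by the primary/secondary dichotomy of Lemma~\ref{lemma2cases}, which fixes which generator plays $d_0$ and in which direction the $L$-shape is cut. Then every minimal outer corner of $T$, other than the three box corners and at most one point $(a_{01},a_{02},a_{03})$ with all coordinates positive (Theorem~\ref{theoremprocedure} and Lemma~\ref{lemma_point0uniqueness}), comes from a $(2,2)$-type relation $\lambda_0d_0+\lambda_id_i=\lambda_jd_j+\lambda_kd_k$. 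Such a point removes the full cylinder over a quadrant of one coordinate face, so it appears as a step of the planar staircase that $T$ traces on that face.

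The count then runs as follows:
\begin{enumerate}
\item Outer corners correspond, up to a bounded error, to stair cubes (Lemma~\ref{lemma_type_suppthm_stair}).
\item Each corner of $T$ accounts for at most two stair cubes, with one exception (Lemma~\ref{lemma_type_suppthm_upperbound}).
\item A purely combinatorial fact gives the converse: a cube figure with no interior outer corner has at most two corners none of whose three coordinate projections is a stair cube (Proposition~\ref{prop_type_suppthm_odd_corners}). The single interior point adds at most six more, giving $t'\le|\mathcal S|+8$ (Lemma~\ref{lemma_type_suppthm_main_lower}).
\end{enumerate}
This is what produces the coefficient $1$ in $t(S)-11$. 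Alternation along a single strip cannot give it, because a corner $[[a,b,c]]$ meets three different faces through its projections. The real difficulty is controlling corners none of whose projections is a stair cube.

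Note also that the danger is not a three-dimensional staircase with many corners but few outer corners. In Backelin's family $\eta(S)$ grows along with $t(S)$, by the very theorem. The danger is the mismatch between the staircase and the semigroup invariants described above.
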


The paper is organized as follows.
In Section \ref{section_geometricprocedure}, we study the geometry of the Ap{\'e}ry set of numerical semigroups with embedding dimension four using $L$-shapes \cite{AGUILOGOST2015}. 
In Section \ref{section_minimal_presentations}, we relate minimal presentations of numerical semigroups with embedding dimension four to an $L$-shape.
In Section \ref{section_type}, we prove Theorem \ref{theorem_type_main} by using results of the previous sections to study $t(S)$ and $\eta(S)$ geometrically.
We conclude the paper with Conjecture \ref{conjecture}.

\section{Geometry of the Ap{\'e}ry set}\label{section_geometricprocedure}

In this section, we study the geometry of the Ap{\'e}ry set of numerical semigroups with $e(S)=4$. Subsection~\ref{subsection_minimal_relations} introduces the notion of minimal relations and uses them to classify numerical semigroups with $e(S)=4$ into primary and secondary semigroups (Lemma~\ref{lemma2cases}) --- a distinction that underpins our approach. Subsection~\ref{subsection_framework} presents the geometric framework for analyzing the Apéry set (Theorem~\ref{theoremprocedure}). Subsection~\ref{subsection_Lshapes} introduces the notion of $L$-shapes and examines them for primary and secondary semigroups (Propositions \ref{prop_primary} and \ref{prop_secondary}).

Let $S$ be minimally generated by $\{d_0,d_1,\dots,d_k\}$. For an element $s \in S$, the set
\begin{equation}
\varphi^{-1}(s) = \{(z_0,\dots,z_k)\in \mathbb{N}^{k+1} \mid z_0d_0 + \dots + z_kd_k = s\}
\end{equation}
is called the \emph{set of factorizations} of $s$.
For $z =(z_0,\dots,z_k) \in \varphi^{-1}(s)$, the \emph{support} of $z$ is 
\begin{equation}
\text{supp}(z) = \{i \in \{0,\dots,k\} \mid z_i > 0\}.
\end{equation}
For an element $a \in S$, the set
\begin{equation}
\textup{Ap}(S,a) = \{ s \in S \mid s-a \notin S\}
\end{equation}
is called the \emph{Ap{\'e}ry set} of $S$ with respect to $a$. Note that $\textup{Ap}(S,a)$ is the set of the smallest nonnegative integers from each residue class modulo $a$ that are in $S$, hence $ |\textup{Ap}(S,a)| = a$. From \cite[Proposition~2.20]{Rosales2009}, we have
\begin{equation}\label{eq_pseudo_frob}
     PF(S) = \{s - a \mid s \in \textup{Ap}(S,a), s + d_i \notin \textup{Ap}(S,a) \text{ for } i = 0,1,\dots,k\}.
\end{equation}
This description of pseudo-Frobenius numbers in terms of the Ap{\'e}ry set will provide a useful framework for our study of the type in Section \ref{section_type}.

\subsection{Minimal relations}\label{subsection_minimal_relations}

Let $S =\langle d_0,d_1,d_2,d_3 \rangle$ be a numerical semigroup with embedding dimension four.

For every $i \in \{0,1,2,3\}$, let $a_{ii}$ be the smallest positive integer such that there exist nonnegative integers $a_{ij},a_{ik},a_{il}$ satisfying
\begin{equation}\label{minimalrelation_eq1}
     a_{ii} d_i = a_{ij} d_j + a_{ik}d_k + a_{il} d_l
\end{equation}
where $\{i,j,k,l\} = \{0,1,2,3\}$. We call \eqref{minimalrelation_eq1} a \emph{minimal relation} of $d_i$.

Fix $m \in \{0,1,2,3\}$. For every $i \in \{0,1,2,3\} \setminus \{m\}$, let $b_{m,ii}$ be the smallest positive integer such that there exist nonnegative integers $b_{m,ij},b_{m,ik},b_{m,il}$ satisfying 
\begin{equation}\label{minimalrelation_eq2}
     b_{m,ii} d_i = b_{m,ij} d_j + b_{m,ik}d_k + b_{m,il} d_l
\end{equation}
where $\{i,j,k,l\} = \{0,1,2,3\}$ and $b_{m,im} > 0$. We call \eqref{minimalrelation_eq2} a \emph{$d_m$-positive minimal relation of $d_i$}.

Note that in the minimal relations $a_{ii}$ is unique; however, each $a_{ij}$ for $i \neq j$ is not necessarily unique. Similarly, in the $d_m$-positive minimal relations $b_{m,ii}$ is unique; however, each $b_{m,ij}$ for $i \neq j$ is not necessarily unique.

Let $\varphi : \mathbb{N}^{4} \to S$ be the factorization morphism of $S$. 

\begin{lemma}\label{lemma_catenary_minrelations}
    For $i \in \{0,1,2,3\}$ and $(\mu_0,\mu_1,\mu_2,\mu_3) \in \varphi^{-1}( a_{ii}d_i)$, we have either $\mu_i = a_{ii}$ and $\mu_j = 0$ for all $j \in \{0,1,2,3\} \setminus \{i\}$, or $\mu_i = 0$.
\end{lemma}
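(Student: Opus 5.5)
Suppose $(\mu_0,\mu_1,\mu_2,\mu_3) \in \varphi^{-1}(a_{ii}d_i)$ with $\mu_i > 0$. The plan is to show that in this case we must have $\mu_i = a_{ii}$ and $\mu_j=0$ for $j \neq i$. Everything rests on the minimality of $a_{ii}$ in \eqref{minimalrelation_eq1}, used through the cancellation property of the monoid $(\mathbb{N},+)$.

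First we bound $\mu_i$ from above. Since all $\mu_j \geq 0$ and $d_j>0$, we have $\mu_i d_i \leq \sum_j \mu_j d_j = a_{ii}d_i$, so $\mu_i \leq a_{ii}$. If $\mu_i = a_{ii}$, the equation $a_{ii}d_i = a_{ii}d_i + \sum_{j\neq i}\mu_j d_j$ forces $\sum_{j \neq i}\mu_j d_j = 0$. Because every $d_j$ is positive, this gives $\mu_j = 0$ for all $j\neq i$, which is the first alternative.

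It remains to rule out $0<\mu_i<a_{ii}$. In that case, subtracting $\mu_i d_i$ from both sides yields
\[
(a_{ii}-\mu_i)\,d_i = \sum_{j\neq i} \mu_j d_j,
\]
with $0 < a_{ii}-\mu_i < a_{ii}$ and nonnegative coefficients $\mu_j$. This is a relation of the form \eqref{minimalrelation_eq1} whose left-hand coefficient is a positive integer strictly smaller than $a_{ii}$. That contradicts the definition of $a_{ii}$ as the smallest such positive integer. Hence either $\mu_i=0$ or we are in the first alternative.

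There is no real obstacle here. The only points needing care are that $a_{ii}-\mu_i$ is genuinely positive, so the definition applies, and that the definition of a minimal relation imposes no positivity requirement on the coefficients $a_{ij}$, so the derived relation is admissible even if some $\mu_j$ vanish.
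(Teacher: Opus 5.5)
Your proof is correct and is essentially the paper's argument. Both rewrite the factorization as $(a_{ii}-\mu_i)d_i=\sum_{j\neq i}\mu_j d_j$, note that the left-hand coefficient is nonnegative, conclude $\mu_j=0$ for $j\neq i$ when $\mu_i=a_{ii}$, and use the minimality of $a_{ii}$ to rule out $0<\mu_i<a_{ii}$.
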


\begin{proof}
     Without loss of generality, suppose $i=0$. We have
     \begin{equation}\label{lemma_catenary2_eq1}
         (a_{00}-\mu_0)d_0 =  \mu_1d_1 + \mu_2d_2 + \mu_3d_3.
     \end{equation}
     The right-hand side is clearly nonnegative. If $(a_{00}-\mu_0) = 0$, then $\mu_1 =\mu_2=\mu_3 = 0$.  If $(a_{00}-\mu_0) > 0$, then the minimality of $a_{00}$ implies $\mu_0 = 0$.
\end{proof}

\begin{lemma}\label{lemma2cases}
     If there is no $m \in \{0,1,2,3\}$ such that $a_{ii} = b_{m,ii}$ for at least two distinct indices $i \in \{0,1,2,3\} \setminus \{m\}$, then there exists an arrangement $\{i,j,k,l\} = \{0,1,2,3\}$ such that $a_{ii}d_i = a_{jj}d_j \neq a_{kk}d_k = a_{ll}d_l$, and $\varphi^{-1}(a_{ii}d_i) \cup \varphi^{-1}(a_{kk}d_k) =  \{(a_{00},0,0,0), (0,a_{11},0,0),(0,0,a_{22},0),(0,0,0,a_{33})\}$.
\end{lemma}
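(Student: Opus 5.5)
The plan is to encode, for each generator, which other generators can occur in a minimal relation, and to show that the hypothesis forces these supports to pair the four generators off. For each $i \in \{0,1,2,3\}$ set
$$T_i = \bigcup \{\operatorname{supp}(\mu) \mid \mu \in \varphi^{-1}(a_{ii}d_i),\ \mu_i = 0\} \subseteq \{0,1,2,3\}\setminus\{i\}.$$

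First, $T_i$ is nonempty. The integer $a_{ii}$ exists, since for instance $d_j d_i = d_i d_j$ gives a relation of the required form. The right-hand side of a minimal relation \eqref{minimalrelation_eq1} is positive, and by Lemma~\ref{lemma_catenary_minrelations} such a factorization has $\mu_i = 0$.

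Next, $m \in T_i$ implies $a_{ii} = b_{m,ii}$. Indeed, a factorization $\mu$ of $a_{ii}d_i$ with $\mu_i = 0$ and $\mu_m > 0$ is a $d_m$-positive relation of $d_i$ with coefficient $a_{ii}$. Minimality of $a_{ii}$ gives $a_{ii} \le b_{m,ii}$, so equality holds.

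The hypothesis therefore says that each $m$ lies in $T_i$ for at most one $i$. So $T_0, T_1, T_2, T_3$ are four pairwise disjoint nonempty subsets of a $4$-element set. Hence each $T_i$ is a singleton $\{\sigma(i)\}$, and $\sigma$ is a permutation with no fixed points. In particular, the only factorizations of $a_{ii}d_i$ are the trivial one $a_{ii}e_i$ and factorizations of the form $c\,e_{\sigma(i)}$. Since $d_{\sigma(i)}$ is fixed, there is exactly one such $c$, so $a_{ii}d_i = c\, d_{\sigma(i)}$.

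The main step is to show $c = a_{\sigma(i)\sigma(i)}$. Write $\tau = \sigma(i)$. By minimality of $a_{\tau\tau}$ we have $c \ge a_{\tau\tau}$. Suppose $c > a_{\tau\tau}$. Replace $a_{\tau\tau}d_\tau$ by its minimal relation, which is supported on $\{\sigma(\tau)\}$. This gives the factorization
$$(c - a_{\tau\tau})e_\tau + a_{\tau\sigma(\tau)}e_{\sigma(\tau)}$$
of $a_{ii}d_i$.

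If $\sigma(\tau) \neq i$, this factorization has no $e_i$-component, yet its support contains $\sigma(\tau) \notin T_i$, which contradicts $T_i = \{\tau\}$. If $\sigma(\tau) = i$, it has $\mu_i > 0$ and $\mu_\tau > 0$, which contradicts Lemma~\ref{lemma_catenary_minrelations}. Hence $a_{ii}d_i = a_{\tau\tau}d_\tau$.

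It follows that $a_{\tau\tau}d_\tau$ has the nontrivial factorization $a_{ii}e_i$, so $i \in T_\tau$ and $\sigma(\tau) = i$. Thus $\sigma$ is a fixed-point-free involution, and it pairs the indices as $\{i,j\}$ and $\{k,l\}$ with $a_{ii}d_i = a_{jj}d_j$ and $a_{kk}d_k = a_{ll}d_l$.

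If $a_{ii}d_i = a_{kk}d_k$, then $a_{kk}e_k$ would be a factorization of $a_{ii}d_i$ with $\mu_i = 0$, giving $k \in T_i = \{j\}$, which is impossible. So $a_{ii}d_i \neq a_{kk}d_k$.

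Finally, by the singleton description, $\varphi^{-1}(a_{ii}d_i) = \{a_{ii}e_i, a_{jj}e_j\}$. The factorization $c\,e_j$ is unique and equals $a_{jj}e_j$, because $c$ is determined by $d_j$. Likewise $\varphi^{-1}(a_{kk}d_k) = \{a_{kk}e_k, a_{ll}e_l\}$, and the union of the two sets is the stated four-element set.

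I expect the main obstacle to be the step $c = a_{\tau\tau}$, where both subcases $\sigma(\tau) \ne i$ and $\sigma(\tau) = i$ must be handled. Everything else is counting based on Lemma~\ref{lemma_catenary_minrelations} and the definitions.
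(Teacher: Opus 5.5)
Your proof is correct. Its second half matches the paper: you show $c=a_{\tau\tau}$ by substituting the minimal relation of $d_\tau$ into $c\,d_\tau$. You then split on whether the new generator is $d_i$ (excluded by Lemma~\ref{lemma_catenary_minrelations}, which the paper phrases as minimality of $a_{00}$) or another generator (excluded by the singleton-support property). That is exactly how the paper proves $a_{01}=a_{11}$.

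The difference is in how you reach the singleton-support property. The paper assumes a factorization of $a_{00}d_0$ with two positive entries. It then chases a specific configuration ($a_{11}<b_{2,11}$, $a_{33}<b_{1,33}$, then $a_{33}=b_{0,33}$, then $a_{11}=b_{3,11}$, \dots) until the hypothesis fails at $m=3$. Later it uses the hypothesis again, at $m=0$, to exclude $a_{00}d_0=a_{11}d_1=a_{22}d_2$.

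You isolate the single fact behind that chase: $m\in T_i$ forces $a_{ii}=b_{m,ii}$. With this, the hypothesis becomes exactly ``the sets $T_0,\dots,T_3$ are pairwise disjoint.'' Pigeonhole then gives singletons and a fixed-point-free permutation $\sigma$ at once. The same disjointness later yields both the involution property and $a_{ii}d_i\neq a_{kk}d_k$.

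The ingredients are the same, but your formulation removes the case analysis and explains why the paper's chain must end in a contradiction. Your last paragraph also spells out why $\varphi^{-1}(a_{ii}d_i)$ has exactly two elements, which the paper leaves implicit.
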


\begin{proof}
    Suppose that for every $m \in \{0,1,2,3\}$ we have $a_{ii} < b_{m,ii}$ for at least two distinct indices $i \in \{0,1,2,3\} \setminus \{m\}$ (note that $a_{ii} \leq b_{m,ii}$ always holds by definition). 
    
    Suppose that there exists $z \in \varphi^{-1}(a_{00} d_0)$ with $|\textup{supp}(z)| \geq 2$. By Lemma \ref{lemma_catenary_minrelations}, $0 \notin \textup{supp}(z)$. Without loss of generality, suppose that $z=(0,a_{01},a_{02},a_{03})$ where $a_{01},a_{02}>0$. Then, $a_{00} = b_{1,00} = b_{2,00}$. This implies $a_{11} < b_{2,11}$, $a_{22} < b_{1,22}$, $a_{33} < b_{1,33}$, and $a_{33} <b_{2,33}$. The last two inequalities give $\varphi^{-1}(a_{33}d_3) = \{(0,0,0,a_{33}),(a_{30},0,0,0)\}$, so $a_{30}>0$ and hence $a_{33} = b_{0,33}$. This implies $a_{11} < b_{0,11}$ and $a_{22} < b_{0,22}$. Now, since $a_{11} < b_{0,11}$ and $a_{11} < b_{2,11}$, we have $\varphi^{-1}(a_{11}d_1) = \{(0,a_{11},0,0),(0,0,0,a_{13})\}$, so $a_{13} > 0$ and hence $a_{11} = b_{3,11}$. Similarly, $\varphi^{-1}(a_{22}d_2) = \{(0,0,a_{22},0),(0,0,0,a_{23})\}$ and $a_{22} = b_{3,22}$, which is a contradiction, as now there exists at most one $i \in \{0,1,2\}$ such that $a_{ii} < b_{3,ii}$. 

    Hence, for every $z \in \varphi^{-1}(a_{00} d_0)$, we have $|\textup{supp}(z)| = 1$.
    Proceeding analogously with $a_{11}d_1$, $a_{22}d_2$, and $a_{33}d_3$, we obtain that for every $i \in \{0,1,2,3\}$ and every $z \in \varphi^{-1}(a_{ii} d_i)$, we have $|\textup{supp}(z)| = 1$.

    Without loss of generality, suppose that $a_{00}d_0 = a_{01}d_1$.
    Observe that $a_{01} \geq a_{11}$. If $a_{01} > a_{11}$, then
    \begin{equation}\label{notprop-eq1}
        a_{00}d_0 = a_{01}d_1 = ( a_{01}-a_{11})d_1 + a_{1i}d_i
    \end{equation}
    for some $i \in \{0,2,3\}$. Clearly $a_{1i} > 0$. If $i=0$, subtracting $d_0$ from both sides of \eqref{notprop-eq1} contradicts the minimality of $a_{00}$. If $i \neq 0$, we get a factorization $z \in  \varphi^{-1}(a_{00} d_0)$ with $|\textup{supp}(z)| = 2$, which is a contradiction. 
    
    Hence $a_{01} = a_{11}$, and therefore $a_{00}d_0 = a_{11}d_1$. Analogously, we get $a_{22}d_2 = a_{ii}d_i$ for some $i \in \{0,1,3\}$. If $i \in \{0,1\}$, then $a_{00}d_0 = a_{11}d_1 = a_{22}d_2$, so $a_{11} = b_{0,11}$ and $a_{22} = b_{0,22}$, which is contrary to our assumption. Hence $a_{22}d_2 = a_{33}d_3 \neq a_{00}d_0 = a_{11}d_1$, and the claim follows.
\end{proof}

If there exists $m \in \{0,1,2,3\}$ such that $a_{ii} = b_{m,ii}$ for at least two distinct indices $i \in \{0,1,2,3\} \setminus \{m\}$, we call $\langle d_0,d_1,d_2,d_3 \rangle$ \emph{primary}. Moreover, for a primary semigroup $\langle d_0,d_1,d_2,d_3 \rangle$, unless stated otherwise, we always assume throughout this paper that the generators are ordered so that $a_{11} = b_{0,11}$ and $a_{22} = b_{0,22}$. 

Numerical semigroups with embedding dimension four that are not primary will be referred to as \emph{secondary} semigroups. By Lemma \ref{lemma2cases}, if $\langle d_0,d_1,d_2,d_3 \rangle$ is a secondary semigroup, then there exists an arrangement $\{i,j,k,l\} = \{0,1,2,3\}$ such that $a_{ii}d_i = a_{jj}d_j \neq a_{kk}d_k = a_{ll}d_l$, and $\varphi^{-1}(a_{ii}d_i) \cup \varphi^{-1}(a_{kk}d_k) =  \{(a_{00},0,0,0), (0,a_{11},0,0), (0,0,a_{22},0),(0,0,0,a_{33})\}$. Moreover, for a secondary semigroup $\langle d_0,d_1,d_2,d_3 \rangle$, unless stated otherwise, we always assume throughout this paper that the generators are ordered so that $\varphi^{-1}(a_{00}d_0) = \varphi^{-1}(a_{11}d_1)= \{(a_{00},0,0,0), (0,a_{11},0,0)\}$ and $ \varphi^{-1}(a_{22}d_2) = \varphi^{-1}(a_{33}d_3) =  \{(0,0,a_{22},0), (0,0,0,a_{33})\}$. 

A numerical semigroup with embedding dimension four, $S=\langle d_0,d_1,d_2,d_3 \rangle$, is said to be \emph{ordered} if its generators are arranged according to the ordering defined above --- that is, the ordering assumed for primary semigroups if $S$ is primary, and the ordering assumed for secondary semigroups if $S$ is secondary. We adopt this convention so that certain definitions and constructions can be stated uniformly for any ordered semigroup $\langle d_0,d_1,d_2,d_3 \rangle$, without requiring a prior assumption on whether it is primary or secondary.

\subsection{Geometric framework}\label{subsection_framework}

From this point on, we will focus on the Ap{\'e}ry set of $\langle d_0,d_1,d_2,d_3 \rangle$ with respect to $d_0$, which we henceforth refer to simply as the Ap{\'e}ry set. For brevity, we also write $b_{0,ij} = b_{ij}$ for all $i,j \in \{0,1,2,3\}$.

Consider the unit cubes in $\mathbb{R}^3$. Each unit cube $[i,i+1] \times [j,j+1] \times [k,k+1] \subset \mathbb{R}^3$ has natural coordinates $(i,j,k) \in \mathbb{N}^3$ and is labeled with the value $id_1 + jd_2 + kd_3$. We will refer to the collection of all such labeled cubes as \emph{the initial collection of cubes}. We denote the unit cube $[i,i+1] \times [j,j+1] \times [k,k+1]$ by $[[i,j,k]]$.
For a point $P =(a,b,c) \in \mathbb{Z}^3$, we call the region $\{x > a, y>b, z>c\}$ \emph{the region associated to} $P$. We write $[[a,b,c]] \geq [[a',b',c']]$ when $a \geq a'$, $b \geq b'$, $c \geq c'$, and $[[a,b,c]] > [[a',b',c']]$ when $[[a,b,c]] \geq [[a',b',c']]$ and $[[a,b,c]] \neq [[a',b',c']]$.

\begin{lemma}\label{lemma_procedure_1}
   No cube labeled with an element of the Ap{\'e}ry set is in the region associated to a point $(\mu_1,\mu_2,\mu_3) \in \mathbb{Z}^3$ satisfying $\mu_1d_1 + \mu_2d_2 + \mu_3d_3 = \mu d_0$ for some positive integer $\mu$. 
\end{lemma}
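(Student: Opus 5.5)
The plan is a direct argument. Suppose a cube $[[x,y,z]]$ lies in the region associated to $P=(\mu_1,\mu_2,\mu_3)$, so that $x>\mu_1$, $y>\mu_2$ and $z>\mu_3$. Being in the initial collection, it has $x,y,z\ge 0$. Its label is $s = xd_1+yd_2+zd_3$. We want to show $s-d_0\in S$, so that $s\notin \textup{Ap}(S,d_0)$.

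The key step is to split the coordinates of $P$ by sign. Write $\mu_i = \mu_i^+ - \mu_i^-$ with $\mu_i^\pm\ge 0$ and $\min(\mu_i^+,\mu_i^-)=0$. Then the hypothesis reads
\begin{equation*}
\mu_1^+d_1+\mu_2^+d_2+\mu_3^+d_3 = \mu d_0 + \mu_1^-d_1+\mu_2^-d_2+\mu_3^-d_3 .
\end{equation*}
Since the region condition uses strict inequalities, $x\ge \mu_i+1\ge \mu_i^+$, and likewise for $y$ and $z$. In fact we only need $x\ge\mu_1^+$, $y\ge\mu_2^+$, $z\ge\mu_3^+$, and these hold because $x,y,z\ge0$ and they exceed $\mu_i$.

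Next we rewrite the label:
\begin{equation*}
s = (x-\mu_1^+)d_1+(y-\mu_2^+)d_2+(z-\mu_3^+)d_3 + \mu d_0 + \mu_1^-d_1+\mu_2^-d_2+\mu_3^-d_3 .
\end{equation*}
Every coefficient here is nonnegative and $\mu\ge1$. Hence $s-d_0$ is a nonnegative combination of the generators, so $s-d_0\in S$, and $s$ is not in the Apéry set, which is what the lemma asserts.

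There is no real obstacle. The only points needing care are that the coordinates of $P$ may be negative, which the sign split handles, and that $\mu>0$ is used to extract a copy of $d_0$. Strictness of the inequalities is not even needed beyond $x\ge\max(\mu_1,0)$, and similarly for the other two coordinates.
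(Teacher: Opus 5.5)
Your proof is correct and is essentially the paper's argument. Both rest on the identity $s-\mu d_0=(x-\mu_1)d_1+(y-\mu_2)d_2+(z-\mu_3)d_3$ with nonnegative coefficients: the paper phrases it as a contradiction with the minimality of the Ap\'ery element in its residue class modulo $d_0$ inside $\langle d_1,d_2,d_3\rangle$, while you conclude directly that $s-d_0\in S$. Your sign split of the $\mu_i$ is harmless but redundant, since $x-\mu_1\ge 0$ already.

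One convention point matters. The paper's proof treats a cube $[[x,y,z]]$ as lying in the region associated to $(\mu_1,\mu_2,\mu_3)$ when $[[x,y,z]]\geq[[\mu_1,\mu_2,\mu_3]]$, i.e.\ with non-strict inequalities. Your strict reading $x>\mu_1$ would not be enough for the later applications. Your closing remark, that only $x\ge\max\{\mu_1,0\}$ (and likewise for $y,z$) is used, is what makes your argument cover the lemma in that form.
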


\begin{proof}
    Recall that the Ap{\'e}ry set with respect to $d_0$ is the set of the smallest nonnegative integers from each residue class modulo $d_0$ that are in the numerical semigroup $\langle d_1,d_2,d_3\rangle$. Suppose to the contrary, that for some residue $r \in \{0,1,\dots,d_0-1\}$ there is a cube $[[\lambda_1,\lambda_2,\lambda_3]]$ labeled with the smallest number $\lambda d_0 + r$ ($\lambda \in \mathbb{N}$) that is in $\langle d_1,d_2,d_3\rangle$ such that $[[\lambda_1,\lambda_2,\lambda_3]] \geq [[\mu_1,\mu_2,\mu_3]]$. This gives
    \begin{equation}\label{lemma_procedure_1_eq}
     (\lambda_1 -\mu_1) d_1 + (\lambda_2-\mu_2) d_2 + (\lambda_3-\mu_3) d_3 = (\lambda-\mu) d_0 + r
    \end{equation}
    where all coefficients on the left-hand side are nonnegative. This means that the value in \eqref{lemma_procedure_1_eq} is congruent to $r$ modulo $d_0$, is nonnegative, is in $\langle d_1,d_2,d_3\rangle$, and is strictly less than $\lambda d_0 + r$, which contradicts the minimality of $\lambda d_0 + r$.
\end{proof}

\begin{thm}\label{theoremprocedure}
    Delete from the initial collection of cubes the regions $\{x > b_{11}\}$, $\{y > b_{22}\}$, and $\{z > b_{33}\}$.
    Next, for every point $(a_{01},a_{02},a_{03}) \in \mathbb{N}^3$ satisfying
     \begin{equation}\label{minrelation-0}
          a_{01} d_1 + a_{02}d_2 + a_{03} d_3 = a_{00} d_0
     \end{equation}
    delete the region associated to it. 
    Furthermore, for all identities of the form
     \begin{equation}\label{theoremprocedure_eq1}
         \lambda_0d_0 + \lambda_1d_1=\lambda_2d_2 + \lambda_3d_3
     \end{equation}
    with $\lambda_i \in \mathbb{Z}_{>0}$, $\lambda_2 < b_{22}$, and $\lambda_3 < b_{33}$, delete the region associated to $(-\lambda_1,\lambda_2,\lambda_3)$. For all identities of the form
    \begin{equation}\label{theoremprocedure_eq2}
         \lambda_0d_0 + \lambda_2d_2=\lambda_1d_1 + \lambda_3d_3
    \end{equation}
    with $\lambda_i \in \mathbb{Z}_{>0}$, $\lambda_1 < b_{11}$, and $\lambda_3 < b_{33}$, delete the region associated to $(\lambda_1,-\lambda_2,\lambda_3)$. Finally, for all identities of the form
    \begin{equation}\label{theoremprocedure_eq3}
        \lambda_0d_0 + \lambda_3d_3=\lambda_1d_1 + \lambda_2d_2
    \end{equation}
    with $\lambda_i \in \mathbb{Z}_{>0}$, $\lambda_1 < b_{11}$, and $\lambda_2 < b_{22}$, delete the region associated to $(\lambda_1,\lambda_2,-\lambda_3)$.
    Then, the remaining collection of cubes is the set of all cubes from the initial collection of cubes that are labeled with an element of the Ap{\'e}ry set.
\end{thm}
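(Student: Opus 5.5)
The proof shows two inclusions: every deleted cube is labeled with a non-Apéry element, and every non-Apéry cube lies in some deleted region. Throughout, write $c=(c_1,c_2,c_3)$ for the coordinates of a cube $[[c_1,c_2,c_3]]$. Write $c\cdot d = c_1d_1+c_2d_2+c_3d_3$ for its label, and $s$ for this label.

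\emph{Deleted cubes are not in the Apéry set.} There are three kinds of deleted region.
\begin{itemize}
\item If $c_1\ge b_{11}$, the $d_0$-positive minimal relation gives $s = b_{10}d_0 + (\text{element of } S)$ with $b_{10}>0$. Hence $s-d_0\in S$. The regions $\{y>b_{22}\}$ and $\{z>b_{33}\}$ are identical.
\item Cubes in the region of a point $(a_{01},a_{02},a_{03})$ are excluded by Lemma~\ref{lemma_procedure_1} with $\mu=a_{00}$.
\item Take a cube in the region associated to $(-\lambda_1,\lambda_2,\lambda_3)$ coming from \eqref{theoremprocedure_eq1}. Then $c_2\ge\lambda_2$ and $c_3\ge\lambda_3$, with $c_1$ arbitrary. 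So
\[
s=\lambda_0d_0+(c_1+\lambda_1)d_1+(c_2-\lambda_2)d_2+(c_3-\lambda_3)d_3 .
\]
Since $\lambda_0\ge1$, we get $s-d_0\in S$. Identities \eqref{theoremprocedure_eq2} and \eqref{theoremprocedure_eq3} are handled symmetrically.
\end{itemize}

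\emph{Non-Apéry cubes are deleted.} Let $c$ be a non-Apéry cube, and assume it survives the first deletion, i.e. $c_1<b_{11}$, $c_2<b_{22}$, $c_3<b_{33}$. Since the deleted regions are up-closed in the order $\geq$, it suffices to treat a cube $c$ that is minimal among non-Apéry cubes below it. Minimality means every cube $c-e_t$ (with $c_t\ge1$) is Apéry.

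Write $s-d_0=\mu_0d_0+\mu_1d_1+\mu_2d_2+\mu_3d_3$ for some factorization. If $\mu_t\ge1$ and $c_t\ge1$ for some $t$, then the label of $c-e_t$ minus $d_0$ lies in $S$, contradicting minimality. Hence $T=\textup{supp}(c)$ and $U=\textup{supp}(\mu)\setminus\{0\}$ are disjoint, and we obtain the identity
\[
(\mu_0+1)d_0+\sum_{t\in U}\mu_td_t=\sum_{t\in T}c_td_t .
\]
We now split by the sizes of $T$ and $U$.
\begin{itemize}
\item If $|T|=1$, say $T=\{1\}$, then $c_1d_1$ has a $d_0$-positive expression. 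This forces $c_1\ge b_{11}$, a contradiction.
\item If $|T|=2$ and $|U|=1$, this is exactly an identity of the form \eqref{theoremprocedure_eq1}--\eqref{theoremprocedure_eq3}. The bounds $\lambda<b$ on the positive side come from the box condition on $c$. The cube $c$ lies in the region associated to the point (coordinates $c_t$ on $T$, $-\mu_t$ on $U$), so it was deleted.
\item If $U=\emptyset$ (so $|T|\in\{2,3\}$), then $c\cdot d=\nu d_0$ with $\nu=\mu_0+1$. Suppose $\nu>a_{00}$. Substituting the minimal relation \eqref{minrelation-0} gives a second factorization $s-d_0=(\nu-1-a_{00})d_0+a_{01}d_1+a_{02}d_2+a_{03}d_3$. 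Applying the disjointness argument to this factorization shows $\textup{supp}(a_{0\ast})$ is disjoint from $T$. Then either we fall into the previous case, or into $|T|=1$, or we reach a contradiction because $\textup{supp}(a_{0\ast})$ must be nonempty and disjoint from a set $T$ of size $3$. Hence $\nu=a_{00}$, and $c$ is itself a point $(a_{01},a_{02},a_{03})$ satisfying \eqref{minrelation-0}. Its cube is deleted as lying in the closed-off region associated to such a point. Care is needed here with the convention that the region is open, so it contains the cube $[[a_{01},a_{02},a_{03}]]$ itself.
\end{itemize}

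The main obstacle is this last case. There one must exploit the freedom to choose the factorization of $s-d_0$, so that a relation $\nu d_0=c\cdot d$ with $\nu>a_{00}$ is either converted into a mixed-sign identity of the listed forms, or ruled out by the disjointness forced by minimality of $c$.
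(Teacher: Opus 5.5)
Your proof is correct. The first half (deleted cubes carry non-Ap{\'e}ry labels) is the same as the paper's appeal to Lemma~\ref{lemma_procedure_1}, but you organize the reverse inclusion differently.

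The paper argues inside the surviving collection. It takes two surviving cubes labeled $\lambda d_0+r$ and $\lambda' d_0+r$ with $\lambda'>\lambda$ and case-splits on the sign pattern of their coordinate difference. In the all-positive case it uses that the surviving set is down-closed, subtracts the $a_{00}$-relation once, and reruns the sign analysis.

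You instead show pointwise that every non-Ap{\'e}ry cube in the box $\{c_i<b_{ii}\}$ is deleted, by descending to a minimal non-Ap{\'e}ry cube $c$. Minimality forces $\mathrm{supp}(c)$ to be disjoint from the $d_1,d_2,d_3$-part of \emph{every} factorization of $s-d_0$. That hands you an identity of the required mixed-sign shape without sign bookkeeping. Applying the same disjointness to the factorization obtained by inserting the $a_{00}$-relation then settles the case $U=\emptyset$, $\nu>a_{00}$.

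A side benefit of your route is that vanishing coefficients are handled explicitly. Consider an identity like $\nu d_0=c_2d_2+c_3d_3$. In the paper's ``exactly two positive'' case such an identity is not literally of $(2,2)$-type, since its third coefficient is $0$, and it really has to be routed through the $a_{00}$ step. Your case $U=\emptyset$ treats it correctly. The paper's route, in exchange, needs neither the minimal-element reduction nor a choice of factorization of $s-d_0$.

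Some small tidying:
\begin{itemize}
\item State $\nu\ge a_{00}$ (by minimality of $a_{00}$) and note that $c\neq[[0,0,0]]$.
\item Drop ``or into $|T|=1$'', since re-factorizing $s-d_0$ does not change $T=\mathrm{supp}(c)$.
\item Replace ``hence $\nu=a_{00}$'' by ``it remains to treat $\nu=a_{00}$''. For $\nu>a_{00}$ and $|T|=2$ you have shown that $c$ is deleted, not reached a contradiction.
\end{itemize}
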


\begin{proof}
Let $R$ be the remaining collection of cubes.
The condition $\lambda_i < b_{ii}$ ensures that only the relevant identities are considered, since we delete the regions $\{x > b_{11}\}$, $\{y > b_{22}\}$, and $\{z > b_{33}\}$. Moreover, by Lemma \ref{lemma_procedure_1}, every cube labeled with an element of the Ap{\'e}ry set is in $R$, since deleting the regions $\{x > b_{11}\}$, $\{y > b_{22}\}$, and $\{z > b_{33}\}$ is the same as deleting the regions associated to $(b_{11}, - b_{12},-b_{13})$, $(-b_{21},b_{22},-b_{23})$, and $(-b_{31},-b_{32},b_{33})$. 

Therefore we only need to prove that $R$ contains only cubes labeled with an element of the Ap{\'e}ry set. Suppose to the contrary, that for some remainder $r \in \{0,1,\dots,d_0-1\}$, $\lambda d_0 + r$ and $\lambda' d_0 + r$, where $\lambda' > \lambda$, are labels in $R$. This means that
    \begin{equation}\label{eq1theorem}
        \lambda d_0 + r = \lambda_1d_1 + \lambda_2d_2 + \lambda_3d_3
    \end{equation}
     \begin{equation}\label{eq2theorem}
        \lambda' d_0 + r = \lambda_1'd_1 + \lambda_2'd_2 + \lambda_3'd_3
    \end{equation}
    where $[[\lambda_1,\lambda_2,\lambda_3]], [[\lambda_1',\lambda_2',\lambda_3']] \in R$.
    Subtracting \eqref{eq1theorem} from \eqref{eq2theorem} we get
    \begin{equation}\label{eq3theorem}
        (\lambda' - \lambda)d_0 = (\lambda_1' - \lambda_1)d_1 + (\lambda_2' - \lambda_2)d_2 + (\lambda_3' - \lambda_3)d_3
    \end{equation}
    where $(\lambda' - \lambda)>0$. Now, we consider cases of how many coefficients on the right-hand side of \eqref{eq3theorem} are positive. The left-hand side is positive; hence, at least one of the coefficients on the right-hand side is positive. 
    
If exactly one coefficient is positive, without loss of generality, suppose that $(\lambda_1' - \lambda_1)$ is the one. Moving the non-positive integers to the other side gives
    \begin{equation}\label{eq4theorem}
        (\lambda' - \lambda)d_0  + (\lambda_2 - \lambda_2')d_2 + (\lambda_3 - \lambda_3')d_3= (\lambda_1' - \lambda_1)d_1.
    \end{equation} 
Since all the above coefficients are nonnegative and $(\lambda_1' - \lambda_1)$ and $(\lambda' - \lambda) $ are positive, the minimality of $b_{11}$ gives $b_{11} \leq (\lambda_1' - \lambda_1) \leq \lambda_1'$, which contradicts $[[\lambda_1',\lambda_2',\lambda_3']] \in R$.

If exactly two coefficients on the right-hand side of \eqref{eq3theorem} are positive, without loss of generality, suppose that $(\lambda_1' - \lambda_1)$ and $(\lambda_2' - \lambda_2)$ are the two. Moving $(\lambda_3' - \lambda_3)$ to the other side gives
    \begin{equation}\label{eq5theorem}
        (\lambda' - \lambda)d_0  + (\lambda_3 - \lambda_3')d_3= (\lambda_1' - \lambda_1)d_1  + (\lambda_2' - \lambda_2)d_2.
    \end{equation}
The above identity satisfies the conditions of the current theorem, thus the region $\{x > (\lambda_1' - \lambda_1), y > (\lambda_2' - \lambda_2)\}$ was deleted. Again, this contradicts $[[\lambda_1',\lambda_2',\lambda_3']] \in R$, since $\lambda'_1 \geq (\lambda_1' - \lambda_1)$ and $\lambda'_2 \geq (\lambda_2' - \lambda_2)$.

Now, the only case left is where all the coefficients in \eqref{eq3theorem} are positive. We substitute $\mu =(\lambda' - \lambda)  > 0$ and $\mu_i =(\lambda_i' - \lambda_i)  >0$ for $i = 1,2,3$. We have
    \begin{equation}\label{eq6theorem}
         \mu d_0 = \mu_1d_1 + \mu_2d_2 + \mu_3d_3
    \end{equation}
and since $[[\mu_1,\mu_2,\mu_3]] \leq [[\lambda_1',\lambda_2',\lambda_3']] \in R$, we have $[[\mu_1,\mu_2,\mu_3]] \in R$. 
From the minimality of $a_{00}$, we get $a_{00} \leq \mu$. Subtracting $a_{00}d_0 = a_{01}d_1 + a_{02}d_2 + a_{03}d_3$ from \eqref{eq6theorem} gives
\begin{equation}\label{eq7theorem}
    (\mu - a_{00}) d_0 = (\mu_1 - a_{01})d_1 +  (\mu_2 - a_{02})d_2 + (\mu_3 - a_{03})d_3.
\end{equation}
    
If $\mu = a_{00}$, then $(0,\mu_1,\mu_2,\mu_3) \in \varphi^{-1}(a_{00}d_0)$, hence the region associated to $(\mu_1,\mu_2,\mu_3)$ had been deleted, which contradicts $[[\mu_1,\mu_2,\mu_3]] \in R$. 

Hence $(\mu - a_{00}) > 0$. Now, we repeat the same case by case consideration with \eqref{eq7theorem} as we did with \eqref{eq3theorem}, which results in all the coefficients in \eqref{eq7theorem} being positive. However, this implies that the cube $[[\mu_1,\mu_2,\mu_3]]$ is in the region associated to $(a_{01}, a_{02}, a_{03})$, which contradicts $[[\mu_1,\mu_2,\mu_3]] \in R$. 
\end{proof}

Throughout this paper, $R$ denotes the collection of all cubes from the initial collection of cubes that are labeled with an element of the Ap{\'e}ry set.
We will refer to equations of the forms \eqref{theoremprocedure_eq1}, \eqref{theoremprocedure_eq2}, \eqref{theoremprocedure_eq3} (where $\lambda_i \in \mathbb{Z}_{>0}$) as the \emph{$(2,2)$-type relations}. In addition, we will refer to points that are constructed from the $(2,2)$-type relations (as in Theorem \ref{theoremprocedure}) as the \emph{$(2,2)$-type points}. 

\subsection{$L$-shapes}\label{subsection_Lshapes}

Define an \emph{$L$-shape} to be a subset of the initial collection of cubes such that:
\begin{itemize}
    \item all its labels are elements of the Ap{\'e}ry set,
    \item every element of the Ap{\'e}ry set labels exactly one cube in it,
    \item if a cube $[[a,b,c]]$, $(a,b,c) \in \mathbb{N}^3$, is not in it, then neither is any cube in the region associated to $(a,b,c)$.
\end{itemize}
This notion is adopted from \cite{AGUILOGOST2015} and is established in the literature.

\begin{lemma}\label{lemma_procedure_2}
     Delete from $R$ one of the regions $\{x > a_{11}\}$, $\{y > a_{22}\}$, or $\{z > a_{33}\}$. Then every element of the Ap{\'e}ry set still labels some remaining cube.
\end{lemma}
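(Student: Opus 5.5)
The idea is a descent on the relevant coordinate using the minimal relation of $d_1$ (respectively $d_2$, $d_3$). By symmetry it suffices to treat the region $\{x > a_{11}\}$. A unit cube $[[i,j,k]]$ lies in $\{x > a_{11}\}$ exactly when $i \geq a_{11}$. So we must show the following: every $w \in \textup{Ap}(S,d_0)$ is the label of some cube $[[\lambda_1,\lambda_2,\lambda_3]]$ with $\lambda_1 < a_{11}$. Any cube labeled by an element of the Ap{\'e}ry set belongs to $R$, by the definition of $R$ and Theorem \ref{theoremprocedure}. Hence such a cube belongs to $R$ and survives the deletion.

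Fix $w \in \textup{Ap}(S,d_0)$. Since $w - d_0 \notin S$, every factorization of $w$ in $\varphi^{-1}(w)$ has zeroth coordinate $0$. Such factorizations exist, so $w$ is the label of at least one cube $[[\lambda_1,\lambda_2,\lambda_3]]$ in the initial collection. Among all these factorizations $(0,\lambda_1,\lambda_2,\lambda_3)$, choose one with $\lambda_1$ minimal. Suppose, for contradiction, that $\lambda_1 \geq a_{11}$. Take a minimal relation $a_{11}d_1 = a_{10}d_0 + a_{12}d_2 + a_{13}d_3$.

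If $a_{10} > 0$, then
\[
w - d_0 = (\lambda_1 - a_{11})d_1 + (a_{10}-1)d_0 + (\lambda_2 + a_{12})d_2 + (\lambda_3 + a_{13})d_3 \in S ,
\]
since all coefficients are nonnegative. This contradicts $w \in \textup{Ap}(S,d_0)$.

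If $a_{10} = 0$, then $(0,\lambda_1 - a_{11}, \lambda_2 + a_{12}, \lambda_3 + a_{13})$ is again a factorization of $w$. Its first coordinate is strictly smaller than $\lambda_1$ because $a_{11} > 0$, which contradicts the minimal choice of $\lambda_1$.

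Hence $\lambda_1 < a_{11}$. The cube $[[\lambda_1,\lambda_2,\lambda_3]]$ is labeled by $w$, lies in $R$, and is not in $\{x > a_{11}\}$. The cases $\{y > a_{22}\}$ and $\{z > a_{33}\}$ are identical, using the minimal relations of $d_2$ and $d_3$ and minimizing $\lambda_2$, respectively $\lambda_3$.

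No step here is a serious obstacle. The only point needing care is the membership argument. Being in $R$ depends only on the label, not on which factorization produced the cube. So the new cube with smaller coordinate is automatically in $R$, even though it arises from a different factorization than one might have first picked.
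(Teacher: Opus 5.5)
Your proof is correct and is essentially the paper's argument: both lower the $x$-coordinate by subtracting copies of a minimal relation of $d_1$ that has no $d_0$ term. The paper splits into $a_{11}=b_{11}$ (nothing to prove, since $R$ already lies in $\{x \leq b_{11}\}$) versus $a_{11}<b_{11}$ (forcing $a_{10}=0$) and subtracts $\lambda$ copies at once, whereas you split directly on $a_{10}$, use the Ap\'ery property when $a_{10}>0$, and take a minimal $\lambda_1$; these are equivalent.
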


\begin{proof}
Delete the region $\{x > a_{11}\}$ from $R$ (the other two cases follow analogously). 
If $a_{11} = b_{11}$, there is nothing to prove; thus, suppose $a_{11} < b_{11}$. Suppose to the contrary, that there is some element of the Ap{\'e}ry set that does not label any cube in $R \cap \{x \leq a_{11}\}$. By Theorem \ref{theoremprocedure}, then there exists $[[\lambda_1, \lambda_2,\lambda_3]] \in R$ with $\lambda_1 \geq a_{11}$ that is labeled with the element in question. There exists a positive integer $\lambda$ such that $a_{11} > (\lambda_1 - \lambda a_{11}) \geq 0$. Using $a_{11}d_1 = a_{12}d_2 + a_{13}d_3$, where $a_{12},a_{13} \in \mathbb{N}$ (recall $a_{11} < b_{11}$), we obtain
    \begin{equation}
        \lambda_1d_1 + \lambda_2d_2 + \lambda_3d_3 = (\lambda_1 - \lambda a_{11})d_1 + (\lambda_2+\lambda a_{12})d_2 + (\lambda_3+\lambda a_{13})d_3.
    \end{equation}
The cube $[[\lambda_1 - \lambda a_{11},\lambda_2+\lambda a_{12},\lambda_3+\lambda a_{13} ]]$ has the same label as $[[\lambda_1, \lambda_2,\lambda_3]]$, hence it is in $R$ by Theorem \ref{theoremprocedure}. Since $a_{11} >(\lambda_1 - \lambda a_{11})$, it is in $R \cap \{x \leq a_{11}\}$ --- contradiction.
\end{proof}

\begin{lemma}\label{lemma_procedure_3}
    Delete from $R$ the regions $\{x > a_{11}\}$, $\{y > a_{22}\}$, and $\{z > a_{33}\}$. Then, in the resulting collection, each element of the Ap{\'e}ry set labels at most one cube.
\end{lemma}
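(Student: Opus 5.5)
The plan is to argue by contradiction, using only the minimality of the $a_{ii}$ and the fact that there are just three generators $d_1,d_2,d_3$ in play. After the deletions, a remaining cube $[[\lambda_1,\lambda_2,\lambda_3]]$ satisfies $0\le \lambda_1 < a_{11}$, $0\le\lambda_2<a_{22}$ and $0\le\lambda_3<a_{33}$. This is because the cube $[[i,j,k]]$ meets the open region $\{x>a_{11}\}$ exactly when $i\ge a_{11}$, and similarly for the other two regions.

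Suppose two distinct remaining cubes $[[\lambda_1,\lambda_2,\lambda_3]]$ and $[[\lambda_1',\lambda_2',\lambda_3']]$ carry the same label. Then
\[
(\lambda_1-\lambda_1')d_1+(\lambda_2-\lambda_2')d_2+(\lambda_3-\lambda_3')d_3=0 .
\]
The coefficient vector is nonzero. Since $d_1,d_2,d_3>0$, it must contain at least one positive entry and at least one negative entry.

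Let $A$ be the set of indices with positive coefficient and $B$ the set with negative coefficient. Moving the negative terms across gives
\[
\sum_{i\in A} c_i d_i=\sum_{i\in B} c_i d_i, \qquad c_i=|\lambda_i-\lambda_i'|>0 .
\]
Here $A$ and $B$ are disjoint nonempty subsets of $\{1,2,3\}$. Because there are only three indices, one of $A$ or $B$ is a singleton $\{i\}$, and we obtain
\[
c_i d_i=\sum_{j\in\{1,2,3\}\setminus\{i\}} c_j d_j ,
\]
where $c_j$ is set to $0$ for indices not in $A\cup B$. This is a relation of the form \eqref{minimalrelation_eq1}, with coefficient of $d_0$ equal to zero.

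Finally, $0<c_i\le \max(\lambda_i,\lambda_i')<a_{ii}$. This contradicts the choice of $a_{ii}$ as the smallest positive integer admitting such a relation, which proves the lemma.

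I do not expect a genuine obstacle here. The only points needing care are the bookkeeping that the remaining cubes have coordinates strictly less than $a_{ii}$, and the observation that, with three indices split into two disjoint nonempty sign classes, one class must be a singleton.
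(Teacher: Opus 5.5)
Your proof is correct and takes essentially the same approach as the paper. Both subtract the two label identities and isolate a single coefficient on one side, which is possible because only three indices are involved. Both then contradict the minimality of $a_{ii}$, using that the surviving cubes have $\lambda_i < a_{ii}$.
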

\begin{proof}
     Suppose to the contrary, that for two distinct $[[\lambda_1, \lambda_2, \lambda_3]], [[\lambda_1', \lambda_2', \lambda_3']] \in R \cap \{x \leq a_{11}, y \leq a_{22}, z \leq a_{33} \}$, we have
    \begin{equation*}
        \lambda_1d_1 + \lambda_2d_2 + \lambda_3d_3 = \lambda_1'd_1 + \lambda_2'd_2 + \lambda_3'd_3 
    \end{equation*}
    \begin{equation*}
        \implies  (\lambda_1' - \lambda_1)d_1 + (\lambda_2' - \lambda_2)d_2 + (\lambda_3' - \lambda_3)d_3 = 0.
    \end{equation*}
    If all coefficients above are zero, we have nothing to prove. Therefore either one or two coefficients above are positive. Regardless, without loss of generality, we can suppose that
    \begin{equation}\label{propeq3}
         (\lambda_1' - \lambda_1)d_1 = (\lambda_2 - \lambda_2')d_2 + (\lambda_3 - \lambda_3')d_3
    \end{equation}
    where every coefficient above is nonnegative. However, $(\lambda_1' - \lambda_1)$ is positive, because otherwise all coefficients are zero. This implies $a_{11} \leq (\lambda_1' - \lambda_1) \leq \lambda_1'$, which contradicts $[[\lambda_1', \lambda_2', \lambda_3']] \in R \cap \{x \leq a_{11}, y \leq a_{22}, z \leq a_{33} \}$.
\end{proof}

\begin{prop}\label{prop_primary}
    If $\langle d_0,d_1,d_2,d_3 \rangle$ is primary, then $R \cap \{z \leq a_{33}\}$ is an $L$-shape.
\end{prop}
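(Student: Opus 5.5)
The plan is to check the three defining properties of an $L$-shape for $R' := R \cap \{z \leq a_{33}\}$ one at a time. The primary ordering gives $a_{11} = b_{0,11} = b_{11}$ and $a_{22} = b_{0,22} = b_{22}$. This lets us apply Lemmas \ref{lemma_procedure_2} and \ref{lemma_procedure_3} with only the $z$-direction truncation actually doing anything.

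\emph{Labels.} By definition (Theorem \ref{theoremprocedure}), every cube of $R$ is labeled with an element of the Ap\'ery set. Hence the same holds for the subcollection $R'$.

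\emph{Every Ap\'ery element appears exactly once.} Existence follows from Lemma \ref{lemma_procedure_2} applied to the region $\{z > a_{33}\}$: after deleting it from $R$, every element of the Ap\'ery set still labels some cube of $R'$. For uniqueness, recall that $R$ was obtained from the initial collection of cubes after deleting, among other things, $\{x > b_{11}\}$ and $\{y > b_{22}\}$. Since $b_{11} = a_{11}$ and $b_{22} = a_{22}$, we have
\[
R = R \cap \{x \leq a_{11}\} \cap \{y \leq a_{22}\}.
\]
Consequently $R' = R \cap \{x \leq a_{11}, y \leq a_{22}, z \leq a_{33}\}$. Lemma \ref{lemma_procedure_3} then says that each element of the Ap\'ery set labels at most one cube of $R'$.

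\emph{Downward closure.} Every region deleted in Theorem \ref{theoremprocedure} is of the form "region associated to a point $P$". This includes the half-spaces $\{x > b_{11}\}$ etc., which are the regions associated to $(b_{11}, -b_{12}, -b_{13})$ and so on, as noted in that proof. The additional region $\{z > a_{33}\}$ is likewise the region associated to $(-1,-1,a_{33})$. Each such region is upward closed: if $[[a,b,c]]$ lies in it and $[[a',b',c']] \geq [[a,b,c]]$, then $[[a',b',c']]$ lies in it too. A union of upward closed sets is upward closed, so its complement $R'$ is downward closed. Now suppose $[[a,b,c]] \notin R'$. Then it lies in some deleted region, and so does every cube $[[a',b',c']]$ in the region associated to $(a,b,c)$, since such cubes satisfy $a' > a$, $b' > b$, $c' > c$. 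Hence none of them is in $R'$.

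The only real subtlety, and the step I expect to need care, is the bookkeeping of conventions in the uniqueness step. One must check that the cube-level meaning of "$\{x > b_{11}\}$ deleted" in Theorem \ref{theoremprocedure} coincides with the meaning of "$\{x \leq a_{11}\}$" used in Lemma \ref{lemma_procedure_3}, so that the equality $R = R \cap \{x \leq a_{11}, y \leq a_{22}\}$ holds on the nose. Both are phrased through the same half-space description of unit cubes, so once $a_{11} = b_{11}$ and $a_{22} = b_{22}$ are substituted the two collections are literally identical, and the proposition follows.
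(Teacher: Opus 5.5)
Your proposal is correct and matches the paper's proof. The primary ordering $a_{11}=b_{11}$, $a_{22}=b_{22}$ makes the $x$- and $y$-truncations vacuous, so Lemma \ref{lemma_procedure_2} applied to $\{z>a_{33}\}$ gives existence and Lemma \ref{lemma_procedure_3} gives uniqueness; the paper's separate case $a_{33}=b_{33}$ is just the trivial instance of your uniform argument. Your explicit downward-closure check, which the paper leaves implicit, is a harmless addition, though under the paper's convention a cube in the region associated to $(a,b,c)$ satisfies $a'\ge a$, $b'\ge b$, $c'\ge c$ (not strict), and your upward-closedness argument already covers that.
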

\begin{proof}
    If $a_{ii} = b_{ii}$ for $i = 1,2,3$, then $R = R \cap \{x \leq a_{11}, y \leq a_{22}, z \leq a_{33} \}$, hence Lemma \ref{lemma_procedure_3} yields the claim. Otherwise, we have $a_{ii} < b_{ii}$ for exactly one $i \in \{1,2,3\}$. In such case, Lemmas \ref{lemma_procedure_2} and \ref{lemma_procedure_3} yield the claim.
\end{proof}

\begin{prop}\label{prop_secondary}
    Suppose that for $\langle d_0,d_1,d_2,d_3 \rangle$ we have $a_{11}=b_{11}$ and $a_{22}d_2 = a_{33}d_3$. Then there is a bijection $f : R \cap \{y \geq a_{22}\} \to R \cap \{z \geq a_{33}\}$ sending $[[\lambda_1,\lambda_2,\lambda_3]] \mapsto [[\lambda_1,\lambda_2-a_{22},\lambda_3+a_{33}]]$. Moreover, $R \cap \{y \leq a_{22}\}$ and $R \cap \{z \leq a_{33}\}$ are $L$-shapes.
\end{prop}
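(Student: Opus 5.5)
The plan is to use the description of $R$ in Theorem \ref{theoremprocedure}: a cube lies in $R$ exactly when its label is in the Ap\'ery set and it is the unique minimal representative of that residue. We also use the fact that $R$ is closed downward. If $[[\lambda_1,\lambda_2,\lambda_3]]\in R$ and $[[\mu]]\le[[\lambda]]$, then $[[\mu]]\in R$, because if $\mu$'s label minus $d_0$ were in $S$, then adding $\lambda-\mu$ would show $\lambda$'s label minus $d_0$ is in $S$.

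\textbf{The map $f$.} Since $a_{22}d_2=a_{33}d_3$, the cubes $[[\lambda_1,\lambda_2,\lambda_3]]$ and $[[\lambda_1,\lambda_2-a_{22},\lambda_3+a_{33}]]$ carry the same label. Let $[[\lambda_1,\lambda_2,\lambda_3]]\in R$ with $\lambda_2\ge a_{22}$. Its image is a cube of the initial collection whose label lies in the Ap\'ery set. By Theorem \ref{theoremprocedure}, $R$ is by definition the set of all cubes labeled by Ap\'ery elements, so the image lies in $R\cap\{z\ge a_{33}\}$. The inverse map $[[\lambda_1,\lambda_2,\lambda_3]]\mapsto[[\lambda_1,\lambda_2+a_{22},\lambda_3-a_{33}]]$ works the same way. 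Both maps are injective and mutually inverse, so $f$ is a bijection. This step is routine.

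\textbf{The $L$-shapes.} We treat $R\cap\{z\le a_{33}\}$; the other case is symmetric in the roles of $y$ and $z$. Downward closure is inherited from $R$, and all labels are Ap\'ery elements.

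For coverage, start from any cube of $R$ with $\lambda_3>a_{33}$ and apply $f^{-1}$ repeatedly. Each application lowers $z$ by $a_{33}$ and preserves the label, so we eventually reach a cube with $z<a_{33}$.

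For uniqueness, suppose two distinct cubes in $R\cap\{z\le a_{33}\}$ have the same label. Taking differences gives a relation $\alpha_1d_1+\alpha_2d_2+\alpha_3d_3=0$ with integer coefficients and $|\alpha_3|\le a_{33}$.
\begin{itemize}
\item If the $d_1$ coefficient is nonzero, we get $cd_1=(\text{nonneg. comb. of }d_2,d_3)$ with $0<c\le\lambda_1'<b_{11}=a_{11}$. This contradicts the minimality of $a_{11}$. Here $\lambda_1'<b_{11}$ holds since cubes of $R$ satisfy $x\le b_{11}$, and the strict inequality follows from Lemma \ref{lemma_procedure_1} applied to the identity defining $b_{11}$.
\item So $\alpha_1=0$, and the relation becomes $\beta d_2=\gamma d_3$ with $0<\gamma\le a_{33}$.
\end{itemize}

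\textbf{Main obstacle.} The hardest part is the case $\beta d_2=\gamma d_3$. Minimality of $a_{33}$ forces $\gamma\ge a_{33}$, hence $\gamma=a_{33}$ exactly and $\beta=a_{22}$. The two cubes then differ by $(0,\pm a_{22},\mp a_{33})$, so one of them has $z=a_{33}$ and the other $z=0$. The statement uses $z\le a_{33}$ rather than $z<a_{33}$, so this pair must be ruled out separately. Call the cube with $z=a_{33}$ the upper one. I will show that it is not in $R$: it lies in the region associated to $(\lambda_1,\lambda_2,0)$-type points? More directly, its label minus $d_0$ should lie in $S$.

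My first attempt is the following. The cube $[[\lambda_1,\lambda_2,a_{33}]]$ dominates $[[0,0,a_{33}]]$. Then $a_{33}d_3=a_{22}d_2$, and the combination of the relations $\varphi^{-1}(a_{00}d_0)$ and $a_{11}=b_{11}$ should make $a_{33}d_3-d_0\in S$. If that fails, I will instead reinterpret the statement as the half-open region $\{z<a_{33}\}$. Counting then gives $|R\cap\{z<a_{33}\}|=d_0$ via the bijection $f$, and the boundary layer is handled by this identification. I expect this boundary analysis to be the crux.
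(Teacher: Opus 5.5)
Your proposal has two problems: you misread the region notation, and your uniqueness argument misses a case, which is the central step of the proof.

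\textbf{The region notation.} A cube $[[i,j,k]]$ is the unit cube $[i,i+1]\times[j,j+1]\times[k,k+1]$. It therefore lies in the region $\{z\le a_{33}\}$ exactly when $k\le a_{33}-1$. The paper uses the notation this way throughout: $\nu_3\ge a_{33}$ contradicts membership in $T$, and the top layer of $T$ is $z=a_{33}-1$.

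Your ``main obstacle'' comes from reading $\{z\le a_{33}\}$ as $k\le a_{33}$, and under that reading the statement is false. For a secondary semigroup, $\varphi^{-1}(a_{33}d_3)=\{(0,0,a_{22},0),(0,0,0,a_{33})\}$. Hence $a_{33}d_3-d_0\notin S$, and $[[0,a_{22},0]]$ and $[[0,0,a_{33}]]$ are two cubes of $R$ carrying the same label. So your first attempt, proving $a_{33}d_3-d_0\in S$, cannot succeed. Your fallback is the correct reading. With it, the subcase $\beta d_2=\gamma d_3$ is immediate, since $0<\gamma\le a_{33}-1$ contradicts the minimality of $a_{33}$.

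Note also that the bijection $f$ only shows that the label map $T\to\textup{Ap}(S,d_0)$ is onto. The count $|T|=d_0$ is therefore equivalent to uniqueness, not a consequence of $f$; uniqueness still has to be proved directly.

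\textbf{The missing case.} You cannot conclude from $\alpha_1\neq 0$ that $c\,d_1$ equals a nonnegative combination of $d_2,d_3$. After possibly swapping the two cubes, the relation has exactly one positive coefficient, and $d_1$ may sit on the same side as $d_2$ or as $d_3$.

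The pattern $p\,d_1+q\,d_2=c\,d_3$ with $p>0$ is easy: minimality gives $c\ge a_{33}$, contradicting $c\le a_{33}-1$. But the pattern
\[
c\,d_2=p\,d_1+q\,d_3,\qquad p>0,\ q\ge 0,
\]
is never treated in your proposal. Here $c\ge a_{22}$ gives no contradiction, because $y$-coordinates in $T$ only satisfy $y<b_{22}$, and $b_{22}$ may exceed $a_{22}$.

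The paper resolves exactly this case by substituting $a_{22}d_2=a_{33}d_3$:
\[
(c-a_{22})\,d_2+(a_{33}-q)\,d_3=p\,d_1 .
\]
Here $a_{33}-q>0$ because $q$ is a difference of $z$-coordinates of cubes in $T$. So the left-hand side is positive, forcing $p\ge a_{11}=b_{11}$, which contradicts $x<b_{11}$ on $R$. The same computation with $p=0$ also disposes of your $\alpha_1=0$ subcase.

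This is the one step where the two hypotheses $a_{11}=b_{11}$ and $a_{22}d_2=a_{33}d_3$ interact. The same substitution, with $y$ and $z$ exchanged, is needed for $R\cap\{y\le a_{22}\}$.

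Your treatment of the bijection $f$, of coverage via iterating $f^{-1}$, and of downward closure is fine and agrees with the paper.
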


\begin{proof}
    For $[[\lambda_1,\lambda_2,\lambda_3]] \in R \cap \{y \geq a_{22}\} $, the cube $[[\mu_1,\mu_2 - a_{22},\mu_3 + a_{33}]]$ is in $R$ by Theorem \ref{theoremprocedure}, because it has the same label as $[[\lambda_1,\lambda_2,\lambda_3]]$, so it is labeled with an element of the Ap{\'e}ry set. Thus, the function $f: [[\lambda_1,\lambda_2,\lambda_3]] \mapsto [[\lambda_1,\lambda_2-a_{22},\lambda_3+a_{33}]]$ maps $R \cap \{y \geq a_{22}\}$ to $R \cap \{z \geq a_{33}\}$ and is also clearly injective. Similarly, we obtain the injection $f^{-1} : R \cap \{z \geq a_{33}\} \to R \cap \{y \geq a_{22}\} $ sending $[[\lambda_1,\lambda_2,\lambda_3]] \mapsto [[\lambda_1,\lambda_2+a_{22},\lambda_3-a_{33}]]$, which yields that $f$ is a bijection.   
    
    Let $T = R \cap \{z \leq a_{33}\}$. Since $f$ preserves labels, every element of the Ap{\'e}ry set labels a cube in $T$. Thus, to prove that $T$ is an $L$-shape we only have to prove that it has distinct labels.  
     
    Otherwise, we have
    \begin{equation}\label{propnottrick-eq1}
        \mu_1d_1 + \mu_2d_2 + \mu_3d_3 = \nu_1d_1 + \nu_2d_2 + \nu_3d_3
    \end{equation}
    where $[[\mu_1,\mu_2,\mu_3]]$ and $[[\nu_1,\nu_2,\nu_3]]$ are two distinct cubes in $T$. Without loss of generality, suppose $\mu_2 \geq \nu_2$. 
    
    If $\mu_1 \leq \nu_1$ and $\mu_3 \leq \nu_3$, then
    \begin{equation}\label{propnottrick-eq2}
         (\mu_2-\nu_2)d_2  = (\nu_1-\mu_1)d_1  + (\nu_3-\mu_3)d_3
    \end{equation}
    where all coefficients above are nonnegative, which implies $(\mu_2-\nu_2) \geq a_{22}$ (as $[[\mu_1,\mu_2,\mu_3]] \neq [[\nu_1,\nu_2,\nu_3]]$). If $\nu_3 \geq (\nu_3-\mu_3) \geq a_{33}$, we get a contradiction with $[[\nu_1,\nu_2,\nu_3]] \in T$, hence $(\nu_3-\mu_3) < a_{33}$. This gives
    \begin{equation}\label{propnottrick-eq21}
         (\mu_2-\nu_2-a_{22})d_2 + (a_{33}-\nu_3+\mu_3)d_3  = (\nu_1-\mu_1)d_1
    \end{equation}
    where all coefficients above are nonnegative and $(a_{33}-\nu_3+\mu_3)$ is positive. Hence $(\nu_1-\mu_1)$ is also positive, which implies $\nu_1 \geq (\nu_1-\mu_1) \geq a_{11} = b_{11}$, contradicting $[[\nu_1,\nu_2,\nu_3]] \in T$.

    If $\mu_1 > \nu_1$ and $\mu_3 \leq \nu_3$, then
    \begin{equation}\label{propnottrick-eq3}
         (\mu_2-\nu_2)d_2 + (\mu_1-\nu_1)d_1  =  (\nu_3-\mu_3)d_3
    \end{equation}
    where all coefficients above are nonnegative and $(\mu_1-\nu_1)$ is positive, which implies $\nu_3 \geq (\nu_3-\mu_3) \geq a_{33}$, contradicting $[[\nu_1,\nu_2,\nu_3]] \in T$.

    Lastly, if $\mu_1 \leq \nu_1$ and $\mu_3 > \nu_3$, then
    \begin{equation}\label{propnottrick-eq4}
         (\mu_2-\nu_2)d_2 + (\mu_3-\nu_3)d_3  =  (\nu_1-\mu_1)d_1
    \end{equation}
     where all coefficients above are nonnegative and $(\mu_3-\nu_3)$ is positive, which implies $\nu_1 \geq (\nu_1-\mu_1) \geq a_{11} = b_{11}$, so again a contradiction with $[[\nu_1,\nu_2,\nu_3]] \in T$.

    Analogously, we obtain that $R \cap \{y \leq a_{22}\}$ is an $L$-shape.
\end{proof}

In particular, since secondary semigroups satisfy $a_{11}=b_{11}$ and $a_{22}d_2 = a_{33}d_3$ by definition, Proposition \ref{prop_secondary} applies to all secondary semigroups.

\begin{ex}\label{example_T}
Consider $\langle d_0,d_1,d_2,d_3 \rangle = \langle 196,225,256,289 \rangle$, which has embedding dimension four. One can check that
\begin{alignat*}{2}
    &a_{00}d_0 = 17 \cdot 196 &&= 2 \cdot 225 + 9 \cdot 256 + 2 \cdot 289,\\
    &a_{11}d_1 = 7 \cdot 225 &&= 1 \cdot 196 + 2 \cdot 256 +3 \cdot 289,\\
   &a_{22}d_2 =15 \cdot 256 &&= 15 \cdot 196 + 4 \cdot 225,\\
   &a_{33}d_3 =4 \cdot 289 &&= 4 \cdot 225 + 1 \cdot 256.
\end{alignat*}
Hence $\langle 196,225,256,289 \rangle$ is primary. Figure \ref{fig_theorem_primary} shows the corresponding collection $R$. The cubes colored red are in the region $R \cap \{z \geq a_{33}\}$ and the cubes colored gray form the $L$-shape from Proposition \ref{prop_primary}. 
Next, consider $\langle d_0,d_1,d_2,d_3 \rangle =\langle 145,203,74,111 \rangle$, which has embedding dimension four. One can check that
\begin{alignat*}{2}
    & \varphi^{-1}(a_{00}d_0) = \varphi^{-1}(7 \cdot 145) &&= \{(7,0,0,0),(0,5,0,0)\},\\
    &\varphi^{-1}(a_{11}d_1) =\varphi^{-1}(5 \cdot 203) &&= \{(7,0,0,0),(0,5,0,0)\},\\
   &\varphi^{-1}(a_{22}d_2) = \varphi^{-1}( 3\cdot 74) &&= \{(0,0,3,0),(0,0,0,2)\},\\
   &\varphi^{-1}(a_{33}d_3) = \varphi^{-1}(2 \cdot 111)  &&= \{(0,0,3,0),(0,0,0,2)\}.
\end{alignat*}
Hence $\langle 145,203,74,111 \rangle$ is secondary.  Figure \ref{fig_theorem_secondary} shows the corresponding collection $R$. Similarly, red marks the region $R \cap \{z \geq a_{33}\}$, while gray marks an $L$-shape from Proposition \ref{prop_secondary}. 
\end{ex}

\begin{figure}[h]
    \centering
    \begin{minipage}[t]{0.48\textwidth}
        \centering
        \includegraphics[width=0.7\textwidth]{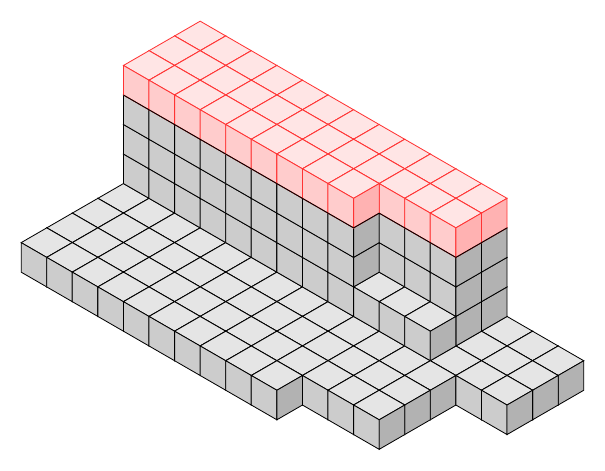}
        \caption{}
        \label{fig_theorem_primary}
    \end{minipage}
    \begin{minipage}[t]{0.48\textwidth}
        \centering
        \includegraphics[width=0.7\textwidth]{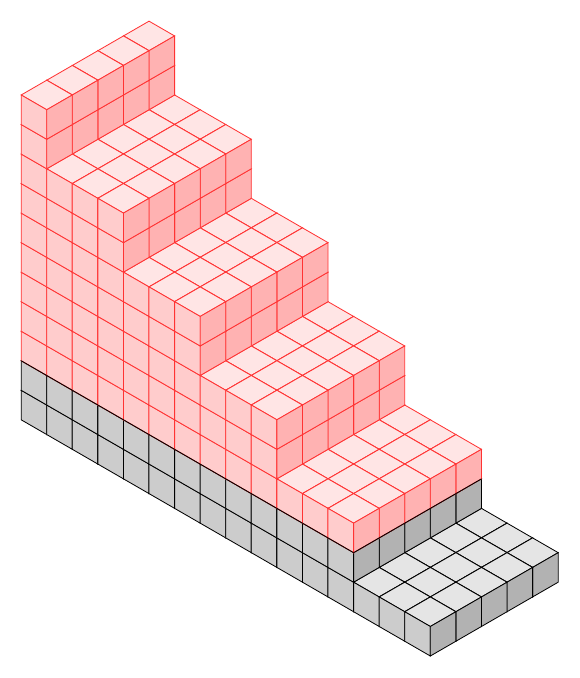}
        \caption{}
        \label{fig_theorem_secondary}
    \end{minipage}
\end{figure}

\section{Minimal presentations}\label{section_minimal_presentations}

In this section, we investigate the set of Betti elements for primary and secondary semigroups. We start with preliminary definitions and then proceed to the main results. Theorems \ref{theorembettifind_primary} and \ref{theorembettifind_secondary} relate Betti elements to an $L$-shape for primary and secondary semigroups respectively. Lemmas \ref{lemma_catenary_primary} and \ref{lemma_catenarycase1_primary} determine the sets of factorizations of Betti elements arising from $(2,2)$-type points in the primary case, while Lemmas \ref{lemma_catenary_lambda1_secondary} and \ref{lemma_catenary_lambda23_secondary} establish the analogous results for secondary semigroups. Finally, Lemma \ref{lemma_catenaryT_primary} characterizes the $(2,2)$-type points which do not correspond to Betti elements in the primary case, while Lemmas \ref{lemma_catenaryT_secondary_case1} and \ref{lemma_catenaryT_secondary_case2} do the same in the secondary case. This framework will be used in Section \ref{section_type} to study minimal presentations from a geometric perspective.

Bresinsky’s work \cite{Bresinsky1988}, which underlies the bound in Theorem \ref{theoremtypebre}, contains results closely related to those of this section (cf.\ \cite[Theorem 3]{Bresinsky1988}). Our approach differs in two main respects. First, in the classification of numerical semigroups with $e(S)=4$, we distinguish primary and secondary semigroups, whereas Bresinsky introduced three families. Second, we relate Betti elements and minimal presentations to the Ap{\'e}ry set via an $L$-shape, allowing us to make use of its geometric structure, whereas Bresinsky's approach was commutative algebra centered.

Let $S$ be minimally generated by $\{d_0,d_1,\dots,d_k\}$.
Two elements $z$ and $z'$ of $\mathbb{N}^{k+1}$ are said to be \emph{$\mathcal{R}$-related} if there exists a finite sequence $z = z_1, \dots, z_p = z'$ in $\mathbb{N}^{k+1}$ such that $\text{supp}(z_i) \cap \text{supp}(z_{i+1})$ is nonempty for all $i \in \{1, \dots, p-1\}$. We write $z \, \mathcal{R} \, z'$ in this case. We refer to the equivalence classes of $\varphi^{-1}(s)$ under the relation $\mathcal{R}$ as the \emph{$\mathcal{R}$-classes}. 
An element $s \in S$ is called a \emph{Betti element} if there exist two elements of $\varphi^{-1}(s)$ that are not $\mathcal{R}$-related (i.e.\ $\varphi^{-1}(s)$ has at least two $\mathcal{R}$-classes). The set of all Betti elements of $S$ is denoted $\text{Betti}(S)$.

Let $\mathcal{R}_1, \dots, \mathcal{R}_l$ be all the $\mathcal{R}$-classes of $\varphi^{-1}(b)$ for some $b \in \text{Betti}(S)$. We choose $v_i \in \mathcal{R}_i$ for each $i \in \{1,\dots,l\}$. Let $\rho_b$ be a set of $l-1$ pairs of elements from $\{v_1, \dots, v_l\}$ such that the graph with vertices $\{v_1, \dots, v_l\}$, having an edge between $v_i$ and $v_j$ if and only if $(v_i,v_j)$ or $(v_j,v_i)$ is in $\rho_b$, is connected. 
\begin{thm}\label{theorem_minimalpresentation}
    \textup{\cite{Rosales1996MinimalRelation}}
    The set $\rho = \bigcup_{b \in \text{Betti}(S)} \rho_b$ is a minimal presentation of $S$ and every minimal presentation of $S$ is of this form.
\end{thm}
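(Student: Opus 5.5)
This is the theorem of Rosales \cite{Rosales1996MinimalRelation}, cited from the literature. Here is how I would reprove it in the framework of the paper. The strategy is to attach to each $s \in S$ a graph $G_s$ whose vertices are the factorizations $\varphi^{-1}(s)$, and to show that a set $\rho \subseteq \ker(\varphi)$ generates the congruence $\ker(\varphi)$ exactly when, for every $s$, the $\mathcal{R}$-classes of $\varphi^{-1}(s)$ become connected once we add edges coming from $\rho$. Call this the \emph{connectivity criterion}. The edges coming from $\rho$ are the pairs $(u+w, v+w)$ with $(u,v)\in\rho$ (or its symmetric) and $w\in\mathbb{N}^{k+1}$.

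First I record the standard description of the congruence generated by $\rho$. Two factorizations $z,z'$ are related exactly when there is a chain $z=z_1,\dots,z_p=z'$ in which each step is $z_i=u+w$, $z_{i+1}=v+w$ with $(u,v)$ or $(v,u)$ in $\rho$.

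Second, I show that $\mathcal{R}$-related factorizations of $s$ are already connected by such chains when $\rho$ handles every $s'<s$. This is an induction on $s$. Suppose $z,z'\in\varphi^{-1}(s)$ share an index $i$ in their supports. Then $z-e_i$ and $z'-e_i$ are factorizations of $s-d_i<s$. By induction they are connected through $\rho$, and adding back $e_i$ connects $z$ and $z'$. Chaining this along an $\mathcal{R}$-sequence handles whole $\mathcal{R}$-classes.

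Third, I need a move that joins two distinct $\mathcal{R}$-classes of $\varphi^{-1}(s)$. Any move $u+w\to v+w$ with $w\neq 0$ keeps $\mathrm{supp}(w)$ in both endpoints, so it stays inside one $\mathcal{R}$-class. Hence only moves with $w=0$, that is pairs of $\rho$ lying in $\varphi^{-1}(s)$ itself, can join different classes. It follows that:
\begin{itemize}
\item $\rho$ must contain, for each Betti element $b$, enough pairs of $\varphi^{-1}(b)$ to connect its $\mathcal{R}$-classes;
\item if $s$ is not a Betti element, nothing beyond the induction is needed.
\end{itemize}
This proves that $\bigcup_b \rho_b$ generates $\ker(\varphi)$. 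Minimality also follows: the graph on the classes needs at least $l-1$ edges to be connected. Any pair $(u,v)$ with $u\,\mathcal{R}\,v$, or with $\varphi(u)$ not a Betti element, is redundant by the second step.

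Conversely, let $\rho$ be any minimal presentation. By the third step it contains a connecting set of pairs for each Betti element $b$. By minimality it contains no redundant pairs and exactly $l-1$ pairs per $b$. Replacing each element of a pair by a representative of its $\mathcal{R}$-class (the pair stays in $\ker(\varphi)$) puts $\rho$ in the stated form.

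The main obstacle is the bookkeeping in the third step. One has to argue carefully that moves with $w\neq0$ never merge $\mathcal{R}$-classes, and that minimal presentations may contain pairs of non-representative elements, so "of this form" must be read up to the choice of the $v_i$ within each class.
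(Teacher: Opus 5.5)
The paper gives no proof of this statement; it is quoted from Rosales. Your argument is essentially the standard proof from that source, and it is correct. Rosales phrases the classes through the graph on the generators $d_i$ with $s-d_i\in S$, joined when $s-d_i-d_j\in S$; its components are exactly your $\mathcal{R}$-classes. The argument has the same two ingredients as yours:
\begin{itemize}
\item induction on $s$, to connect factorizations inside one $\mathcal{R}$-class;
\item the observation that a move $u+w\to v+w$ with $w\neq 0$ never leaves an $\mathcal{R}$-class.
\end{itemize}
Together these give the sufficiency of $\bigcup_b\rho_b$ and the lower bound $\sum_b(l_b-1)$ for every presentation.

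The one thing to sharpen is your caveat on the converse. What your third step plus minimality actually proves is this: a minimal presentation consists, for each Betti element $b$, of exactly $l_b-1$ pairs $(u,v)$ with $u,v$ in distinct $\mathcal{R}$-classes of $\varphi^{-1}(b)$, and these pairs induce a tree on the classes. Your sufficiency argument never uses that a class is always represented by the same element, so it applies to any such family. This matters because the factorizations used for a given class may differ from one pair to another. Reading the statement ``up to the choice of the $v_i$ within each class'' is therefore not enough.

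For instance, take $S=\langle 18,20,30,45\rangle$. Then $\varphi^{-1}(90)=\{(5,0,0,0),(0,3,1,0),(0,0,3,0),(0,0,0,2)\}$, with $\mathcal{R}$-classes $\{(5,0,0,0)\}$, $\{(0,3,1,0),(0,0,3,0)\}$ and $\{(0,0,0,2)\}$. By your own argument, a minimal presentation may use $\rho_{90}=\{((5,0,0,0),(0,3,1,0)),((0,0,3,0),(0,0,0,2))\}$. No single choice of $v_1,v_2,v_3$ produces this set.

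So the second half of the statement, as worded with one fixed $v_i$ per class, is slightly too strong. Your ``replace each element by a representative'' step changes $\rho$ and cannot repair that. What you have proved is the class-level form, which is the correct statement. It is also all the paper uses later, since the paper only needs the count $l_b-1$ per Betti element.
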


For the rest of this section, let $S = \langle d_0,d_1,d_2,d_3 \rangle$ be an ordered numerical semigroup with embedding dimension four. Let $T = R \cap \{z \leq a_{33}\}$. By Propositions \ref{prop_primary} and \ref{prop_secondary}, $T$ is an $L$-shape.
Let $V \subset \mathbb{Z}^3$ be a minimal set (with respect to inclusion) of points $(x,y,z)$ such that $xd_1 + yd_2 +zd_3$ is a nonnegative multiple of $d_0$, and such that removing from the initial collection of cubes the regions associated to the points of $V$ yields $T$. 
Define
$$ U = \{ \max\{ x,0\} d_1 + \max\{y,0\}d_2 + \max\{z,0\}d_3 \mid (x,y,z) \in V \}.$$
Note that $U$ is independent of the choice of $V$. Furthermore, assume that every point $(x,y,z) \in V$ such that
$$ \max\{ x,0\} d_1 + \max\{y,0\}d_2 + \max\{z,0\}d_3 \notin \{(a_{00}d_0,b_{11}d_1,b_{22}d_2,a_{33}d_3)\}$$
is a $(2,2)$-type point. Such $V$ exists by Theorem \ref{theoremprocedure}. Throughout this paper, we will always assume that $V$ satisfies this.

\begin{remark}
   The above assumption may fail for certain choices of $V$. For example, if $a_{00}d_0 = a_{11}d_1$ and there exists $(-\lambda_1,\lambda_2,\lambda_3) \in V$ such that $\lambda_1 = \mu a_{11}$ for some $\mu \in \mathbb{Z}_{>0}$, then we could take $(0,\lambda_2,\lambda_3)$ as an element of $V$ instead of $(-\lambda_1,\lambda_2,\lambda_3)$, since $\lambda_2d_2 + \lambda_3d_3 = (\lambda_0 + \mu a_{00})d_0$.
\end{remark}

For the semigroups $\langle 196,225,256,289 \rangle$ and $\langle 145,203,74,111 \rangle$ from Example \ref{example_T}, shown respectively in Figures \ref{fig_betti_primary} and \ref{fig_betti_secondary}, the cubes shown in gray form the corresponding collections $T$. The corresponding sets $U$ are the set of labels of the cubes shown in red.

\begin{figure}[h]
    \centering
    \begin{minipage}[t]{0.48\textwidth}
        \centering
        \includegraphics[width=0.7\textwidth]{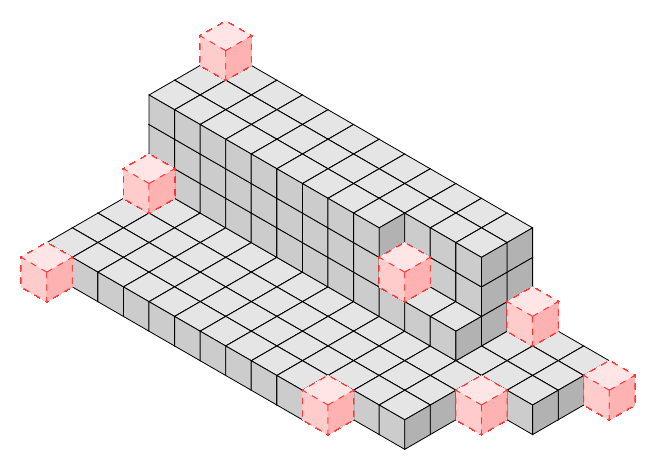}
        \caption{}
        \label{fig_betti_primary}
    \end{minipage}
    \begin{minipage}[t]{0.48\textwidth}
        \centering
        \includegraphics[width=0.7\textwidth]{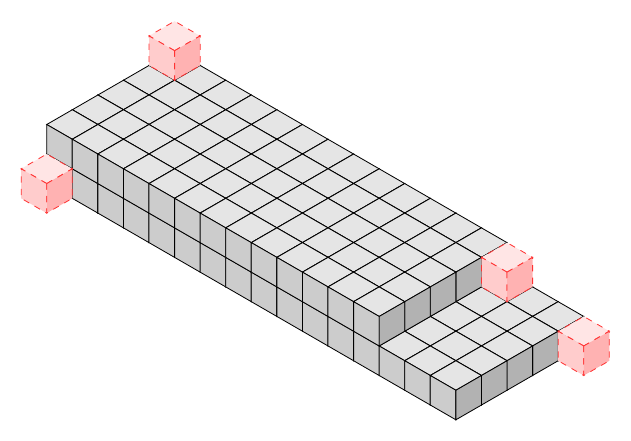}
        \caption{}
        \label{fig_betti_secondary}
    \end{minipage}
\end{figure}

\subsection{Primary case}

\begin{lemma}\label{lemmabettifind_primary}
  Suppose that $S$ is primary. If $b \in \textup{Betti}(S)$ labels a cube in $R$, then $b \in U$.
\end{lemma}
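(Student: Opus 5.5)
We start from a Betti element $b$ that labels a cube $[[\lambda_1,\lambda_2,\lambda_3]] \in R$, so $b \in \textup{Ap}(S,d_0)$. Then no factorization of $b$ uses $d_0$, and every factorization has the form $(0,\mu_1,\mu_2,\mu_3)$. Since $b$ is a Betti element, $\varphi^{-1}(b)$ has at least two $\mathcal{R}$-classes. Pick $z=(0,\mu_1,\mu_2,\mu_3)$ and $z'=(0,\nu_1,\nu_2,\nu_3)$ in different classes; their supports are disjoint. The cubes $[[\mu_1,\mu_2,\mu_3]]$ and $[[\nu_1,\nu_2,\nu_3]]$ both carry the label $b$, so both lie in $R$ by Theorem~\ref{theoremprocedure}. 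Because $T$ is an $L$-shape (Proposition~\ref{prop_primary}), at most one of them lies in $T$.

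The plan is to show that one of these cubes is exactly a ``corner'' point $\max(V,0)$, so that $b\in U$. First consider a factorization, say $z$, whose cube is not in $T$. Then it lies in the region associated to some $P\in V$, that is, $[[\mu_1,\mu_2,\mu_3]] \ge [[\max(P,0)]]$ componentwise, writing $\max(P,0)$ for $(\max\{x,0\},\max\{y,0\},\max\{z,0\})$ with $P=(x,y,z)$. The cube $[[\max(P,0)]]$ is itself in $R$, as it lies below a cube of $R$ and $R$ is closed downward. The point $P$ comes from one of three kinds of relation: the relation $a_{33}d_3 = a_{30}d_0+\dots$, some $b_{ii}$ relation, or a $(2,2)$-type relation $\lambda_0 d_0 + (\text{negative part}) = (\text{positive part})$. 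In each case we get a factorization of $c=\max(P,0)\cdot(d_1,d_2,d_3)$ whose support is disjoint from that of $(0,\max(P,0))$. This is the factorization given by the other side of the relation, and it involves $d_0$ or the negative coordinates.

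Now suppose $z \ne (0,\max(P,0))$, so the difference $w = z-(0,\max(P,0))$ is nonzero and nonnegative. We can then rewrite $z$ by replacing its $\max(P,0)$ part with the other side of the relation. If $\lambda_0>0$, this produces a factorization of $b$ involving $d_0$, which is impossible since $b\in\textup{Ap}(S,d_0)$. Every $(2,2)$-type relation and the $a_{00}$ relation have positive $d_0$ coefficient, which disposes of those cases. The remaining points correspond to $b_{11}d_1$, $b_{22}d_2$, $a_{33}d_3$; in the primary case $b_{11}=a_{11}$ or $b_{22}=a_{22}$ may hold, so the $b_{ii}$ relations also contain $d_0$ and are excluded. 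So the only remaining case is $P$ corresponding to $a_{33}d_3$, and when $a_{33}<b_{33}$ the relation $a_{33}d_3=a_{31}d_1+a_{32}d_2$ has no $d_0$.

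In that remaining case, $z\ge (0,0,0,a_{33})$ and $z$ can be rewritten into $z''=w+(0,a_{31},a_{32},0)$. If $w\neq 0$, then $\mathrm{supp}(w)$ meets $\mathrm{supp}(z)$ and also $\mathrm{supp}(z'')$, so $z\,\mathcal{R}\,z''$. Iterating this reduction, which terminates since the $z$-coordinate strictly decreases, every factorization is $\mathcal{R}$-related to one whose cube lies in $T$. By Lemma~\ref{lemma_procedure_3}, within $T$ the label determines the cube, so all factorizations would be $\mathcal{R}$-related, contradicting $b\in\textup{Betti}(S)$. Hence $w=0$ at some stage, so $b=a_{33}d_3\in U$; in the other cases $z=(0,\max(P,0))$ directly, so $b\in U$.

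The main obstacle is making the reduction airtight. This means checking, for each kind of point in $V$, that a strict rewrite either forces a $d_0$ term or preserves the $\mathcal{R}$-class, and handling the primary ordering, where only one of $a_{ii}<b_{ii}$ can hold. I would first treat the generic $(2,2)$-type case and then check the boundary regions $x>b_{11}$, $y>b_{22}$, $z>a_{33}$ separately.
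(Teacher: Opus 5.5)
Your argument is correct, but it is organized differently from the paper's proof.

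The paper fixes the factorization $z_0=(0,\lambda_1,\lambda_2,\lambda_3)$ whose cube lies in $T$ and compares it with any other factorization through the sign pattern of the differences $\lambda_i-\lambda_i'$. Using minimality of the $a_{ii}$, the bounds of $T$ and Lemma~\ref{lemma_procedure_1}, it shows that a factorization not $\mathcal{R}$-related to $z_0$ must be a pure power, say $(0,0,0,\mu_3)$, with $z_0=(0,\lambda_1,\lambda_2,0)$. One rewrite with $a_{33}d_3=a_{31}d_1+a_{32}d_2$ then forces $\mu_3=a_{33}$.

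You proceed differently. You note directly that $b\in\mathrm{Ap}(S,d_0)$ keeps $d_0$ out of every factorization. You then use that a cube of $R$ can fail to lie in $T$ only by having third coordinate at least $a_{33}$, which forces $a_{33}<b_{33}$ and hence $a_{30}=0$. Folding with the $d_3$-relation preserves the $\mathcal{R}$-class unless the factorization is exactly $(0,0,0,a_{33})$, so all factorizations collapse onto the unique one in $T$. This is the same folding mechanism as in Lemmas~\ref{lemma_procedure_2} and~\ref{lemma_cornerlabelRT}, and it replaces the sign-pattern case analysis with a short descent.

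The paper writes its proof with the ordering dropped, because the lemma is reused in that form inside the proof of Theorem~\ref{theorembettifind_primary}. Your descent adapts to that setting without change, since primariness leaves at most one $i$ with $a_{ii}<b_{ii}$.

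Two small points. First, the detour through the kinds of points of $V$ is unnecessary. Second, your stated reason for discarding the $b_{ii}$-points is muddled: $b_{i0}>0$ holds by the definition of $b_{0,ii}$, not because $S$ is primary. What is really used is that the regions $\{x>b_{11}\}$ and $\{y>b_{22}\}$ contain no cube of $R$, by Theorem~\ref{theoremprocedure}, whatever relation the corresponding point of $V$ comes from.
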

    
\begin{proof}
    In this proof, it will be more convenient to drop the assumption that $S$ is ordered --- we only suppose that $a_{ii}=b_{ii}$ for at least two distinct $i \in \{1,2,3\}$ and that  $T = R \cap \{x \leq a_{11},y\leq a_{22}, z \leq a_{33}\}$.
    
    Suppose that $b$ labels a cube in $R$. Then $b \in \textup{Ap}(S,d_0)$ by Theorem \ref{theoremprocedure}. Since $T$ is an $L$-shape, $b$ labels a cube in $T$.   Consider the factorization $z_0 =(0,\lambda_1,\lambda_2, \lambda_3) \in \varphi^{-1}(b)$ with $[[\lambda_1,\lambda_2,\lambda_3]] \in T$. Let $(\lambda_0',\lambda_1',\lambda_2', \lambda_3')$ be a different factorization of $b$. We have
    \begin{equation}\label{betticharacter_eq0}
        (\lambda_1-\lambda_1')d_1 +  (\lambda_2-\lambda_2')d_2 +  (\lambda_3-\lambda_3')d_3 = \lambda_0'd_0.
    \end{equation}

    If all coefficients on the left-hand side are nonnegative, then $\lambda_0'>0$, as the two factorizations are distinct. This contradicts $[[\lambda_1,\lambda_2,\lambda_3]] \in T$ by Lemma \ref{lemma_procedure_1}.

    Suppose that exactly one coefficient on the left-hand side of \eqref{betticharacter_eq0} is nonnegative. Without loss of generality, suppose that every coefficient below is nonnegative.
    \begin{equation}\label{betticharacter_eq00}
        (\lambda_1-\lambda_1')d_1   = \lambda_0'd_0 + (\lambda_2'-\lambda_2)d_2 +  (\lambda_3'-\lambda_3)d_3.
    \end{equation}
    This gives $\lambda_1 \geq (\lambda_1-\lambda_1') \geq a_{11}$, which contradicts $[[\lambda_1,\lambda_2,\lambda_3]] \in T$.

    Suppose that exactly two coefficients on the left-hand side of \eqref{betticharacter_eq0} are nonnegative. Without loss of generality, suppose that every coefficient below is nonnegative.
     \begin{equation}\label{betticharacter_eq01}
        (\lambda_1-\lambda_1')d_1 +  (\lambda_2-\lambda_2')d_2  = \lambda_0'd_0 +  (\lambda_3'-\lambda_3)d_3.
    \end{equation}
    If $\lambda_0'>0$, then by Lemma \ref{lemma_procedure_1}, we get a contradiction with $[[\lambda_1,\lambda_2,\lambda_3]] \in T$, hence $\lambda_0'=0$. This means that for all $z \in \varphi^{-1}(b)$ we have $0 \notin \text{supp}(z)$, as $(\lambda_0',\lambda_1',\lambda_2', \lambda_3') \in \varphi^{-1}(b)$ was chosen arbitrarily.

    Let $z=(0,\mu_1, \mu_2,  \mu_3) \in \varphi^{-1}(b)$ be a factorization that is not $\mathcal{R}$-related to $z_0$. Repeating the same argument as for $(\lambda_0',\lambda_1',\lambda_2', \lambda_3')$, without loss of generality, we may assume $\lambda_1 \geq \mu_1$, $\lambda_2 \geq \mu_2$, $\lambda_3 < \mu_3$. This forces $\mu_1= \mu_2= \lambda_3 =0$, as otherwise $z \, \mathcal{R} \, z_0$. Hence
    \begin{equation}\label{betticharacter_eq03}
       b= \lambda_1d_1 + \lambda_2d_2   =  \mu_3d_3
    \end{equation}
    where $\lambda_1, \lambda_2, \mu_3> 0$ --- if $\lambda_1 =0$ or $\lambda_2 = 0$, we derive a contradiction as in \eqref{betticharacter_eq00}.
    If $ \mu_3 > a_{33}$, then 
    \begin{equation}
        b  =  a_{30}d_0 + a_{31}d_1 + a_{32}d_2 +  (\mu_3-a_{33})d_3
    \end{equation}
    where $(a_{30},a_{31},a_{32},0) \in \varphi^{-1}(a_{33}d_3)$. We also have $a_{30} = 0$, since $0 \notin \text{supp}(z)$ for any $z \in \varphi^{-1}(b)$. Hence $\max \{a_{31},a_{32}\} > 0$, which implies $z_0 \, \mathcal{R} \, (0,a_{31},a_{32},\mu_3-a_{33}) \, \mathcal{R} \,z$, which is a contradiction. Thus $\mu_3 = a_{33}$, and hence $b = a_{33}d_3 \in U$.
\end{proof}

\begin{thm}\label{theorembettifind_primary}
If $S$ is primary, then $\textup{Betti}(S) \subseteq U$. 
\end{thm}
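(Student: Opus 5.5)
The plan is to split according to whether a Betti element $b$ lies in the Ap\'ery set. If $b \in \textup{Ap}(S,d_0)$, then by Theorem \ref{theoremprocedure} $b$ labels a cube in $R$, and Lemma \ref{lemmabettifind_primary} gives $b \in U$. So the real work is the case $b \notin \textup{Ap}(S,d_0)$, where $b - d_0 \in S$ and hence some factorization of $b$ has $0$ in its support.

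\textbf{Step 1: an $\mathcal{R}$-class avoiding $d_0$.} Any two factorizations of $b$ whose supports both contain $0$ are $\mathcal{R}$-related. Since $b$ has at least two $\mathcal{R}$-classes, exactly one class contains factorizations using $d_0$. Let $C'$ be another class; every $z \in C'$ has the form $z=(0,\lambda_1,\lambda_2,\lambda_3)$. Choose $z \in C'$ with $\lambda_3$ minimal. The cube $[[\lambda_1,\lambda_2,\lambda_3]]$ has label $b \notin \textup{Ap}(S,d_0)$, so it is not in $R$, and in particular not in $T$. Because $T$ is obtained by deleting the regions associated to the points of $V$, there is $P=(p_1,p_2,p_3) \in V$ with $\lambda_i \geq p_i$ for all $i$; for negative $p_i$ this just says $\lambda_i \ge 0 > p_i$.

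\textbf{Step 2: compare $b$ with the element of $U$ attached to $P$.} Write $P^+=(\max\{p_i,0\})_i$ and $u = \sum_i \max\{p_i,0\} d_i \in U$. The point $P$ encodes an identity of the form $u = (\text{positive multiple of } d_0) + \sum_{p_i<0} (-p_i) d_i$, with supports disjoint from $\textup{supp}(P^+)$. For a $(2,2)$-type point this holds by \eqref{theoremprocedure_eq1}--\eqref{theoremprocedure_eq3}. For the points giving $a_{00}d_0$ it is immediate. For the point giving $b_{11}d_1$ or $b_{22}d_2$ it holds because $b_{i0}>0$ by definition. The only exceptional case is the point giving $a_{33}d_3$, where $a_{30}$ might be $0$.
\begin{itemize}
\item If $\lambda = P^+$ componentwise, then $b = u \in U$ and we are done.
\item Otherwise $\lambda - P^+$ is nonzero and nonnegative. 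Replacing the part $P^+$ of $z$ by the other side of the identity gives a factorization $z'$ of $b$. It shares a support index with $z$, namely one where $\lambda - P^+$ is positive, so $z' \,\mathcal{R}\, z$ and $z' \in C'$.
\end{itemize}
In the non-exceptional cases $z'$ uses $d_0$, contradicting the choice of $C'$.

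\textbf{Step 3 (main obstacle): the $a_{33}$ point.} Here $P^+ = (0,0,a_{33})$ and $a_{33}d_3 = a_{30}d_0+a_{31}d_1+a_{32}d_2$. If $a_{30}>0$, we are in the previous case. If $a_{30}=0$, then $z' = (0,\lambda_1+a_{31},\lambda_2+a_{32},\lambda_3-a_{33})$ lies in $C'$ and has smaller third coordinate, contradicting the minimality of $\lambda_3$. Thus every case leads either to $b\in U$ or to a contradiction.

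I expect the delicate points to be the following. One must check that a cube outside $T$ really dominates some point of $V$; this uses that $T$ is exactly the complement of the union of those regions. One must also confirm that each kind of point in $V$ carries a relation of the stated form, in particular that $V$ contains no points other than the four special ones and the $(2,2)$-type points, as arranged by the standing assumption on $V$. The descent on $\lambda_3$ is what handles the one relation that may not involve $d_0$.
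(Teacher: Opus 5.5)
Your proof is correct, and it takes a different and much shorter route than the paper's.

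The paper starts from an arbitrary factorization $(0,\kappa_1,\kappa_2,\kappa_3)$ of $b$. It uses Lemma~\ref{lemmabettifind_primary} only to get $[[\kappa_1,\kappa_2,\kappa_3]]\notin T$, and then splits by the type of the point $P\in V$. The $(2,2)$-type points are dismissed because the two resulting factorizations cover all four indices. For $P=(a_{01},a_{02},a_{03})$, $(b_{11},-b_{12},-b_{13})$ and $(-a_{31},-a_{32},a_{33})$, however, it has to analyse at length a third factorization $z$ not $\mathcal{R}$-related to the first two. That analysis runs through subcases on $\textup{supp}(z)$ and re-uses the minimal relations of $d_2,d_3$, the primary hypothesis, and further points $Q\in V$.

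You avoid all of this by making two choices up front:
\begin{itemize}
\item you take $z$ in an $\mathcal{R}$-class containing no factorization that uses $d_0$, which exists because all such factorizations form a single class;
\item you minimize its third coordinate.
\end{itemize}
Then every $P\in V$ whose value $u$ admits some factorization involving $d_0$ gives either $b=u$ or an immediate contradiction. By the standing assumption on $V$, this covers every point except possibly the one with $u=a_{33}d_3$. The single relation that may avoid $d_0$ is then handled by descent. Your route buys uniformity and isolates the only real obstruction. The paper's route works with whatever factorization is at hand, at the price of a long case analysis.

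One assertion deserves a line of justification: $P^+=(0,0,a_{33})$ in Step~3. The paper takes this canonical form for granted, cf.~\eqref{type_assumption_on_V}. Here is the argument. If $a_{33}d_3$ has no factorization using $d_0$, then $a_{33}d_3\in\textup{Ap}(S,d_0)$. By Lemma~\ref{lemma_catenary_minrelations}, the only alternative is $P^+=(q_1,q_2,0)$ with $q_1d_1+q_2d_2=a_{33}d_3$. In that case the cube $[[q_1,q_2,0]]$ is labeled by an Ap{\'e}ry element and has third coordinate $0<a_{33}$, so it lies in $T$. It also lies in the region associated to $P$, which is a contradiction.

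In Step~2, what you actually use is that $u$ has a factorization involving $d_0$, not that $P$ itself encodes a relation with a positive multiple of $d_0$. For instance, a point with value $b_{11}d_1$ could satisfy $xd_1+yd_2+zd_3=0$. Your argument already handles this correctly.
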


\begin{proof}
    Suppose that there exists an element $b \in \text{Betti}(S)$ that is not in $U$.
    There exists a factorization $z_0 =(0,\kappa_1,\kappa_2,\kappa_3) \in \varphi^{-1}(b)$; otherwise, if every factorization $z \in \varphi^{-1}(b)$ had $0 \in \text{supp}(z)$, then all factorizations would be $\mathcal{R}$-related, which cannot happen for a Betti element. By Lemma \ref{lemmabettifind_primary}, we have $[[\kappa_1,\kappa_2,\kappa_3]] \notin T$, hence $[[\kappa_1,\kappa_2,\kappa_3]]$ is in the region associated to some point $P \in V$.

    \begin{itemize}

    \item  If $P =(-\lambda_1,\lambda_2,\lambda_3)$, where
    \begin{equation}\label{betticharacter_eq1}
        \lambda_0d_0 + \lambda_1d_1 = \lambda_2d_2 + \lambda_3d_3
    \end{equation}
    with $\lambda_i \in \mathbb{Z}_{>0}$, we get
     \begin{equation}
        b =\lambda_0d_0 + (\lambda_1+\delta_1)d_1 + \delta_2d_2 + \delta_3d_3  =  \delta_1d_1 + (\lambda_2+\delta_2)d_2 + (\lambda_3+\delta_3)d_3
    \end{equation}
    for some $\delta_i \in \mathbb{N}$ with $\max \{\delta_1,\delta_2,\delta_3\} > 0$. The factorizations $z_1 =(\lambda_0,\lambda_1+\delta_1,\delta_2,\delta_3)$ and $z_2 =(0, \delta_1,\lambda_2+\delta_2,\lambda_3+\delta_3)$ have a nonempty intersection of supports, and hence they are $\mathcal{R}$-related. Moreover, we have $0,1,2,3 \in \textup{supp}(z_1) \cup \textup{supp}(z_2)$, so every factorization of $b$ will be $\mathcal{R}$-related to $z_1$ and $z_2$. This contradicts $b \in \textup{Betti}(S)$, as it must have at least two distinct $\mathcal{R}$-classes. 
    
    The cases $P=(\lambda_1,-\lambda_2,\lambda_3)$ and $P=(\lambda_1,\lambda_2,-\lambda_3)$ follow analogously.
    
    \item If $P =(a_{01},a_{02},a_{03})$, where $(0,a_{01},a_{02},a_{03}) \in \varphi^{-1}(a_{00}d_0)$, we get
     \begin{equation}
        b =a_{00}d_0 + \delta_1d_1 + \delta_2d_2 + \delta_3d_3  =  (a_{01}+\delta_1)d_1 + (a_{02}+\delta_2)d_2 + (a_{03}+\delta_3)d_3
    \end{equation}
    for some $\delta_i \in \mathbb{N}$ with $\max \{\delta_1,\delta_2,\delta_3\} > 0$. The factorizations $z_1 =(a_{00},\delta_1,\delta_2,\delta_3)$ and $z_2 =(0,a_{01}+\delta_1,a_{02}+\delta_2,a_{03}+\delta_3)$ have a nonempty intersection of supports, and hence they are $\mathcal{R}$-related. Because $b \in \textup{Betti}(S)$, there exists a factorization $z \in \varphi^{-1}(b)$ which is not $\mathcal{R}$-related to them. 
    
    In this case, as in the proof of Lemma \ref{lemmabettifind_primary}, it will be more convenient to drop the assumption that $S$ is ordered --- we only suppose that $a_{ii}=b_{ii}$ for at least two distinct $i \in \{1,2,3\}$ and that  $T = R \cap \{x \leq a_{11},y\leq a_{22}, z \leq a_{33}\}$. This way, without loss of generality, we can suppose that $\delta_1 > 0$. As $a_{00} >0$, we have $0,1 \notin \text{supp}(z)$. 

    First, suppose that $z = (0,0,\mu_2,\mu_3)$ where $\mu_2,\mu_3>0$. This implies $z_1 = (a_{00},\delta_1,0,0 )$ and $z_2 =(0,a_{01}+\delta_1,0,0)$. 
    If $\mu_2 \geq a_{22}$, then
   \begin{equation}\label{betticharacter_eq40}
         b =\mu_2d_2 + \mu_3d_3 = a_{20}d_0 + a_{21}d_1 +(\mu_2-a_{22})d_2 + (\mu_3+a_{23})d_3
   \end{equation}
   where $(\mu_2-a_{22}) \geq 0$ and $(a_{20},a_{21},0,a_{23}) \in \varphi^{-1}(a_{22}d_2)$.
   If $\max \{a_{20},a_{21} \}>0$, then $z_1 \, \mathcal{R} \,(a_{20},a_{21},$ $ \mu_2-a_{22},\mu_3+a_{23}) \, \mathcal{R} \, z$ (recall $\mu_3>0$), which is a contradiction. Hence $a_{20}=a_{21}=0$ and $a_{22}d_2 = a_{23}d_3$, which implies $a_{23} \geq a_{33}$. Moreover, since $(a_{20},a_{21},0,a_{23})$ was chosen arbitrarily, the conclusion $a_{20}=0$ gives $a_{22} < b_{22}$. Now, we have
   \begin{equation}\label{betticharacter_eq401}
        b=  a_{30}d_0 + a_{31}d_1 + (\mu_2-a_{22} + a_{32})d_2 + (\mu_3+a_{23}-a_{33})d_3
   \end{equation}
   where all coefficients above are nonnegative (since $a_{23} \geq a_{33}$) and $(a_{30},a_{31},a_{32},0) \in \varphi^{-1}(a_{33}d_3)$. As in \eqref{betticharacter_eq40}, this yields $a_{30} = 0$ and hence $a_{33} < b_{33}$. Since we also have $a_{22} < b_{22}$, we get a contradiction with the hypothesis.
   
   If $\mu_3 \geq a_{33}$ we proceed analogously and get a contradiction, thus $\mu_2 < a_{22}$ and $ \mu_3 < a_{33}$. 
   Consider the cube $[[0,\mu_2,\mu_3]]$. If it is in $T$, then $b \in U$ by Lemma \ref{lemmabettifind_primary}. Hence $[[0,\mu_2,\mu_3]]$ is in the region associated to a point $Q \in V$.
   As $\mu_2 < a_{22}$ and $ \mu_3 < a_{33}$, we have $Q =(-\nu_1,\nu_2,\nu_3)$, where
    \begin{equation}\label{betticharacter_eq41}
        - \nu_1d_1 + \nu_2d_2 +  \nu_3d_3   =  \nu_0d_0
    \end{equation}
    $\nu_0,\nu_2,\nu_3 \in \mathbb{Z}_{>0}$, $\nu_1 \in \mathbb{N}$ (possibly $\nu_1=0$), and $\mu_2 \geq \nu_2$, $\mu_3 \geq \nu_3$.  We can write 
    \begin{equation}\label{betticharacter_eq42}
        b =  \nu_0d_0 + \nu_1d_1 + (\mu_2-\nu_2)d_2 + (\mu_3-\nu_3)d_3.
    \end{equation}
    We have $\max \{\mu_2-\nu_2,\mu_3-\nu_3 \}>0$, as $b \notin U$, hence $z \, \mathcal{R} \, (\nu_0,\nu_1,\mu_2-\nu_2,\mu_3-\nu_3) \, \mathcal{R}  \, z_1$, which is a contradiction.

    Next, suppose that $z = (0,0,\mu_2,0)$. 
    If $\mu_2 = a_{22} $, then $b = a_{22}d_2 \in U$, so $\mu_2 > a_{22}$. We can write
    \begin{equation}\label{betticharacter_eq6}
         b=\mu_2d_2 = a_{20}d_0 + a_{21}d_1 + (\mu_2-a_{22})d_2 + a_{23}d_3.
    \end{equation}
    where $(a_{20},a_{21},0,a_{23}) \in \varphi^{-1}(a_{22}d_2)$.
    Because $(\mu_2-a_{22}) > 0$, we have $z \, \mathcal{R} \, (a_{20},a_{21},\mu_2-a_{22},a_{23})$. Since $z$ is not $\mathcal{R}$-related to $z_1$, we have $a_{20} = a_{21} = 0$ (recall $a_{00},\delta_1 >0$). This gives the factorization $(0,0,\mu_2-a_{22},a_{23}) \in \varphi^{-1}(b)$ that has $(\mu_2-a_{22}),a_{23} > 0$ and which is not $\mathcal{R}$-related to $z_1$ nor $z_2$, as it is $\mathcal{R}$-related to $z$. Thus we reduce to the previously treated case $z =(0,0,\mu_2,\mu_3)$. 

    The case $z = (0,0,0,\mu_3)$ follows analogously.

    \item   If $P =(b_{11},-b_{12},-b_{13})$, where $(b_{10},0,b_{12},b_{13}) \in \varphi^{-1}(b_{11}d_1)$ with $b_{10}>0$, we get
    \begin{equation}
        b = (b_{11}+\delta_1)d_1 +   \delta_2d_2 + \delta_3d_3  =  b_{10}d_0 + \delta_1d_1+ (b_{12}+\delta_2)d_2 + (b_{13}+\delta_3)d_3
    \end{equation}
    for some $\delta_i \in \mathbb{N}$ with $\max \{\delta_1,\delta_2,\delta_3\} > 0$. The factorizations $z_1 =(0,b_{11} +\delta_1,\delta_2,\delta_3)$ and $z_2 =(b_{10},\delta_1,b_{12}+\delta_2,b_{13}+\delta_3)$ have a nonempty intersection of supports, and hence they are $\mathcal{R}$-related. Because $b \in \text{Betti}(S)$, there exists a factorization $z \in \varphi^{-1}(b)$ that is not $\mathcal{R}$-related to them. 
        
        If $b_{12},b_{13}>0$, then $0,1,2,3 \in \textup{supp}(z_1) \cup \textup{supp}(z_2)$, which implies that $b$ has only one $\mathcal{R}$-class.
        
        If $b_{12}>0,b_{13} = 0$, then $0,1,2 \in \textup{supp}(z_1) \cup \textup{supp}(z_2)$ and hence $z = (0,0,0,\mu_3)$. 
        If $\mu_3 = a_{33}$, then $b = a_{33}d_3 \in U$, hence $\mu_3 > a_{33}$. Now, for $(a_{30},a_{31},a_{32},0) \in \varphi^{-1}(a_{33}d_3)$, we have $z \, \mathcal{R} \, (a_{30},a_{31},a_{32},\mu_3-a_{33}) \, \mathcal{R} \, z_1  \, \mathcal{R} \, z_2$, since $\max\{a_{30},a_{31},a_{32}\}>0$, which is a contradiction.

        If $b_{12}=0, b_{13} > 0$, we proceed analogously as above.

        If $b_{12} = b_{13} = 0$, then $P = (b_{11},0,0)$ and $b_{10} \geq a_{00}$. If $b_{10} = a_{00}$, then we are in the case $P = (a_{01},a_{02},a_{03})$ and we can proceed analogously as there. If $b_{10} > a_{00}$, then $(b_{10}-a_{00},a_{01},a_{02},a_{03}) \in \varphi^{-1}(b_{11}d_1)$ with $\max \{a_{01},a_{02},a_{03}\}>0$. If $a_{01} > 0$, we get a contradiction with the minimality of $b_{11}$ (since $(b_{10}-a_{00})>0$). Hence $(b_{10}-a_{00},0,a_{02},a_{03}) \in \varphi^{-1}(b_{11}d_1)$ where $(b_{10}-a_{00})>0$ and $\max \{a_{02},a_{03}\} > 0$. Therefore we can replace $P = (b_{11},0,0)$ with $(b_{11},-a_{02},-a_{03})$ and proceed as in the previous cases of this bullet-point.

        The case $P = (-b_{21},b_{22},-b_{23})$ follows analogously.

    \item   Suppose $P =(-a_{31},-a_{32},a_{33})$, where $(a_{30},a_{31},a_{32},0) \in \varphi^{-1}(a_{33}d_3)$. If $a_{33}=b_{33}$, then we can proceed as in the previous bullet-point. Thus, suppose $a_{33}<b_{33}$, which implies $a_{30} = 0$. We get
    \begin{equation}
        b = \delta_1d_1 +   \delta_2d_2 + (a_{33} +\delta_3)d_3  =  (a_{31}+\delta_1)d_1+ (a_{32}+\delta_2)d_2 + \delta_3d_3
    \end{equation}
    for some $\delta_i \in \mathbb{N}$ with $\max \{\delta_1,\delta_2,\delta_3\} > 0$. The factorizations $z_1 =(0,\delta_1,\delta_2,a_{33} +\delta_3)$ and $z_2 =(0,a_{31}+\delta_1,a_{32}+\delta_2,\delta_3)$ have a nonempty intersection of supports, and hence they are $\mathcal{R}$-related. Because $b \in \text{Betti}(S)$, there exists a factorization $z \in \varphi^{-1}(b)$ that is not $\mathcal{R}$-related to them. 
        
    If $a_{31},a_{32}>0$, then $1,2,3 \in \textup{supp}(z_1) \cup \textup{supp}(z_2)$, hence $z = (\mu_0,0,0,0)$. Consider the cube $[[a_{31}+\delta_1,a_{32}+\delta_2,\delta_3]]$ which has label $b$. By Lemma \ref{lemma_catenary_help_primary}, we have $[[a_{31}+\delta_1,a_{32}+\delta_2,\delta_3]] \notin T$. If $\delta_3 < a_{33}$, then $[[a_{31}+\delta_1,a_{32}+\delta_2,\delta_3]]$ lies in the region associated to some point $Q \in V$, $Q \neq (-a_{31},-a_{32},a_{33})$, and we can proceed as in one of the cases covered so far. If $\delta_3 \geq a_{33}$, then there exists $\lambda \in \mathbb{Z}_{>0}$ such that $a_{33} >\delta_3 - \lambda a_{33} \geq 0$. Now, the cube $[[ (\lambda+1)a_{31}+\delta_1,(\lambda+1)a_{32}+\delta_2,\delta_3 - \lambda a_{33}]]$ has label $b$ and is in the region associated to some point $Q \in V$, $Q \neq (-a_{31},-a_{32},a_{33})$, hence we can proceed as in one of the cases covered so far.

    If $a_{31} = 0$, then $a_{33}d_3 = a_{32}d_2$, which implies $a_{32} \geq a_{22} = b_{22}$. We get 
    \begin{equation}
        a_{33}d_3 = b_{20}d_0 + b_{21}d_1 + (a_{32}-b_{22})d_2 + b_{23}d_3
    \end{equation}
    where $(b_{20},b_{21},0,b_{23}) \in \varphi^{-1}(b_{22}d_2)$ with $b_{20} > 0$, which contradicts $a_{33} < b_{33}$.

    The case $a_{32}=0$ follows analogously. \qedhere

\end{itemize}
\end{proof}

\begin{lemma}\label{lemma_catenary_help_primary}
    Suppose that $S$ is primary.  If $(-\lambda_1,\lambda_2,\lambda_3) \in V$ satisfies
    $$ \lambda_0d_0 + \lambda_1d_1 = \lambda_2d_2 + \lambda_3d_3$$
    with $\lambda_i \in \mathbb{Z}_{>0}$ and $\lambda_0 < a_{00}$, then for any $(\mu_0,\mu_1,\mu_2,\mu_3) \in \varphi^{-1}(\lambda_0d_0 + \lambda_1d_1)$ we have $(\mu_0,\mu_1,\mu_2,\mu_3) = (0,0,\lambda_2,\lambda_3)$ or $\mu_2=\mu_3 = 0$. An analogous statement holds for $(\lambda_1,-\lambda_2,\lambda_3)$ and $(\lambda_1,\lambda_2,-\lambda_3) \in V$.
\end{lemma}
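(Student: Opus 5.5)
Let $b = \lambda_0 d_0 + \lambda_1 d_1 = \lambda_2 d_2 + \lambda_3 d_3$ and take any $(\mu_0,\mu_1,\mu_2,\mu_3) \in \varphi^{-1}(b)$ with $\max\{\mu_2,\mu_3\}>0$. The goal is to show it equals $(0,0,\lambda_2,\lambda_3)$. I would compare it with the factorization $(0,0,\lambda_2,\lambda_3)$, using the minimality of $V$: the point $(-\lambda_1,\lambda_2,\lambda_3)$ is not redundant, so the cube $[[0,\lambda_2,\lambda_3]]$ lies in no other deleted region. Since $\lambda_2<b_{22}$ and $\lambda_3<b_{33}$, every cube $[[0,y,z]]$ with $y\le\lambda_2$, $z\le\lambda_3$ and $(y,z)\neq(\lambda_2,\lambda_3)$ is then in $T$. (In the primary case I would take $T\subseteq\{x\le a_{11},y\le a_{22},z\le a_{33}\}$ after reordering, as in Lemma \ref{lemmabettifind_primary}.) Subtracting the two factorizations gives
$$(\lambda_2-\mu_2)d_2+(\lambda_3-\mu_3)d_3=\mu_0 d_0+\mu_1 d_1.$$
I would then split into cases according to the signs of $\lambda_2-\mu_2$ and $\lambda_3-\mu_3$.

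\emph{Case 1: both differences are nonnegative, not both zero.} Then $(\lambda_2-\mu_2)d_2+(\lambda_3-\mu_3)d_3=\mu_0d_0+\mu_1d_1$ is a relation whose left side is strictly dominated by $(\lambda_2,\lambda_3)$. If $\mu_1=0$, then $\mu_0>0$ and the cube $[[0,\lambda_2-\mu_2,\lambda_3-\mu_3]]$ would be deleted by Lemma \ref{lemma_procedure_1}. If $\mu_1>0$ and $\mu_0>0$, we get a $(2,2)$-type point $(-\mu_1,\lambda_2-\mu_2,\lambda_3-\mu_3)$ whose region contains $[[0,\lambda_2,\lambda_3]]$ properly, which makes $(-\lambda_1,\lambda_2,\lambda_3)$ redundant in $V$ and contradicts minimality. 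A $(2,1)$ relation from a single coordinate would push $\lambda_2\ge a_{22}$ or $\lambda_3\ge a_{33}$, contradicting $[[\lambda_1',\lambda_2,\lambda_3]]$-type cubes being near $T$. If $\mu_0=\mu_1=0$, then $(\lambda_2-\mu_2)d_2=(\mu_3-\lambda_3)d_3$ up to sign; this forces a coefficient $\ge a_{22}$ or $\ge a_{33}$, which is again a contradiction since $\lambda_2\le a_{22}$ and $\lambda_3\le a_{33}$ with the correct inequalities.

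\emph{Case 2: mixed signs,} say $\mu_2>\lambda_2$ and $\mu_3<\lambda_3$. Then
$$(\lambda_3-\mu_3)d_3=\mu_0d_0+\mu_1d_1+(\mu_2-\lambda_2)d_2,$$
with positive $d_2$ coefficient, so $\lambda_3-\mu_3\ge a_{33}$. This contradicts $\lambda_3<a_{33}$, or $\le b_{33}$ when $a_{33}=b_{33}$. In the boundary situation, when equality is possible, I would use $\lambda_0<a_{00}$.

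\emph{Case 3: both differences are nonpositive, not both zero.} Then $\mu_0d_0+\mu_1d_1$ is less than or equal to $\lambda_0d_0+\lambda_1d_1$ minus a positive amount. Subtracting gives $(\lambda_0-\mu_0)d_0+(\lambda_1-\mu_1)d_1=(\mu_2-\lambda_2)d_2+(\mu_3-\lambda_3)d_3>0$. I would use $\lambda_0<a_{00}$ and the minimality of $\lambda_1$-type data to rule this out: if $\mu_0\le\lambda_0$ and $\mu_1\le\lambda_1$, we get a smaller $(2,2)$ or $(2,1)$ relation with a point $(-(\lambda_1-\mu_1),\mu_2-\lambda_2,\mu_3-\lambda_3)$. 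That point does not dominate nicely, so I would instead rewrite it and invoke the minimality of $V$. Otherwise $\mu_0>\lambda_0$ or $\mu_1>\lambda_1$, which yields a relation $(\mu_0-\lambda_0)d_0=\dots$ with $\mu_0-\lambda_0\ge a_{00}$ and hence $\mu_0\ge a_{00}>\lambda_0$. Then I would substitute the minimal relation of $d_0$ and compare. I expect this case, which controls $\mu_0$ through $\lambda_0<a_{00}$, to be the main obstacle.

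The analogous statements for the other two point types follow by permuting indices $1,2,3$.
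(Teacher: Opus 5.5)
\textbf{There is a genuine gap in Case~1.} Your sign split leaves one sub-case untreated, and it is exactly the sub-case where the hypothesis $\lambda_0<a_{00}$ is needed. In Case~1 you handle $\mu_0>0$. For $\mu_0,\mu_1>0$ the right argument is not ``redundancy in $V$'': the point $(-\mu_1,\lambda_2-\mu_2,\lambda_3-\mu_3)$ need not lie in $V$, and $V$ is minimal only with respect to inclusion. Instead, apply Lemma~\ref{lemma_procedure_1} to that point; its region contains $[[0,\lambda_2-\mu_2,\lambda_3-\mu_3]]\in T$, just as in your $\mu_1=0$ case. Your $\mu_0=\mu_1=0$ sub-case is vacuous, since the left side is positive.

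The missing sub-case is $\mu_0=0$, $\mu_1>0$, $\mu_2\le\lambda_2$, $\mu_3\le\lambda_3$, where the relation reads $(\lambda_2-\mu_2)d_2+(\lambda_3-\mu_3)d_3=\mu_1d_1$. If one difference is zero, minimality of $a_{22}$ or $a_{33}$ gives a contradiction, as your ``$(2,1)$'' remark suggests. If both are positive, the relation only gives $\mu_1\ge a_{11}$ and contradicts nothing. Comparing with $(0,0,\lambda_2,\lambda_3)$ alone cannot close this case, because the conclusion is false without $\lambda_0<a_{00}$. If $\lambda_0=a_{00}$ and $\max\{a_{02},a_{03}\}>0$, then $(0,\lambda_1+a_{01},a_{02},a_{03})$ is a factorization of $b$ of exactly this shape violating the conclusion; compare Lemmas~\ref{lemma_catenarycase1_primary} and~\ref{lemma_catenaryT_primary}. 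Your Case~1 never invokes $\lambda_0<a_{00}$, so it cannot be complete.

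\textbf{The fix.} Compare with $(\lambda_0,\lambda_1,0,0)$ instead:
\[
\lambda_0d_0+(\lambda_1-\mu_1)d_1=\mu_2d_2+\mu_3d_3 .
\]
If $\mu_1\ge\lambda_1$, then $0<\lambda_0<a_{00}$ contradicts the minimality of $a_{00}$. If $\mu_1<\lambda_1$, Lemma~\ref{lemma_procedure_1} applied to $(\mu_1-\lambda_1,\mu_2,\mu_3)$ excludes $[[0,\mu_2,\mu_3]]$ from $T$. This contradicts your own (correct) observation that this cube lies in $T$, since $(\mu_2,\mu_3)\neq(\lambda_2,\lambda_3)$. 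This is essentially the paper's argument in its sub-case $\mu_0=0$, $\mu_2<\lambda_2$, $\mu_3<\lambda_3$.

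\textbf{Cases~2 and~3.} Your Case~3 sketch, comparing with $(\lambda_0,\lambda_1,0,0)$ and using $\lambda_0<a_{00}$, is this argument, but attached to the wrong case. Case~3 is empty: $\mu_2\ge\lambda_2$ and $\mu_3\ge\lambda_3$ with one inequality strict would force $\mu_2d_2+\mu_3d_3>b$. The identity you display there is also miscomputed; the correct one is
\[
(\lambda_0-\mu_0)d_0+(\lambda_1-\mu_1)d_1=\mu_2d_2+\mu_3d_3 .
\]
Case~2 is fine and matches the paper. It needs no boundary hedging, because $\lambda_3<a_{33}$ holds strictly: $[[0,\lambda_2,\lambda_3]]$ is not in the region of $(-a_{31},-a_{32},a_{33})\in V$.
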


\begin{proof}
Note that $\lambda_2< a_{22}$ and $\lambda_3 < a_{33}$ by the definition of $V$. 

Suppose $\mu_0>0$. If $\mu_2 > 0$, then by Lemma \ref{lemma_procedure_1}, no element of the Ap{\'e}ry set is a label in the region associated to $(-\mu_1,\lambda_2-\mu_2,\lambda_3-\mu_3)$. This contradicts the fact $[[0,\lambda_2-1, \lambda_3]] \in T$, which holds since $(-\lambda_1,\lambda_2,\lambda_3) \in V$. If $\mu_3 > 0$, we get an analogous contradiction, hence $\mu_2=\mu_3 = 0$.

Thus, suppose $\mu_0 = 0$. If $\mu_2 \geq \lambda_2$, then
\begin{equation}
     (\lambda_3-\mu_3)d_3=\mu_1d_1 + (\mu_2 - \lambda_2)d_2
\end{equation}
where all coefficients above are nonnegative. If $(\lambda_3-\mu_3) > 0$, then we get a contradiction with $\lambda_3 < a_{33}$. Hence $(\lambda_3-\mu_3) = 0$, which gives $(\mu_0,\mu_1,\mu_2,\mu_3) = (0,0,\lambda_2,\lambda_3)$. If $\mu_3 \geq \lambda_3$, we get the same thing; therefore, suppose that $\mu_2 < \lambda_2$ and $\mu_3 < \lambda_3$. We have
\begin{equation}
   \lambda_0d_0 + (\lambda_1-\mu_1)d_1 = \mu_2d_2 + \mu_3d_3.
\end{equation}
If $(\lambda_1-\mu_1) \leq 0$, then we get a contradiction with $\lambda_0 < a_{00}$, hence $(\lambda_1-\mu_1) > 0$. Since $\lambda_0>0$, by Lemma \ref{lemma_procedure_1}, no cube labeled with an element of the Ap{\'e}ry set is in the region associated to $(\mu_1 - \lambda_1,\mu_2,\mu_3)$. This contradicts the fact $[[0,\lambda_2-1,\lambda_3-1]] \in T$, which holds since $(-\lambda_1,\lambda_2,\lambda_3) \in V$.

The same argument applies to $(\lambda_1,-\lambda_2,\lambda_3)$ and $(\lambda_1,\lambda_2,-\lambda_3) \in V$.
\end{proof}

\begin{lemma}\label{lemma_catenary_primary}
    Suppose that $S$ is primary. If $(-\lambda_1,\lambda_2,\lambda_3) \in V$ satisfies
    $$ \lambda_0d_0 + \lambda_1d_1 = \lambda_2d_2 + \lambda_3d_3$$
    with $\lambda_i \in \mathbb{Z}_{>0}$, $\lambda_0 < a_{00}$, and $ \lambda_1 < a_{11}$, then $\varphi^{-1}(\lambda_0d_0 + \lambda_1d_1) = \{(\lambda_0,\lambda_1,0,0), (0,0,\lambda_2,\lambda_3) \} $. An analogous statement holds for $(\lambda_1,-\lambda_2,\lambda_3)$ and $(\lambda_1,\lambda_2,-\lambda_3) \in V$. 
\end{lemma}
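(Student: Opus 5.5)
The plan is to start from Lemma \ref{lemma_catenary_help_primary}. It already tells us that every factorization $(\mu_0,\mu_1,\mu_2,\mu_3)$ of $b=\lambda_0d_0+\lambda_1d_1$ is either $(0,0,\lambda_2,\lambda_3)$ or has $\mu_2=\mu_3=0$. So it suffices to show that the only factorization of the form $(\mu_0,\mu_1,0,0)$ is $(\lambda_0,\lambda_1,0,0)$. Suppose $\mu_0d_0+\mu_1d_1=\lambda_0d_0+\lambda_1d_1$ with $(\mu_0,\mu_1)\neq(\lambda_0,\lambda_1)$. Then, up to swapping roles, $(\mu_0-\lambda_0)d_0=(\lambda_1-\mu_1)d_1$ with both sides positive.

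There are two cases. If $\mu_0>\lambda_0$, then $\lambda_1-\mu_1>0$, and the identity $(\lambda_1-\mu_1)d_1=(\mu_0-\lambda_0)d_0$ expresses a positive multiple of $d_1$, at most $\lambda_1<a_{11}$, as a nonnegative combination of the other generators. This contradicts the minimality of $a_{11}$. If instead $\mu_0<\lambda_0$, then $\mu_1>\lambda_1$ and $(\lambda_0-\mu_0)d_0=(\mu_1-\lambda_1)d_1$ with $0<\lambda_0-\mu_0\le\lambda_0<a_{00}$. This contradicts the minimality of $a_{00}$. Hence $\varphi^{-1}(b)$ is exactly the two stated factorizations. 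Both of them really occur: $(\lambda_0,\lambda_1,0,0)$ by definition, and $(0,0,\lambda_2,\lambda_3)$ by the defining identity.

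For the analogous points $(\lambda_1,-\lambda_2,\lambda_3)$ and $(\lambda_1,\lambda_2,-\lambda_3)$ in $V$, I will use the same argument with indices permuted. Here the hypotheses become $\lambda_2<a_{22}$ (respectively $\lambda_3<a_{33}$) and $\lambda_0<a_{00}$, and I will invoke the corresponding analogous part of Lemma \ref{lemma_catenary_help_primary}.

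The only real content is Lemma \ref{lemma_catenary_help_primary}, which is assumed. The main thing to watch is that the hypothesis $\lambda_1<a_{11}$ is exactly what rules out the first case, so no additional geometric input from the $L$-shape is needed. I do not expect a genuine obstacle.
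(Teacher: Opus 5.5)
Your proposal is correct and follows the paper's proof essentially verbatim. You use Lemma \ref{lemma_catenary_help_primary} to reduce to factorizations with $\mu_2=\mu_3=0$, then split on the sign of $\mu_0-\lambda_0$ to contradict the minimality of $a_{00}$ (using $\lambda_0<a_{00}$) or of $a_{11}$ (using $\lambda_1<a_{11}$).
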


\begin{proof}
Note that $\lambda_2< a_{22}$ and $\lambda_3 < a_{33}$ by the definition of $V$.
Suppose that there exists a factorization $ (\mu_0,\mu_1,\mu_2,\mu_3) \in \varphi^{-1}(\lambda_0d_0 + \lambda_1d_1)$ distinct from the two above. 
By Lemma \ref{lemma_catenary_help_primary}, we have $\mu_2 = \mu_3 = 0 $. 
If $\lambda_0 \geq \mu_0$, then
\begin{equation}
    (\lambda_0- \mu_0) d_0 =  (\mu_1 - \lambda_1)d_1
\end{equation}
where all coefficients above are nonnegative. Indeed, they are positive, since otherwise $(\mu_0,\mu_1,\mu_2,\mu_3) = (\lambda_0,\lambda_1,0,0)$. This contradicts $\lambda_0 < a_{00}$. If $\lambda_0 < \mu_0$, then $\lambda_1 > \mu_1$, and we get a contradiction with $\lambda_1 < a_{11}$. 

The same argument applies to $(\lambda_1,-\lambda_2,\lambda_3)$ and $(\lambda_1,\lambda_2,-\lambda_3) \in V$.
\end{proof}

\begin{lemma}\label{lemma_catenarycase1_primary}
    Suppose that $S$ is primary. Consider $(-\lambda_1,\lambda_2,\lambda_3) \in V$ satisfying
    $$ \lambda_0d_0 + \lambda_1d_1 = \lambda_2d_2 + \lambda_3d_3$$
    with $\lambda_i \in \mathbb{Z}_{>0}$, and $ \lambda_0 \geq a_{00}$ or $ \lambda_1 \geq a_{11}$. If $a_{00}d_0 \neq a_{11}d_1$, then $\lambda_0d_0 + \lambda_1d_1 \notin \textup{Betti}(S)$. Otherwise, if $a_{00}d_0 = a_{11}d_1$, then $\lambda_0d_0 + \lambda_1d_1 \in \textup{Betti}(S)$ and $\varphi^{-1}(\lambda_0d_0 + \lambda_1d_1) = \mathcal{F} \cup \{(0,0,\lambda_2,\lambda_3)\}$, where 
    $$\mathcal{F} =   \{(\lambda_0 + \lambda a_{00},\lambda_1 - \lambda a_{11},0,0) \mid \lambda \in \mathbb{Z}, \lambda_0 + \lambda a_{00} \geq 0, \lambda_1 - \lambda a_{11} \geq 0 \}. $$ 
    An analogous statement holds for $(\lambda_1,-\lambda_2,\lambda_3)$ and $(\lambda_1,\lambda_2,-\lambda_3) \in V$.
\end{lemma}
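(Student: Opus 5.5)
The plan is to first determine the complete set $\varphi^{-1}(b)$, where $b=\lambda_0d_0+\lambda_1d_1=\lambda_2d_2+\lambda_3d_3$, and then count its $\mathcal{R}$-classes.

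\textbf{Step 1: shape of the factorizations.} Recall from the definition of $V$ that $\lambda_2<a_{22}$ and $\lambda_3<a_{33}$, and that $[[0,\lambda_2-1,\lambda_3]]$ and $[[0,\lambda_2,\lambda_3-1]]$ lie in $T$. The argument of Lemma \ref{lemma_catenary_help_primary} should go through unchanged; at the point where it used $\lambda_0<a_{00}$, I will instead treat separately the case $\mu_2<\lambda_2,\ \mu_3<\lambda_3$ with $\lambda_1-\mu_1\le 0$. The expected conclusion is that every $(\mu_0,\mu_1,\mu_2,\mu_3)\in\varphi^{-1}(b)$ is either $(0,0,\lambda_2,\lambda_3)$ or has $\mu_2=\mu_3=0$. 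In the latter case, $\mu_0d_0+\mu_1d_1=\lambda_0d_0+\lambda_1d_1$, so $(\mu_0-\lambda_0)d_0=(\lambda_1-\mu_1)d_1$.

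\textbf{Step 2: the case $a_{00}d_0\neq a_{11}d_1$.} Here I will show that $b$ has a factorization meeting both $\{0,1\}$ and $\{2,3\}$, which makes all factorizations $\mathcal{R}$-related, so $b\notin\textup{Betti}(S)$.
\begin{itemize}
\item If $\lambda_0\ge a_{00}$, take a minimal relation $a_{00}d_0=a_{01}d_1+a_{02}d_2+a_{03}d_3$ and substitute it into $\lambda_0d_0+\lambda_1d_1$. This gives $(\lambda_0-a_{00},\lambda_1+a_{01},a_{02},a_{03})\in\varphi^{-1}(b)$. By Step 1 we need $a_{02}=a_{03}=0$, so $a_{00}d_0=a_{01}d_1$. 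Lemma \ref{lemma2cases}-style reasoning then forces $a_{01}=a_{11}$: if $a_{01}>a_{11}$, replace $a_{11}d_1$ by its own minimal relation and get a factorization with a $d_0$ term, contradicting minimality of $a_{00}$, or with mixed support. This gives $a_{00}d_0=a_{11}d_1$, a contradiction.
\item If $\lambda_1\ge a_{11}$, substitute a minimal relation of $d_1$ in the same way. Since $a_{11}=b_{11}$ is not assumed, I must split according to whether that relation involves $d_0$. If the relation has $d_2$ or $d_3$ support, the resulting factorization contradicts Step 1. Otherwise $a_{11}d_1=a_{10}d_0$, and the symmetric argument gives $a_{10}=a_{00}$.
\end{itemize}
So in this case Step 1 is contradicted unless the factorization meeting both blocks exists, and $b\notin\textup{Betti}(S)$ follows.

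\textbf{Step 3: the case $a_{00}d_0=a_{11}d_1$.} The identity $(\mu_0-\lambda_0)d_0=(\lambda_1-\mu_1)d_1$ then has solutions exactly at multiples of $(a_{00},-a_{11})$. This holds because $\gcd$ considerations, via the minimality of $a_{00}$ and $a_{11}$, show that $a_{00}d_0=a_{11}d_1=\operatorname{lcm}(d_0,d_1)$. This gives $\varphi^{-1}(b)=\mathcal{F}\cup\{(0,0,\lambda_2,\lambda_3)\}$. The family $\mathcal{F}$ is nonempty, and its supports lie in $\{0,1\}$. The lone element $(0,0,\lambda_2,\lambda_3)$ has support $\{2,3\}$, which is disjoint from the supports in $\mathcal{F}$, so there are at least two $\mathcal{R}$-classes and $b\in\textup{Betti}(S)$. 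The analogous statements for the other two types of points follow by permuting coordinates.

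\textbf{Main obstacle.} The delicate step is extending Lemma \ref{lemma_catenary_help_primary} without the hypothesis $\lambda_0<a_{00}$. In particular, I must rule out a factorization with $\mu_0=0$, $0<\mu_2<\lambda_2$ and $\mu_1\ge\lambda_1$. For this I plan to use the relation $(\mu_1-\lambda_1)d_1+\mu_2d_2+\mu_3d_3=\ldots$ together with Lemma \ref{lemma_procedure_1} and the cubes of $T$ adjacent to the point $(-\lambda_1,\lambda_2,\lambda_3)$, whose membership in $T$ follows from the minimality of $V$.
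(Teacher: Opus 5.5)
Step 1 is where your proposal breaks. As an unconditional claim it is false, so the plan in your last paragraph cannot succeed.

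When $a_{00}d_0\neq a_{11}d_1$, the lemma asserts $b\notin\textup{Betti}(S)$. Since $(\lambda_0,\lambda_1,0,0)$ and $(0,0,\lambda_2,\lambda_3)$ have disjoint supports, this is equivalent to the existence of a factorization of $b$ whose support meets both $\{0,1\}$ and $\{2,3\}$. That is exactly what Step 1 forbids. Your own Step 2 builds such factorizations. For instance, if $\lambda_0=a_{00}$ and $a_{02}+a_{03}>0$, then $(0,\lambda_1+a_{01},a_{02},a_{03})\in\varphi^{-1}(b)$. One checks $a_{01}>0$, $a_{02}<\lambda_2$ and $a_{03}<\lambda_3$, so this factorization has precisely your ``main obstacle'' shape.

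This case really occurs. In $\langle 196,225,256,289\rangle$ from Example \ref{example_T}, the cubes $[[5,10,0]]$ and $[[6,9,0]]$ lie in $T$ but $[[6,10,0]]$ does not. Hence $(6,10,-2)\in V$ with $17d_0+2d_3=6d_1+10d_2=3910$, and $(0,2,9,4)$ is a factorization of $3910$. So the analogue of Step 1 for this type of point, which you obtain ``by permuting coordinates'', fails.

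The tool you propose cannot detect this. From $\lambda_0d_0=(\mu_1-\lambda_1)d_1+\mu_2d_2+\mu_3d_3$, Lemma \ref{lemma_procedure_1} only excludes Ap\'ery cubes from the region associated to $(\mu_1-\lambda_1,\mu_2,\mu_3)$. For $\mu_1>\lambda_1$ that region contains none of $[[0,\lambda_2-1,\lambda_3]]$, $[[0,\lambda_2,\lambda_3-1]]$, $[[0,\lambda_2-1,\lambda_3-1]]$.

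For the same reason, Step 2 must not appeal to Step 1. The correct dichotomy is: if $a_{02}+a_{03}>0$, the substituted factorization already links the two blocks; otherwise $a_{00}d_0=a_{01}d_1$ and you continue. The second bullet needs the same rewording. With that change, Step 2 is the paper's argument.

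The missing idea is that Step 1 holds only when $a_{00}d_0=a_{11}d_1$, and you must use that hypothesis to prove it, not just to identify $\mathcal{F}$. Write $\lambda_0=qa_{00}+r$ with $0\le r<a_{00}$. Then $(r,\lambda_1+qa_{11},0,0)\in\mathcal{F}$. A point with negative first coordinate deletes the same cubes whatever that coordinate is, so you may use $(-(\lambda_1+qa_{11}),\lambda_2,\lambda_3)$ in place of $(-\lambda_1,\lambda_2,\lambda_3)$.

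If $r>0$, Lemma \ref{lemma_catenary_help_primary} now applies and gives Step 1; this is the paper's route.

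If $r=0$, that lemma does not apply, since it requires a positive $d_0$-coefficient, so argue directly:
\begin{itemize}
\item Take a factorization with $\mu_0=0$, $\mu_2<\lambda_2$, $\mu_3<\lambda_3$, $\mu_2+\mu_3>0$.
\item It gives $cd_1=\mu_2d_2+\mu_3d_3$ with $c=\lambda_1+qa_{11}-\mu_1\ge a_{11}$.
\item Hence $a_{00}d_0+(c-a_{11})d_1=\mu_2d_2+\mu_3d_3$.
\item Lemma \ref{lemma_procedure_1} applied at $(a_{11}-c,\mu_2,\mu_3)$ then contradicts $[[0,\lambda_2-1,\lambda_3-1]]\in T$.
\end{itemize}
The other subcases of Lemma \ref{lemma_catenary_help_primary} never used $\lambda_0<a_{00}$, so they go through unchanged.

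Once Step 1 is secured this way, the rest of Step 3 is correct. Your lcm identification of the $\{0,1\}$-supported factorizations with $\mathcal{F}$ is an equivalent substitute for the paper's division-with-remainder step, and your count of $\mathcal{R}$-classes is right.
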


\begin{proof}
    Note that $\lambda_2< a_{22}$ and $\lambda_3 < a_{33}$ by the definition of $V$.
    
    Suppose $a_{00}d_0 \neq a_{11}d_1$ and $\lambda_0 \geq a_{00}$. Consider $(0,a_{01},a_{02},a_{03}) \in \varphi^{-1}(a_{00}d_0)$. If $\max \{a_{02},a_{03}\} > 0$, then
    \begin{equation}
        (\lambda_0,\lambda_1,0,0) \, \mathcal{R} \, (\lambda_0 - a_{00},\lambda_1 + a_{01},a_{02},a_{03}) \, \mathcal{R} \, (0,0,\lambda_2,\lambda_3)
    \end{equation}
    which results in only one $\mathcal{R}$-class. Hence $a_{02}=a_{03}=0$ and $a_{00}d_0 = a_{01}d_1$. Since $a_{00}d_0 \neq a_{11}d_1$, this gives $a_{01} > a_{11}$. Consider $(a_{10},0,a_{12},a_{13}) \in \varphi^{-1}(a_{11}d_1)$. If $a_{10}>0$, then we get a contradiction with the minimality of $a_{00}$, hence $a_{10}=0$. This implies  $\max \{a_{12},a_{13}\} > 0$ and hence 
    \begin{equation}
        (\lambda_0,\lambda_1,0,0) \, \mathcal{R} \, (\lambda_0 - a_{00},\lambda_1 + a_{01},0,0) \, \mathcal{R} \, (\lambda_0 - a_{00},\lambda_1 + a_{01}-a_{11},a_{12},a_{13}) \, \mathcal{R} \, (0,0,\lambda_2,\lambda_3)
    \end{equation}
    which results in only one $\mathcal{R}$-class. 
    
    If $a_{00}d_0 \neq a_{11}d_1$ and $\lambda_0 < a_{00}$, then $\lambda_1 \geq a_{11}$ and we can apply the same argument as above. Thus the first result follows.

    Suppose $a_{00}d_0 = a_{11}d_1$. Clearly $\mathcal{F} \subseteq \varphi^{-1}(\lambda_0d_0 + \lambda_1d_1)$. Suppose that there exists $(\mu_0,\mu_1,\mu_2,\mu_3) \in \varphi^{-1}(\lambda_0d_0 + \lambda_1d_1)$ that is not in $\mathcal{F} \cup \{(0,0,\lambda_2,\lambda_3)\}$.  By Lemma \ref{lemma_catenary_help_primary}, we get $\mu_2=\mu_3 = 0$ --- we can apply this lemma, because in $\mathcal{F}$, there exists a factorization that satisfies its assumptions. Then, as $(\mu_0,\mu_1,0,0) \notin \mathcal{F}$, we can find $\lambda \in \mathbb{Z}$ such that $\lambda_0 + (\lambda+1)a_{00} > \mu_0 > \lambda_0 + \lambda a_{00}$. This gives
    \begin{equation}
        \lambda_0d_0 + \lambda_1d_1 = (\mu_0 - \lambda a_{00})d_0 + (\mu_1 + \lambda a_{11})d_1 \implies (\mu_0 -\lambda a_{00} - \lambda_0)d_0 = (\lambda_1 - \mu_1 - \lambda a_{11})d_1  
    \end{equation}
    which gives a contradiction, since $ a_{00} > (\mu_0 - \lambda_0 - \lambda a_{00}) > 0$.

    The same argument applies to $(\lambda_1,-\lambda_2,\lambda_3)$ and $(\lambda_1,\lambda_2,-\lambda_3) \in V$.
\end{proof}

Lemma \ref{lemma_catenary_minrelations} gives $a_{ii}d_i \in \textup{Betti}(S)$ for $i = 0,1,2,3$. Combined with Lemmas \ref{lemma_catenary_primary} and \ref{lemma_catenarycase1_primary}, we know exactly which elements of $U$ are Betti elements. Moreover, we get that every Betti element that corresponds to a $(2,2)$-type point has exactly two $\mathcal{R}$-classes.

\begin{lemma}\label{lemma_catenaryT_primary}
    Suppose that $S$ is primary. If $(-\lambda_1,\lambda_2,\lambda_3) \in V$ satisfies
    $$ \lambda_0d_0 + \lambda_1d_1 = \lambda_2d_2 + \lambda_3d_3 \notin \textup{Betti}(S)$$
    with $\lambda_i \in \mathbb{Z}_{>0}$, then $\lambda_0 = a_{00}$ and $\lambda_1 < a_{11}$. An analogous statement holds for $(\lambda_1,-\lambda_2,\lambda_3)$ and $(\lambda_1,\lambda_2,-\lambda_3) \in V$. 
\end{lemma}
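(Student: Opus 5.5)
We argue by contraposition, splitting on the value of $\lambda_0$ relative to $a_{00}$ and of $\lambda_1$ relative to $a_{11}$. Throughout, $b=\lambda_0d_0+\lambda_1d_1$ has the two factorizations $(\lambda_0,\lambda_1,0,0)$ and $(0,0,\lambda_2,\lambda_3)$, whose supports are disjoint. We also keep in mind that $\lambda_2<a_{22}$ and $\lambda_3<a_{33}$ by the definition of $V$.

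\textbf{Case $\lambda_0<a_{00}$ and $\lambda_1<a_{11}$.} Lemma \ref{lemma_catenary_primary} gives $\varphi^{-1}(b)=\{(\lambda_0,\lambda_1,0,0),(0,0,\lambda_2,\lambda_3)\}$. These two factorizations are not $\mathcal{R}$-related, so $b\in\textup{Betti}(S)$.

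\textbf{Case $\lambda_0\geq a_{00}$ or $\lambda_1\geq a_{11}$.} If $a_{00}d_0=a_{11}d_1$, Lemma \ref{lemma_catenarycase1_primary} gives $b\in\textup{Betti}(S)$ directly. So we may assume $a_{00}d_0\neq a_{11}d_1$ and $b\notin\textup{Betti}(S)$, and the task is to show $\lambda_0=a_{00}$ and $\lambda_1<a_{11}$. We do this by ruling out the remaining subcases with the geometry of $V$ and $T$.

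\textbf{Subcase $\lambda_0>a_{00}$.} Subtract a minimal relation $a_{00}d_0=a_{01}d_1+a_{02}d_2+a_{03}d_3$ from the $(2,2)$-type relation:
\[
(\lambda_0-a_{00})d_0+(\lambda_1+a_{01})d_1=(\lambda_2-a_{02})d_2+(\lambda_3-a_{03})d_3 .
\]
If $a_{02}\le\lambda_2$ and $a_{03}\le\lambda_3$, this produces the point $(-(\lambda_1+a_{01}),\lambda_2-a_{02},\lambda_3-a_{03})$. When the right-hand coefficients are positive it is a $(2,2)$-type point, or more generally a point whose associated region has already been deleted (Lemma \ref{lemma_procedure_1}). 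If $(a_{02},a_{03})\neq(0,0)$, its region strictly contains the region of $(-\lambda_1,\lambda_2,\lambda_3)$, contradicting the minimality of $V$. If $a_{02}=a_{03}=0$, the new point has the same $y,z$ coordinates and the region is again deleted, so $(-\lambda_1,\lambda_2,\lambda_3)$ could be replaced by it.

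The difficulty is what the minimality of $V$ actually forbids. A region containing another is redundant, but the $x$-coordinate is negative here, and making it more negative does not enlarge the region within $x\ge0$. I would therefore argue with the cube $[[0,\lambda_2-1,\lambda_3-1]]$, which lies in $T$ (as used in Lemma \ref{lemma_catenary_help_primary}). Alternatively, if $a_{02}>\lambda_2$ or $a_{03}>\lambda_3$, rearranging the relation gives an identity forcing $\lambda_2\ge a_{22}$ or $\lambda_3\ge a_{33}$, which is a contradiction.

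\textbf{Subcase $\lambda_1\geq a_{11}$.} Symmetrically, subtract a minimal relation $a_{11}d_1=a_{10}d_0+a_{12}d_2+a_{13}d_3$. Since $S$ is primary, $a_{11}=b_{11}$, so $a_{10}>0$ can be chosen. This yields $(\lambda_0+a_{10})d_0+(\lambda_1-a_{11})d_1=(\lambda_2-a_{12})d_2+(\lambda_3-a_{13})d_3$. From here we aim to produce a factorization of $b$ meeting both supports, i.e.\ with support including $1$ and one of $2,3$, or including $0$ and one of $2,3$. Such a factorization merges the two $\mathcal{R}$-classes, which is consistent with $b\notin\textup{Betti}(S)$. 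We then derive the contradiction from the $T$-membership of $[[0,\lambda_2-1,\lambda_3-1]]$, as in Lemma \ref{lemma_catenary_help_primary}.

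\textbf{Subcase $\lambda_0=a_{00}$, $\lambda_1\ge a_{11}$.} This falls under the previous subcase.

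The main obstacle is making rigorous that $\lambda_0>a_{00}$ or $\lambda_1\ge a_{11}$ contradicts the choice of $V$. This requires turning the minimality of the point set $V$, together with Lemma \ref{lemma_procedure_1} applied to appropriate shifted points, into a contradiction with $[[0,\lambda_2-1,\lambda_3-1]]\in T$. I would first try to show that the shifted relation yields a point whose region already contains $[[0,\lambda_2,\lambda_3]]$ but with strictly smaller $y$ or $z$ coordinate, which directly contradicts the minimality of $V$. The analogous statements for $(\lambda_1,-\lambda_2,\lambda_3)$ and $(\lambda_1,\lambda_2,-\lambda_3)$ follow by the same argument with the roles of $d_1$, $d_2$, $d_3$ permuted.
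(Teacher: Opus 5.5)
Your overall plan matches the paper's. Lemmas \ref{lemma_catenary_primary} and \ref{lemma_catenarycase1_primary} reduce to $a_{00}d_0\neq a_{11}d_1$ with $\lambda_0\ge a_{00}$ or $\lambda_1\ge a_{11}$. One then excludes $\lambda_1\ge a_{11}$ and $\lambda_0>a_{00}$ by subtracting a minimal relation, checking nonnegativity via $\lambda_2<a_{22}$ and $\lambda_3<a_{33}$, and applying Lemma \ref{lemma_procedure_1} to a region that contains a cube of $T$ adjacent to $[[0,\lambda_2,\lambda_3]]$.

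\textbf{The gap.} The proposal stops exactly where the work lies. Your treatment of the case where the subtracted relation already has a nonzero $d_2$- or $d_3$-part is fine. The subcase $\lambda_1\ge a_{11}$, however, is only an intention. The $\mathcal{R}$-class remark there is beside the point: a factorization merging the two classes is consistent with the hypothesis $b\notin\textup{Betti}(S)$, so it contradicts nothing. In the subcase $\lambda_0>a_{00}$, the degenerate case $a_{02}=a_{03}=0$, i.e.\ $a_{00}d_0=a_{01}d_1$, is not disposed of. Saying that $(-\lambda_1,\lambda_2,\lambda_3)$ ``could be replaced'' by $(-(\lambda_1+a_{01}),\lambda_2,\lambda_3)$ is not a contradiction. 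Your fallback of finding a point with strictly smaller $y$- or $z$-coordinate cannot succeed with this relation alone, because a relation between $d_0$ and $d_1$ never moves $y$ or $z$. The same degeneracy occurs in the other subcase when $a_{11}d_1=a_{10}d_0$. Choosing $a_{10}>0$ via $a_{11}=b_{11}$ does not exclude it, and that choice would not transfer to $(\lambda_1,\lambda_2,-\lambda_3)$, where $a_{33}<b_{33}$ is allowed.

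\textbf{The missing idea.} Chain the two minimal relations using $a_{00}d_0\neq a_{11}d_1$.

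If $a_{00}d_0=a_{01}d_1$, then $a_{01}>a_{11}$. Take any minimal relation $a_{11}d_1=a_{10}d_0+a_{12}d_2+a_{13}d_3$. If $a_{10}>0$, then $(a_{00}-a_{10})d_0=(a_{01}-a_{11})d_1+a_{12}d_2+a_{13}d_3$ with $0<a_{00}-a_{10}<a_{00}$, contradicting the minimality of $a_{00}$. So $a_{10}=0$ and $(a_{12},a_{13})\neq(0,0)$. Substituting both relations gives $(\lambda_0-a_{00})d_0+(\lambda_1+a_{01}-a_{11})d_1=(\lambda_2-a_{12})d_2+(\lambda_3-a_{13})d_3$ with $\lambda_0-a_{00}>0$, and this one does lower $y$ or $z$.

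Symmetrically, if $a_{11}d_1=a_{10}d_0$, then $a_{10}>a_{00}$. The minimality of $a_{11}$ forces $a_{01}=0$ in the minimal relation of $d_0$, so $(a_{02},a_{03})\neq(0,0)$. Substituting gives $(\lambda_0+a_{10}-a_{00})d_0+(\lambda_1-a_{11})d_1=(\lambda_2-a_{02})d_2+(\lambda_3-a_{03})d_3$.

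In every case you must still check $a_{12}\le\lambda_2$ and $a_{13}\le\lambda_3$ (resp.\ for $a_{02},a_{03}$); otherwise $\lambda_3\ge a_{33}$ or $\lambda_2\ge a_{22}$. The resulting point then has $x$-coordinate $\le 0$. Its region contains $[[0,\lambda_2-1,\lambda_3]]$ if the subtracted $d_2$-coefficient is positive, or $[[0,\lambda_2,\lambda_3-1]]$ if the $d_3$-coefficient is. It contains $[[0,\lambda_2-1,\lambda_3-1]]$ only when both are positive. Both cubes lie in $T$ by minimality of $V$, so Lemma \ref{lemma_procedure_1} gives the contradiction. Note that the contradiction comes from Lemma \ref{lemma_procedure_1}, not from the minimality of $V$ directly; minimality only guarantees that these cubes lie in $T$.
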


\begin{proof}

The condition $\lambda_0d_0 + \lambda_1d_1 \notin \textup{Betti}(S)$ implies $a_{00}d_0 \neq a_{11}d_1$, and $\lambda_0 \geq a_{00}$ or $\lambda_1 \geq a_{11}$, by Lemmas \ref{lemma_catenary_primary} and \ref{lemma_catenarycase1_primary}. Also, note that $\lambda_2< a_{22} $ and $\lambda_3 < a_{33} $ by the definition of $V$.

Suppose $\lambda_1 \geq a_{11}$. For $(a_{10},0,a_{12},a_{13}) \in \varphi^{-1}(a_{11}d_1)$, we have
\begin{equation}
     (\lambda_0+a_{10})d_0 + (\lambda_1-a_{11})d_1 =  (\lambda_2-a_{12})d_2 + (\lambda_3-a_{13})d_3
\end{equation}
where $(\lambda_1-a_{11}) \geq 0$. 
If $(\lambda_2-a_{12}) < 0$, then $\lambda_3 \geq (\lambda_3-a_{13}) \geq a_{33}$, which is a contradiction. If $(\lambda_3-a_{13}) < 0$, we get the same contradiction. Hence $(\lambda_2-a_{12}),(\lambda_3-a_{13}) \geq 0$.
If $\max \{a_{12},a_{13}\} > 0$, then by Lemma \ref{lemma_procedure_1}, no cube labeled with an element of the Ap{\'e}ry set is in the region associated to $(-\lambda_1+a_{11},\lambda_2-a_{12},\lambda_3-a_{13})$. This contradicts the fact $[[0,\lambda_2-1,\lambda_3-1]] \in T$, which holds since $(-\lambda_1,\lambda_2,\lambda_3) \in V$. 
Hence $a_{12} =a_{13}= 0$ and $a_{11}d_1 = a_{10}d_0$. This gives $a_{10} > a_{00}$, as $a_{00}d_0 \neq a_{11}d_1$. For $(0,a_{01},a_{02},a_{03}) \in \varphi^{-1}(a_{00}d_0)$, we have
\begin{equation}
    a_{11}d_1 = (a_{10}-a_{00})d_0 + a_{01}d_1 + a_{02}d_2 + a_{03}d_3
\end{equation}
which gives $a_{01} = 0$, as otherwise we would get a contradiction with the minimality of $a_{11}$. Therefore, we have $\max \{ a_{02},a_{03}\} > 0$ and
\begin{equation}
    (\lambda_0+a_{10}-a_{00})d_0 + (\lambda_1-a_{11} + a_{01})d_1 =  (\lambda_2-a_{02})d_2 + (\lambda_3-a_{03})d_3.
\end{equation}
Since $(\lambda_0+a_{10}-a_{00}) > 0$, no cube labeled with an element of the Ap{\'e}ry set is in the region associated to $(-\lambda_1+a_{11} - a_{01},\lambda_2-a_{02},\lambda_3-a_{03})$, by Lemma \ref{lemma_procedure_1}. Again, this contradicts the fact $[[0,\lambda_2-1,\lambda_3-1]] \in T$. Therefore $\lambda_1 <a_{11}$. 

If $\lambda_0 > a_{00}$, then for $(0,a_{01},a_{02},a_{03}) \in \varphi^{-1}(a_{00}d_0)$, we have
\begin{equation}
     (\lambda_0-a_{00})d_0 + (\lambda_1+a_{01})d_1 =  (\lambda_2-a_{02})d_2 + (\lambda_3-a_{03})d_3
\end{equation} 
where $(\lambda_0-a_{00}) > 0$. Proceeding as in the previous case, we obtain $a_{02}=a_{03}=0$ and $a_{01} > a_{11}$. Then, following the same approach further, we get $\max \{a_{12},a_{13}\} > 0$ and at last a contradiction. Therefore $\lambda_0 = a_{00}$.

The same argument applies to $(\lambda_1,-\lambda_2,\lambda_3)$ and $(\lambda_1,\lambda_2,-\lambda_3) \in V$.
\end{proof}

\subsection{Secondary case}

\begin{lemma}\label{lemmabettifind_secondary}
   Suppose that $S$ satisfies $a_{11} = b_{11}$ and $a_{22}d_2 = a_{33}d_3$. If $b \in \textup{Betti}(S)$ labels a cube in $R$, then $b = a_{22}d_2 = a_{33}d_3$.
\end{lemma}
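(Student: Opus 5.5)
I would follow the proof of Lemma \ref{lemmabettifind_primary}, with the $L$-shape $T = R \cap \{z \leq a_{33}\}$ of Proposition \ref{prop_secondary} in place of the primary one, together with the bijection $f$ and the identity $a_{22}d_2 = a_{33}d_3$. Since $b$ labels a cube in $R$, we have $b \in \textup{Ap}(S,d_0)$ by Theorem \ref{theoremprocedure}. Because $T$ is an $L$-shape, $b$ labels a cube $[[\lambda_1,\lambda_2,\lambda_3]] \in T$. Set $z_0 = (0,\lambda_1,\lambda_2,\lambda_3)$ and let $(\lambda_0',\lambda_1',\lambda_2',\lambda_3')$ be any other factorization of $b$. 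Subtracting gives
$$(\lambda_1-\lambda_1')d_1 + (\lambda_2-\lambda_2')d_2 + (\lambda_3-\lambda_3')d_3 = \lambda_0'd_0 .$$
I split into cases according to the signs of the coefficients, as in the primary lemma.

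If all three coefficients are nonnegative, then $\lambda_0' > 0$, and Lemma \ref{lemma_procedure_1} contradicts $[[\lambda_1,\lambda_2,\lambda_3]] \in R$. If exactly two are nonnegative, Lemma \ref{lemma_procedure_1} again forces $\lambda_0' = 0$. If exactly one is nonnegative, the single positive coefficient $c$ satisfies $c\,d_i = \lambda_0'd_0 + (\dots)$. For $i=1$ this gives $\lambda_1 \geq b_{11} = a_{11}$ if $\lambda_0'>0$, and $\lambda_1 \geq a_{11}$ otherwise, contradicting $R \subseteq \{x < b_{11}\}$. For $i=3$ it gives $\lambda_3 \geq a_{33}$. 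Since $T \subseteq \{z \leq a_{33}\}$, this forces $\lambda_3 = a_{33}$ exactly, which I will need to treat separately. For $i = 2$ it gives $\lambda_2 \geq a_{22}$, so $f$ is available.

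The main tool for the borderline cubes is the bijection $f$ of Proposition \ref{prop_secondary}. If $\lambda_2 \geq a_{22}$, the factorization $(0,\lambda_1,\lambda_2-a_{22},\lambda_3+a_{33})$ of $b$ shares support with $z_0$ unless $\lambda_1 = 0$ and $\lambda_2 = a_{22}$ and $\lambda_3 = 0$, which is exactly $b = a_{22}d_2$. The case $\lambda_3 = a_{33}$ is symmetric, using $f^{-1}$. So I will first reduce to the situation where $z_0$ can be chosen with $\lambda_2 < a_{22}$ and $\lambda_3 < a_{33}$, or else $b = a_{22}d_2$. Here $R \cap \{y < a_{22}, z < a_{33}\}$ should behave like the primary box.

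In the reduced situation, take $z = (0,\mu_1,\mu_2,\mu_3)$ not $\mathcal{R}$-related to $z_0$. Every factorization has zero $d_0$-coordinate, by the Lemma \ref{lemma_procedure_1} step applied to every factorization. By the sign analysis, $z$ and $z_0$ have disjoint supports whose union has size at least $2$, and one side is a single generator. The options are $\lambda_1d_1 + \lambda_2d_2 = \mu_3d_3$, or $\lambda_1d_1 + \lambda_3d_3 = \mu_2d_2$, or $\lambda_2d_2 + \lambda_3d_3 = \mu_1d_1$, possibly with one of the $\lambda$'s zero.

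The last option forces $\mu_1 \geq a_{11} = b_{11}$, so there is a factorization of $b$ involving $d_0$, which is impossible. In the first option, $\mu_3 \geq a_{33}$ allows the rewrite $\mu_3d_3 = (\mu_3-a_{33})d_3 + a_{22}d_2$. This creates a factorization sharing $d_3$ with $z$ (if $\mu_3 > a_{33}$) and $d_2$ with $z_0$ (if $\lambda_2 > 0$), which chains them together. The same works in the second option with the roles of $d_2$ and $d_3$ exchanged. The remaining subcases, such as $\mu_3 = a_{33}$ or $\lambda_2 = 0$, must be pushed to $b = a_{22}d_2 = a_{33}d_3$ or to a contradiction with the minimality of $a_{22}$ or $a_{33}$. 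For instance, $\lambda_1d_1 = \mu_3d_3$ with $\mu_3 \geq a_{33}$ rewrites as $(\mu_3-a_{33})d_3 + a_{22}d_2$, and if $\mu_3 > a_{33}$ this contradicts the minimality of $a_{11}$ relative to the available factorizations.

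I expect the main obstacle to be this bookkeeping of boundary cases, where a coordinate equals $a_{22}$ or $a_{33}$ exactly or some $\lambda_i$ is zero. There it is easy to miss a chain of $\mathcal{R}$-relations, and the relation $a_{22}d_2 = a_{33}d_3$ has to be used carefully in both directions.
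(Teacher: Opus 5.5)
The gap is in your reduction step. You assert that either $z_0$ can be chosen with $\lambda_2<a_{22}$ and $\lambda_3<a_{33}$, or $b=a_{22}d_2$. Your only justification is that $z_0$ and $f(z_0)=(0,\lambda_1,\lambda_2-a_{22},\lambda_3+a_{33})$ share a generator, and the dichotomy is false.

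Every cube of $R$ in the box $\{y<a_{22},z<a_{33}\}$ belongs to the $L$-shape $T$, whose labels are distinct. So if the unique cube of $T$ labeled $b$ has $\lambda_2\ge a_{22}$, then no cube labeled $b$ lies in the box at all. This does happen: $T$ is truncated only in the $z$-direction, and $a_{22}<b_{22}$ when $S$ is secondary. For example, in $\langle 145,203,74,111\rangle$ the Ap\'ery element $425=203+3\cdot 74=203+2\cdot 111$ labels only $[[1,3,0]]$ and $[[1,0,2]]$, and $425\neq a_{22}d_2=222$.

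Knowing $z_0\,\mathcal{R}\,f(z_0)$ does not settle this case. A Betti element only needs one further factorization outside that class, and your box argument, which is where you rule such factorizations out, never applies here. So the cubes of $T$ with $\lambda_2\ge a_{22}$ are left untreated.

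Two minor points. Cubes of $T$ have third coordinate at most $a_{33}-1$, as in the proof of Lemma \ref{lemma_type-corners_primary}, so your case $i=3$ is an immediate contradiction and there is no separate case $\lambda_3=a_{33}$. Also, in the box, $\lambda_1d_1=\mu_3d_3$ contradicts the minimality of $a_{11}$ directly, because $0<\lambda_1<b_{11}=a_{11}$.

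The hole can be closed inside your own framework, without any box:
\begin{itemize}
\item Suppose $\lambda_2\ge a_{22}$ and some factorization has $\lambda_0'>0$. The only sign case not already excluded is $i=2$, and there the minimality of $b_{22}$ gives $\lambda_2\ge b_{22}$, contradicting $[[\lambda_1,\lambda_2,\lambda_3]]\in R$. So no factorization of $b$ uses $d_0$.
\item A factorization $z'$ with support disjoint from that of $z_0$ then has $\lambda_2'=0$. It is one of: $(0,\mu_1,0,0)$, which is impossible since $\mu_1\ge a_{11}=b_{11}$ would produce a factorization using $d_0$; $(0,0,0,\mu_3)$ with $\lambda_3=0$; or $(0,\lambda_1',0,\lambda_3')$ with $z_0=(0,0,\lambda_2,0)$.
\item In the last two cases $f(z_0)$ shares $d_3$ with $z'$, and shares a generator with $z_0$ unless $z_0=(0,0,a_{22},0)$, i.e.\ unless $b=a_{22}d_2$.
\end{itemize}

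The paper avoids the $T$/box split entirely. It starts from an arbitrary cube of $R$ labeled $b$ and reduces every other factorization modulo $a_{22}d_2=a_{33}d_3$. It then uses the fact that $b\in\textup{Ap}(S,d_0)$ cannot label a cube with $x\ge b_{11}$ (Theorem \ref{theoremprocedure}). This shows that $\varphi^{-1}(b)$ is exactly $\{(0,\lambda_1,\lambda_2-\lambda a_{22},\lambda_3+\lambda a_{33})\}$, a set with two $\mathcal{R}$-classes only when $b=a_{22}d_2$.
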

    
\begin{proof}
    Consider a factorization $(0,\lambda_1,\lambda_2, \lambda_3) \in \varphi^{-1}(b)$ with $[[\lambda_1,\lambda_2,\lambda_3]] \in R$. Let $(\lambda_0',\lambda_1',\lambda_2', \lambda_3')$ be a distinct factorization of $b$. We have
    \begin{equation}\label{betticharacter_eq0a}
        (\lambda_1-\lambda_1')d_1 +  (\lambda_2-\lambda_2')d_2 +  (\lambda_3-\lambda_3')d_3 = \lambda_0'd_0.
    \end{equation}
    \begin{itemize}
    \item 
    If all coefficients on the left-hand side of \eqref{betticharacter_eq0a} are nonnegative, then $\lambda_0'>0$, as the two factorizations are distinct. This contradicts $[[\lambda_1,\lambda_2,\lambda_3]] \in R$ by Lemma \ref{lemma_procedure_1}.

    \item 
    Suppose that exactly one coefficient on the left-hand side of \eqref{betticharacter_eq0a} is nonnegative. If $(\lambda_1-\lambda_1')$ is the nonnegative coefficient, then $\lambda_1 \geq  (\lambda_1-\lambda_1') \geq a_{11}=b_{11}$, which contradicts $[[\lambda_1,\lambda_2,\lambda_3]] \in R$. Therefore, without loss of generality, suppose that $(\lambda_2-\lambda_2')$ is the nonnegative coefficient. This gives 
    \begin{equation}
          (\lambda_2-\lambda_2')d_2    = \lambda_0'd_0 + (\lambda_1'-\lambda_1)d_1 + (\lambda_3'-\lambda_3)d_3
    \end{equation}
    where every coefficient above is nonnegative, which implies $ (\lambda_2-\lambda_2') \geq a_{22}$ (since $(\lambda_2-\lambda_2') \neq 0$, as $(\lambda_3'-\lambda_3) > 0$). If $\lambda_0' > 0$, then $\lambda_2 \geq  (\lambda_2-\lambda_2') \geq b_{22}$, which contradicts $[[\lambda_1,\lambda_2,\lambda_3]] \in R$, hence $\lambda_0' = 0$.  
    There exists $\lambda \in \mathbb{N}$ such that $a_{22} > \lambda_2-\lambda_2' - \lambda a_{22} \geq 0$. Using $a_{22}d_2 = a_{33}d_3$, we obtain 
    \begin{equation}
          (\lambda_2-\lambda_2'- \lambda a_{22} )d_2    =  (\lambda_1'-\lambda_1)d_1 + (\lambda_3'-\lambda_3-\lambda a_{33} )d_3.
    \end{equation}
    If $(\lambda_3'-\lambda_3-\lambda a_{33} ) > 0$, then we get a contradiction with $a_{22} > \lambda_2-\lambda_2' - \lambda a_{22}$, hence every coefficient below is nonnegative.
    \begin{equation}
          (\lambda_2-\lambda_2'- \lambda a_{22} )d_2 + ( \lambda_3 -\lambda_3'+ \lambda a_{33})d_3    =  (\lambda_1'-\lambda_1)d_1.
    \end{equation}
    If $(\lambda_1'-\lambda_1) >0$, then $\lambda_1' \geq (\lambda_1'-\lambda_1) \geq a_{11} = b_{11}$. By Theorem \ref{theoremprocedure}, $R$ is the collection of all cubes labeled with an element of the Ap{\'e}ry set. This implies that $b$ is an element of the Ap{\'e}ry set, as it labels $[[\lambda_1,\lambda_2,\lambda_3]] \in R$. However, $b$ also labels $[[\lambda_1',\lambda_2',\lambda_3']]$ (recall $\lambda_0' = 0$), which is not in $R$ since $\lambda_1' \geq b_{11}$ --- contradiction.

    Hence $(\lambda_1'-\lambda_1) = 0$ and $(\lambda_0',\lambda_1',\lambda_2', \lambda_3') = (0,\lambda_1,\lambda_2 - \lambda a_{22} , \lambda_3 + \lambda a_{33} )$. As $(\lambda_0',\lambda_1',\lambda_2', \lambda_3') \in \varphi^{-1}(b)$ was chosen arbitrarily, we obtain
    \begin{equation}\label{betticharacter_factor}
        \varphi^{-1}(b) = \{ (0,\lambda_1,\lambda_2 - \lambda a_{22} , \lambda_3 + \lambda a_{33} ) \mid \lambda \in \mathbb{Z}, \lambda_2 - \lambda a_{22} \geq 0 , \lambda_3 + \lambda a_{33} \geq 0\}.
    \end{equation}
    Therefore $b = a_{22}d_2 = a_{33}d_3$, as otherwise $b$ would have only one $\mathcal{R}$-class, which would contradict $b \in \textup{Betti}(S)$.

    \item 
    Suppose that exactly two coefficients on the left-hand side of \eqref{betticharacter_eq0a} are nonnegative. 
    
    If $(\lambda_1-\lambda_1') < 0$, then
    \begin{equation}
        (\lambda_2-\lambda_2')d_2 +  (\lambda_3-\lambda_3')d_3 = \lambda_0'd_0 +  (\lambda_1'-\lambda_1)d_1
    \end{equation}
    where every coefficient above is nonnegative. If $\lambda_0' > 0$, we get a contradiction with $[[\lambda_1,\lambda_2,\lambda_3]] \in  R$ by Lemma \ref{lemma_procedure_1}. Hence $\lambda_0' = 0$, which implies $\lambda_1' \geq (\lambda_1'-\lambda_1) \geq a_{11} = b_{11}$. As before, $b$ labels $[[\lambda_1',\lambda_2',\lambda_3']]$, which is not in $R$ since $\lambda_1' \geq b_{11}$, contradicting Theorem \ref{theoremprocedure}.  

    Suppose $(\lambda_2-\lambda_2')<0$ (the case $(\lambda_3-\lambda_3') < 0$ follows analogously). This gives
    \begin{equation}
        (\lambda_1-\lambda_1')d_1 +  (\lambda_3-\lambda_3')d_3 = \lambda_0'd_0 +  (\lambda_2'-\lambda_2)d_2
    \end{equation}
    where every coefficient above is nonnegative. If $\lambda_0' > 0$, we get a contradiction with $[[\lambda_1,\lambda_2,\lambda_3]] \in R$ by Lemma \ref{lemma_procedure_1}. Hence $\lambda_0' = 0$, which implies $\lambda_2' \geq (\lambda_2'-\lambda_2) \geq a_{22}$.  There exists $\lambda \in \mathbb{N}$ such that $a_{22} > \lambda_2'-\lambda_2 - \lambda a_{22} \geq 0$. Using $a_{22}d_2 = a_{33}d_3$, we obtain 
    \begin{equation}
               (\lambda_1-\lambda_1')d_1 + (\lambda_3-\lambda_3'-\lambda a_{33} )d_3 = (\lambda_2'-\lambda_2- \lambda a_{22} )d_2.
    \end{equation}
    If $(\lambda_3-\lambda_3'-\lambda a_{33} ) > 0$, then we get a contradiction with $a_{22} > \lambda_2'-\lambda_2 - \lambda a_{22}$, hence every coefficient below is nonnegative.
    \begin{equation}
               (\lambda_1-\lambda_1')d_1 = (\lambda_3'-\lambda_3+\lambda a_{33} )d_3 + (\lambda_2'-\lambda_2- \lambda a_{22} )d_2.
    \end{equation}
     If $(\lambda_1-\lambda_1')$ is positive, then we get $\lambda_1 \geq (\lambda_1-\lambda_1') \geq a_{11} = b_{11}$, which contradicts $[[\lambda_1,\lambda_2,\lambda_3]] \in R$. 
     Hence $(\lambda_1-\lambda_1') = 0$, $(\lambda_0',\lambda_1',\lambda_2', \lambda_3') = (0,\lambda_1,\lambda_2 + \lambda a_{22} , \lambda_3 - \lambda a_{33} )$, and we obtain \eqref{betticharacter_factor}. This again gives $b =a_{22}d_2 =a_{33}d_3$. \qedhere   
    \end{itemize}  
\end{proof}

Let $V' \subset \mathbb{Z}^3$ be a minimal set (with respect to inclusion) of points $(x,y,z)$ such that $xd_1 + yd_2 +zd_3$ is a positive multiple of $d_0$ and such that removing from the initial collection of cubes the regions associated to the points of $V'$ yields $R$. 
Define
$$ U' = \{ \max\{ x,0\} d_1 + \max\{y,0\}d_2 + \max\{z,0\}d_3 \mid (x,y,z) \in V' \}.$$
Note that $U'$ is independent of the choice of $V'$. Similarly as for $V$, assume that every point $(x,y,z) \in V'$ such that
$$ \max\{ x,0\} d_1 + \max\{y,0\}d_2 + \max\{z,0\}d_3 \notin \{(a_{00}d_0,b_{11}d_1,b_{22}d_2,b_{33}d_3)\}$$
is a $(2,2)$-type point. Such $V'$ exists by Theorem \ref{theoremprocedure}.

\begin{lemma}\label{lemmaU_secondary}
    Suppose that $S$ satisfies $a_{11} = b_{11}$ and $a_{22}d_2 = a_{33}d_3$. Then $ U' \cup \{a_{33}d_3\} = U \cup \{b_{33}d_3\}$.
\end{lemma}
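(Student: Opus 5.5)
The plan is to compare the two minimal deletion sets $V$ (producing $T=R\cap\{z\le a_{33}\}$) and $V'$ (producing $R$) directly. Since $a_{11}=b_{11}$, both $V$ and $V'$ contain the ``wall'' in the $x$-direction giving $b_{11}d_1$. They also contain the points from $a_{00}d_0$ and those giving $b_{22}d_2$. The essential difference is the $z$-wall: $V$ must cut off $\{z>a_{33}\}$, contributing $a_{33}d_3$, while $V'$ cuts off $\{z>b_{33}\}$, contributing $b_{33}d_3$. The claimed identity says that, apart from this swap, the remaining generators coincide. So I would prove two inclusions: $U\setminus\{a_{33}d_3\}\subseteq U'$ and $U'\setminus\{b_{33}d_3\}\subseteq U$.

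\textbf{Step 1.} Characterize the points of $V'$ intrinsically as the minimal (with respect to $\ge$) lattice points whose associated region contains no cube of $R$. Equivalently, they are the minimal cubes outside $R$ whose lower neighbours all lie in $R$ (the ``corners'' of the complement). Similarly, the points of $V$ are the minimal cubes outside $T$. Because $T=R\cap\{z\le a_{33}\}$, a minimal cube outside $T$ either has $z$-coordinate exactly $a_{33}$ with the cube below in $R$, which gives the single generator $a_{33}d_3$ via the point $(-a_{31},-a_{32},a_{33})$ and the relation $a_{33}d_3=a_{22}d_2$, or has $z<a_{33}$ and is then a minimal cube outside $R$. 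Hence every element of $U$ other than $a_{33}d_3$ lies in $U'$.

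\textbf{Step 2.} Take a point $P\in V'$ with corner cube $[[x,y,z]]$, $z\ge a_{33}$, whose label is not $b_{33}d_3$. I must show its label also arises from a point of $V$. Here I use the bijection $f$ of Proposition~\ref{prop_secondary}. Translating by $(0,-a_{22}\cdot(\text{sign}),\ldots)$, i.e.\ applying $f^{-1}$ repeatedly, sends $R\cap\{z\ge a_{33}\}$ to $R\cap\{y\ge a_{22}\}$ preserving labels. I would argue that $f^{-1}$ also maps corners of the complement of $R$ in $\{z\ge a_{33}\}$ to corners in $\{y\ge a_{22}\}$, since the whole configuration is invariant under the shift $(0,a_{22},-a_{33})$ in the relevant slab. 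After enough shifts the corner lies in $\{z<a_{33}\}$, and it carries the same label $\max\{x,0\}d_1+\cdots$ because of $a_{22}d_2=a_{33}d_3$. It is then a corner for $T$ as well, so the label lies in $U$. The exceptional corners are those whose shift would exceed $y\le b_{22}$ or which sit on the top face $z=b_{33}$; the latter give exactly $b_{33}d_3$.

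\textbf{Main obstacle.} I expect the hardest step to be making Step 2 rigorous: checking that the shift really maps minimal corners to minimal corners, including at the boundary $y=a_{22}$, $z=a_{33}$. I also need to handle $(2,2)$-type points whose negative coordinate is in $y$ or $z$, where the max-label must be tracked. I would verify this using the injections $f,f^{-1}$ from Proposition~\ref{prop_secondary} together with Theorem~\ref{theoremprocedure}: a cube is in $R$ iff its label is Apéry, and labels are shift-invariant. This makes membership in $R$ of the neighbours of the shifted cube equivalent to that of the original neighbours, as long as the shifted neighbours remain in the octant. The cases where a neighbour leaves the octant would be handled separately; they should force the point to be one of the walls $(-b_{21},b_{22},-b_{23})$ or $(\ast,\ast,b_{33})$.
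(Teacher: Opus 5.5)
Your route is the paper's: keep the points of $V'$ below height $a_{33}$, and push the others down with the bijection $f^{-1}$ of Proposition~\ref{prop_secondary}. Step~1 is fine. It is cleanest to note that $U$ and $U'$ are exactly the sets of labels of the minimal cubes of $\mathbb{N}^3\setminus T$ and of $\mathbb{N}^3\setminus R$. The only minimal cube of $\mathbb{N}^3\setminus T$ with third coordinate $\ge a_{33}$ is $[[0,0,a_{33}]]$. Step~2 is also fine for a minimal cube $C=[[x,y,z]]$ of $\mathbb{N}^3\setminus R$ with $z\ge a_{33}$ and $y\ge 1$. The shifted cube $[[x,y+a_{22},z-a_{33}]]$ has the label of $C$, and each of its existing lower neighbours has the label of an existing lower neighbour of $C$. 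So by Theorem~\ref{theoremprocedure} it is again minimal, and you can iterate until the height drops below $a_{33}$.

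\textbf{The gap.} You leave the case $y=0$, $x\ge 1$ as an expectation. This is a $(2,2)$-type point $(x,-\lambda_2,z)\in V'$ with $z\ge a_{33}$ and $\lambda_0d_0+\lambda_2d_2=xd_1+zd_3$. Here the shift invariance genuinely fails. The lower $y$-neighbour $[[x,a_{22}-1,z-a_{33}]]$ of the shifted cube is not the shift of any cube. Moreover, if $\lambda_2<a_{22}$, rewriting $zd_3=a_{22}d_2+(z-a_{33})d_3$ gives $\lambda_0d_0=xd_1+(a_{22}-\lambda_2)d_2+(z-a_{33})d_3$. By Lemma~\ref{lemma_procedure_1} that neighbour is then not in $R$, so the shifted cube is not minimal at all. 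Nor is this a wall case: $(-b_{21},b_{22},-b_{23})$ clamps to $[[0,b_{22},0]]$ at height $0$, and $[[0,0,b_{33}]]$ has $x=0$.

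\textbf{How to close it.} What you need is a proof that such cubes do not exist. This uses the secondary structure $\varphi^{-1}(a_{00}d_0)=\{(a_{00},0,0,0),(0,a_{11},0,0)\}$, which is the situation where the lemma is applied (for ordered primary $S$ the hypotheses force $a_{33}=b_{33}$, hence $T=R$).
\begin{itemize}
\item Let $C=[[x,0,z]]$ be minimal in $\mathbb{N}^3\setminus R$ with $x\ge 1$ and $z\ge a_{33}$. Since $C\notin R$, its label has a factorization $(\mu_0,\mu_1,\mu_2,\mu_3)$ with $\mu_0>0$.
\item Because $[[x-1,0,z]],[[x,0,z-1]]\in R$, this forces $\mu_1=\mu_3=0$. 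Hence $xd_1+(a_{22}-\mu_2)d_2+(z-a_{33})d_3=\mu_0d_0$.
\item If $\mu_2\ge a_{22}$, Lemma~\ref{lemma_procedure_1} applied to the point $(x,a_{22}-\mu_2,z-a_{33})$ contradicts $[[x,0,z-1]]\in R$.
\item If $\mu_2<a_{22}$, this factorization of $\mu_0 d_0$ has two positive entries, so $\mu_0>a_{00}$. Subtract $a_{00}d_0=a_{11}d_1$ and apply Lemma~\ref{lemma_procedure_1} to $(x-a_{11},a_{22}-\mu_2,z-a_{33})$. This contradicts $[[x-1,a_{22},z-a_{33}]]=f^{-1}([[x-1,0,z]])\in R$.
\end{itemize}
Thus the only minimal cube of $\mathbb{N}^3\setminus R$ with $y=0$ and $z\ge a_{33}$ is $[[0,0,b_{33}]]$, which completes your Step~2. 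The paper handles this case with the word ``analogously''; the point is that the case is vacuous, not that the shift works there.
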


\begin{proof}
    Recall that the boundary values of $R$ and $T$ in the $x$-, $y$-, and $z$-directions are $b_{11}$, $b_{22}$, $b_{33}$ and $b_{11}$, $b_{22}$, $a_{33}$, respectively.

    Consider $(-\lambda_1,\lambda_2,\lambda_3) \in V'$ where $\lambda_i > 0$ and $\lambda_3 \geq a_{33}$. There exists $\lambda \in \mathbb{N}$ such that $a_{33} > \lambda_3 - \lambda a_{33} \geq 0$. Then, we have $(-\lambda_1,\lambda_2 + \lambda a_{22},\lambda_3-\lambda a_{33}) \in V$, since $T = R \cap \{z \leq a_{33}\}$ and $[[a,b,c]] \in R \cap \{y \geq a_{22}\}$ if and only if $[[a,b-a_{22},c+a_{33}]] \in R \cap \{z \geq a_{33}\}$ by Proposition \ref{prop_secondary}.
    
    We proceed analogously for any $(\lambda_1,-\lambda_2,\lambda_3) \in V'$ with $\lambda_3 \geq a_{33}$  (and $\lambda_i > 0$). We also have that any $(\lambda_1,\lambda_2,-\lambda_3) \in V'$ ($\lambda_i > 0$) is in $V$, since $T = R \cap \{z \leq a_{33}\}$. Consequently, $ U' \cup \{a_{33}d_3\} = U \cup \{b_{33}d_3\}$.    
\end{proof}

\begin{thm}\label{theorembettifind_secondary}
If $S$ is secondary, then $\textup{Betti}(S) \subseteq U \cup \{b_{33}d_3\}$. 
\end{thm}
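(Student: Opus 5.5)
The plan is to mirror the proof of Theorem \ref{theorembettifind_primary}, but to work with $R$ and $V'$ in place of $T$ and $V$, and then transfer the conclusion using Lemma \ref{lemmaU_secondary}. Concretely, I first show $\textup{Betti}(S) \subseteq U' \cup \{a_{33}d_3\}$. Since Lemma \ref{lemmaU_secondary} gives $U' \cup \{a_{33}d_3\} = U \cup \{b_{33}d_3\}$, this yields the claim. Recall also that $a_{22}d_2 = a_{33}d_3$ lies in $U$, because $(b_{21}\cdot 0, \dots)$-type boundary points of $T$ in the $z$-direction give $a_{33}d_3 \in U$.

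Let $b \in \textup{Betti}(S)$ and suppose $b \notin U' \cup \{a_{33}d_3\}$. As in the primary case, some factorization $z_0 = (0,\kappa_1,\kappa_2,\kappa_3)$ of $b$ exists. Otherwise every factorization contains $0$ in its support, and $b$ would have a single $\mathcal{R}$-class. By Lemma \ref{lemmabettifind_secondary}, $b$ does not label any cube of $R$, since the only exception is $a_{33}d_3$. Hence $[[\kappa_1,\kappa_2,\kappa_3]]$ lies in the region associated to some $P \in V'$. I then split into cases according to the type of $P$.

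\textbf{Case: $P$ is a $(2,2)$-type point.} The argument is verbatim the first bullet of Theorem \ref{theorembettifind_primary}. The two factorizations $z_1$ and $z_2$ obtained from $P$ and the offsets $\delta_i$ are $\mathcal{R}$-related, and together their supports cover $\{0,1,2,3\}$. Hence $b$ has only one $\mathcal{R}$-class, a contradiction.

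\textbf{Case: $P = (a_{01},a_{02},a_{03})$.} For a secondary semigroup this point is $(a_{11},0,0)$, since $\varphi^{-1}(a_{00}d_0)=\{(a_{00},0,0,0),(0,a_{11},0,0)\}$. It coincides with the boundary $x>b_{11}$, because $a_{11}=b_{11}$. Then $b = a_{11}d_1 + \delta\cdot d$, and I produce $z_1=(a_{00},\delta_1,\delta_2,\delta_3)$ and $z_2=(0,a_{11}+\delta_1,\delta_2,\delta_3)$. If $\delta_2$ or $\delta_3$ is positive, both $z_1$ and $z_2$ share that index; otherwise $\delta_1>0$ and they share index $1$. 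In either case they are $\mathcal{R}$-related. A factorization $z$ of $b$ not related to them must avoid $0$, $1$, and the indices where $\delta_i>0$. So $z$ is supported in $\{2,3\}$, and then $\delta_1>0$ and $\delta_2=\delta_3=0$, giving $b=(a_{11}+\delta_1)d_1$.

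I then reduce $z=(0,0,\mu_2,\mu_3)$ using $a_{22}d_2=a_{33}d_3$ to a factorization with $\mu_2<a_{22}$. If $\mu_3\ge a_{33}$, the cube $[[0,\mu_2,\mu_3]]$ lies in $\{z\ge a_{33}\}$, so by the bijection of Proposition \ref{prop_secondary} I can move to a cube inside $T$ or a cube covered by a $(2,2)$-type point. If it is covered by a $(2,2)$-type point $(-\nu_1,\nu_2,\nu_3)$, I obtain a factorization with $d_0$ in its support that also shares index $2$ or $3$ with $z$. This links $z$ to $z_1$, a contradiction, unless $b\in U'$. If the cube lies in $R$, Lemma \ref{lemmabettifind_secondary} forces $b=a_{33}d_3$.

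\textbf{Case: $P$ is a boundary point $(-b_{21},b_{22},-b_{23})$ or $(-b_{31},-b_{32},b_{33})$.} I argue as in the third bullet of the primary proof. Using $b_{20}>0$ and the relation $a_{22}d_2=a_{33}d_3$, any leftover factorization supported on a single index $1$ or $3$ is linked back to $z_1$. A factorization supported on $\{1\}$ alone is handled via the bullet for $(a_{11},0,0)$.

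I expect the main obstacle to be the $(a_{11},0,0)$ case. Unlike the primary setting, there is no bound $\mu_3<a_{33}$ automatically, so I must use Proposition \ref{prop_secondary} carefully to shift the cube back into $T$ without losing the $\mathcal{R}$-class information. I would first try to show directly that $z$ is $\mathcal{R}$-related to $(0,0,\mu_2+a_{22},\mu_3-a_{33})$, and iterate.
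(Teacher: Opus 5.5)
Your skeleton is the paper's: reduce to $U'\cup\{a_{33}d_3\}$ via Lemma \ref{lemmaU_secondary}, take a factorization of $b$ with $0\notin\textup{supp}$, note that its cube lies outside $R$ by Lemma \ref{lemmabettifind_secondary}, and split on the point $P\in V'$ whose region contains it. The $(2,2)$-type case is fine. The gap is in the case $P=(a_{11},0,0)$ and in how it interacts with the boundary case.

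\begin{itemize}
\item From ``$z$ avoids $0$, $1$ and every $i$ with $\delta_i>0$'' you cannot conclude $\delta_2=\delta_3=0$. If $z=(0,0,0,\mu_3)$, only $\delta_3=0$ follows. This is harmless, since you never use it.
\item More seriously, the bijection of Proposition \ref{prop_secondary} is defined only on cubes of $R$. But $[[0,\mu_2,\mu_3]]\notin R$, since otherwise $b=a_{33}d_3$. So that bijection does not let you ``move to a cube inside $T$ or a cube covered by a $(2,2)$-type point''. After shifting by $a_{22}d_2=a_{33}d_3$, the cube can equally be covered by a boundary point: $(-b_{21},b_{22},-b_{23})$ if you push $y$ up, or $(-b_{31},-b_{32},b_{33})$ if you keep $\mu_2<a_{22}$. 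You do not treat this subcase.
\item You also cannot settle that subcase by appealing to your boundary bullet. That bullet sends the leftover factorization supported on $\{1\}$ back to the $(a_{11},0,0)$ bullet, so the argument would be circular.
\end{itemize}

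The repair is the paper's order of cases: make the boundary case self-contained, then let the $(a_{11},0,0)$ case reduce to it.

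In either boundary case, the three factorizations $z_1,z_2,z_3$ (the last obtained from $a_{22}d_2=a_{33}d_3$) are $\mathcal{R}$-related and cover indices $0,2,3$. So the only possible unrelated factorization is $z=(0,\mu_1,0,0)$. Since $b$ also has a factorization with positive $0$-th coordinate, minimality of $b_{11}$ gives $\mu_1\ge b_{11}=a_{11}$. If $\mu_1=a_{11}$, then $b=b_{11}d_1\in U$. Otherwise $z\,\mathcal{R}\,(a_{00},\mu_1-a_{11},0,0)\,\mathcal{R}\,z_2$ directly, a contradiction.

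In the $(a_{11},0,0)$ case no shifting into $T$ is needed. The cube $[[0,\mu_2,\mu_3]]$ is not in $R$, so it lies in the region of some $Q\in V'$ with $Q\neq(b_{11},0,0)$, because its $x$-coordinate is $0$. The earlier cases were proved for an arbitrary factorization of $b$ with $0\notin\textup{supp}$, so they apply verbatim to $(0,0,\mu_2,\mu_3)$.

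The obstacle you anticipate (no bound $\mu_3<a_{33}$) never arises. You are working with $R$ and $V'$, whose $z$-boundary is $b_{33}$, not with $T$ and $V$.
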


\begin{proof}
    Suppose that there exists an element $b \in \text{Betti}(S)$ that is not in $U\cup \{b_{33}d_3\}$, which by Lemma \ref{lemmaU_secondary} equals $U' \cup \{a_{33}d_3\}$.
    There exists a factorization $(0,\kappa_1,\kappa_2,\kappa_3) \in \varphi^{-1}(b)$; otherwise, if every factorization $z \in \varphi^{-1}(b)$ had $0 \in \text{supp}(z)$, then all factorizations would be $\mathcal{R}$-related, which cannot happen for a Betti element. By Lemma \ref{lemmabettifind_secondary}, we have $[[\kappa_1,\kappa_2,\kappa_3]] \notin R$; therefore $[[\kappa_1,\kappa_2,\kappa_3]]$ is in the region associated to some point $P \in V'$.  
\begin{itemize}
    \item 
    If $P =(-\lambda_1,\lambda_2,\lambda_3)$, where
    \begin{equation}
        \lambda_0d_0 + \lambda_1d_1 = \lambda_2d_2 + \lambda_3d_3
    \end{equation}
    with $\lambda_i \in \mathbb{Z}_{>0}$, we get
     \begin{equation}
        b =\lambda_0d_0 + (\lambda_1+\delta_1)d_1 + \delta_2d_2 + \delta_3d_3  =  \delta_1d_1 + (\lambda_2+\delta_2)d_2 + (\lambda_3+\delta_3)d_3
    \end{equation}
    for some $\delta_i \in \mathbb{N}$ with $\max \{\delta_1,\delta_2,\delta_3\} > 0$. The factorizations $z_1 =(\lambda_0,\lambda_1+\delta_1,\delta_2,\delta_3)$ and $z_2=(0, \delta_1,\lambda_2+\delta_2,\lambda_3+\delta_3)$ have a nonempty intersection of supports, and hence they are $\mathcal{R}$-related. Moreover, we have $0,1,2,3 \in \textup{supp}(z_1) \cup \textup{supp}(z_2)$, so every factorization of $b$ will be $\mathcal{R}$-related to $z_1$ and $z_2$. This contradicts $b \in \textup{Betti}(S)$, as it must have at least two distinct $\mathcal{R}$-classes. 
    
    The cases $P=(\lambda_1,-\lambda_2,\lambda_3)$ and $P=(\lambda_1,\lambda_2,-\lambda_3)$ follow analogously.

\item
    If $P =(-b_{21},b_{22},-b_{23})$, where $(b_{20},b_{21},0,b_{23}) \in \varphi^{-1} (b_{22}d_2)$ with $b_{20}>0$, we get
    \begin{equation}
        b = \delta_1d_1 +   (b_{22}+\delta_2)d_2 + \delta_3d_3  =  b_{20}d_0 + (b_{21}+\delta_1)d_1+ \delta_2d_2 + (b_{23}+\delta_3)d_3
    \end{equation}
    for some $\delta_i \in \mathbb{N}$ with $\max \{\delta_1,\delta_2,\delta_3\} > 0$. The factorizations $z_1 =(0,\delta_1,b_{22}+\delta_2,\delta_3)$ and $z_2 =(b_{20},b_{21}+\delta_1,\delta_2,b_{23}+\delta_3)$ are $\mathcal{R}$-related. 
    Because $b \in \textup{Betti}(S)$, there exists a factorization $z \in \varphi^{-1}(b)$ that is not $\mathcal{R}$-related to them. Let $z_3 = (0,\delta_1,b_{22}+\delta_2 - a_{22},\delta_3+a_{33}) \in \varphi^{-1}(b)$ (as $a_{22}d_2 = a_{33}d_3$). We have $z_1 \, \mathcal{R} \, z_2 \, \mathcal{R} \,   z_3 $ and $0,2,3 \in \textup{supp}(z_1) \cup \textup{supp}(z_2) \cup \textup{supp}(z_3)$. This implies $z = (0,\mu_1,0,0)$. If $\mu_1 = a_{11} = b_{11}$, then $b = b_{11}d_1 \in U$, hence $\mu_1 > a_{11}$. This gives $z \, \mathcal{R} \, (a_{00},\mu_1-a_{11},0,0) \, \mathcal{R} \, z_2$, which is a contradiction.

    The case $P = (-b_{31},-b_{32},b_{33})$ follows analogously.
\item
    If $P = (a_{01},a_{02},a_{03})$, where $(0,a_{01},a_{02},a_{03}) \in \varphi^{-1}(a_{00}d_0)$, then as $S$ is secondary, we have $(a_{01},a_{02},a_{03}) = (a_{11},0,0)$. Thus this case reduces to the next one.
\item    
    If $P =(b_{11},0,0)=(a_{11},0,0)$, we get
     \begin{equation}
        b =a_{00}d_0 + \delta_1d_1 + \delta_2d_2 + \delta_3d_3  =  (a_{11}+\delta_1)d_1 + \delta_2d_2 + \delta_3d_3
    \end{equation}
    for some $\delta_i \in \mathbb{N}$ with $\max \{\delta_1,\delta_2,\delta_3\} > 0$. The factorizations $z_1 =(a_{00},\delta_1,\delta_2,\delta_3)$ and $z_2 =(0,a_{11}+\delta_1,\delta_2,\delta_3)$ have a nonempty intersection of supports, and hence they are $\mathcal{R}$-related. Because $b \in \textup{Betti}(S)$, there exists a factorization $z \in \varphi^{-1}(b)$ that is not $\mathcal{R}$-related to them. As $0,1 \in \textup{supp}(z_1) \cup \textup{supp}(z_2)$, we have $z = (0,0,\mu_2,\mu_3)$. If $[[0,\mu_2,\mu_3]] \in R$, then $b =a_{22}d_2=a_{33}d_3 \in U$ by Lemma \ref{lemmabettifind_secondary}. Therefore $[[0,\mu_2,\mu_3]] \notin R$, so this cube is in the region associated to some $Q \in V'$, where $Q \neq (b_{11},0,0)$. Since $[[0,\mu_2,\mu_3]]$ is labeled with $b$, we can apply the same argument as in the earlier cases. \qedhere
\end{itemize}
\end{proof}

\begin{lemma}\label{lemma_catenary_lambda1_secondary}
    Suppose that $S$ is secondary.  If $(-\lambda_1,\lambda_2,\lambda_3) \in V$ satisfies
    $$ \lambda_0d_0 + \lambda_1d_1 = \lambda_2d_2 + \lambda_3d_3$$
    with $\lambda_i \in \mathbb{Z}_{>0}$, then $\lambda_0d_0 + \lambda_1d_1 \in \textup{Betti}(S)$ and $\varphi^{-1}(\lambda_0d_0 + \lambda_1d_1) = \mathcal{F}_1 \cup \mathcal{F}_2$, where 
    \begin{align*}
        &\mathcal{F}_1 =  \{(\lambda_0 + \lambda a_{00},\lambda_1 - \lambda a_{11},0,0) \mid \lambda \in \mathbb{Z}, \lambda_0 + \lambda a_{00} \geq 0,\lambda_1 - \lambda a_{11} \geq 0\}, \\
        &\mathcal{F}_2 =  \{(0,0,\lambda_2 + \lambda a_{22},\lambda_3 - \lambda a_{33}) \mid \lambda \in \mathbb{Z}, \lambda_2 + \lambda a_{22} \geq 0,\lambda_3 - \lambda a_{33} \geq 0\}.
    \end{align*}
  
\end{lemma}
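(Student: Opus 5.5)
The plan is to show two things in order. First, every factorization of $b=\lambda_0d_0+\lambda_1d_1$ lies in $\mathcal{F}_1\cup\mathcal{F}_2$. Then, since $\mathcal{F}_1$ and $\mathcal{F}_2$ are each nonempty and supports from $\mathcal{F}_1$ lie in $\{0,1\}$ while supports from $\mathcal{F}_2$ lie in $\{2,3\}$, there are exactly two $\mathcal{R}$-classes, so $b\in\textup{Betti}(S)$.

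The inclusion $\mathcal{F}_1\cup\mathcal{F}_2\subseteq\varphi^{-1}(b)$ is immediate from $a_{00}d_0=a_{11}d_1$ and $a_{22}d_2=a_{33}d_3$. Two geometric facts come from $(-\lambda_1,\lambda_2,\lambda_3)\in V$ and $T=R\cap\{z\le a_{33}\}$: we have $\lambda_3\le a_{33}$, and by minimality of $V$ the cubes $[[0,\lambda_2-1,\lambda_3]]$ and $[[0,\lambda_2,\lambda_3-1]]$ (hence also $[[0,\lambda_2-1,\lambda_3-1]]$) lie in $T$.

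Now take $(\mu_0,\mu_1,\mu_2,\mu_3)\in\varphi^{-1}(b)$. I would first reduce modulo the two binomial moves. Adding a suitable element of $\mathcal{F}_1$ direction, replace $(\mu_0,\mu_1)$ by $(\mu_0',\mu_1')$ with $0\le\mu_1'<a_{11}$ (possibly $\mu_0'$ gets larger). Using $a_{22}d_2=a_{33}d_3$, replace $(\mu_2,\mu_3)$ by $(\mu_2',\mu_3')$ with $0\le\mu_3'<a_{33}$. Do the same to $(\lambda_0,\lambda_1,0,0)$ and to $(0,0,\lambda_2,\lambda_3)$. Then compare against $(0,0,\lambda_2,\lambda_3)$ via
\[
\mu_0d_0+\mu_1d_1=(\lambda_2-\mu_2)d_2+(\lambda_3-\mu_3)d_3 .
\]
I would split into cases by the signs of $\lambda_2-\mu_2$ and $\lambda_3-\mu_3$, mirroring Lemma \ref{lemma_catenary_help_primary}.

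\begin{itemize}
\item If $\mu_2,\mu_3$ are not both $0$ and $\mu_0>0$, rearranging gives a relation $\mu_0d_0=x d_1+yd_2+zd_3$. Lemma \ref{lemma_procedure_1} then excludes Apéry cubes above $(-\mu_1,\lambda_2-\mu_2,\lambda_3-\mu_3)$, contradicting $[[0,\lambda_2-1,\lambda_3-1]]\in T$, provided $\mu_2\le\lambda_2$ and $\mu_3\le\lambda_3$.
\item If instead, say, $\mu_2>\lambda_2$, we get $(\mu_2-\lambda_2)d_2+\mu_0d_0+\mu_1d_1=(\lambda_3-\mu_3)d_3$. This forces $\lambda_3-\mu_3\ge a_{33}$, and since $\varphi^{-1}(a_{33}d_3)=\{(0,0,a_{22},0),(0,0,0,a_{33})\}$ the move must be a multiple of $a_{33}$. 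Peeling this off yields an element of $\mathcal{F}_2$ after using $\mu_0=\mu_1=0$.
\item If $\mu_0=0$ and $\mu_2,\mu_3\le\lambda_2,\lambda_3$ with $\mu_1>0$, Lemma \ref{lemma_procedure_1} again contradicts (this needs $\lambda_0>0$ in the shifted relation).
\end{itemize}
Symmetrically, if $\mu_2=\mu_3=0$, then $\mu_0d_0+\mu_1d_1=\lambda_0d_0+\lambda_1d_1$. Since $\varphi^{-1}(a_{00}d_0)=\{(a_{00},0,0,0),(0,a_{11},0,0)\}$ and $a_{00},a_{11}$ are minimal, $\mu_0-\lambda_0$ is a multiple of $a_{00}$, exactly as in the last paragraph of Lemma \ref{lemma_catenarycase1_primary}. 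This puts the factorization in $\mathcal{F}_1$.

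The main obstacle is the mixed case, where a factorization has support meeting both $\{0,1\}$ and $\{2,3\}$. Note that $\lambda_2$ may exceed $a_{22}$, because $T$ is only bounded by $b_{22}$ in $y$, so the reduction step matters. My approach here is to translate the cube $[[0,\lambda_2-1,\lambda_3-1]]$ by $f^{-1}$ from Proposition \ref{prop_secondary} so that its $y$-coordinate drops below $a_{22}$. Then I compare with the reduced $(\mu_2',\mu_3')$ so that the inequalities $\mu_2'\le\lambda_2$ and $\mu_3'\le\lambda_3$ needed for Lemma \ref{lemma_procedure_1} hold. If that fails, I would instead use the bijection of Proposition \ref{prop_secondary} to move the offending cube into $R$ and derive the contradiction there.
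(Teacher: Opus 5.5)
Your overall plan matches the paper's: exhaust the factorizations by cases, test them with Lemma \ref{lemma_procedure_1} against the cubes of $T$ next to $(-\lambda_1,\lambda_2,\lambda_3)$, then read off two $\mathcal{R}$-classes from the disjoint supports. The case analysis, however, has genuine gaps.

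\textbf{The case $\mu_0>0$.} You test the wrong cube under a proviso that plays no role. The cube $[[0,\lambda_2-1,\lambda_3-1]]$ lies in the region of $(-\mu_1,\lambda_2-\mu_2,\lambda_3-\mu_3)$ only if both $\mu_2\ge1$ and $\mu_3\ge1$. The comparisons $\mu_2\le\lambda_2$, $\mu_3\le\lambda_3$ are not needed at all. The paper uses $[[0,\lambda_2-1,\lambda_3]]$ when $\mu_2>0$ and $[[0,\lambda_2,\lambda_3-1]]$ when $\mu_3>0$; you listed both. This gives $\mu_2=\mu_3=0$ unconditionally. So for $\mu_0>0$ there is no ``mixed case''. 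Your remedy for it (moving the cube by $f^{-1}$, which actually raises the $y$-coordinate by $a_{22}$, plus a conditional fallback) is neither needed nor an argument.

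\textbf{The case $\mu_0=0<\mu_1$.} This is where the real work is, and your claims do not hold up.

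First, suppose $\mu_1>\lambda_1$; your normalization $\mu_1<a_{11}$ does not exclude this. Then $(\mu_1-\lambda_1,\mu_2,\mu_3)$ has positive first coordinate, so Lemma \ref{lemma_procedure_1} says nothing about cubes with $x=0$. Without normalization this sub-case even contains $(0,\lambda_1+a_{11},0,0)\in\mathcal{F}_1$ when $\lambda_0=a_{00}$. The missing step is:
\begin{itemize}
\item minimality of $a_{00}$ gives $\lambda_0\ge a_{00}$;
\item if $\lambda_0=a_{00}$, invoke $\varphi^{-1}(a_{00}d_0)=\{(a_{00},0,0,0),(0,a_{11},0,0)\}$;
\item otherwise substitute $a_{00}d_0=a_{11}d_1$ and apply Lemma \ref{lemma_procedure_1} again, iterating in general, or once if $\mu_1<a_{11}$.
\end{itemize}

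Second, the cases $\mu_2>\lambda_2$ and $\mu_3>\lambda_3$ are not symmetric. By minimality of $V$ one has $\lambda_3<a_{33}$ strictly, since the cubes $[[i,j,k]]$ with $k\ge a_{33}$ are already removed by the point $(-a_{31},-a_{32},a_{33})\in V$. Hence $\mu_2>\lambda_2$ is simply contradictory and never ``yields an element of $\mathcal{F}_2$''. The elements of $\mathcal{F}_2$ other than $(0,0,\lambda_2,\lambda_3)$ all have $\mu_3>\lambda_3$.

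In that case $\lambda_2$ may exceed $a_{22}$. So $(\lambda_2-\mu_2)d_2=\mu_1d_1+(\mu_3-\lambda_3)d_3$ is not contradictory by itself, and nothing about $\varphi^{-1}(a_{22}d_2)$ makes $\lambda_2-\mu_2$ a multiple of $a_{22}$. You must reduce $\lambda_2-\mu_2$ modulo $a_{22}$ using $a_{22}d_2=a_{33}d_3$ to force $\mu_1\ge a_{11}$; under your normalization this is already a contradiction. Otherwise pass to $(a_{00},\mu_1-a_{11},\mu_2,\mu_3)$, which falls under the case $\mu_0>0$. This is the paper's second bullet.
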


\begin{proof}
    Clearly $\mathcal{F}_1 \cup \mathcal{F}_2 \subseteq \varphi^{-1}(\lambda_0d_0 + \lambda_1d_1)$.
    Consider any $(\mu_0,\mu_1,\mu_2,\mu_3) \in \varphi^{-1}(\lambda_0d_0 + \lambda_1d_1)$.

    \begin{itemize}
    \item 

    Suppose $\mu_0 > 0$. If $\mu_2 > 0$, then by Lemma \ref{lemma_procedure_1}, no cube labeled with an element of the Ap{\'e}ry set is in the region associated to $(-\mu_1,\lambda_2-\mu_2,\lambda_3-\mu_3)$. This contradicts the fact $[[0,\lambda_2-1, \lambda_3]] \in T$, which holds since $(-\lambda_1,\lambda_2,\lambda_3) \in V$. If $\mu_3 > 0$, we get an analogous contradiction, hence $\mu_2=\mu_3 = 0$. If $(\mu_0,\mu_1,0,0) \notin \mathcal{F}_1$, then there exists $\lambda \in \mathbb{Z}$ such that $\lambda_0 + (\lambda+1) a_{00} > \mu_0  > \lambda_0 + \lambda a_{00}$. Now, we have
    \begin{equation}
        \lambda_0d_0 + \lambda_1d_1 = (\mu_0 - \lambda a_{00})d_0 + (\mu_1 + \lambda a_{11})d_1 \implies (\mu_0  - \lambda_0- \lambda a_{00})d_0 = (\lambda_1 - \mu_1 - \lambda a_{11})d_1
    \end{equation}
    which gives a contradiction, since $a_{00} > (\mu_0 - \lambda_0 - \lambda a_{00}) > 0$.

    \item
    Suppose $\mu_0 = 0$ and $\mu_1 > 0$. If $\mu_2 \geq \lambda_2$, then we get
    \begin{equation}
         (\lambda_3-\mu_3)d_3=\mu_1d_1 + (\mu_2 - \lambda_2)d_2
    \end{equation}
    where all coefficients above are nonnegative. Moreover, we have $(\lambda_3-\mu_3) > 0$, as $\mu_1 > 0$. Hence $(\lambda_3-\mu_3) \geq a_{33}$, so there exists $\lambda \in \mathbb{Z}_{>0}$ such that $a_{33} >(\lambda_3-\mu_3 - \lambda a_{33}) \geq 0$. We have 
    \begin{equation}
         (\lambda_3-\mu_3-\lambda a_{33})d_3=\mu_1d_1 + (\mu_2 - \lambda_2-\lambda a_{22})d_2.
    \end{equation}
    If $(\mu_2 - \lambda_2-\lambda a_{22}) \geq 0$, then as $\mu_1 > 0$ we have $(\lambda_3-\mu_3-\lambda a_{33}) > 0$, which gives $(\lambda_3-\mu_3-\lambda a_{33}) \geq a_{33}$ --- contradiction. Hence $(\mu_2 - \lambda_2-\lambda a_{22}) < 0$, which gives $\mu_1 \geq a_{11}$. Then, repeating the approach of the first bullet-point to the factorization $(a_{00},\mu_1 - a_{11},\mu_2,\mu_3) \in \varphi^{-1}(\lambda_0d_0 + \lambda_1d_1)$, yields $\mu_2 = \mu_3 = 0$ and $(a_{00},\mu_1 - a_{11},0,0) \in \mathcal{F}_1$, so $(\mu_0,\mu_1,\mu_2,\mu_3) =(0,\mu_1,0,0) \in \mathcal{F}_1$ also.
    
    If $\mu_3 \geq \lambda_3$, we proceed analogously; therefore, suppose that $\mu_2 < \lambda_2$ and $\mu_3 < \lambda_3$. We have 
    \begin{equation}\label{lemma_catenary_lambda1latter_secondary_eq1}
        \lambda_0d_0 = (\mu_1-\lambda_1)d_1 + \mu_2d_2 + \mu_3d_3. 
    \end{equation} 
     By Lemma \ref{lemma_procedure_1}, no cube labeled with an element of the Ap{\'e}ry set is in the region associated to $(\mu_1 - \lambda_1,\mu_2,\mu_3)$. If $(\mu_1 - \lambda_1) \leq 0$, this gives a contradiction with the fact $[[0,\lambda_2-1,\lambda_3-1]] \in T$, which holds since $(-\lambda_1,\lambda_2,\lambda_3) \in V$. Hence $(\mu_1 - \lambda_1) > 0$, which gives $\lambda_0 \geq a_{00}$. If $\lambda_0 = a_{00}$, then $\mu_2 = \mu_3 = 0$ and $\mu_1 = \lambda_1 + a_{11}$, since $\varphi^{-1}(a_{00}d_0) = \{(a_{00},0,0,0),(0,a_{11},0,0)\}$. This gives $(\mu_0,\mu_1,\mu_2,\mu_3) \in \mathcal{F}_1$, so suppose $\lambda_0 > a_{00}$. We have
    \begin{equation}
        (\lambda_0-a_{00})d_0 = (\mu_1-\lambda_1-a_{11})d_1 + \mu_2d_2 + \mu_3d_3. 
    \end{equation}
    Now, we can repeat the same argument used for \eqref{lemma_catenary_lambda1latter_secondary_eq1} to eventually obtain a contradiction or $(\mu_0,\mu_1,\mu_2,\mu_3) \in \mathcal{F}_1$. 
    
    \item 
    Suppose $\mu_0 =\mu_1 = 0$. If $(0,0,\mu_2,\mu_3) \notin \mathcal{F}_2$, then there exists $\lambda \in \mathbb{Z}$ such that $ \lambda_2  + (\lambda+1) a_{22} > \mu_2 > \lambda_2  + \lambda a_{22}$. Now, we have
    \begin{equation}
        \lambda_2d_2 + \lambda_3d_3 = (\mu_2 - \lambda a_{22})d_2 + (\mu_3 + \lambda a_{33})d_3 \implies (\mu_2 - \lambda_2 - \lambda a_{22} )d_2 = (\lambda_3 - \mu_3 - \lambda a_{33})d_3
    \end{equation}
    which gives a contradiction, since $ a_{22} > (\mu_2 - \lambda_2  - \lambda a_{22}) > 0$. \qedhere
     \end{itemize}
\end{proof}

\begin{lemma}\label{lemma_catenary_lambda23_secondary}

    Suppose that $S$ is secondary.  Consider $(\lambda_1,-\lambda_2,\lambda_3) \in V$ satisfying
    $$ \lambda_0d_0 + \lambda_2d_2 = \lambda_1d_1 + \lambda_3d_3$$
    with $\lambda_i \in \mathbb{Z}_{>0}$. If $\lambda_0 \geq a_{00}$ or $\lambda_2 \geq a_{22}$, then $\lambda_0d_0 + \lambda_2d_2 \notin \textup{Betti}(S)$.
    Otherwise, if $\lambda_0 < a_{00}$ and $\lambda_2 < a_{22}$, then $\lambda_0d_0 + \lambda_2d_2 \in \textup{Betti}(S)$ and $\varphi^{-1}(\lambda_0d_0 + \lambda_2d_2) = \{(\lambda_0,0,\lambda_2,0),(0,\lambda_1,0,\lambda_3)\}$. An analogous statement holds for $(\lambda_1,\lambda_2,-\lambda_3) \in V$.
\end{lemma}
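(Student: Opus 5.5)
We treat the point $(\lambda_1,-\lambda_2,\lambda_3)\in V$ by following Lemmas \ref{lemma_catenary_help_primary}, \ref{lemma_catenary_primary} and \ref{lemma_catenarycase1_primary}, using the secondary structure $\varphi^{-1}(a_{00}d_0)=\{(a_{00},0,0,0),(0,a_{11},0,0)\}$ and $\varphi^{-1}(a_{22}d_2)=\{(0,0,a_{22},0),(0,0,0,a_{33})\}$. Put $b=\lambda_0d_0+\lambda_2d_2$. By definition of $V$ and $T=R\cap\{z\le a_{33}\}$ we have $\lambda_1<b_{11}=a_{11}$ and $\lambda_3\le a_{33}$. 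In fact $\lambda_3<a_{33}$: if $\lambda_3=a_{33}$, then substituting $a_{33}d_3=a_{22}d_2$ yields a factorization of $b$ with $d_0$ and $d_1$ only, which contradicts the minimality of $a_{00}$ or the structure of $\varphi^{-1}(a_{00}d_0)$. Since the point lies in $V$, the cubes $[[\lambda_1-1,0,\lambda_3-1]]$ and their neighbours with $y=0$ lie in $T$. This is what we will play against Lemma \ref{lemma_procedure_1}.

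\textbf{Key step: a first restriction on all factorizations.} Let $(\mu_0,\mu_1,\mu_2,\mu_3)\in\varphi^{-1}(b)$.

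If $\mu_0>0$ and $\mu_1>0$ or $\mu_3>0$, then we have $(\lambda_1-\mu_1)d_1+(-\mu_2)d_2+(\lambda_3-\mu_3)d_3=\mu_0d_0$. By Lemma \ref{lemma_procedure_1}, the region associated to $(\lambda_1-\mu_1,-\mu_2,\lambda_3-\mu_3)$ contains no Apéry cube. This contradicts $[[\lambda_1-1,0,\lambda_3-1]]\in T$, as in Lemma \ref{lemma_catenary_help_primary}. So $\mu_0>0$ forces $\mu_1=\mu_3=0$.

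If $\mu_0=0$, we compare $(\mu_1,\mu_3)$ with $(\lambda_1,\lambda_3)$.
\begin{itemize}
\item If $\mu_1\ge\lambda_1$, we get $(\lambda_3-\mu_3)d_3=(\mu_1-\lambda_1)d_1+\mu_2d_2$. Together with $\lambda_3<a_{33}$, this forces $\lambda_3=\mu_3$, $\mu_1=\lambda_1$ and $\mu_2=0$.
\item The symmetric argument handles $\mu_3\ge\lambda_3$, using $\lambda_1<a_{11}$.
\item Otherwise $\mu_1<\lambda_1$ and $\mu_3<\lambda_3$, so $\lambda_0d_0=(\mu_1-\lambda_1)d_1+(\mu_2-\lambda_2)d_2+(\mu_3-\lambda_3)d_3$ with the $d_1$ and $d_3$ coefficients negative. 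For this to be positive we need $\mu_2>\lambda_2$. Then Lemma \ref{lemma_procedure_1} kills the region of $(\mu_1-\lambda_1,\mu_2-\lambda_2,\mu_3-\lambda_3)$, contradicting $[[0,\cdot,0]]$-type cubes in $T$. Here we must check that $\mu_2-\lambda_2$ is small enough, and use the bijection of Proposition \ref{prop_secondary} to move along $y$.
\end{itemize}
Hence every factorization is $(0,\lambda_1,0,\lambda_3)$ or of the form $(\mu_0,0,\mu_2,0)$.

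\textbf{Case $\lambda_0<a_{00}$ and $\lambda_2<a_{22}$.} Take a factorization $(\mu_0,0,\mu_2,0)$ different from $(\lambda_0,0,\lambda_2,0)$. It gives a relation $(\lambda_0-\mu_0)d_0=(\mu_2-\lambda_2)d_2$ with both sides of the same sign. If $\mu_0<\lambda_0$, this contradicts $\lambda_0<a_{00}$. If $\mu_0>\lambda_0$, then $\lambda_2>\mu_2$, which contradicts $\lambda_2<a_{22}$. So $\varphi^{-1}(b)=\{(\lambda_0,0,\lambda_2,0),(0,\lambda_1,0,\lambda_3)\}$. The two supports are disjoint, so there are two $\mathcal R$-classes and $b\in\mathrm{Betti}(S)$.

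\textbf{Case $\lambda_0\ge a_{00}$ or $\lambda_2\ge a_{22}$.} Exhibit an $\mathcal R$-chain between the two classes.
\begin{itemize}
\item If $\lambda_0\ge a_{00}$, rewrite $a_{00}d_0=a_{11}d_1$. This gives the factorization $(\lambda_0-a_{00},a_{11},\lambda_2,0)$, which shares index $2$ with $(\lambda_0,0,\lambda_2,0)$ and index $1$ with $(0,\lambda_1,0,\lambda_3)$.
\item If $\lambda_2\ge a_{22}$, rewrite $a_{22}d_2=a_{33}d_3$. This gives $(\lambda_0,0,\lambda_2-a_{22},a_{33})$, which shares index $0$ with the first factorization and index $3$ with the second.
\end{itemize}
In either case all factorizations found above are $\mathcal R$-connected. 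This uses that every factorization has one of the two listed shapes, together with Lemma \ref{lemma_catenary_minrelations}-type reasoning for members of the $d_0,d_2$ family. So $b\notin\mathrm{Betti}(S)$.

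\textbf{Main obstacle.} The hard step is the last subcase of the restriction, $\mu_0=0$ with $\mu_2>\lambda_2$. In the primary case the bound $\lambda_2<a_{22}$ was automatic, but here $y$ is bounded in $T$ only by $b_{22}$. We will first reduce $\mu_2$ modulo $a_{22}$ via $a_{22}d_2=a_{33}d_3$, as in Lemma \ref{lemma_catenary_lambda1_secondary}, and only then apply Lemma \ref{lemma_procedure_1}. The point $(\lambda_1,\lambda_2,-\lambda_3)$ is handled the same way with $d_2$ and $d_3$ interchanged, using the $L$-shape $R\cap\{y\le a_{22}\}$ from Proposition \ref{prop_secondary}.
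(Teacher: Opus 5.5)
Your overall plan is the paper's: an explicit $\mathcal R$-chain when $\lambda_0\ge a_{00}$ or $\lambda_2\ge a_{22}$, and otherwise a case analysis on $\mu_0$ and on $\mu_1,\mu_3$ versus $\lambda_1,\lambda_3$, playing Lemma~\ref{lemma_procedure_1} against cubes of $T$ next to the point. However, the subcase $\mu_0=0$, $\mu_1<\lambda_1$, $\mu_3<\lambda_3$ rests on a false identity. From $\mu_1d_1+\mu_2d_2+\mu_3d_3=b=\lambda_0d_0+\lambda_2d_2$ one gets
\[
\lambda_0d_0=\mu_1d_1+(\mu_2-\lambda_2)d_2+\mu_3d_3,
\]
whose $d_1$- and $d_3$-coefficients are $\mu_1,\mu_3\ge 0$. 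The right-hand side you wrote actually equals $-\lambda_2d_2$. So $(\mu_1-\lambda_1,\mu_2-\lambda_2,\mu_3-\lambda_3)$ is not a legitimate input for Lemma~\ref{lemma_procedure_1}. Discarding $\mu_2\le\lambda_2$ ``by positivity'' is also unjustified, and that is exactly the case with content.

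The correct argument is short. If $\mu_2\le\lambda_2$, the region associated to $(\mu_1,\mu_2-\lambda_2,\mu_3)$ contains $[[\lambda_1-1,0,\lambda_3-1]]\in T$, contradicting Lemma~\ref{lemma_procedure_1}. If $\mu_2>\lambda_2$, all coefficients are nonnegative and minimality of $a_{00}$ gives $\lambda_0\ge a_{00}$. That is a contradiction only under $\lambda_0<a_{00}$, so the factorization analysis must be done inside the Betti case, as in the paper. Alternatively, first show $\lambda_0\ge a_{00}$ is impossible: it gives $\lambda_3d_3=(\lambda_0-a_{00})d_0+(a_{11}-\lambda_1)d_1+\lambda_2d_2$, forcing $\lambda_3\ge a_{33}$. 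Your ``main obstacle'' (reducing $\mu_2$ modulo $a_{22}$ and using Proposition~\ref{prop_secondary}) is an artifact of the sign error. It is neither needed nor actually carried out.

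Two further points. First, in the non-Betti case you appeal to ``every factorization has one of the two listed shapes''. Yet the factorizations you construct, $(\lambda_0-a_{00},a_{11},\lambda_2,0)$ and $(\lambda_0,0,\lambda_2-a_{22},a_{33})$, have neither shape, so a restriction valid for all $\lambda_0,\lambda_2$ would make this case vacuous. The clean argument, used in the paper, is that in each case the supports of the three factorizations in your chain cover $\{0,1,2,3\}$. Hence every factorization of $b$ is $\mathcal R$-related to them.

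Second, the point $(\lambda_1,\lambda_2,-\lambda_3)$ is not a literal $y\leftrightarrow z$ mirror of the first case. $V$ is built from $T=R\cap\{z\le a_{33}\}$, so there you only know $\lambda_1<a_{11}$ and $\lambda_2<b_{22}$. The step where you used $\lambda_3<a_{33}$ from $T$ must instead use the Betti-case hypotheses $\lambda_0<a_{00}$ and $\lambda_3<a_{33}$. For example, $\lambda_2\ge a_{22}$ yields $\lambda_0d_0=\lambda_1d_1+(\lambda_2-a_{22})d_2+(a_{33}-\lambda_3)d_3$, contradicting $\lambda_0<a_{00}$. The $L$-shape $R\cap\{y\le a_{22}\}$ plays no role.

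Two minor fixes. $\lambda_3<a_{33}$ follows at once from $[[\lambda_1-1,0,\lambda_3]]\in T$; your substitution gives a factorization in $d_1,d_2$, not $d_0,d_1$. Also, for $\mu_0,\mu_1>0$ and $\mu_3=0$, the cube you need is $[[\lambda_1-1,0,\lambda_3]]$, not $[[\lambda_1-1,0,\lambda_3-1]]$.
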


\begin{proof}
    If $\lambda_0 \geq a_{00}$, then $(\lambda_0,0,\lambda_2,0) \, \mathcal{R} \, (\lambda_0-a_{00},a_{11},\lambda_2,0) \, \mathcal{R} \, (0,\lambda_1,0,\lambda_3)$, which implies that $\lambda_0d_0 + \lambda_2d_2$ has only one $\mathcal{R}$-class. If $\lambda_2 \geq a_{22}$ we can proceed analogously. Thus the first part of the lemma follows.

    Suppose that $\lambda_0 < a_{00}$ and $\lambda_2 < a_{22}$. Consider any $(\mu_0,\mu_1,\mu_2,\mu_3) \in \varphi^{-1}(\lambda_0d_0 + \lambda_2d_2)$.

    \begin{itemize}
        \item Suppose $\mu_0>0$. If $\mu_1 > 0$, then by Lemma \ref{lemma_procedure_1}, no cube labeled with an element of the Ap{\'e}ry set is in the region associated to $(\lambda_1-\mu_1,-\mu_2,\lambda_3-\mu_3)$. This contradicts the fact $[[\lambda_1-1,0, \lambda_3]] \in T$, which holds since $(\lambda_1,-\lambda_2,\lambda_3) \in V$. If $\mu_3 > 0$, we get an analogous contradiction, hence $\mu_1=\mu_3 = 0$. This gives
        \begin{equation}
            (\lambda_0- \mu_0) d_0 =  (\mu_2 - \lambda_2)d_2.
        \end{equation}
        If $(\lambda_0- \mu_0) = 0$, then $(\mu_0,\mu_1,\mu_2,\mu_3) = (\lambda_0,0,\lambda_2,0)$. If $(\lambda_0- \mu_0) > 0$, then $\lambda_0 \geq (\lambda_0- \mu_0) \geq a_{00}$ --- contradiction. If $(\lambda_0- \mu_0) < 0$, then $\lambda_2 \geq (\lambda_2- \mu_2) \geq a_{22}$ --- contradiction.

        \item
    Suppose $\mu_0 = 0$ and $\mu_2 > 0$. If $\mu_1 \geq \lambda_1$, then
    \begin{equation}
         (\lambda_3-\mu_3)d_3 = (\mu_1-\lambda_1)d_1 + \mu_2d_2
    \end{equation}
    where all coefficients above are nonnegative. If $(\lambda_3-\mu_3) = 0$, then $(\mu_0,\mu_1,\mu_2,\mu_3) = (0,\lambda_1,0,\lambda_3)$. Thus, suppose $(\lambda_3-\mu_3) > 0$, which implies $\lambda_3 \geq (\lambda_3-\mu_3) \geq a_{33}$. Now, as $\lambda_2 < a_{22}$, we have
    \begin{equation}\label{lemma_catenary_lambda23_secondary_eq2}
        \lambda_0d_0 = \lambda_1d_1 + (a_{22}-\lambda_2)d_2 + (\lambda_3 - a_{33})d_3
    \end{equation}
    where all coefficients above are nonnegative, which contradicts $\lambda_0 < a_{00}$. 
    
    If $\mu_3 \geq \lambda_3$, then
    \begin{equation}
         (\lambda_1-\mu_1)d_1 = (\mu_3-\lambda_3)d_3 + \mu_2d_2
    \end{equation}
    where all coefficients above are nonnegative. If $(\lambda_1-\mu_1) = 0$, then $(\mu_0,\mu_1,\mu_2,\mu_3) = (0,\lambda_1,0,\lambda_3)$. Thus, suppose $(\lambda_1-\mu_1) > 0$, which implies $\lambda_1 \geq (\lambda_1-\mu_1) \geq a_{11}$. Now, as $\lambda_0 < a_{00}$, we have
    \begin{equation}\label{lemma_catenary_lambda23_secondary_eq1}
        \lambda_2d_2 = (a_{00}-\lambda_0)d_0 + (\lambda_1 - a_{11})d_1 + \lambda_3d_3
    \end{equation}
    where all coefficients above are nonnegative, which contradicts $\lambda_2 < a_{22}$.

    Hence $\mu_1 < \lambda_1$ and $\mu_3 < \lambda_3$. We have
    \begin{equation}
         \lambda_0d_0 = \mu_1d_1 + (\mu_2 - \lambda_2)d_2 + \mu_3d_3.
    \end{equation}
     By Lemma \ref{lemma_procedure_1}, no cube labeled with an element of the Ap{\'e}ry set is in the region associated to $(\mu_1,\mu_2-\lambda_2,\mu_3)$. If $(\mu_2 - \lambda_2) \leq 0$, this gives a contradiction with the fact $[[\lambda_1-1,0,\lambda_3-1]] \in T$, which holds since $(\lambda_1,-\lambda_2,\lambda_3) \in V$. Hence $(\mu_2 - \lambda_2) > 0$, which gives $\lambda_0 \geq a_{00}$ --- contradiction.

    \item 
    Finally, suppose $\mu_0 = \mu_2 = 0$. We have
    \begin{equation}
        (\lambda_1 - \mu_1)d_1 = (\mu_3 - \lambda_3)d_3.
    \end{equation}
    If $(\lambda_1 - \mu_1) = 0$, then $(0,\mu_1,0,\mu_3) = (0,\lambda_1,0,\lambda_3)$. 
    If $(\lambda_1 - \mu_1) > 0$, then $\lambda_1 \geq (\lambda_1 - \mu_1) \geq a_{11}$ and we obtain \eqref{lemma_catenary_lambda23_secondary_eq1} with the same contradiction.
    Hence $(\lambda_1 - \mu_1) < 0$, which implies $\lambda_3 \geq (\lambda_3 - \mu_3) \geq a_{33} $. In this case we obtain \eqref{lemma_catenary_lambda23_secondary_eq2} leading to the same contradiction.
    \end{itemize}

The same argument applies to $(\lambda_1,\lambda_2,-\lambda_3) \in V$. 
\end{proof}

From Lemmas \ref{lemma_catenary_lambda1_secondary} and \ref{lemma_catenary_lambda23_secondary}, we know which elements of $U$ corresponding to $(2,2)$-type points are Betti elements. Moreover, we get that every Betti element that corresponds to a $(2,2)$-type point has exactly two $\mathcal{R}$-classes. We also know that $a_{ii}d_i \in \textup{Betti}(S)$ for $i = 0,1,2,3$ by Lemma \ref{lemma_catenary_minrelations}. However, we do not know whether $b_{22}d_2$ and $b_{33}d_3$ are Betti elements. This gap will not affect the results of Section \ref{section_type}, so we leave it unresolved.

\begin{lemma}\label{lemma_catenaryT_secondary_case1}
    Suppose that $S$ is secondary. No element $(\lambda_1,-\lambda_2,\lambda_3) \in V$ satisfies
    $$ \lambda_0d_0 + \lambda_2d_2 = \lambda_1d_1 + \lambda_3d_3 \notin \textup{Betti}(S)$$
    with $\lambda_i \in \mathbb{Z}_{>0}$.
\end{lemma}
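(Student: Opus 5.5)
By Lemma \ref{lemma_catenary_lambda23_secondary}, a point $(\lambda_1,-\lambda_2,\lambda_3) \in V$ with $\lambda_0d_0 + \lambda_2d_2 = \lambda_1d_1 + \lambda_3d_3 \notin \textup{Betti}(S)$ must satisfy $\lambda_0 \geq a_{00}$ or $\lambda_2 \geq a_{22}$. So the plan is to assume one of these inequalities and contradict the membership of the point in $V$. The tool is Lemma \ref{lemma_procedure_1} (or the definition of $T$), set against the fact that $[[\lambda_1-1,0,\lambda_3-1]] \in T$. This fact holds because $(\lambda_1,-\lambda_2,\lambda_3)$ is a minimal point of $V$, and the same fact was used in the proof of Lemma \ref{lemma_catenary_lambda23_secondary}. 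Recall also that $\lambda_1 < b_{11} = a_{11}$ and $\lambda_3 \leq a_{33}$ by the definition of $V$, since $T = R \cap \{z \leq a_{33}\}$.

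\emph{Case $\lambda_0 \geq a_{00}$.} In the secondary case $\varphi^{-1}(a_{00}d_0) = \{(a_{00},0,0,0),(0,a_{11},0,0)\}$, so we can rewrite the relation as
\[
(\lambda_0 - a_{00})d_0 + \lambda_2 d_2 = (\lambda_1 - a_{11})d_1 + \lambda_3 d_3 .
\]
Since $\lambda_1 < a_{11}$, the coefficient $a_{11}-\lambda_1$ is positive, and this gives
\[
(\lambda_0-a_{00})d_0 + (a_{11}-\lambda_1)d_1 + \lambda_2 d_2 = \lambda_3 d_3 .
\]
Because the left side has a positive coefficient, minimality of $a_{33}$ gives $\lambda_3 \geq a_{33}$, and hence $\lambda_3 = a_{33}$. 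Substituting $a_{33}d_3 = a_{22}d_2$ yields $(\lambda_0-a_{00})d_0 + (a_{11}-\lambda_1)d_1 + \lambda_2 d_2 = a_{22}d_2$. Therefore $a_{22}-\lambda_2 > 0$, and then $(a_{22}-\lambda_2)d_2 = (\lambda_0-a_{00})d_0 + (a_{11}-\lambda_1)d_1$. This contradicts the fact that $\varphi^{-1}(a_{22}d_2)$ contains only pure powers of $d_2$ and $d_3$, in the following sense. Minimality forces $a_{22}-\lambda_2 \geq a_{22}$, which is impossible because $\lambda_2>0$.

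\emph{Case $\lambda_2 \geq a_{22}$.} This case is symmetric. Replace $a_{22}d_2$ by $a_{33}d_3$ to get $\lambda_0 d_0 + (\lambda_2 - a_{22})d_2 = \lambda_1 d_1 + (\lambda_3 - a_{33})d_3$. If $\lambda_3 < a_{33}$, move the $d_3$ term to the left; then $\lambda_1 d_1$ equals a combination with positive $d_0$ coefficient, so $\lambda_1 \geq b_{11} = a_{11}$, a contradiction. If $\lambda_3 = a_{33}$, we get $\lambda_0 d_0 + (\lambda_2-a_{22})d_2 = \lambda_1 d_1$ with $\lambda_0>0$, so again $\lambda_1 \geq b_{11}$, a contradiction.

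The main obstacle is the bookkeeping of the bound on $\lambda_3$. The definition of $V$ allows $\lambda_3 = a_{33}$ only in a degenerate sense, so we need to check whether a point with $\lambda_3 = a_{33}$ could actually lie in $V$. If $\lambda_3 = a_{33}$, then the point $(\lambda_1,-\lambda_2,a_{33})$ deletes nothing new beyond the plane $\{z > a_{33}\}$ already cut off. This contradicts the minimality of $V$, which would dispose of the boundary subcases above directly. I would first try to establish $\lambda_3 < a_{33}$ from this minimality remark, after which both cases close via the minimality of $a_{33}$ or of $b_{11}=a_{11}$ as above.
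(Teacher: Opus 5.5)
Your proof is correct and follows the paper's argument. You use Lemma \ref{lemma_catenary_lambda23_secondary} to reduce to $\lambda_0 \geq a_{00}$ or $\lambda_2 \geq a_{22}$, then rewrite via $a_{00}d_0 = a_{11}d_1$ (respectively $a_{22}d_2 = a_{33}d_3$) to force $\lambda_3 \geq a_{33}$ (respectively $\lambda_1 \geq b_{11} = a_{11}$). The paper takes $\lambda_3 < a_{33}$ directly from the minimality of $V$, which is exactly the observation in your last paragraph, so your separate handling of the boundary subcase $\lambda_3 = a_{33}$ is valid but unnecessary.
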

\begin{proof}
By Lemma \ref{lemma_catenary_lambda23_secondary}, we have $\lambda_0 \geq a_{00}$ or $\lambda_2 \geq a_{22}$. Since $(\lambda_1,-\lambda_2,\lambda_3) \in V$ and $T = R \cap \{z \leq a_{33}\}$, we have $\lambda_1 < b_{11} = a_{11}$ and $\lambda_3 < a_{33}$. 

If $\lambda_0\geq a_{00}$, then 
\begin{equation}
     (\lambda_0-a_{00})d_0 + \lambda_2d_2 + (a_{11}-\lambda_1)d_1 =  \lambda_3d_3 
\end{equation}
where all above coefficients are nonnegative. Since $(a_{11}-\lambda_1) > 0$, we get $\lambda_3 \geq a_{33}$, which is a contradiction.

If $\lambda_2\geq a_{22}$, then
\begin{equation}
     \lambda_0d_0 + (\lambda_2-a_{22})d_2 + (a_{33}-\lambda_3)d_3 =  \lambda_1d_1 
\end{equation}
where all above coefficients are nonnegative. Since $(a_{33}-\lambda_3) > 0$, we get $\lambda_1 \geq a_{11}$, which is a contradiction.
\end{proof}

\begin{lemma}\label{lemma_catenaryT_secondary_case2}
    Suppose that $S$ is secondary. If $(\lambda_1,\lambda_2,-\lambda_3) \in V$ satisfies
    $$ \lambda_0d_0 + \lambda_3d_3 = \lambda_1d_1 + \lambda_2d_2 \notin \textup{Betti}(S)$$
    with $\lambda_i \in \mathbb{Z}_{>0}$, then $\lambda_0 = a_{00}$ and $\lambda_3 < a_{33}$.
\end{lemma}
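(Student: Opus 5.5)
The plan is to first reduce to the two alternatives $\lambda_0 \geq a_{00}$ or $\lambda_3 \geq a_{33}$, and then to rule out $\lambda_3 \geq a_{33}$ and $\lambda_0 > a_{00}$ separately. Each exclusion comes from subtracting one of the two secondary identities $a_{00}d_0 = a_{11}d_1$ and $a_{22}d_2 = a_{33}d_3$.

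\textbf{Setup.} Since $(\lambda_1,\lambda_2,-\lambda_3) \in V$ and $T = R \cap \{z \leq a_{33}\}$ has boundary values $b_{11}=a_{11}$ and $b_{22}$ in the $x$- and $y$-directions, we have $\lambda_1 < a_{11}$ and $\lambda_2 < b_{22}$. By the minimality of $V$, the cube $[[\lambda_1,\lambda_2-1,0]]$ lies in $T$. This is the same kind of witness cube used in Lemmas \ref{lemma_catenary_lambda1_secondary} and \ref{lemma_catenary_lambda23_secondary}. Lemma \ref{lemma_catenary_lambda23_secondary} applies to points of the form $(\lambda_1,\lambda_2,-\lambda_3)$ as well. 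So if $\lambda_0 < a_{00}$ and $\lambda_3 < a_{33}$, the element $\lambda_0d_0+\lambda_3d_3$ would be a Betti element. Hence the hypothesis forces $\lambda_0 \geq a_{00}$ or $\lambda_3 \geq a_{33}$.

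\textbf{Step 1: exclude $\lambda_3 \geq a_{33}$.} Substitute $a_{33}d_3 = a_{22}d_2$ to obtain
$$\lambda_0d_0 + (\lambda_3-a_{33})d_3 = \lambda_1d_1 + (\lambda_2-a_{22})d_2 .$$
\begin{itemize}
\item If $\lambda_2 < a_{22}$, rewrite this as $\lambda_0d_0 + (\lambda_3-a_{33})d_3 + (a_{22}-\lambda_2)d_2 = \lambda_1 d_1$. The left-hand side has positive coefficients on $d_0$ and $d_2$, so $\lambda_1 \geq b_{11} = a_{11}$. This is a contradiction.
\item If $\lambda_2 \geq a_{22}$, the identity expresses $\lambda_0 d_0$ as $\lambda_1d_1+(\lambda_2-a_{22})d_2-(\lambda_3-a_{33})d_3$ with $\lambda_0>0$. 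By Lemma \ref{lemma_procedure_1}, no Ap\'ery cube lies in the region associated to $(\lambda_1,\lambda_2-a_{22},-(\lambda_3-a_{33}))$. That region contains $[[\lambda_1,\lambda_2-1,0]]$, because $\lambda_2-a_{22}\leq \lambda_2-1$ and the $z$-coordinate $0$ satisfies $0\ge -(\lambda_3-a_{33})$. This contradicts $[[\lambda_1,\lambda_2-1,0]]\in T$.
\end{itemize}
Hence $\lambda_3 < a_{33}$, and therefore $\lambda_0 \geq a_{00}$.

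\textbf{Step 2: exclude $\lambda_0 > a_{00}$.} Substitute $a_{00}d_0 = a_{11}d_1$ to obtain
$$(\lambda_0-a_{00})d_0 + \lambda_3d_3 + (a_{11}-\lambda_1)d_1 = \lambda_2 d_2 .$$
All coefficients here are nonnegative, and $a_{11}-\lambda_1>0$. If $\lambda_0 > a_{00}$, then the coefficient of $d_0$ is also positive, and the definition of $b_{22}$ as a $d_0$-positive minimal relation gives $\lambda_2 \geq b_{22}$. This contradicts $\lambda_2 < b_{22}$. Thus $\lambda_0 = a_{00}$.

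\textbf{Main obstacle.} The delicate point is the subcase $\lambda_2 \geq a_{22}$ in Step 1. There one must justify carefully that $[[\lambda_1,\lambda_2-1,0]] \in T$ follows from the minimality of $V$, and that this cube indeed lies in the newly forbidden region. The same kind of witness-cube argument is used in Lemmas \ref{lemma_catenary_lambda1_secondary} and \ref{lemma_catenary_lambda23_secondary}, so I expect it to go through here.
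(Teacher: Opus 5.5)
Your proposal is correct and follows essentially the same route as the paper's proof. You reduce via Lemma \ref{lemma_catenary_lambda23_secondary} to $\lambda_0 \geq a_{00}$ or $\lambda_3 \geq a_{33}$. You exclude $\lambda_3 \geq a_{33}$ using $a_{22}d_2 = a_{33}d_3$, splitting into the cases $\lambda_2 < a_{22}$ (which forces $\lambda_1 \geq b_{11} = a_{11}$) and $\lambda_2 \geq a_{22}$ (handled by Lemma \ref{lemma_procedure_1} with the witness cube $[[\lambda_1,\lambda_2-1,0]] \in T$). You then exclude $\lambda_0 > a_{00}$ using $a_{00}d_0 = a_{11}d_1$ and the minimality of $b_{22}$. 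Your added justification that the witness cube lies in $T$ and that $\lambda_1 < a_{11}$, $\lambda_2 < b_{22}$ follow from the minimality of $V$ fills in what the paper leaves implicit.
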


\begin{proof}
By Lemma \ref{lemma_catenary_lambda23_secondary}, we have $\lambda_0 \geq a_{00}$ or $\lambda_3 \geq a_{33}$.  Since $(\lambda_1,-\lambda_2,\lambda_3) \in V$ and $T = R \cap \{z \leq a_{33}\}$, we have $\lambda_1 < b_{11} = a_{11}$.

If $\lambda_3\geq a_{33}$, then
\begin{equation}
     \lambda_0d_0   + (\lambda_3-a_{33})d_3=  \lambda_1d_1 + (\lambda_2-a_{22})d_2 
\end{equation}
where $(\lambda_3-a_{33}) \geq 0$.
If $(\lambda_2-a_{22}) < 0$, then $\lambda_1 \geq a_{11}$, which is a contradiction. Hence $(\lambda_2-a_{22}) \geq 0$. Since $\lambda_0 > 0$, no cube labeled with an element of the Ap{\'e}ry set is in the region associated to $(\lambda_1,\lambda_2-a_{22},-\lambda_3+a_{33})$ by Lemma \ref{lemma_procedure_1}. This contradicts the fact $[[\lambda_1,\lambda_2-1,0]] \in T$, which holds since $(\lambda_1,\lambda_2,-\lambda_3) \in V$. Hence $\lambda_3 < a_{33}$.

If $\lambda_0 > a_{00}$, then
\begin{equation}
     (\lambda_0-a_{00})d_0 + \lambda_3d_3 + (a_{11}-\lambda_1)d_1 =  \lambda_2d_2
\end{equation}
where all above coefficients are positive. This gives $\lambda_2 \geq b_{22}$, which contradicts $(\lambda_1,\lambda_2,-\lambda_3) \in V$. Hence $\lambda_0 = a_{00}$.
\end{proof}

\section{$t(S)$ and $\eta(S)$ }\label{section_type}

This section is devoted to proving Theorem \ref{theorem_type_main}. We begin with a series of lemmas relating the type to the set $V$, culminating in Theorem \ref{theorem_type1}. We then turn to results linking $V$ to $\eta(S)$, concluding with Corollary \ref{corollary_type_main}, from which Theorem \ref{theorem_type_main} follows.

Let $S = \langle d_0,d_1,d_2,d_3 \rangle$ be a numerical semigroup with embedding dimension four that is ordered. Let $T$, $R$, $V$, and $U$ be as in the previous sections.

\subsection{$t(S)$ and $V$}

A finite and nonempty subset $F$ of the initial collection of cubes is called a \emph{cube figure} if for every $(a,b,c) \in \mathbb{N}^3$ with $[[a,b,c]] \notin F$, no cube in the region associated to $(a,b,c)$ is in $F$. For a cube figure $F$, a cube $[[a,b,c]] \in F$ is called a \emph{corner} of $F$ if $[[a+1,b,c]],[[a,b+1,c]],[[a,b,c+1]] \notin F$. 

Recall from \eqref{eq_pseudo_frob} that
\begin{equation}
     PF(S) = \{s - d_0 \mid s \in \textup{Ap}(S,d_0), s + d_i \notin \textup{Ap}(S,d_0) \text{ for } i = 1,2,3\}.
\end{equation}
Thus, since $R$ is the collection of all cubes labeled with an element of the Ap{\'e}ry set, $PF(S)$ is the set of all labels of corners of $R$, and $t(S)$ is the number of such distinct labels.

\begin{lemma}\label{lemma_point0uniqueness}
    There exists at most one $(0,a_{01},a_{02},a_{03}) \in \varphi^{-1}(a_{00}d_0)$ that satisfies $(a_{01},a_{02},a_{03}) \in V \setminus \{(b_{11},0,0),(0,b_{22},0),(0,0,b_{33})\}$.
\end{lemma}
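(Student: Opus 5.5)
Suppose, for contradiction, that there are two distinct factorizations $(0,a_{01},a_{02},a_{03})$ and $(0,a_{01}',a_{02}',a_{03}')$ in $\varphi^{-1}(a_{00}d_0)$ with $P=(a_{01},a_{02},a_{03})$ and $P'=(a_{01}',a_{02}',a_{03}')$ both in $V \setminus \{(b_{11},0,0),(0,b_{22},0),(0,0,b_{33})\}$. Subtracting the two relations gives
\begin{equation*}
(a_{01}-a_{01}')d_1 + (a_{02}-a_{02}')d_2 + (a_{03}-a_{03}')d_3 = 0 .
\end{equation*}
By minimality of $V$, neither point dominates the other. Otherwise the region associated to one would contain the other's region, and the dominated point could be dropped from $V$. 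So among the three differences at least one is strictly positive and at least one is strictly negative.

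I will split into cases according to the sign pattern, and I expect to derive a contradiction in each using only that every coordinate of a point of $V$ is bounded by the sizes of $T$. Concretely, $a_{0i},a_{0i}' \le b_{11},b_{22},a_{33}$ respectively, with strict inequality as soon as another coordinate is positive, since otherwise the region is already removed by the boundary planes.

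\emph{Case 1: exactly one coefficient is positive, say that of $d_1$.} Then $(a_{01}-a_{01}')d_1 = (a_{02}'-a_{02})d_2 + (a_{03}'-a_{03})d_3$ with nonnegative coefficients on the right, not all zero, so $a_{01} \ge a_{01}-a_{01}' \ge a_{11}$. Two subcases follow.
\begin{itemize}
\item If $a_{11}=b_{11}$, then the region associated to $P$ lies inside $\{x>b_{11}\}$ (or $P=(b_{11},0,0)$, which is excluded). This contradicts the minimality of $V$.
\item If $a_{11}<b_{11}$ (possible only in the primary case, where then $a_{22}=b_{22}$ and $a_{33}=b_{33}$ by the ordering), then $T$ has $x$-extent $a_{11}$ and the same contradiction occurs, now using the boundary of $T$ and Lemma~\ref{lemma_procedure_2}.
\end{itemize}
Whenever the single positive coefficient is that of $d_3$, the same argument works with $a_{33}$, since $T\subseteq\{z\le a_{33}\}$. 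For $d_2$, either $a_{22}=b_{22}$ or, in the primary case, we argue with $T$'s boundary as above. In the secondary case, however, $a_{22}d_2=a_{33}d_3$ and $b_{22}$ may exceed $a_{22}$, so this subcase needs more care. There I will use Proposition~\ref{prop_secondary} and the structure $\varphi^{-1}(a_{00}d_0)=\{(a_{00},0,0,0),(0,a_{11},0,0)\}$. In fact, for secondary $S$ the lemma is immediate: the only factorization with zeroth coordinate $0$ is $(0,a_{11},0,0)$, and $(a_{11},0,0)=(b_{11},0,0)$ is excluded. So the whole argument can be restricted to primary $S$.

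\emph{Case 2: two coefficients are positive, say those of $d_1$ and $d_2$.} Then
\begin{equation*}
(a_{01}-a_{01}')d_1 + (a_{02}-a_{02}')d_2 = (a_{03}'-a_{03})d_3
\end{equation*}
with $a_{03}'-a_{03}>0$, so $a_{03}' \ge a_{33}$. Since $P'$ lies in $V$ and has some other positive coordinate (otherwise $P'=(0,0,a_{33})$, which is excluded when $a_{33}=b_{33}$ and is a boundary point of $T$ otherwise), the region associated to $P'$ is already removed by $\{z>a_{33}\}$ restricted to $T$. This contradicts the minimality of $V$. The other placements of the two positive coefficients are symmetric, once the primary ordering is used to identify which of $a_{ii}$ equals the $T$-boundary.

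The main obstacle is pinning down the $T$-boundary in the primary case when exactly one of $a_{ii}<b_{ii}$ holds, and checking that a point of $V$ lying on the boundary hyperplane really is redundant. To handle this I will first record a small observation: every $P\in V$ of the type $(a_{01},a_{02},a_{03})$ satisfies $a_{0i}$ strictly less than the corresponding side length of $T$ whenever some other coordinate is nonzero, and this follows from minimality of $V$. Both cases then reduce to this observation.
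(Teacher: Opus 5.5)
Your route is the paper's. The secondary case follows at once from $\varphi^{-1}(a_{00}d_0)=\{(a_{00},0,0,0),(0,a_{11},0,0)\}$. In the primary case you bound the coordinates of both points by the side lengths of $T$ via minimality of $V$, then subtract the two factorizations; your Case~2 is just Case~1 with the two points interchanged.

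\textbf{The gap.} You leave open exactly the step where the asymmetry of the excluded set matters. The side lengths of $T$ are $b_{11},b_{22},a_{33}$, but the lemma excludes $(0,0,b_{33})$, not $(0,0,a_{33})$. Suppose the lone positive difference is the $d_3$-coefficient, or, in Case~2, the positive ones are those of $d_1,d_2$. Then minimality of $V$ only tells you the offending point is $(0,0,a_{33})$. For $a_{33}<b_{33}$ you write that it ``is a boundary point of $T$'', which is not a contradiction. Your argument genuinely stalls here: the configuration $P'=(0,0,a_{33})$, $P=(a_{01},a_{02},0)$ with $a_{33}d_3=a_{01}d_1+a_{02}d_2$ is not ruled out by anything you prove.

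The missing observation is one line, and it is the one the paper uses. If $(0,0,0,a_{33})\in\varphi^{-1}(a_{00}d_0)$, then $a_{33}d_3=a_{00}d_0$ with $a_{00}>0$, so $b_{33}\le a_{33}$. Hence $a_{33}=b_{33}$, and the point is the excluded $(0,0,b_{33})$.

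\textbf{A misreading of the ordering.} Your subcase $a_{11}<b_{11}$, and the caveat for $d_2$ ``in the primary case'', rest on a misreading of the ordering convention. An ordered primary semigroup has $a_{11}=b_{11}$ and $a_{22}=b_{22}$ by definition; only $a_{33}<b_{33}$ can occur. A secondary one has $a_{11}=b_{11}$ because $a_{11}d_1=a_{00}d_0$. So those subcases are vacuous. That is fortunate, because the reasoning you sketch for them is wrong: with $T=R\cap\{z\le a_{33}\}$ the $x$-extent of $T$ would be $b_{11}$, not $a_{11}$. Then $a_{01}\ge a_{11}$ would contradict nothing, and Lemma~\ref{lemma_procedure_2} does not change this.

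With the correct reading, $T\subseteq\{x\le a_{11},\,y\le a_{22},\,z\le a_{33}\}$ in the primary case. Your ``small observation'' plus the fix above give $a_{0i},a_{0i}'<a_{ii}$ for all $i$. One application of your Case~1 then finishes the proof, and the ``main obstacle'' you anticipate does not arise.
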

\begin{proof}
If $S$ is secondary, then $\varphi^{-1}(a_{00}d_0) = \{(a_{00},0,0,0), (0,a_{11},0,0)\}$ (and by definition $a_{11}=b_{11}$), so no such $(0,a_{01},a_{02},a_{03}) \in \varphi^{-1}(a_{00}d_0)$ exists in this case. 

Suppose that $S$ is primary. Suppose to the contrary, that $(0,a_{01},a_{02},a_{03})$ and $(0,a_{01}',a_{02}',a_{03}') \in \varphi^{-1}(a_{00}d_0)$ are two such distinct factorizations. Since $T \subseteq \{x \leq a_{11}, y \leq a_{22}, z \leq a_{33}\}$, it follows that $a_{0i}, a_{0i}' \leq a_{ii}$. If $(a_{01},a_{02},a_{03}) = (0,0,a_{33})$, then $a_{33}=b_{33}$ (as $a_{00}>0$), which contradicts $(a_{01},a_{02},a_{03}) \neq (0,0,b_{33})$. If  $(a_{01},a_{02},a_{03}) \in \{(a_{11},0,0) ,(0,a_{22},0) \}$, we get an analogous contradiction, therefore $a_{0i}, a_{0i}' < a_{ii}$.  We have
\begin{equation}
    (a_{01}-a_{01}')d_1 +  (a_{02}-a_{02}')d_2 + (a_{03}-a_{03}')d_3 = 0.
\end{equation}
Since $(0,a_{01},a_{02},a_{03}) \neq (0,a_{01}',a_{02}',a_{03}')$, not all coefficients above are zero.
Without loss of generality, suppose $(a_{01}-a_{01}') >0$, $(a_{02}-a_{02}') \leq 0$, and $(a_{03}-a_{03}') \leq 0$. This implies $a_{01} \geq (a_{01}-a_{01}') \geq a_{11}$, which is a contradiction.
\end{proof}

Let $V^*$ denote the set of all $(2,2)$-type points of $V$. By the definition of $V$ and Lemma \ref{lemma_point0uniqueness}, we get
\begin{equation}\label{type_assumption_on_V}
    V = V^* \sqcup \{(b_{11},-b_{12},-b_{13}), (-b_{21},b_{22},-b_{23}), (-a_{31},-a_{32},a_{33}), (a_{01},a_{02},a_{03})\}
\end{equation}
where $a_{ij},b_{ij} \in \mathbb{N}$ and $(0,a_{01},a_{02},a_{03}) \in \varphi^{-1}(a_{00}d_0)$. Such a point $(a_{01},a_{02},a_{03}) \in V$ exists, since $a_{00}d_0 \in \textup{Betti}(S) \subseteq U \cup \{b_{33}d_3\}$ by Theorems \ref{theorembettifind_primary} and \ref{theorembettifind_secondary} --- perhaps $(a_{01},a_{02},a_{03}) \in \{(b_{11},-b_{12},-b_{13})$, $(-b_{21},b_{22},-b_{23})$, $(-a_{31},-a_{32},a_{33}) \}$. 

The identity \eqref{type_assumption_on_V} yields the following lemma.

\begin{lemma}\label{lemma_point0connection}
    If $(\lambda_1,\lambda_2,\lambda_3) \in V$ with $\lambda_i \geq 0$ for $i=1,2,3$ and $\lambda_i > 0$ for at least two distinct indices $i \in \{1,2,3\}$, then $(0,\lambda_1,\lambda_2,\lambda_3) \in \varphi^{-1}(a_{00}d_0)$.
\end{lemma}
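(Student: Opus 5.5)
The plan is to read the statement off the decomposition \eqref{type_assumption_on_V} by checking, point by point, which elements of $V$ can have all coordinates nonnegative with at least two of them positive. Let $P=(\lambda_1,\lambda_2,\lambda_3)\in V$ be such a point. By \eqref{type_assumption_on_V}, $P$ is either a $(2,2)$-type point of $V^*$, or one of $(b_{11},-b_{12},-b_{13})$, $(-b_{21},b_{22},-b_{23})$, $(-a_{31},-a_{32},a_{33})$, or $(a_{01},a_{02},a_{03})$ with $(0,a_{01},a_{02},a_{03})\in\varphi^{-1}(a_{00}d_0)$. I will rule out everything except the last option, which is exactly the conclusion.

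\textbf{Step 1.} Every $(2,2)$-type point has exactly one negative coordinate: it is of the form $(-\lambda_1,\lambda_2,\lambda_3)$, $(\lambda_1,-\lambda_2,\lambda_3)$ or $(\lambda_1,\lambda_2,-\lambda_3)$ with $\lambda_i\in\mathbb{Z}_{>0}$. Such a point can never have all coordinates nonnegative, so $P\notin V^*$.

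\textbf{Step 2.} The three boundary points each have at most one positive coordinate once their negative entries are excluded. For instance, $(b_{11},-b_{12},-b_{13})$ has nonnegative coordinates only if $b_{12}=b_{13}=0$, and then it is $(b_{11},0,0)$, which has a single positive coordinate. The same holds for $(-b_{21},b_{22},-b_{23})$ and for $(-a_{31},-a_{32},a_{33})$. Hence none of them satisfies the hypothesis.

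\textbf{Step 3.} So $P=(a_{01},a_{02},a_{03})$ with $(0,a_{01},a_{02},a_{03})\in\varphi^{-1}(a_{00}d_0)$, which is the claim.

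The one subtle point is that \eqref{type_assumption_on_V} allows the point $(a_{01},a_{02},a_{03})$ to coincide with one of the three boundary points. If $P$ is one of those boundary points, Step 2 shows it has at most one positive coordinate. So any $P$ with two positive coordinates cannot be a boundary point and must be the distinguished point $(a_{01},a_{02},a_{03})$.

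I expect no real obstacle beyond reading the decomposition carefully. Its correctness depends on the standing assumption on $V$: every point whose $U$-value is not among $a_{00}d_0, b_{11}d_1, b_{22}d_2, a_{33}d_3$ is of $(2,2)$-type. It also depends on Lemma \ref{lemma_point0uniqueness}, which ensures there is only one point of the $a_{00}d_0$ kind, and both are already established.
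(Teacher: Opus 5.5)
Your proposal is correct and matches the paper, which simply states that the lemma follows from the decomposition \eqref{type_assumption_on_V}. Your check is exactly the verification the paper leaves implicit: every $(2,2)$-type point has a negative coordinate, and each of $(b_{11},-b_{12},-b_{13})$, $(-b_{21},b_{22},-b_{23})$, $(-a_{31},-a_{32},a_{33})$ has at most one positive coordinate when all its coordinates are nonnegative. (Lemma \ref{lemma_point0uniqueness} is needed only so that the decomposition lists a single $a_{00}d_0$-point; the conclusion itself does not depend on that uniqueness.)
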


\begin{lemma}\label{lemma_cornerlabelRT}
    Every integer that labels a corner of $R$ also labels a corner of $T$.
\end{lemma}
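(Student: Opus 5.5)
The key observation is that, because $R$ consists of \emph{all} cubes labeled by Ap{\'e}ry elements (Theorem \ref{theoremprocedure}), being a corner of $R$ depends only on the label. We then pull a corner of $R$ back to $T$ via the $L$-shape property.

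\textbf{Step 1: corners of $R$ are determined by labels.} Let $[[x,y,z]]$ be a corner of $R$ with label $s$. The cube $[[x+1,y,z]]$ has label $s+d_1$, and similarly for the other two coordinate directions with $s+d_2$ and $s+d_3$. By Theorem \ref{theoremprocedure}, a cube of the initial collection lies in $R$ if and only if its label is in $\textup{Ap}(S,d_0)$. Hence the corner condition
$$[[x+1,y,z]],\,[[x,y+1,z]],\,[[x,y,z+1]] \notin R$$
is equivalent to $s+d_i \notin \textup{Ap}(S,d_0)$ for $i=1,2,3$. This matches the description of $PF(S)$ recalled from \eqref{eq_pseudo_frob}.

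\textbf{Step 2: find a cube of $T$ with the same label.} By Propositions \ref{prop_primary} and \ref{prop_secondary}, $T$ is an $L$-shape, in both the primary and secondary cases. Since $s \in \textup{Ap}(S,d_0)$, some cube $[[x',y',z']] \in T$ has label $s$.

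\textbf{Step 3: that cube is a corner of $T$.} For each $i\in\{1,2,3\}$, the neighbor of $[[x',y',z']]$ in the $i$-th coordinate direction has label $s+d_i$. By Step 1, $s+d_i \notin \textup{Ap}(S,d_0)$, so by Theorem \ref{theoremprocedure} this neighbor is not in $R$. Since $T \subseteq R$, it is not in $T$ either. Thus $[[x',y',z']]$ is a corner of $T$ with label $s$, which proves the statement.

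\textbf{Where care is needed.} There is no real obstacle. The one point to check is that $T$ is a cube figure contained in $R$ and contains a cube for every Ap{\'e}ry element; both facts are exactly what the earlier propositions provide. We do not need to assume the corner of $R$ satisfies $z < a_{33}$: the argument passes through labels, so it covers corners of $R$ lying in $\{z \geq a_{33}\}$ as well.
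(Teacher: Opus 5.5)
Your proof is correct, and it takes a different and somewhat cleaner route than the paper's.

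The paper first splits on whether some factorization $(a_{30},a_{31},a_{32},0)$ of $a_{33}d_3$ has $a_{30}>0$, in which case $a_{33}=b_{33}$ and $R=T$. Otherwise it takes a corner $[[a,b,c]]$ of $R$ with $c\ge a_{33}$ and slides it down explicitly to $C=[[a+\lambda a_{31},b+\lambda a_{32},c-\lambda a_{33}]]\in T$, using $a_{33}d_3=a_{31}d_1+a_{32}d_2$. It then shows $C$ is a corner of $T$ by contradiction: any cube of $T$ strictly above $C$ transports back to a cube of $R$ strictly above $[[a,b,c]]$.

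You instead use Theorem \ref{theoremprocedure} to note that membership in $R$ depends only on the label. Hence being a corner of $R$ is a property of the label $s$ alone, namely $s+d_i\notin\textup{Ap}(S,d_0)$ for $i=1,2,3$. Any cube of $T$ carrying $s$ then automatically has all three neighbors outside $R\supseteq T$. Such a cube exists because $T$ is an $L$-shape (Propositions \ref{prop_primary} and \ref{prop_secondary}).

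What each route buys:
\begin{itemize}
\item Your argument needs no case split and no explicit relation for $d_3$.
\item It also shows slightly more: every cube of any subcollection of $R$ carrying such a label is a corner of that subcollection.
\item The paper's argument does not rely on the $L$-shape property of $T$, only on Theorem \ref{theoremprocedure} and the minimal relation of $d_3$.
\item The paper's argument also exhibits the corresponding corner of $T$ concretely as a vertical shift of the corner of $R$.
\end{itemize}
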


\begin{proof}
Take $(a_{30},a_{31},a_{32},0) \in \varphi^{-1}(a_{33}d_3)$. If $a_{30}>0$, then $a_{33}=b_{33}$ and $R = T$, which yields the claim. Thus, suppose $a_{30} =0$.

Consider a corner $[[a,b,c]]$ of $R$ that is in the region $R \cap \{z \geq a_{33}\}$ (that is $c \geq a_{33}$). There exists $\lambda \in \mathbb{N}$ such that $a_{33} > c- \lambda a_{33} \geq 0$, for which we have $C = [[a + \lambda a_{31},b + \lambda a_{32},c- \lambda a_{33}]] \in  R \cap \{z \leq a_{33}\} = T$, since $C$ has the same label as $[[a,b,c]]$. 
We claim that $C$ is a corner of $T$. If not, then there exists $[[a',b',c']] \in T $ with $C <  [[a',b',c']]$. However, this implies $[[a,b,c]] < [[a' -\lambda a_{31},b'-\lambda a_{32},c' + \lambda a_{33} ]] \in R$, which contradicts the fact that $[[a,b,c]]$ is a corner of $R$.
\end{proof}

Let $t'$ denote the number of corners of $T$ and let $\mathcal{T}$ be the set of the corners of $T$ that are not corners of $R$. Since $T$ is an $L$-shape, the corners of $T$ have distinct labels. This fact, combined with Lemma \ref{lemma_cornerlabelRT}, yields $t' = | \mathcal{T}| + t(S)$.

For the semigroups $\langle 196,225,256,289 \rangle$ and $\langle 145,203,74,111 \rangle$ from Example \ref{example_T}, shown in Figures \ref{fig_cornersT_primary} and \ref{fig_cornersT_secondary}, respectively, the cubes shown in blue are all the corners of $T$ that are also corners of $R$, and the cubes shown in magenta are all the cubes in $\mathcal{T}$.

\begin{figure}[h]
    \centering
    \begin{minipage}[t]{0.48\textwidth}
        \centering
        \includegraphics[width=0.7\textwidth]{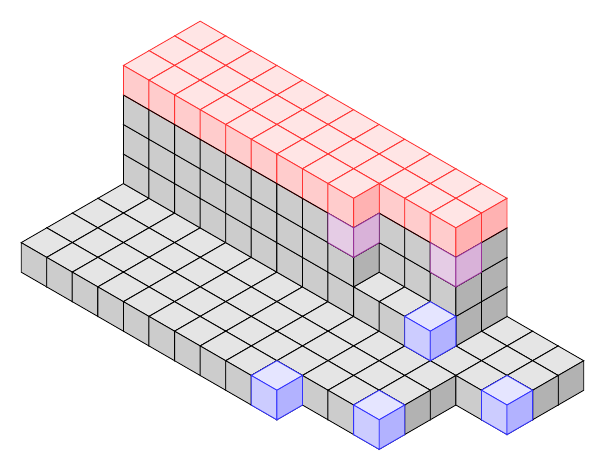}
        \caption{}
        \label{fig_cornersT_primary}
    \end{minipage}
    \begin{minipage}[t]{0.48\textwidth}
        \centering
        \includegraphics[width=0.7\textwidth]{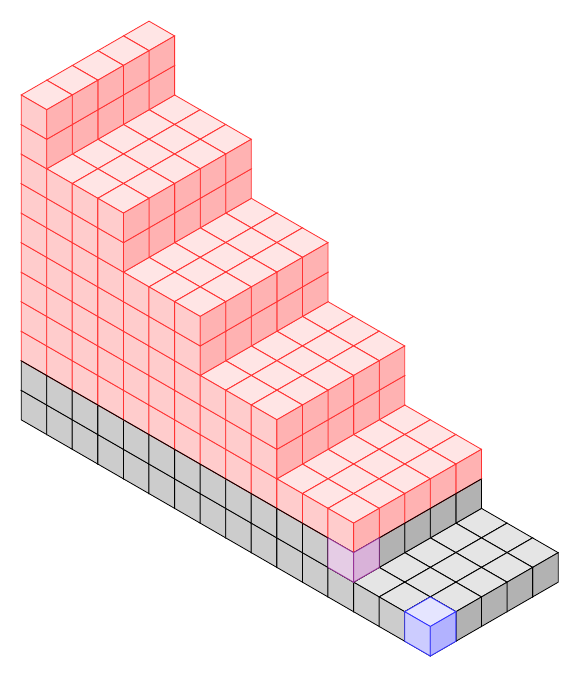}
        \caption{}
        \label{fig_cornersT_secondary}
    \end{minipage}
\end{figure}

\begin{lemma}\label{lemma_type-corners_primary}
    If $S$ is primary, then $2t(S) \geq t' \geq t(S)$.
\end{lemma}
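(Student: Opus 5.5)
Since $t' = |\mathcal{T}| + t(S)$, the lower bound $t' \geq t(S)$ is immediate. The whole content is the upper bound $|\mathcal{T}| \leq t(S)$. I will build an injection from $\mathcal{T}$ into the set of corners of $R$ that lie in $\{z \geq a_{33}\}$; distinct such corners have distinct labels, as shown below.

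\textbf{Trivial case and slab structure.} Take $(a_{30},a_{31},a_{32},0) \in \varphi^{-1}(a_{33}d_3)$. If $a_{30} > 0$, then $a_{33} = b_{33}$, so $R = T$, $\mathcal{T} = \emptyset$, and we are done. Assume $a_{30} = 0$. Then the relation $a_{33}d_3 = a_{31}d_1 + a_{32}d_2$ lets us move cubes of $R$ with $z \geq a_{33}$ down into $T$, exactly as in the proof of Lemma \ref{lemma_cornerlabelRT}. Since $S$ is primary and $a_{33} < b_{33}$, the ordering gives $a_{ii} = b_{ii}$ for $i = 1,2$. So $T = R \cap \{x \leq a_{11}, y \leq a_{22}, z \leq a_{33}\}$, and Lemma \ref{lemma_procedure_3} applies.

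\textbf{Describing $\mathcal{T}$.} A corner $[[a,b,c]]$ of $T$ fails to be a corner of $R$ only if $[[a,b,c+1]] \in R$. Since $T = R \cap \{z \leq a_{33}\}$, this forces $c = a_{33} - 1$. So every element of $\mathcal{T}$ lies in the top layer $z = a_{33}-1$ and has $[[a,b,a_{33}]] \in R$.

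\textbf{Defining the map.} For $C = [[a,b,a_{33}-1]] \in \mathcal{T}$, go upward in $R$ from $[[a,b,a_{33}]]$ and then to a maximal cube: choose a corner $g(C)$ of $R$ with $g(C) \geq [[a,b,a_{33}]]$. To make this canonical, I will instead push $C$ to a point of the same label. Write $[[a,b,a_{33}]]$ and translate by the bijective relabelling $[[x,y,z]] \mapsto [[x - a_{31}, y - a_{32}, z + a_{33}]]$, where defined. Then argue directly: the corners of $R$ above height $a_{33}$ have, by Lemma \ref{lemma_cornerlabelRT}, labels equal to labels of corners of $T$ lying below height $a_{33}$. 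Those must be corners of $R$ already, because the $L$-shape property makes labels in $T$ unique.

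\textbf{Injectivity (main obstacle).} I must show that two distinct elements of $\mathcal{T}$ cannot be sent to corners of $R$ with the same label. I would argue by contradiction. Suppose $C \neq C'$ in $\mathcal{T}$ both lie below corners of $R$ whose labels coincide, and let $[[a,b,a_{33}]] \leq K$ and $[[a',b',a_{33}]] \leq K'$. Shifting $K$ and $K'$ down by $a_{33}$ with the relation gives cubes of $T$. By Lemma \ref{lemma_procedure_3} these two cubes are equal, so $K - K'$ is a multiple of the vector $(-a_{31},-a_{32},a_{33})$. Then $K$ and $K'$ are comparable in the $x$ and $y$ directions in opposite ways to the $z$ direction, and being corners of $R$ forces $K = K'$. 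It remains to rule out one corner $K$ of $R$ dominating two distinct top-layer corners of $T$. Here I expect to use that $[[a,b,a_{33}-1]]$ is a corner of $T$: it is maximal in the $x$ and $y$ directions within the top layer. Together with the fact that $R \cap \{z \geq a_{33}\}$ is, by the shift bijection, a down-closed set contained in the cross-section $\{z = a_{33}-1\}$ of $T$ shifted by $(-a_{31},-a_{32})$, this should show that each corner of $R$ with $z \geq a_{33}$ lies above at most one top-layer corner of $T$.

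If that step is delicate, the fallback is to map each $C \in \mathcal{T}$ to the label of the topmost cube $[[a,b,c_{\max}]]$ of $R$ above it. Then control collisions via the $(2,2)$-type points of $V$ of the form $(\lambda_1,\lambda_2,-\lambda_3)$ that cut off those columns. This yields $|\mathcal{T}| \leq t(S)$ and hence $t' \leq 2t(S)$.
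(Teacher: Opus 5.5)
Your reduction to $|\mathcal{T}|\le t(S)$ matches the paper. So does your description of $\mathcal{T}$ as corners of $T$ in the layer $z=a_{33}-1$ with $[[a,b,a_{33}]]\in R$, and so does the idea of sending each $C\in\mathcal{T}$ to the label of a corner of $R$ above it. The gap is in the injectivity step. You correctly obtain $K-K'\in\mathbb{Z}(a_{31},a_{32},-a_{33})$, but then assert that ``being corners of $R$ forces $K=K'$''. That is false.

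If $K=[[x,y,z]]$ is any corner of $R$ with $z\ge a_{33}$, then $[[x+a_{31},y+a_{32},z-a_{33}]]$ has the same label. Hence it lies in $R$ by Theorem~\ref{theoremprocedure}, and it is again a corner: a cube of $R$ immediately beyond it would shift back to a cube of $R$ immediately beyond $K$. Concretely, take $(d_0,d_1,d_2,d_3)=(49,10,12,11)$. This is primary and ordered, with $a_{11}=b_{11}=6$, $a_{22}=b_{22}=5$, $2d_3=d_1+d_2$ (so $a_{33}=2$, $a_{30}=0$) and $b_{33}=9$. Here $[[0,0,8]]$ and $[[1,1,6]]$ are distinct corners of $R$, both above height $a_{33}$, with the common label $88$. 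So equal labels do not force equal corners.

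Since $g(C)$ is an arbitrary dominating corner, your argument never uses how $K,K'$ sit relative to $C,C'$, and that is where the contradiction must come from. The remaining claim, that one corner of $R$ cannot dominate two top-layer corners of $T$, is only said to ``should'' follow. The fallback is not carried out either. As written, the bound $|\mathcal{T}|\le t(S)$ is therefore not proved.

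The fix is short. Let $g(C)$ be the topmost cube $[[a,b,a_{33}+k]]$ of $R$ in the column over $C=[[a,b,a_{33}-1]]$. It is a corner of $R$ because $[[a+1,b,a_{33}-1]],[[a,b+1,a_{33}-1]]\notin R$ and $R$ is closed downward. Your shift-down argument with Lemma~\ref{lemma_procedure_3} then gives $(a,b)-(a',b')\in\mathbb{Z}(a_{31},a_{32})$ with $(a_{31},a_{32})\neq(0,0)$. So $C\neq C'$ would make $C$ and $C'$ comparable, which is impossible for two distinct corners of $T$ in the same layer.

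With an arbitrary dominating corner you can also finish. The lower of $K,K'$ dominates $[[\max(a,a'),\max(b,b'),a_{33}]]$, so $[[\max(a,a'),\max(b,b'),a_{33}-1]]\in T$, and this cube strictly dominates $C$ or $C'$.

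The paper uses the same column-top map. It derives the contradiction directly from $(\lambda_1-\lambda_1')d_1+(k-k')d_3=(\lambda_2'-\lambda_2)d_2$, which forces $\lambda_2'\ge a_{22}=b_{22}$, instead of invoking uniqueness of labels in $T$.
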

\begin{proof}
If $a_{33} = b_{33}$, then $t' = t(S)$, so suppose that $a_{33} < b_{33}$. Since $t' = | \mathcal{T}| + t(S)$, it follows immediately that $t' \geq  t(S)$, so we only need to prove $t' \leq 2t(S) $, which is equivalent to $|\mathcal{T}| \leq t(S)$.

Consider any $[[\lambda_1,\lambda_2,\lambda_3]] \in \mathcal{T}$. Since $[[\lambda_1,\lambda_2,\lambda_3]]$ is not a corner of $R$ and $T = R \cap \{z \leq a_{33}\} $, we have $\lambda_3 = a_{33}-1$ and $[[\lambda_1,\lambda_2,a_{33}]] \in R$. There exists $k \in \mathbb{N}$ such that $[[\lambda_1,\lambda_2,a_{33}+k]] \in R$ and $[[\lambda_1,\lambda_2,a_{33}+k+1]] \notin R$. Note that $[[\lambda_1,\lambda_2,a_{33}+k]]$ is a corner of $R$, since $T = R \cap \{z \leq a_{33}\}$ and $[[\lambda_1,\lambda_2,a_{33}-1]]$ is a corner of $T$. 

If  $| \mathcal{T}| > t(S)$, then there exist two distinct cubes $[[\lambda_1,\lambda_2,a_{33}-1]], [[\lambda_1',\lambda_2',a_{33}-1]] \in \mathcal{T}$ such that, for some $k, k' \in \mathbb{N}$, $[[\lambda_1,\lambda_2,a_{33}+k]]$ and $[[\lambda_1',\lambda_2',a_{33}+k']]$ are corners of $R$ with the same label. Without loss of generality, suppose $k \geq k'$, $\lambda_1 > \lambda_1'$ and $\lambda_2 < \lambda_2'$ (if $\lambda_1 \geq \lambda_1'$ and $\lambda_2 \geq \lambda_2'$, we would get a contradiction with $[[\lambda_1,\lambda_2,a_{33}-1]]$ and $[[\lambda_1',\lambda_2',a_{33}-1]]$ being distinct corners of $T$). This gives
\begin{equation}
    (\lambda_1 - \lambda_1')d_1 + (k-k')d_3 = (\lambda_2'-\lambda_2)d_2
\end{equation}
which implies $\lambda_2' \geq (\lambda_2'-\lambda_2) \geq a_{22} = b_{22}$, contradicting $[[\lambda_1',\lambda_2',a_{33}-1]] \in T$. Thus $|\mathcal{T}| \leq t(S)$.    
\end{proof}

\begin{lemma}\label{lemma_type-corners_secondary}
    If $S$ is secondary, then $t(S)+1 \geq t' \geq  t(S)$.
\end{lemma}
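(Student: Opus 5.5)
Since $t' = |\mathcal{T}| + t(S)$, the lower bound $t' \geq t(S)$ is immediate, and the claim reduces to showing $|\mathcal{T}| \leq 1$. If $a_{33} = b_{33}$ then $R = T$ and $\mathcal{T} = \emptyset$, so we may assume $a_{33} < b_{33}$. We then argue as in the proof of Lemma \ref{lemma_type-corners_primary}. Every cube of $\mathcal{T}$ has the form $[[\lambda_1,\lambda_2,a_{33}-1]]$ with $[[\lambda_1,\lambda_2,a_{33}]] \in R$.

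The key input is the bijection $f$ of Proposition \ref{prop_secondary}. Since $a_{22}d_2 = a_{33}d_3$, we have $[[\lambda_1,\lambda_2,a_{33}]] \in R$ if and only if $[[\lambda_1,\lambda_2+a_{22},0]] \in R$. In particular this forces $\lambda_2 + a_{22} < b_{22}$.

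Now suppose $\mathcal{T}$ contains two distinct cubes $[[\lambda_1,\lambda_2,a_{33}-1]]$ and $[[\lambda_1',\lambda_2',a_{33}-1]]$. Both are corners of $T$ lying in the same layer $z = a_{33}-1$, so they are incomparable. Without loss of generality $\lambda_1 > \lambda_1'$ and $\lambda_2 < \lambda_2'$.

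Because $[[\lambda_1',\lambda_2',a_{33}-1]]$ is a corner of $T$ and $T$ is down-closed, the cube $[[\lambda_1',\lambda_2',a_{33}]]$ lies in $R$. Applying $f^{-1}$, the cube $[[\lambda_1',\lambda_2'+a_{22},0]]$ lies in $R$, hence in $T$. Since $T$ is down-closed, $[[\lambda_1',\lambda_2+a_{22},0]] \in T$ as well.

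The aim is to produce a cube of $T$ strictly above $[[\lambda_1,\lambda_2,a_{33}-1]]$, contradicting that it is a corner of $T$. The natural candidate is to compare $[[\lambda_1,\lambda_2,a_{33}-1]]$ with the $z = 0$ layer through the map $f$. We would show that the presence of the cube $[[\lambda_1,\lambda_2+a_{22},0]]$ in $T$, together with $[[\lambda_1',\lambda_2'+a_{22},0]]$, forces $[[\lambda_1,\lambda_2+1,a_{33}-1]]$ or $[[\lambda_1+1,\lambda_2,a_{33}-1]]$ into $T$.

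If that direct route fails, the fallback is the label argument of Lemma \ref{lemma_type-corners_primary}. Moving upward from $[[\lambda_1,\lambda_2,a_{33}]]$ and from $[[\lambda_1',\lambda_2',a_{33}]]$ gives corners $[[\lambda_1,\lambda_2,a_{33}+k]]$ and $[[\lambda_1',\lambda_2',a_{33}+k']]$ of $R$. Their labels are the labels of corners of $T$, by Lemma \ref{lemma_cornerlabelRT}. One then bounds the number of elements of $\mathcal{T}$ by the number of such corners lying outside the "$f$-image" structure. In the secondary case $R \cap \{z \geq a_{33}\}$ is a translate of $R \cap \{y \geq a_{22}\}$, so its corners correspond to corners of $T$ with $y \geq a_{22}$. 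These are in turn corners of $R$ (because $T \cap \{y \ge a_{22}\}$ is not capped by $z=a_{33}$ in the same way), with at most one exception: the corner directly below the boundary of the $(b_{11},0,0)$ wall.

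The main obstacle is making this correspondence precise, in particular identifying the single possible exceptional element of $\mathcal{T}$. I would try to show that any element of $\mathcal{T}$ other than the one with maximal $\lambda_1$ is dominated, via $f$, by a cube of $T$. This uses $a_{11} = b_{11}$ together with the structure $\varphi^{-1}(a_{22}d_2) = \{(0,0,a_{22},0),(0,0,0,a_{33})\}$ to rule out equalities of labels, as in the computation in Lemma \ref{lemma_type-corners_primary}.
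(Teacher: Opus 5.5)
Your reduction to $|\mathcal{T}|\le 1$ and your use of $f$ to place $[[\lambda_1,\lambda_2+a_{22},0]]$ in $T$ are correct. However, the decisive step is only announced (``we would show that \dots\ forces \dots\ into $T$''), and the ingredient that produces the contradiction never appears. That ingredient is the structure of $V$. By \eqref{type_assumption_on_V} and Lemma~\ref{lemma_point0connection}, a point of $V$ with all coordinates nonnegative and at least two of them positive gives a factorization $(0,p_1,p_2,p_3)$ of $a_{00}d_0$. This is impossible for a secondary semigroup, because $\varphi^{-1}(a_{00}d_0)=\{(a_{00},0,0,0),(0,a_{11},0,0)\}$.

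With this, your direct route closes. The cube $[[\lambda_1,\lambda_2+1,a_{33}-1]]\notin T$ lies in the region associated to some $P=(p_1,p_2,p_3)\in V$. Since $[[\lambda_1,\lambda_2,a_{33}-1]]\in T$, we get $p_2=\lambda_2+1$. Since $[[\lambda_1',\lambda_2+1,a_{33}-1]]\le[[\lambda_1',\lambda_2',a_{33}-1]]\in T$, we get $p_1>\lambda_1'\ge 0$. Since $[[\lambda_1,\lambda_2+1,0]]\le[[\lambda_1,\lambda_2+a_{22},0]]\in T$, we get $p_3\ge 1$. So all three coordinates of $P$ are positive, a contradiction.

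The paper runs the same argument at the inner corner $(\lambda_1'+1,\lambda_2+1,\lambda)\in V$. There $\lambda\ge 0$ is excluded by Lemma~\ref{lemma_point0connection}, while $\lambda<0$ would delete every cube with $x\ge\lambda_1'+1$ and $y\ge\lambda_2+1$, including $[[\lambda_1,\lambda_2+a_{22},0]]$. Note that what the second cube contributes is $[[\lambda_1',\lambda_2+1,a_{33}-1]]\in T$, not $[[\lambda_1',\lambda_2'+a_{22},0]]$. Also, the facts you cite at the end ($a_{11}=b_{11}$ and the structure of $\varphi^{-1}(a_{22}d_2)$) are not what rules out the cutting point.

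Your fallback does not fill this gap. An injection of $\mathcal{T}$ into labels of corners of $R$, as in Lemma~\ref{lemma_type-corners_primary}, can at best give $|\mathcal{T}|\le t(S)$, not $|\mathcal{T}|\le 1$. The assertion that the relevant corners match up ``with at most one exception'' is the statement to be proved, restated without argument.

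You also identify the exception backwards. The mechanism above rules out the cube with the \emph{larger} $x$-coordinate. So the only possible element of $\mathcal{T}$ is the corner of $T$ in the layer $z=a_{33}-1$ with the smallest $x$-coordinate (largest $y$-coordinate), not the one with maximal $\lambda_1$.

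Finally, $[[\lambda_1',\lambda_2',a_{33}]]\in R$ holds because that cube lies in $\mathcal{T}$ (it is a corner of $T$ but not of $R$), not because $T$ is down-closed.
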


\begin{proof}
    As before, $t' = | \mathcal{T}| + t(S)$ gives $t' \geq  t(S)$, so it remains to show $t' \leq t(S)+1 $, which is equivalent to $|\mathcal{T}| \leq 1$.

    Consider any $[[\lambda_1,\lambda_2,\lambda_3]] \in \mathcal{T}$. Since $[[\lambda_1,\lambda_2,\lambda_3]]$ is not a corner of $R$ and $T = R \cap \{z \leq a_{33}\} $, we have $\lambda_3 = a_{33}-1$ and $[[\lambda_1,\lambda_2,a_{33}]] \in R$.
    
    If $|\mathcal{T}| \geq 2$, then there exist $[[\lambda_1,\lambda_2,a_{33}-1]],  [[\lambda_1',\lambda_2',a_{33}-1]] \in \mathcal{T}$ with $\lambda_1 > \lambda_1'$ and $\lambda_2 < \lambda_2'$. Additionally, assume that no cube $[[\lambda_1'',\lambda_2'',a_{33}-1]] \in \mathcal{T}$ satisfies $\lambda_1 > \lambda_1'' > \lambda_1'$ and $\lambda_2 < \lambda_2'' < \lambda_2'$. This assumption is equivalent to $(\lambda_1'+1,\lambda_2+1,\lambda) \in V$ for some $\lambda \in \mathbb{Z}$. 
    
    If $\lambda \geq 0$, then by Lemma \ref{lemma_point0connection}, as $\lambda_1'+1,\lambda_2+1 > 0$, we get $(0,\lambda_1'+1,\lambda_2+1,\lambda) \in \varphi^{-1}(a_{00}d_0)$. As $S$ is secondary, we have $\varphi^{-1}(a_{00}d_0) = \{(a_{00},0,0,0),(0,a_{11},0,0)\}$, which is a contradiction. 
    
    Hence $\lambda < 0$, which implies that no cube in the region $\{x \geq \lambda_1'+1, y \geq  \lambda_2+1, z \geq 0\}$ is in $R$. We have $[[\lambda_1,\lambda_2,a_{33}]] \in R$, since $[[\lambda_1,\lambda_2,a_{33}-1]] \in \mathcal{T}$. By Theorem \ref{theoremprocedure}, this gives $[[\lambda_1,\lambda_2+a_{22},0]] \in R$, as $a_{22}d_2 = a_{33}d_3$. However, $[[\lambda_1,\lambda_2+a_{22},0]] \in \{x \geq \lambda_1'+1, y \geq  \lambda_2+1, z \geq 0\}$, as $\lambda_1 > \lambda_1'$, which is a contradiction. Thus $|\mathcal{T}| \leq 1$.   
\end{proof}

Let $F$ be a cube figure.
A cube $[[a,b,0]] \in F$ with $[[a+1,b,0]], [[a,b+1,0]] \notin F$ is called a \emph{stair cube}. Similarly, $[[a,0,c]] \in F$ with $[[a+1,0,c]], [[a,0,c+1]] \notin F$, and $[[0,b,c]] \in F$ with $[[0,b+1,c]], [[0,b,c+1]] \notin F$ are also \emph{stair cubes}.  

Let $\mathcal{S}$ denote the set of all stair cubes of $T$. For the primary semigroup $\langle 784, 841,900,961 \rangle$, Figure \ref{fig_staircubes} shows the corresponding figure $T$ with all stair cubes shown in blue.

\begin{figure}[h]
    \centering
    \includegraphics[width=0.5\textwidth]{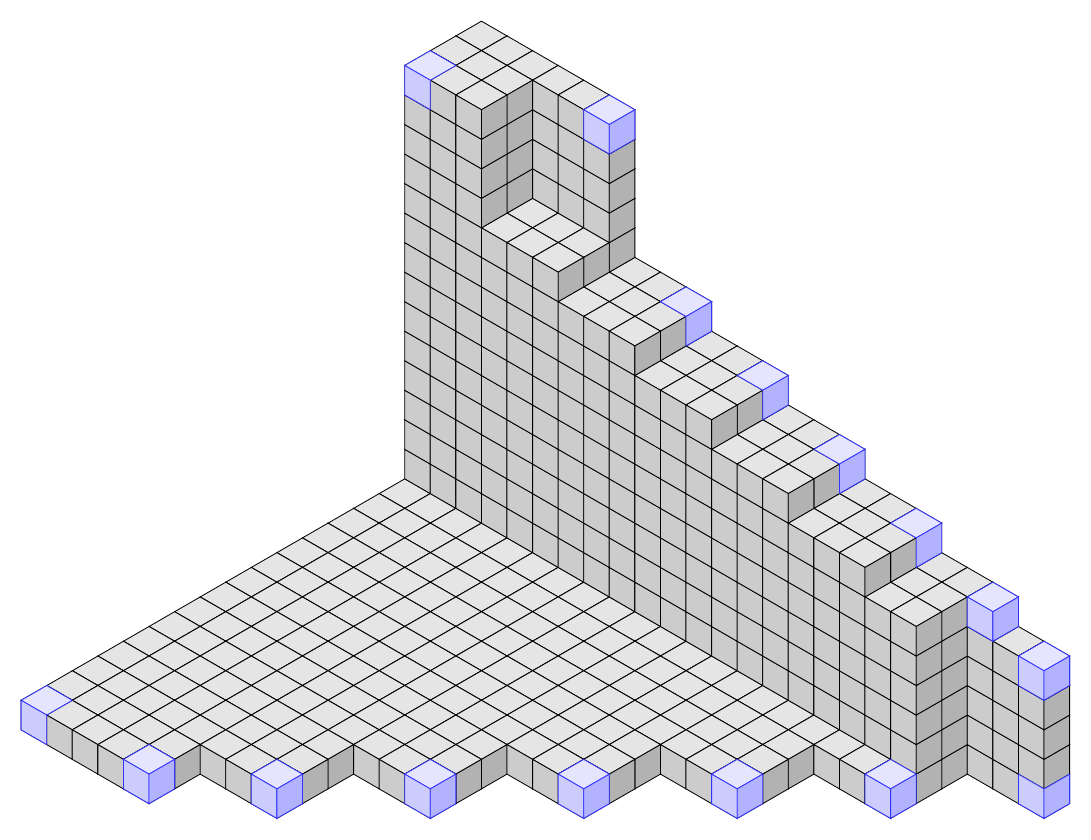}
    \caption{}
    \label{fig_staircubes}
\end{figure}

\begin{lemma}\label{lemma_type_suppthm_stair}
 If there exists $(0,a_{01},a_{02},a_{03}) \in \varphi^{-1}(a_{00}d_0)$ such that $(a_{01},a_{02},a_{03}) \in V$ and exactly one among $a_{01}, a_{02}, a_{03}$ is zero, then $|V^*|+4 \geq |\mathcal{S}| \geq |V^*|+1$. Otherwise $|V^*|+3 \geq |\mathcal{S}| \geq |V^*|$.
\end{lemma}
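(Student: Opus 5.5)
The plan is to count stair cubes of $T$ face by face and match them with the points of $V$. We work on the three coordinate faces $\{z=0\}$, $\{y=0\}$, $\{x=0\}$ of $T$ separately. The trace of $T$ on the face $\{z=0\}$ is a two-dimensional staircase (a Ferrers-type diagram) in the $xy$-quadrant. Its stair cubes are exactly its outer corners $[[a,b,0]]$.

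For such a staircase, consecutive outer corners are separated by exactly one inner corner. Here an inner corner is a minimal excluded cell $(a',b')$ with $a',b'>0$. The excluded cells on the axes form the two extreme boundaries. Hence the number of stair cubes on the face $\{z=0\}$ equals the number of inner corners plus one.

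Each inner corner $(a',b')$ of the $z=0$ trace is the minimal cell of some deleted region. This region is associated to a point of $V$ of the form $(a',b',-c)$ with $c>0$, which is a $(2,2)$-type point of the third kind. Alternatively it is associated to a point $(a',b',0)$, which by Lemma \ref{lemma_point0connection} comes from $\varphi^{-1}(a_{00}d_0)$ with $a_{03}=0$. By minimality of $V$, distinct inner corners give distinct points of $V$. Conversely, every point of $V^*$ of the form $(\lambda_1,\lambda_2,-\lambda_3)$ produces an inner corner on the face $\{z=0\}$, since its region meets $z=0$ and minimality ensures it is not dominated by another point there.

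The same correspondence holds on $\{y=0\}$ with points $(\lambda_1,-\lambda_2,\lambda_3)$, and on $\{x=0\}$ with points $(-\lambda_1,\lambda_2,\lambda_3)$. Every point of $V^*$ has exactly one negative coordinate, so it is counted on exactly one face.

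Summing over the three faces gives
\[
|\mathcal{S}| = |V^*| + 3 + \varepsilon - \varepsilon',
\]
where the correction terms are as follows:
\begin{itemize}
\item $\varepsilon\in\{0,1\}$ counts the extra inner corner coming from $(a_{01},a_{02},a_{03})$ when exactly one $a_{0i}$ is zero. By Lemma \ref{lemma_point0uniqueness} there is at most one such point.
\item $\varepsilon'$ accounts for stair cubes shared by two faces, i.e.\ cubes lying on an axis, which are counted on both faces containing that axis.
\end{itemize}

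The main obstacle is controlling this overlap. Up to three stair cubes can lie on the coordinate axes, namely $[[b_{11}-1,0,0]]$, $[[0,b_{22}-1,0]]$ and $[[0,0,a_{33}-1]]$. Each is counted on the two faces that contain it, but only when it is a stair cube in both. A stair cube on a face that does not lie on an axis is counted once.

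So I would show that $\varepsilon'\in\{0,1,2,3\}$. This gives $|\mathcal{S}|\in[|V^*|+\varepsilon,\;|V^*|+3+\varepsilon]$, which is exactly the claimed pair of bounds: $|V^*|+1\le|\mathcal{S}|\le|V^*|+4$ when $\varepsilon=1$, and $|V^*|\le|\mathcal{S}|\le|V^*|+3$ otherwise. The bound $\varepsilon'\le 3$ is automatic, since there are only three axis cubes.

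What needs care is the converse direction of the correspondence. A point of $V^*$ or $(a_{01},a_{02},a_{03})$ must genuinely produce an inner corner on its face rather than be hidden. I expect this follows from minimality of $V$ together with $T$ being an $L$-shape (Propositions \ref{prop_primary} and \ref{prop_secondary}): if the region's trace on the face were contained in another region's trace, then that point could be removed from $V$.

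Finally, points with all coordinates nonnegative and support of size three do not meet any face, so they contribute nothing. This is why only the case where exactly one $a_{0i}$ is zero changes the count.
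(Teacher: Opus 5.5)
Your proposal is correct and follows the paper's own route: on each coordinate face, consecutive stair cubes are separated by an inner corner coming from a unique point of $V$. That point is either a $(2,2)$-type point whose negative coordinate lies in the normal direction, or the unique $a_{00}d_0$-point with exactly one zero coordinate. This gives $|V^*|+3+\varepsilon$ stair cubes counted with multiplicity, and at most the three axis cubes $[[b_{11}-1,0,0]]$, $[[0,b_{22}-1,0]]$, $[[0,0,a_{33}-1]]$ are double-counted. The converse step you flag needs only minimality of $V$ and the up-closedness of the deleted regions, not the $L$-shape property: a point $(\lambda_1,\lambda_2,-\lambda_3)$ deletes the whole column above $[[\lambda_1,\lambda_2,0]]$, so if that cube or one of its two lower neighbours in the face were deleted by another point of $V$, then $(\lambda_1,\lambda_2,-\lambda_3)$ would be redundant.
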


\begin{proof}
     Order the elements of $\mathcal{S}$ with $z=0$ according to increasing $x$-coordinate. Note that they are also ordered by decreasing $y$-coordinate. In this order, consider two consecutive stair cubes with $z=0$: $[[\lambda_1,\lambda_2,0]], [[\mu_1,\mu_2,0]]$ with $\lambda_1 > \mu_1$ and $\lambda_2 < \mu_2$. Since they are consecutive, there exists $\nu \in \mathbb{N}$ such that $(\mu_1+1,\lambda_2+1,-\nu) \in V$. 

     If no $(0,a_{01},a_{02},a_{03}) \in \varphi^{-1}(a_{00}d_0)$ satisfies $(a_{01},a_{02},a_{03}) \in V$ where exactly one among $a_{01}, a_{02}, a_{03}$ is zero, then $\nu > 0$, since otherwise $(\mu_1+1,\lambda_2+1,0) \in V$ would be such a point by Lemma \ref{lemma_point0connection}. This gives $(\mu_1+1,\lambda_2+1,-\nu) \in V^*$, which we assign to the stair cube $[[\lambda_1,\lambda_2,0]]$. Such a point $(\mu_1+1,\lambda_2+1,-\nu) \in V^*$ exists for all stair cubes in $T \cap \{z = 0\}$ except for the minimal one in our ordering. Moreover, every such point is unique (when $V$ is fixed), and it is clear that we assign exactly one stair cube of $T \cap \{z = 0\}$ to every point of $V^*$ in the region $\{x > 0, y > 0, z < 0\}$.   
     Therefore in the region $\{x > 0, y > 0, z < 0\}$ there is one fewer element of $V^*$ than there are stair cubes in $T \cap \{z = 0\}$. Applying the same approach to stair cubes of $T \cap \{x = 0\}$ and $T \cap \{y = 0\}$, we obtain $|V^*|+3 \geq |\mathcal{S}| \geq |V^*|$, since there are at most three stair cubes of $T$ that belong to more than one of the sets $T \cap \{x = 0\}$, $T \cap \{y = 0\}$, and $T \cap \{z = 0\}$; if such stair cubes exist, they must be among $[[b_{11}-1,0,0]], [[0,b_{22}-1,0]], [[0,0,a_{33}-1]]$.

     If there exists $(0,a_{01},a_{02},a_{03}) \in \varphi^{-1}(a_{00}d_0)$ such that $(a_{01},a_{02},a_{03}) \in V$ and exactly one among $a_{01}, a_{02}, a_{03}$ is zero, then this points in unique by Lemma \ref{lemma_point0uniqueness}. Without loss of generality, suppose $a_{03} = 0$. If $\nu = 0$, then necessarily $(a_{01},a_{02},a_{03}) = (\mu_1+1,\lambda_2+1,0) \in V$. Since $(a_{01},a_{02},a_{03})$ is unique, we get that in the region $\{x > 0, y > 0, z < 0\}$ there are two fewer elements of $V^*$ than there are stair cubes in $T \cap \{z = 0\}$. Again, since $(a_{01},a_{02},a_{03})$ is unique, applying the same approach to stair cubes of $T \cap \{x = 0\}$ and $T \cap \{y = 0\}$ gives the same result as in the previous paragraph. Therefore $|V^*|+4 \geq |\mathcal{S}| \geq |V^*|+1$.  
\end{proof}

\begin{lemma}\label{lemma_type_suppthm_upperbound}
    $  2t'+1 \geq |\mathcal{S}|$.
\end{lemma}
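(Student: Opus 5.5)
The plan is to charge each stair cube of $T$ to a corner of $T$, so that every corner receives at most two stair cubes, with at most one stair cube left over. The key observation is that a stair cube is a maximal element of one of the three coordinate faces $T\cap\{z=0\}$, $T\cap\{y=0\}$, $T\cap\{x=0\}$ of the finite cube figure $T$.

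Take first a stair cube $[[a,b,0]]$ in $T\cap\{z=0\}$. Choose the largest $c$ with $[[a,b,c]]\in T$. The cube $C=[[a,b,c]]$ is a corner of $T$. Indeed, $[[a,b,c+1]]\notin T$ by the choice of $c$, and $[[a+1,b,0]]\notin T$. Since $T$ is a cube figure, any cube in the region associated to $(a+1,b,0)$ in the sense of $\geq$ is also excluded. Concretely, if $[[a+1,b,c]]\in T$, then downward closure would force $[[a+1,b,0]]\in T$, which is false. The same argument handles $[[a,b+1,c]]$. We assign $C$ to the stair cube.

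Distinct stair cubes in the same face are assigned distinct corners, because the first two coordinates of $C$ are those of the stair cube. So each face gives an injection from its stair cubes into the corners of $T$, and this yields the crude bound $|\mathcal{S}|\le 3t'$. To improve it to $2t'+1$, I will show that a corner $C=[[a,b,c]]$ can be hit from all three faces only in a degenerate situation.

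Being hit from the $z=0$ face means the stair cube is $[[a,b,0]]$. Being hit from the $y=0$ face means $[[a,0,c]]$ is a stair cube, and from the $x=0$ face it means $[[0,b,c]]$ is. Suppose all three happen. Each stair-cube condition forbids a cube: $[[a+1,b,0]]\notin T$ and $[[a,b+1,0]]\notin T$ from the first, $[[a,0,c+1]]\notin T$ and $[[a+1,0,c]]\notin T$ from the second, and similarly for the third. These conditions mean that $T$ contains a box of the form $[0,a]\times[0,b]\times[0,c]$ and projects onto exactly that box in each coordinate plane, with $C$ as its top corner. Hence $T$ is exactly this box, and $C$ is its unique corner. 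In that case $t'=1$ and $|\mathcal{S}|\le 3=2t'+1$, so the bound holds.

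Otherwise every corner receives at most two stair cubes, giving $|\mathcal{S}|\le 2t'$. Stair cubes lying on two faces at once (among $[[b_{11}-1,0,0]]$, $[[0,b_{22}-1,0]]$, $[[0,0,a_{33}-1]]$) are counted once in $\mathcal{S}$, so overlaps only help.

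The step I expect to be the main obstacle is the box argument: checking carefully that three simultaneous hits force $T$ to be a box. If it fails in some configuration, I will fall back on a more local statement. It should suffice to show that at most one corner receives three stair cubes. Two such corners would each dominate all three axis projections, so they would be comparable under $\geq$, which contradicts both being corners.
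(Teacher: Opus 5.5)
\textbf{Gap: the step showing at most one corner receives three stair cubes.} Your charging scheme is the same as the paper's. Sending a face stair cube such as $[[a,b,0]]$ to the top cube of its column does give a corner; your verification is correct and fills in a detail the paper only asserts. Each face then injects into the corners, and the lemma reduces to showing that at most one corner is hit from all three faces. The gap is in how you prove that last point.

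Your box argument is false. The stair condition on $[[a,b,0]]$ only excludes cubes of the face lying weakly beyond $[[a+1,b,0]]$ or $[[a,b+1,0]]$. So $(a,b)$ is a maximal element of the $xy$-projection of $T$, not an element dominating the whole projection. For example, take the cube figure $F=\{[[i,j,k]] : i,j,k\in\{0,1\}\}\cup\{[[2,0,0]]\}$. Its corner $[[1,1,1]]$ receives all three of $[[1,1,0]]$, $[[1,0,1]]$, $[[0,1,1]]$, yet $F$ is not a box and has a second corner $[[2,0,0]]$. So your dichotomy (``either $T$ is the box and $t'=1$, or every corner receives at most two'') does not hold. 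Your fallback claim, that at most one corner receives three, is exactly what is needed. However, the justification you sketch, that such corners ``dominate all three axis projections'' and are therefore comparable, rests on the same false premise.

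The missing step is short. Suppose $C=[[a,b,c]]$ and $C'=[[a',b',c']]$ are distinct corners, each receiving three stair cubes. Distinct corners are incomparable, so after permuting coordinates and possibly swapping them you may assume $a>a'$, $b\ge b'$, $c<c'$. Since $[[a',b',0]]$ is a stair cube, $[[a'+1,b',0]]\notin T$. By downward closure, $[[a,b,0]]\notin T$, because $[[a,b,0]]\ge[[a'+1,b',0]]$. But $[[a,b,0]]\le C\in T$, a contradiction.

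Equivalently, using only maximality: the pairs $(a,b),(a',b')$, then $(a,c),(a',c')$, then $(b,c),(b',c')$ would each have to be equal or incomparable. That is impossible for two distinct points of $\mathbb{N}^3$. This is the paper's argument, and with it your count gives $|\mathcal{S}|\le 2(t'-1)+3=2t'+1$.
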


\begin{proof}
    Let $[[a,b,c]]$ be a corner of $T$. Consider how many cubes among $[[0,b,c]],[[a,0,c]],[[a,b,0]]$ are stair cubes of $T$. We claim that there is at most one corner of $T$ that has all three as stair cubes. Suppose to the contrary, that $[[a,b,c]]$ and $[[a',b',c']]$ are two such distinct corners of $T$. Without loss of generality, suppose $a > a'$, $b \geq b'$, $c < c'$. Now, since $[[a',b',0]]$ is a stair cube, we have $[[a'+1,b',0]] \notin T$. This implies $[[a,b,0]] \notin T$, which is a contradiction. 
    
    Therefore, assigning to each corner $[[a,b,c]]$ of $T$ the stair cubes among $[[0,b,c]],[[a,0,c]],$ $[[a,b,0]]$, every corner of $T$ receives at most two stair cubes, with the exception of at most one corner, which receives three. Moreover, each stair cube is assigned to exactly one corner of $T$, hence $ 2t'+1 \geq |\mathcal{S}|$.
\end{proof}

A corner $[[a,b,c]]$ of a cube figure $F$ is an \emph{odd corner}
if none of $[[0,b,c]]$, $[[a,0,c]]$, $[[a,b,0]]$ is a stair cube of $F$.

\begin{prop}\label{prop_type_suppthm_odd_corners}
    Let $F$ be a cube figure. If there is no $[[a,b,c]] \notin F$ such that $[[a-1,b,c]],[[a,b-1,c]],[[a,b,c-1]] \in F$, then $F$ has at most two odd corners.
\end{prop}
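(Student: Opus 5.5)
The plan is to reduce the statement to three planar staircases, attach to each odd corner one of two cyclic ``orientations'', and then show that at most one odd corner can have each orientation. I read the hypothesis as concerning cubes $[[a,b,c]]$ with $a,b,c\geq 1$, so that the three cubes $[[a-1,b,c]],[[a,b-1,c]],[[a,b,c-1]]$ all exist.

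\textbf{Step 1 (reduction to three staircases).} Every minimal cube not in $F$ (minimal with respect to $\geq$) must lie on one of the coordinate planes $\{x=0\}$, $\{y=0\}$, $\{z=0\}$. Indeed, if a minimal missing cube had all coordinates positive, then its three lower neighbours would lie in $F$, which the hypothesis forbids. Define the three faces
$$F_{xy}=\{(x,y) : [[x,y,0]]\in F\},\quad F_{xz}=\{(x,z) : [[x,0,z]]\in F\},\quad F_{yz}=\{(y,z) : [[0,y,z]]\in F\}.$$
A cube dominates a missing cube of the form $[[p,q,0]]$ if and only if its $(x,y)$-projection is not in $F_{xy}$, and similarly for the other two planes. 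Hence
$$[[x,y,z]]\in F \iff (x,y)\in F_{xy},\ (x,z)\in F_{xz},\ (y,z)\in F_{yz}.$$
Each face is a 2D staircase, i.e. a down-closed finite subset of $\mathbb{N}^2$. In this language, $[[a,b,0]]$ is a stair cube exactly when $(a,b)$ is a maximal element of $F_{xy}$, and likewise for the other two faces.

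\textbf{Step 2 (orientation of an odd corner).} Let $[[a,b,c]]$ be an odd corner. Since $[[a,b,c]]\in F$, Step 1 gives $(a,b)\in F_{xy}$. Because $[[a,b,0]]$ is not a stair cube, $(a,b)$ is not maximal in $F_{xy}$, so $(a+1,b)\in F_{xy}$ or $(a,b+1)\in F_{xy}$.

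Suppose $(a+1,b)\in F_{xy}$. Since $[[a,b,c]]$ is a corner, $[[a+1,b,c]]\notin F$. By Step 1, and using $(b,c)\in F_{yz}$, this failure must come from $(a+1,c)\notin F_{xz}$. As $(a,c)$ is not maximal in $F_{xz}$, it follows that $(a,c+1)\in F_{xz}$. The same reasoning then forces $(b,c+1)\notin F_{yz}$ and $(b+1,c)\in F_{yz}$, and finally $(a,b+1)\notin F_{xy}$.

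The case $(a,b+1)\in F_{xy}$ gives the reverse cycle in the same way. So each odd corner has one of exactly two orientations. It therefore suffices to show that two odd corners with the same orientation coincide.

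\textbf{Step 3 (uniqueness per orientation).} Take odd corners $[[a,b,c]]$ and $[[a',b',c']]$ with the orientation from Step 2, and assume without loss of generality that $a\leq a'$. The argument then proceeds one face at a time, using that each face is down-closed.
\begin{itemize}
\item In $F_{xy}$: we have $(a',b')\in F_{xy}$ and $(a,b+1)\notin F_{xy}$, so $b'\leq b$.
\item In $F_{xz}$, if $a<a'$: we have $(a',c')\in F_{xz}$ and $(a+1,c)\notin F_{xz}$, so $c'<c$.
\item In $F_{yz}$: then $(b',c'+1)\leq (b,c)$ coordinatewise, and $(b,c)\in F_{yz}$, so $(b',c'+1)\in F_{yz}$. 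This contradicts $(b',c'+1)\notin F_{yz}$.
\end{itemize}
Hence $a=a'$. Applying the symmetric argument with the roles of the two corners swapped gives $b=b'$ and then $c=c'$.

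I expect Step 1 to be the crux, since it is the only place the hypothesis is used. Particular care is needed with missing cubes having a zero coordinate, for which some of the three lower neighbours do not exist; the reading of the hypothesis stated at the outset is what makes this step work.
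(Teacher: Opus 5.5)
Your proof is correct, and it takes a genuinely different route from the paper's.

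\textbf{The paper's route.} The paper works directly with individual cubes. For an odd corner $[[a,b,c]]$ it first shows that exactly one of $[[a+1,b,0]]$, $[[a,b+1,0]]$ lies in $F$, and likewise for the other two coordinate planes. Each time it walks along coordinate lines from the corner until it finds an explicit cube $[[\lambda,\mu,\nu]]\notin F$ whose three lower neighbours lie in $F$. It then proves that two distinct odd corners must agree in some coordinate. Finally it excludes a third odd corner by a case analysis, again producing a violating cube in each sub-case.

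\textbf{Your route.} You use the hypothesis exactly once: every minimal non-member of $F$ lies on a coordinate plane. Hence $F$ is the intersection of the three cylinders over its coordinate faces $F_{xy},F_{xz},F_{yz}$, and everything after that is monotonicity in down-closed subsets of $\mathbb{N}^2$.

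\textbf{What each approach buys.} The paper establishes the ``exactly one'' dichotomy separately on each plane but never records how the three choices are linked. Your cyclic orientation captures exactly that link. It explains why the bound is $2$: there is at most one odd corner per orientation, so two distinct odd corners must have opposite orientations. It also replaces the paper's three-corner case analysis with a short argument on the faces. The paper's cube-level technique has the advantage of reusability. In Lemma \ref{lemma_type_suppthm_main_lower} it is applied to $T$, which is obtained from a figure $T'$ satisfying the hypothesis by deleting one extra orthant with positive apex. There the cylinder description no longer holds verbatim, and the paper simply reruns the ``construct a violating cube'' argument.
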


\begin{proof}
    Let $[[a,b,c]]$ be an odd corner of $F$. This implies: $[[a+1,b,0]] \in F$ or $[[a,b+1,0]] \in F$, $[[a+1,0,c]] \in F$ or $[[a,0,c+1]] \in F$, and $[[0,b+1,c]] \in F$ or $[[0,b,c+1]] \in F$. Suppose that $[[a+1,b,0]]$ and $[[a,b+1,0]]$ are both in $F$ --- we claim that this cannot be the case. 

    If $[[a+1,0,c]] \in F$, then there exists $b \geq \lambda > 0$ such that $[[a+1,\lambda-1,c]] \in F$ and $[[a+1,\lambda,c]] \notin F$. Then, there exists $c \geq \mu > 0$ such that $[[a+1,\lambda,\mu-1]] \in F$ and $[[a+1,\lambda,\mu]] \notin F$, as $[[a+1,\lambda,0]] \leq [[a+1,b,0]] \in F$. However, since $[[a+1,\lambda,\mu]] \notin F$ and $[[a,\lambda,\mu]], [[a+1,\lambda-1,\mu]], [[a+1,\lambda,\mu-1]] \in F$, we get a contradiction with the hypothesis. On Figure \ref{fig_type_theorem_1}, we can see an example of such a situation, where $P = (a+1,\lambda,\mu)$. 
    
    If $[[0,b+1,c]] \in F$, we get an analogous contradiction; thus, since $[[a,b,c]]$ is an odd corner and neither $[[a+1,0,c]]$ nor $[[0,b+1,c]]$ is in $F$, we have $[[a,0,c+1]], [[0,b,c+1]] \in F$. Now, there exist $a \geq \lambda > 0$, $b \geq \mu > 0$, such that $[[\lambda,\mu,c+1]] \notin F$ and $[[\lambda-1,\mu,c+1]], [[\lambda,\mu-1,c+1]] \in F$. Since $[[\lambda,\mu,c]] \in F$ also, we get a contradiction with the hypothesis. On Figure \ref{fig_type_theorem_2}, we can see an example of such a situation, where $P = (\lambda,\mu,c+1)$.

\begin{figure}[h]
    \centering
    \begin{minipage}[t]{0.48\textwidth}
        \centering
        \includegraphics[width=0.7\textwidth]{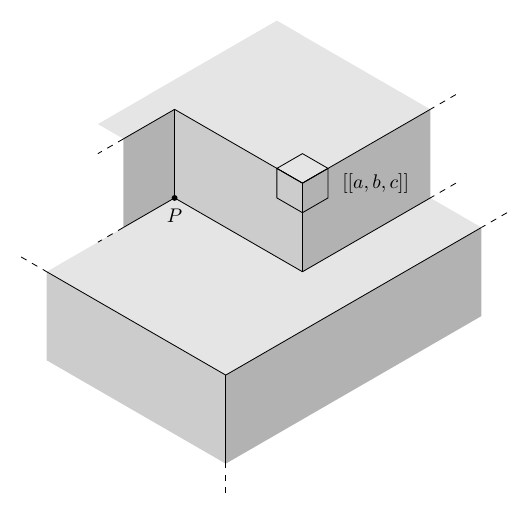}
        \caption{}
        \label{fig_type_theorem_1}
    \end{minipage}
    \begin{minipage}[t]{0.48\textwidth}
        \centering
        \includegraphics[width=0.7\textwidth]{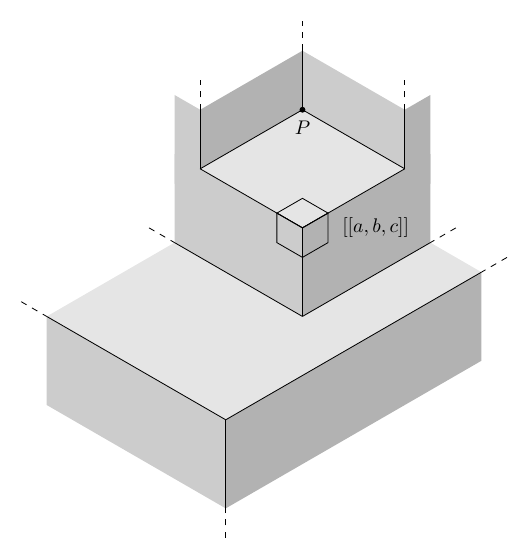}
        \caption{}
        \label{fig_type_theorem_2}
    \end{minipage}
\end{figure}

    Similarly, we obtain that exactly one of $[[a+1,0,c]]$ and $[[a,0,c+1]]$ is in $F$, and that exactly one of $[[0,b+1,c]]$ and $[[0,b,c+1]]$ is in $F$. 

    Now, we claim that any two distinct odd corners $[[a,b,c]]$ and $[[a',b',c']]$ of $F$ must agree in at least one coordinate. Otherwise, without loss of generality, suppose that $a < a'$, $b < b'$, $c > c'$ (we cannot have $[[a,b,c]] < [[a',b',c']]$, as they are both corners). Then, clearly $[[a+1,b,0]], [[a,b+1,0]] \in F$, since $[[a+1,b,0]], [[a,b+1,0]] < [[a',b',c']] \in F$, which contradicts the claim proved at the beginning.

    Thus, without loss of generality, suppose that $a < a'$, $b = b'$, $c > c'$.  Suppose that $[[a'',b'',c'']]$ is a third distinct odd corner of $F$. If $b'' \neq b$, then from the previous paragraph
    we obtain $[[a'',b'',c'']] = [[a,b'',c']]$ or $[[a'',b'',c'']] = [[a',b'',c]]$ --- recall that $a < a'$ and $c > c'$, so we cannot have $a'' = a' = a$ or $c'' = c' = c$.

    Suppose $[[a'',b'',c'']] = [[a',b'',c]]$. If $b'' \geq b' = b$, then $[[a',b'',c]] >[[a',b',c']]$, which contradicts the fact that $[[a',b',c']]$ is a corner of $F$. Hence $ b'' < b' = b$. Now, there exist $a' \geq \lambda > a$, $b \geq \mu > b''$, and $c \geq \nu > c'$, such that $[[\lambda,\mu,\nu]] \notin F$ and $[[\lambda-1,\mu,\nu]],[[\lambda,\mu-1,\nu]],[[\lambda,\mu,\nu-1]] \in F$, which contradicts the hypothesis. On Figure \ref{fig_type_theorem_3}, we can see an example of such a situation, where $P = (\lambda,\mu,\nu)$. 
    
    Suppose $[[a'',b'',c'']] = [[a,b'',c']]$. If $ b'' \leq b'=b $, then $[[a,b'',c']] < [[a,b,c]]$, which contradicts the fact that $ [[a,b'',c']]$ is a corner of $F$. Hence $b'' > b' =b$. Now, since $[[a+1,b,0]] \leq [[a',b',0]] \in F$ and $ [[a,b+1,0]] \leq [[a,b'',0]] = [[a'',b'',0]] \in F$, we have $[[a+1,b,0]], [[a,b+1,0]] \in F$, which contradicts the claim proved at the beginning. On Figure \ref{fig_type_theorem_4}, we can see an example of such a situation. 

\begin{figure}[h]
    \centering
    \begin{minipage}[t]{0.48\textwidth}
        \centering
        \includegraphics[width=0.7\textwidth]{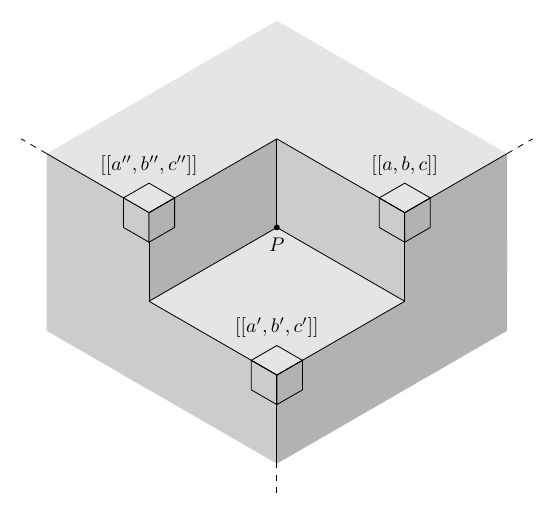}
        \caption{}
        \label{fig_type_theorem_3}
    \end{minipage}
    \begin{minipage}[t]{0.48\textwidth}
        \centering
        \includegraphics[width=0.7\textwidth]{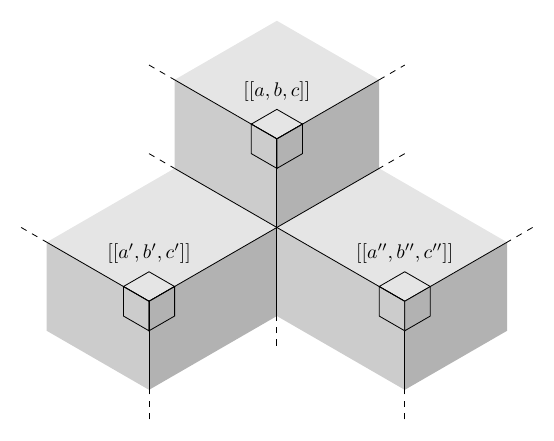}
        \caption{}
        \label{fig_type_theorem_4}
    \end{minipage}
\end{figure}

    Hence $b= b' = b'' $, and without loss of generality, suppose that
    $a < a' < a''$ and $c > c' > c''$. We have $[[a'+1,0,c']] \in F$ or $[[a',0,c'+1]] \in F$, as $[[a',b',c']]$ is an odd corner. 
    
    If $[[a',0,c'+1]] \in F$, then there exist $a' \geq \lambda > a$ and $b \geq \mu > 0$ such that $[[\lambda,\mu,c'+1]] \notin F$ and $[[\lambda-1,\mu,c'+1]], [[\lambda,\mu-1,c'+1]], [[\lambda,\mu,c']] \in F$, which contradicts the hypothesis. On Figure \ref{fig_type_theorem_5}, we can see an example of such a situation, where $P = (\lambda,\mu,c'+1)$.

     If $[[a'+1,0,c']] \in F$, then there exist $a'' \geq \lambda > a'$ and $b \geq \mu > 0$ such that $[[\lambda,\mu,c''+1]] \notin F$ and $[[\lambda-1,\mu,c''+1]], [[\lambda,\mu-1,c''+1]], [[\lambda,\mu,c'']] \in F$, which contradicts the hypothesis. On Figure \ref{fig_type_theorem_6}, we can see an example of such a situation, where $P = (\lambda,\mu,c'+1)$.  \qedhere

\begin{figure}[h]
    \centering
    \begin{minipage}[t]{0.48\textwidth}
        \centering
        \includegraphics[width=0.7\textwidth]{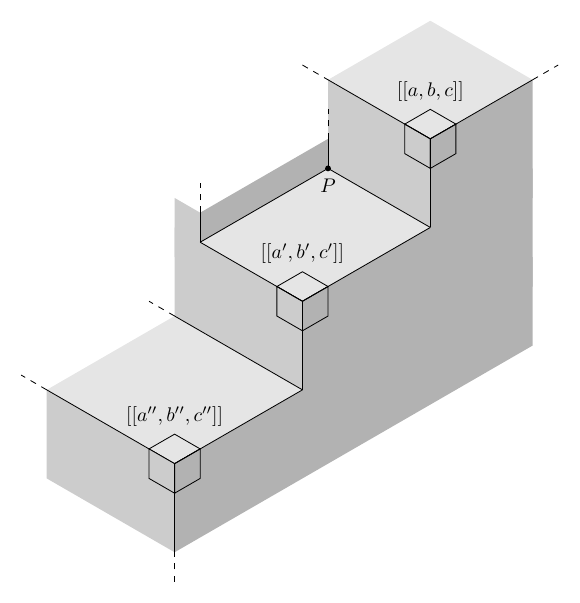}
        \caption{}
        \label{fig_type_theorem_5}
    \end{minipage}
    \begin{minipage}[t]{0.48\textwidth}
        \centering
        \includegraphics[width=0.7\textwidth]{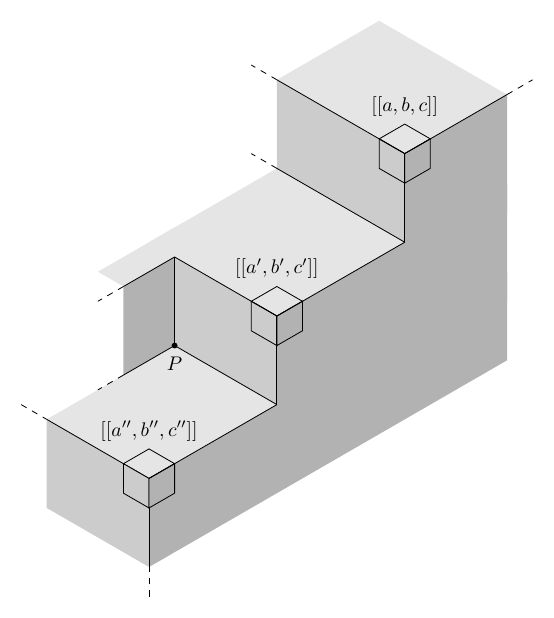}
        \caption{}
        \label{fig_type_theorem_6}
    \end{minipage}
\end{figure}
\end{proof}

\begin{lemma}\label{lemma_type_suppthm_main_lower}
 If there exists $(0,a_{01},a_{02},a_{03}) \in \varphi^{-1}(a_{00}d_0)$ such that $(a_{01},a_{02},a_{03}) \in V$ and $a_{01} a_{02} a_{03}$ $ > 0$, then $|\mathcal{S}| \geq t'-8$. Otherwise $|\mathcal{S}| \geq t'-2$.
\end{lemma}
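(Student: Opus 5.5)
Every corner of $T$ that is not odd has at least one of the cubes $[[0,b,c]]$, $[[a,0,c]]$, $[[a,b,0]]$ as a stair cube. The plan is to count corners through the assignment already used in Lemma \ref{lemma_type_suppthm_upperbound}, but in the opposite direction, and to control the odd corners with Proposition \ref{prop_type_suppthm_odd_corners}.

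First we check that the assignment from the proof of Lemma \ref{lemma_type_suppthm_upperbound} is injective on stair cubes. A stair cube $[[a,b,0]]$ with $[[a+1,b,0]],[[a,b+1,0]]\notin T$ determines a unique corner of $T$ of the form $[[a,b,c]]$, namely the one with maximal $c$ in the column $\{(a,b,z)\}$. This is a corner precisely because the neighbours in the $x$- and $y$-directions are absent already at height $0$, so by the down-closure of $T$ they are absent at every height. Each stair cube is therefore assigned to exactly one corner. It follows that the number of non-odd corners is at most $|\mathcal{S}|$, so
$$t' \le |\mathcal{S}| + \#\{\text{odd corners of } T\}.$$
It remains to bound the number of odd corners by $2$ in general and by $8$ in the exceptional case.

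Next we relate the hypothesis of Proposition \ref{prop_type_suppthm_odd_corners} to $V$. Suppose a cube $[[a,b,c]]\notin T$ has $[[a-1,b,c]],[[a,b-1,c]],[[a,b,c-1]]\in T$, where any coordinate that is $0$ gives a vacuous condition. Then $(a,b,c)$ is a minimal excluded cube, so some point $P\in V$ has $[[a,b,c]]$ in its associated region while none of the three neighbours is. This forces $P=(a,b,c)$ with all three coordinates positive. By \eqref{type_assumption_on_V}, $(2,2)$-type points and the points $(b_{11},-b_{12},-b_{13})$, $(-b_{21},b_{22},-b_{23})$, $(-a_{31},-a_{32},a_{33})$ each have a nonpositive coordinate. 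The only possible such $P$ is therefore $(a_{01},a_{02},a_{03})$ with $a_{01}a_{02}a_{03}>0$. In the ``otherwise'' case no such cube exists, so Proposition \ref{prop_type_suppthm_odd_corners} applied to $F=T$ gives at most two odd corners and hence $|\mathcal{S}|\ge t'-2$.

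In the remaining case there is exactly one offending cube $[[a_{01},a_{02},a_{03}]]$; it is unique by Lemma \ref{lemma_point0uniqueness}. We plan to modify $T$ into a cube figure $F$ satisfying the hypothesis of Proposition \ref{prop_type_suppthm_odd_corners}, either by adding the single cube $[[a_{01},a_{02},a_{03}]]$ or by comparing with the figure obtained by removing the region of that point. We then compare corners and stair cubes of $T$ and $F$. Adding one cube removes at most three corners of $T$ (its three lower neighbours) and creates one. It also changes stair cubes only locally, in at most three coordinate-plane projections, each affecting at most about two stair cubes. The aim is to show that the number of odd corners of $T$ is at most $2$ plus a constant around $6$. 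Verifying this constant is the main obstacle: it needs careful local bookkeeping of how corners that become non-odd or odd, and the stair cubes on the three planes, change under the single-cube modification. I would first try adding the cube and tracking each of the three lower neighbours, and fall back to a cruder bound if $8$ is not reached cleanly.
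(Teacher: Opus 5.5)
Your reduction $t'\le|\mathcal{S}|+\#\{\text{odd corners of }T\}$ is correct and agrees with the paper's argument. Your treatment of the ``otherwise'' case also agrees, and you justify more carefully than the paper why $T$ then satisfies the hypothesis of Proposition~\ref{prop_type_suppthm_odd_corners}. One slip: drop the remark that a zero coordinate gives a ``vacuous'' condition. The hypothesis only concerns cubes with $a,b,c\ge 1$, because a cube with a coordinate $-1$ is never in $F$. Under your reading, the cubes $[[0,\lambda_2,\lambda_3]]$ cut out by $(2,2)$-type points $(-\lambda_1,\lambda_2,\lambda_3)$ would count as violations, and your next sentence would be false.

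The genuine gap is the exceptional case, which is the real content of the lemma. You name the target ``$2$ plus about $6$'' but do not prove it, and your primary plan cannot work as stated. Adding the single cube $[[a_{01},a_{02},a_{03}]]$ does not in general yield a figure satisfying the hypothesis of the Proposition. For example, $[[a_{01}+1,a_{02},a_{03}]]$ becomes a new violating cube as soon as $[[a_{01}+1,a_{02}-1,a_{03}]],[[a_{01}+1,a_{02},a_{03}-1]]\in T$, which is the typical situation. You must restore the whole region, i.e.\ pass to the figure $T'$ cut out by $V\setminus\{(a_{01},a_{02},a_{03})\}$; this figure has at most two odd corners. But the corners of $T$ that are not corners of $T'$ form antichains against the faces of the removed region, and their length is not a priori bounded. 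So tracking how corners change gives no constant; what must be bounded is the number of new \emph{odd} corners.

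Here are the missing ingredients.
\begin{enumerate}
\item The removed region lies in $\{x,y,z\ge 1\}$, so $T$ and $T'$ have exactly the same stair cubes. Your worry about stair cubes changing is therefore moot.
\item Consequently, a corner of $T'$ that survives in $T$ keeps its odd/non-odd status.
\item Every new corner of $T$ has an upper neighbour in the removed region, so it lies against exactly one of the region's three faces.
\end{enumerate}

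The key step is that at most two new odd corners lie against each face. Suppose $[[x_i,y_i,a_{03}-1]]$, $i=1,2,3$, are new odd corners under the face $z=a_{03}$, with $x_1>x_2>x_3\ge a_{01}$ and $y_3>y_2>y_1\ge a_{02}$. Since $[[x_2,y_2,0]]$ is not a stair cube, say $[[x_2+1,y_2,0]]\in T$. Because $[[x_2+1,y_2,a_{03}-1]]\notin T$, you can take a minimal cube not in $T$ among the cubes $[[x_2+1,\mu,\nu]]$ with $y_1<\mu\le y_2$ and $0<\nu<a_{03}$. Its three lower neighbours are in $T$; those outside this set are dominated by $[[x_2,y_2,a_{03}-1]]$, $[[x_1,y_1,a_{03}-1]]$, or $[[x_2+1,y_2,0]]$. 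Hence it is a point of $V$ with all coordinates positive, different from $(a_{01},a_{02},a_{03})$ since $\nu<a_{03}$. This contradicts Lemmas~\ref{lemma_point0connection} and~\ref{lemma_point0uniqueness}. The case $[[x_2,y_2+1,0]]\in T$ is handled symmetrically using the third corner.

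This gives at most $2+3\cdot 2=8$ odd corners of $T$, and with your injection, $|\mathcal{S}|\ge t'-8$.
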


\begin{proof}

    If no $(0,a_{01},a_{02},a_{03}) \in \varphi^{-1}(a_{00}d_0)$ has $(a_{01},a_{02},a_{03}) \in V$ and $a_{01}a_{02}a_{03} > 0$, then $T$ satisfies the assumptions of Proposition \ref{prop_type_suppthm_odd_corners}, hence it has at most two odd corners. Then each non-odd corner $[[a,b,c]]$ of $T$ has at least one associated stair cube, chosen from $[[0,b,c]], [[a,0,c]],[[a,b,0]]$. Every stair cube is associated to at most one corner of $T$, hence $|\mathcal{S}| \geq t' -2$.

    Suppose that there exists $(0,a_{01},a_{02},a_{03}) \in \varphi^{-1}(a_{00}d_0)$ with $(a_{01},a_{02},a_{03}) \in V$ and $a_{01}a_{02}a_{03} > 0$. By Lemmas \ref{lemma_point0uniqueness} and \ref{lemma_point0connection} we know that this point is unique. Let $T'$ be the figure obtained by removing from the initial collection of cubes all the regions associated to each of $V \setminus \{ (a_{01},a_{02},a_{03})\}$. Note that the sets of stair cubes of $T$ and $T'$ are equal. Moreover, $T'$ satisfies the assumptions of Proposition \ref{prop_type_suppthm_odd_corners}, hence it has at most two odd corners.

    Now, deleting from $T'$ the region associated to $(a_{01},a_{02},a_{03})$ will yield $T$. We claim that this operation can add at most six new odd corners. Note that any new odd corner could not have been a corner of $T'$ --- by the definition, a corner that is not odd in $T'$ cannot become an odd corner merely because cubes were deleted. Thus every new odd corner must touch the region associated to $(a_{01},a_{02},a_{03})$ with one of its faces.

    Assume, for contradiction, that seven new odd corners appear. This means that three of them touch the same side of the region associated to $(a_{01},a_{02},a_{03})$. Without loss of generality, suppose that $[[x_1,y_1,a_{03}-1]], [[x_2,y_2,a_{03}-1]], [[x_3,y_3,a_{03}-1]]$ are three distinct odd corners, where $x_1 > x_2 > x_3 \geq a_{01}$ and $y_3 > y_2 > y_1 \geq a_{02}$. We have $[[x_2+1,y_2,0]] \in T$ or $[[x_2,y_2+1,0]] \in T$. If $[[x_2+1,y_2,0]] \in T$, then there exists $x_1 \geq \lambda > x_2$, $y_2 \geq  \mu > y_1$, and $a_{03}-1 \geq \nu > 0$, such that $[[\lambda,\mu,\nu]] \notin F$ and $[[\lambda-1,\mu,\nu]],[[\lambda,\mu-1,\nu]],[[\lambda,\mu,\nu-1]] \in F$. This implies $(\lambda,\mu,\nu) \in V$, which by Lemma \ref{lemma_point0connection}, contradicts Lemma \ref{lemma_point0uniqueness}. On Figure \ref{fig_type_theorem_7}, we can see an example of such a situation, where $P = (\lambda,\mu,\nu)$. If $[[x_2,y_2+1,0]] \in T$, we proceed analogously.

    \begin{figure}[h]
    \centering
    \includegraphics[width=0.5\textwidth]{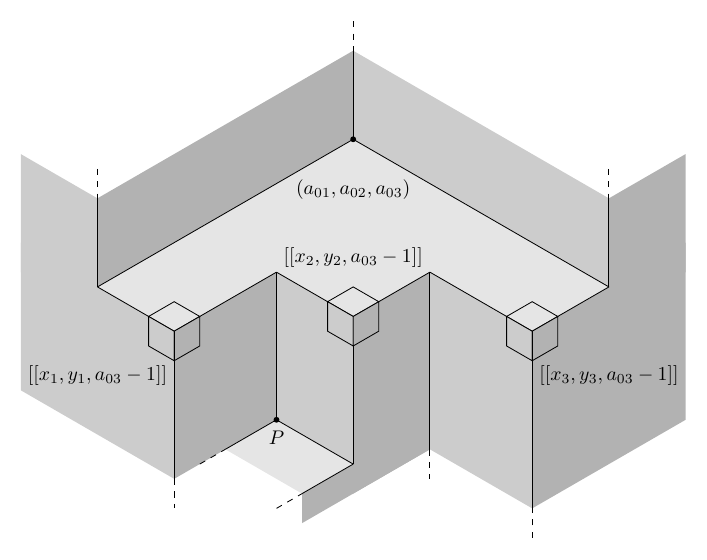}
    \caption{}
    \label{fig_type_theorem_7}
    \end{figure}

    Thus, since $T'$ has at most two odd corners, $T$ can have at most eight odd corners.
    Each non-odd corner $[[a,b,c]]$ of $T$ has at least one associated stair cube, chosen from $[[0,b,c]], [[a,0,c]],[[a,b,0]]$. Every stair cube is associated to at most one corner of $T$, hence $|\mathcal{S}| \geq t' - 8$. 
\end{proof}

\begin{thm}\label{theorem_type1}
    If $S$ is primary, then $4 t(S) +1 \geq |V^*| \geq t(S)-11$.  If $S$ is secondary, then $2 t(S) +3 \geq |V^*| \geq t(S) - 5$.
\end{thm}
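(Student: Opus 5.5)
We chain the inequalities already established between $|V^*|$, $|\mathcal{S}|$, $t'$ and $t(S)$.

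For the \emph{upper} bounds, start from the lower estimate of Lemma \ref{lemma_type_suppthm_stair}, which in both of its cases gives $|V^*| \leq |\mathcal{S}|$. Lemma \ref{lemma_type_suppthm_upperbound} gives $|\mathcal{S}| \leq 2t'+1$, so $|V^*| \leq 2t'+1$. In the primary case Lemma \ref{lemma_type-corners_primary} gives $t' \leq 2t(S)$, hence $|V^*| \leq 4t(S)+1$. In the secondary case Lemma \ref{lemma_type-corners_secondary} gives $t' \leq t(S)+1$, hence $|V^*| \leq 2t(S)+3$.

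For the \emph{lower} bounds, use the upper estimate of Lemma \ref{lemma_type_suppthm_stair}, namely $|\mathcal{S}| \leq |V^*|+4$ in its first case and $|\mathcal{S}| \leq |V^*|+3$ otherwise. Combine it with Lemma \ref{lemma_type_suppthm_main_lower}, which gives $|\mathcal{S}| \geq t'-8$ if the point $(a_{01},a_{02},a_{03})\in V$ has all coordinates positive and $|\mathcal{S}| \geq t'-2$ otherwise. These two case distinctions are mutually exclusive at the level of the distinguished point $(a_{01},a_{02},a_{03})$. By Lemma \ref{lemma_point0uniqueness} there is at most one such point outside the three axis points, so it cannot simultaneously have all coordinates positive and exactly one coordinate zero. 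This is the one step needing care. We split into three cases:
\begin{enumerate}
\item[(i)] All three coordinates are positive. Then Lemma \ref{lemma_type_suppthm_stair} is in its ``otherwise'' case, so $|V^*| \geq |\mathcal{S}|-3 \geq t'-11$.
\item[(ii)] Exactly one coordinate is zero. Then $|V^*| \geq |\mathcal{S}|-4 \geq t'-2-4 = t'-6$.
\item[(iii)] Neither holds. Then $|V^*| \geq |\mathcal{S}|-3 \geq t'-5$.
\end{enumerate}
Hence $|V^*| \geq t'-11$ always. Since $t' \geq t(S)$ by Lemmas \ref{lemma_type-corners_primary} and \ref{lemma_type-corners_secondary}, this gives $|V^*| \geq t(S)-11$ in the primary case.

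In the secondary case the sharper bound comes from the proof of Lemma \ref{lemma_point0uniqueness}. There $\varphi^{-1}(a_{00}d_0)=\{(a_{00},0,0,0),(0,a_{11},0,0)\}$, so the only candidate point is $(a_{11},0,0)=(b_{11},0,0)$. This has two zero coordinates, so neither special case (i) nor (ii) occurs. Thus $|\mathcal{S}| \leq |V^*|+3$ and $|\mathcal{S}| \geq t'-2$, giving $|V^*| \geq t'-5 \geq t(S)-5$.

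The only genuine obstacle is checking that the hypotheses of Lemmas \ref{lemma_type_suppthm_stair} and \ref{lemma_type_suppthm_main_lower} are mutually exclusive, and that both are vacuous in the secondary case. Both facts follow directly from Lemma \ref{lemma_point0uniqueness} and the explicit form of $\varphi^{-1}(a_{00}d_0)$ for secondary semigroups. Everything else is arithmetic.
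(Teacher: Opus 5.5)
Your proposal is correct and follows essentially the same route as the paper. Both chain Lemmas \ref{lemma_type_suppthm_stair}, \ref{lemma_type_suppthm_upperbound}, \ref{lemma_type_suppthm_main_lower} with the corner lemmas, and both use Lemma \ref{lemma_point0uniqueness} and the explicit form of $\varphi^{-1}(a_{00}d_0)$ in the secondary case to settle which case of each lemma applies. The only difference is that you split the paper's second case (no all-positive point, yielding $t'-6$) into your cases (ii) and (iii), which is harmless.
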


\begin{proof}
    Suppose that $S$ is primary. The upper bound for $|V^*|$ follows from the chain of inequalities:
    \begin{equation}
        |V^*| \leq |\mathcal{S}| \leq 2 t' + 1 \leq 4 t(S) +1
    \end{equation}
    which are the result of Lemmas \ref{lemma_type_suppthm_stair}, \ref{lemma_type_suppthm_upperbound}, and \ref{lemma_type-corners_primary}, respectively. To prove the lower bound for $|V^*|$ we need to split into two cases.

    Suppose that there exists $(0,a_{01},a_{02},a_{03}) \in \varphi^{-1}(a_{00}d_0)$ such that $(a_{01},a_{02},a_{03}) \in V$ and $a_{01} a_{02} a_{03} > 0$. Then, by Lemma \ref{lemma_point0uniqueness}, the point $(a_{01},a_{02},a_{03}) \in V$ is unique and there does not exist $(0,a_{01}',a_{02}',a_{03}') \in \varphi^{-1}(a_{00}d_0)$ such that $(a_{01}',a_{02}',a_{03}') \in V$ and exactly one among $a_{01}', a_{02}', a_{03}'$ is zero. Hence, we have the following chain of inequalities: 
    \begin{equation}
        |V^*| \geq |\mathcal{S}|-3 \geq t' - 11 \geq t(S) - 11
    \end{equation}
    which are the result of Lemmas \ref{lemma_type_suppthm_stair}, \ref{lemma_type_suppthm_main_lower}, and \ref{lemma_type-corners_primary}, respectively.

    Suppose that there does not exist $(0,a_{01},a_{02},a_{03}) \in \varphi^{-1}(a_{00}d_0)$ such that $(a_{01},a_{02},a_{03}) \in V$ and $a_{01} a_{02} a_{03} > 0$. Then, we have the following chain of inequalities: 
    \begin{equation}
        |V^*| \geq |\mathcal{S}|-4 \geq t' - 6 \geq t(S) - 6
    \end{equation}
    which are the result of Lemmas \ref{lemma_type_suppthm_stair}, \ref{lemma_type_suppthm_main_lower}, and \ref{lemma_type-corners_primary}, respectively. As $t(S) - 6 \geq t(S) - 11$, the first claim follows.

    Suppose that $S$ is secondary. The upper bound for $|V^*|$ is due to the following chain of inequalities:
    \begin{equation}
        |V^*| \leq |\mathcal{S}| \leq 2 t' + 1 \leq 2 t(S) +3
    \end{equation}
    which are the result of Lemmas \ref{lemma_type_suppthm_stair}, \ref{lemma_type_suppthm_upperbound}, and \ref{lemma_type-corners_secondary}, respectively. Since $\varphi^{-1}(a_{00}d_0) = \{(a_{00},0,0,0)$, $(0,a_{11},0,0)\}$, there does not exist $(0,a_{01},a_{02},a_{03}) \in \varphi^{-1}(a_{00}d_0)$ such that $a_{01} a_{02} a_{03} > 0$, nor such that exactly one among $a_{01}, a_{02}, a_{03}$ is zero. Hence, we have the following chain of inequalities: 
    \begin{equation}
        |V^*| \geq |\mathcal{S}|-3 \geq t' -5 \geq t(S) - 5
    \end{equation} 
    which are the result of Lemmas \ref{lemma_type_suppthm_stair}, \ref{lemma_type_suppthm_main_lower}, and \ref{lemma_type-corners_secondary}, respectively.
\end{proof}

\subsection{$V$ and $\eta(S)$}
To relate $|V^*|$ with $\eta(S)$, we need to know how many elements of $V^*$ do not correspond to Betti elements. 

Let
$$ \mathcal{N} = \{ (x,y,z) \in V^* \mid  \max\{ x,0\} d_1 + \max\{y,0\}d_2 + \max\{z,0\}d_3 \notin \textup{Betti}(S)  \}.$$

\begin{lemma}\label{lemma_bettiT_primary}
     If $S$ is primary, then $|\mathcal{N}| \leq 3$.
\end{lemma}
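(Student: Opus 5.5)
The plan is to show that $\mathcal{N}$ contains at most one point of each of the three shapes $(-\lambda_1,\lambda_2,\lambda_3)$, $(\lambda_1,-\lambda_2,\lambda_3)$ and $(\lambda_1,\lambda_2,-\lambda_3)$. Since every element of $V^*$ has exactly one of these shapes, this gives $|\mathcal{N}|\le 3$.

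\emph{Reduction.} I treat the shape $(-\lambda_1,\lambda_2,\lambda_3)$; the other two are symmetric. Suppose $(-\lambda_1,\lambda_2,\lambda_3)\in\mathcal{N}$ comes from $\lambda_0d_0+\lambda_1d_1=\lambda_2d_2+\lambda_3d_3$ with all $\lambda_i>0$. Lemma \ref{lemma_catenaryT_primary} gives $\lambda_0=a_{00}$ and $\lambda_1<a_{11}$. Moreover, for a primary semigroup we have $T\subseteq\{x\le a_{11},\,y\le a_{22},\,z\le a_{33}\}$, as already used in Lemma \ref{lemma_point0uniqueness}. Hence the definition of $V$ forces $\lambda_2<a_{22}$ and $\lambda_3<a_{33}$.

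\emph{Uniqueness.} Let $(-\lambda_1,\lambda_2,\lambda_3)$ and $(-\lambda_1',\lambda_2',\lambda_3')$ be two such points of $\mathcal{N}$. Both have $\lambda_0=\lambda_0'=a_{00}$, so subtracting the two identities gives
$$(\lambda_1-\lambda_1')d_1=(\lambda_2-\lambda_2')d_2+(\lambda_3-\lambda_3')d_3 .$$
By symmetry assume $\lambda_1\ge\lambda_1'$, and split into cases by the signs of $\lambda_2-\lambda_2'$ and $\lambda_3-\lambda_3'$.
\begin{itemize}
\item If both are nonnegative and $\lambda_1>\lambda_1'$, the minimality of $a_{11}$ gives $\lambda_1\ge\lambda_1-\lambda_1'\ge a_{11}$. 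This contradicts $\lambda_1<a_{11}$.
\item If both are nonnegative and $\lambda_1=\lambda_1'$, the right-hand side vanishes, so the two points coincide.
\item If $\lambda_2-\lambda_2'\ge 0$ and $\lambda_3-\lambda_3'<0$, rearrange to
$$(\lambda_1-\lambda_1')d_1+(\lambda_3'-\lambda_3)d_3=(\lambda_2-\lambda_2')d_2 .$$
The left side is positive, so $\lambda_2-\lambda_2'>0$, and minimality of $a_{22}$ gives $\lambda_2\ge a_{22}$. This is a contradiction. The case with the roles of $d_2$ and $d_3$ exchanged is identical, using $a_{33}$.
\item If both are negative, the left side is nonnegative while the right side is negative, which is impossible.
\end{itemize}
So there is at most one point of this shape in $\mathcal{N}$.

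\emph{The other shapes.} For $(\lambda_1,-\lambda_2,\lambda_3)$ and $(\lambda_1,\lambda_2,-\lambda_3)$, the analogous statement of Lemma \ref{lemma_catenaryT_primary} gives $\lambda_0=a_{00}$ together with $\lambda_2<a_{22}$ (respectively $\lambda_3<a_{33}$). The other two coordinates are bounded by $a_{11},a_{22},a_{33}$ because $T$ is. The same sign analysis then shows uniqueness for each shape.

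The only delicate point I expect is confirming the coordinate bounds used above in the primary case. These are that $T\subseteq\{x\le a_{11},y\le a_{22},z\le a_{33}\}$ (from the $L$-shape description in Proposition \ref{prop_primary} together with Lemma \ref{lemma_procedure_2}), and that the positive coordinates of points of $V^*$ are strictly below these bounds. Once these bounds are in hand, the rest is the routine case analysis above.
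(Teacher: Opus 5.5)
Your proposal is correct and follows essentially the same argument as the paper. The paper also uses Lemma \ref{lemma_catenaryT_primary} to get $\lambda_0=a_{00}$ with every positive coordinate below the corresponding $a_{ii}$, subtracts the identities of two points whose negative coordinate is in the same position, and uses minimality of $a_{ii}$. Your ``at most one point per shape'' is the paper's pigeonhole step, and your explicit sign cases unpack its ``without loss of generality.'' The only difference is that the paper drops the ordering assumption so the three shapes become symmetric, whereas you cite the analogous statements of the lemma directly.
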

\begin{proof}

In this proof, as in the proof of Lemma \ref{lemmabettifind_primary}, it will be more convenient to drop the assumption that $S$ is ordered --- we only suppose that $a_{ii}=b_{ii}$ for at least two distinct $i \in \{1,2,3\}$ and that $T = R \cap \{x \leq a_{11},y\leq a_{22}, z \leq a_{33}\}$. 

Suppose $|\mathcal{N}| \geq 4$. Then, we can find two distinct points in $\mathcal{N}$ whose negative coordinates occur in the same position. Without loss of generality, let $(-\lambda_1,\lambda_2,\lambda_3)$ and $(-\lambda_1',\lambda_2',\lambda_3')$ be distinct points in $\mathcal{N}$ (where $\lambda_i>0$). By Lemma \ref{lemma_catenaryT_primary}, we have 
\begin{align}
    a_{00}d_0 + \lambda_1d_1 &=  \lambda_2d_2 + \lambda_3d_3,  \label{lemma_bettiT_primary_eq3} \\
    a_{00}d_0 + \lambda_1'd_1 &=  \lambda_2'd_2 + \lambda_3'd_3, \label{lemma_bettiT_primary_eq4}
\end{align}
where $ \lambda_i,\lambda_i' < a_{ii}$ for $i = 1,2,3$. Subtracting \eqref{lemma_bettiT_primary_eq3} from \eqref{lemma_bettiT_primary_eq4} gives 
\begin{equation}
    (\lambda_1-\lambda_1')d_1 + (\lambda_2'-\lambda_2)d_2 + (\lambda_3'-\lambda_3)d_3 = 0.
\end{equation}
Since $(-\lambda_1,\lambda_2,\lambda_3) \neq (-\lambda_1',\lambda_2',\lambda_3')$, not all coefficients above are zero. Without loss of generality, suppose $(\lambda_1-\lambda_1') > 0$, $(\lambda_2'-\lambda_2) \leq 0$, $(\lambda_3'-\lambda_3) \leq 0$. This implies $\lambda_1 \geq (\lambda_1-\lambda_1') \geq a_{11}$, which is a contradiction. 
\end{proof}

\begin{lemma}\label{lemma_bettiT_secondary}
     If $S$ is secondary, then $|\mathcal{N}| \leq 1$.
\end{lemma}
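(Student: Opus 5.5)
The plan is to split $\mathcal{N}$ according to the position of the negative coordinate, using the structure of $V^*$ in the secondary case. Every point of $V^*$ has exactly one negative coordinate, so it has one of the forms $(-\lambda_1,\lambda_2,\lambda_3)$, $(\lambda_1,-\lambda_2,\lambda_3)$ or $(\lambda_1,\lambda_2,-\lambda_3)$ with $\lambda_i>0$.

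The first two forms can be dismissed immediately.
\begin{itemize}
\item By Lemma \ref{lemma_catenary_lambda1_secondary}, every point $(-\lambda_1,\lambda_2,\lambda_3)\in V$ gives a Betti element. Hence no point of this form lies in $\mathcal{N}$.
\item By Lemma \ref{lemma_catenaryT_secondary_case1}, no point of the form $(\lambda_1,-\lambda_2,\lambda_3)\in V$ lies in $\mathcal{N}$.
\end{itemize}
So it suffices to show that at most one point $(\lambda_1,\lambda_2,-\lambda_3)\in V^*$ lies in $\mathcal{N}$.

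Suppose, for contradiction, that $(\lambda_1,\lambda_2,-\lambda_3)$ and $(\lambda_1',\lambda_2',-\lambda_3')$ are two distinct such points. By Lemma \ref{lemma_catenaryT_secondary_case2}, both have $\lambda_0=a_{00}$ and $\lambda_3,\lambda_3'<a_{33}$. That is,
\begin{align*}
a_{00}d_0+\lambda_3d_3&=\lambda_1d_1+\lambda_2d_2,\\
a_{00}d_0+\lambda_3'd_3&=\lambda_1'd_1+\lambda_2'd_2.
\end{align*}
Membership in $V$ (so that $T=R\cap\{z\le a_{33}\}$) gives the further bounds $\lambda_1,\lambda_1'<b_{11}=a_{11}$ and $\lambda_2,\lambda_2'<b_{22}$.

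Subtracting the two identities gives
$$(\lambda_1-\lambda_1')d_1+(\lambda_2-\lambda_2')d_2+(\lambda_3'-\lambda_3)d_3=0,$$
with not all coefficients zero. Rearranging, some single positive coefficient times its generator equals a nonnegative combination of the other two generators. We then consider which coefficient is isolated.
\begin{itemize}
\item If it is the $d_1$ coefficient, then $|\lambda_1-\lambda_1'|\ge a_{11}$. This contradicts $\lambda_1,\lambda_1'<a_{11}$.
\item If it is the $d_3$ coefficient, then $|\lambda_3-\lambda_3'|\ge a_{33}$. This contradicts $\lambda_3,\lambda_3'<a_{33}$.
\end{itemize}

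The main obstacle is the remaining case, where the $d_2$ coefficient is isolated. There we only get $|\lambda_2-\lambda_2'|\ge a_{22}$, and $a_{22}$ may be smaller than $b_{22}$, so the bound on $\lambda_2$ gives no contradiction. Without loss of generality take $\lambda_2-\lambda_2'\ge a_{22}$, with $\lambda_1\le\lambda_1'$ and $\lambda_3'\le\lambda_3$.

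To handle it, I will use $a_{22}d_2=a_{33}d_3$ to replace $a_{22}d_2$ by $a_{33}d_3$ in the first identity. This gives
$$a_{00}d_0+(\lambda_3-a_{33})d_3=\lambda_1d_1+(\lambda_2-a_{22})d_2.$$
Since $\lambda_3<a_{33}$, the $d_3$ coefficient on the left is negative, and moving it across yields
$$a_{00}d_0=\lambda_1d_1+(\lambda_2-a_{22})d_2+(a_{33}-\lambda_3)d_3,$$
with all coefficients nonnegative and $a_{33}-\lambda_3>0$.

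This gives a factorization of $a_{00}d_0$ with $d_3$ in its support. That contradicts $\varphi^{-1}(a_{00}d_0)=\{(a_{00},0,0,0),(0,a_{11},0,0)\}$ for secondary semigroups. Hence at most one point of $\mathcal{N}$ has the third form, and $|\mathcal{N}|\le1$.
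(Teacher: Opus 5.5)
Your proof is correct and follows the paper's argument. You rule out the first two forms via Lemmas \ref{lemma_catenary_lambda1_secondary} and \ref{lemma_catenaryT_secondary_case1}, and for two points of the third form you use Lemma \ref{lemma_catenaryT_secondary_case2} and the bounds on $\lambda_1,\lambda_3$ to force $\lambda_2-\lambda_2'\ge a_{22}$. You then substitute $a_{22}d_2=a_{33}d_3$ to get a forbidden factorization of $a_{00}d_0$. The only cosmetic difference is that you split into cases on which coefficient is isolated, whereas the paper first assumes $\lambda_3\ge\lambda_3'$ and shows all coefficients are nonnegative.
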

\begin{proof} 
By Lemmas \ref{lemma_catenary_lambda1_secondary} and \ref{lemma_catenaryT_secondary_case1}, no points of the form $(-\lambda_1,\lambda_2,\lambda_3)$ or $(\lambda_1,-\lambda_2,\lambda_3)$ (where $\lambda_i \in \mathbb{Z}_{>0}$) are in $\mathcal{N}$.

Thus every point in $\mathcal{N}$ is of the form $(\lambda_1,\lambda_2,-\lambda_3)$ (where $\lambda_i \in \mathbb{Z}_{>0}$). 
If $|\mathcal{N}| \geq 2$, then let $(\lambda_1,\lambda_2,-\lambda_3)$ and $(\lambda_1',\lambda_2',-\lambda_3')$ be distinct points in $\mathcal{N}$. By Lemma \ref{lemma_catenaryT_secondary_case2}, we have
\begin{align}
    a_{00}d_0 + \lambda_3d_3 &=  \lambda_1d_1 + \lambda_2d_2,  \label{lemma_bettiT_secondary_eq1} \\
    a_{00}d_0 + \lambda_3'd_3 &=  \lambda_1'd_1 + \lambda_2'd_2, \label{lemma_bettiT_secondary_eq2}
\end{align}
where $\lambda_3, \lambda_3' < a_{33}$ Since $T = R \cap \{z \leq a_{33}\}$, we have $\lambda_1, \lambda_1' < b_{11}=a_{11}$.  

Without loss of generality, suppose $\lambda_3 \geq \lambda_3'$. Subtracting \eqref{lemma_bettiT_secondary_eq2} and \eqref{lemma_bettiT_secondary_eq1}, we obtain
\begin{equation}
    (\lambda_3- \lambda_3')d_3 + (\lambda_1'-\lambda_1)d_1 =  (\lambda_2-\lambda_2')d_2
\end{equation}
where all above coefficients are nonnegative, as otherwise we would get a contradiction with $\lambda_3 < a_{33}$ or $\lambda_1 < a_{11}$. As $(\lambda_1,\lambda_2,-\lambda_3) \neq (\lambda_1',\lambda_2',-\lambda_3')$, we obtain $\lambda_2 >
(\lambda_2-\lambda_2') \geq a_{22}$. We have 
\begin{equation}
     a_{00}d_0 =  \lambda_1d_1 + (\lambda_2-a_{22})d_2 + (a_{33}-\lambda_3)d_3
\end{equation}
where all above coefficients are positive. This contradicts $\varphi^{-1}(a_{00}d_0) = \{(a_{00},0,0,0), (0,a_{11},0,0)\}$. 
\end{proof}

Let
$$U_0 = \{ \max\{ x,0\} d_1 + \max\{y,0\}d_2 + \max\{z,0\}d_3 \mid (x,y,z) \in V^* \setminus \mathcal{N} \}.$$
Note that $U_0$ is independent of the choice of $V$.

\begin{lemma}\label{lemma_U0_VN}
     $|U_0| = |V^* \setminus \mathcal{N}|$.
\end{lemma}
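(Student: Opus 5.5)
The map $\psi : V^* \setminus \mathcal{N} \to U_0$, $(x,y,z) \mapsto \max\{x,0\}d_1+\max\{y,0\}d_2+\max\{z,0\}d_3$, is surjective by definition. So the plan is to prove that $\psi$ is injective. Suppose two distinct points $P,Q \in V^* \setminus \mathcal{N}$ have the same image $b$; we want a contradiction. By definition of $\mathcal{N}$, $b \in \textup{Betti}(S)$. Write $P$ as a $(2,2)$-type point, say $P=(-\lambda_1,\lambda_2,\lambda_3)$ with $\lambda_0d_0+\lambda_1d_1=\lambda_2d_2+\lambda_3d_3=b$ (the other sign patterns are analogous). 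Then $(\lambda_0,\lambda_1,0,0)$ and $(0,0,\lambda_2,\lambda_3)$ are factorizations of $b$ lying in different $\mathcal{R}$-classes.

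\textbf{Step 1: the fibre $\varphi^{-1}(b)$.} In the primary case, Lemmas \ref{lemma_catenary_primary} and \ref{lemma_catenarycase1_primary} give
\[
\varphi^{-1}(b)=\{(\lambda_0,\lambda_1,0,0),(0,0,\lambda_2,\lambda_3)\}\quad\text{or}\quad \varphi^{-1}(b)=\mathcal{F}\cup\{(0,0,\lambda_2,\lambda_3)\}.
\]
In the secondary case, Lemmas \ref{lemma_catenary_lambda1_secondary} and \ref{lemma_catenary_lambda23_secondary} give the analogous descriptions $\mathcal{F}_1\cup\mathcal{F}_2$, or a two-element set. In every case there are exactly two $\mathcal{R}$-classes.

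\textbf{Step 2: comparing with $Q$.} The point $Q$ also yields two factorizations of $b$: one supported on the positive-coordinate indices of $Q$, and one supported on index $0$ together with the negative-coordinate index of $Q$. By Step 1, the factorization of $b$ without $d_0$ determined by $Q$ must be one of the listed ones. This forces the sign patterns of $P$ and $Q$ to agree, or else it places $Q$ in the family $\mathcal{F}_2$ (in the primary case, only the single element $(0,0,\lambda_2,\lambda_3)$ is available). Most cases are settled at once: in the primary case, and for the $(\lambda_1,-\lambda_2,\lambda_3)$-type points in the secondary case, the factorization with positive support is unique, so $Q$ has the same positive coordinates. Its negative coordinate is then determined by $b$ minus the positive part, so $Q=P$.

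\textbf{Step 3: the remaining secondary case (main obstacle).} The delicate case is the secondary one with $P=(-\lambda_1,\lambda_2,\lambda_3)$. There $Q=(-\lambda_1',\lambda_2+\lambda a_{22},\lambda_3-\lambda a_{33})$ for some $\lambda\neq 0$, with $-\lambda_1'$ determined via $\mathcal{F}_1$. Here I would use the minimality of $V$ together with the fact that both points lie in $V$, which requires $\lambda_2,\lambda_2+\lambda a_{22}\le b_{22}$ and $\lambda_3,\lambda_3-\lambda a_{33}< a_{33}$. The $z$-constraints force $\lambda_3<a_{33}$ and $\lambda_3-\lambda a_{33}\in[0,a_{33})$, which is impossible for $\lambda\neq 0$ unless one of the third coordinates is $0$. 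A $(2,2)$-type point has all $\lambda_i>0$, so this case is excluded. (If instead the first coordinates differ through $\mathcal{F}_1$, i.e.\ the $d_1$-coefficient shifts by a multiple of $a_{11}=b_{11}$, the bound $\lambda_1<b_{11}$ rules that out in the same way.) The main work is checking that these coordinate bounds coming from the definition of $V$ (namely $T=R\cap\{z\le a_{33}\}$ and $x<b_{11}$) apply to every relevant sign pattern; the rest is routine bookkeeping.
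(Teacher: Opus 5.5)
Your argument works, but it takes a different route from the paper.

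\textbf{Your route versus the paper's.} You get injectivity from the fibre descriptions in Lemmas \ref{lemma_catenary_primary}, \ref{lemma_catenarycase1_primary}, \ref{lemma_catenary_lambda1_secondary} and \ref{lemma_catenary_lambda23_secondary}. This is where the hypothesis $P\notin\mathcal{N}$ is used. The only $d_0$-free factorizations of $b$ with two positive entries have support equal to the positive support of $P$, so all mixed sign patterns are excluded at once. The secondary family $\mathcal{F}_2$ is then cut down by the bound $z<a_{33}$.

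The paper never looks at fibres. It runs a direct case analysis on pairs of sign patterns. An equality such as $\lambda_2d_2+\lambda_3d_3=\lambda_2'd_2+\lambda_3'd_3$, or a mixed one such as $\lambda_2d_2+\lambda_3d_3=\lambda_1'd_1+\lambda_3'd_3$, is rearranged into a relation that forces some positive coordinate of a point of $V^*$ past one of the bounds such points satisfy. These bounds are $x<a_{11}=b_{11}$, $z<a_{33}$, and in the primary case $y<a_{22}=b_{22}$. Only the secondary mixed case needs an extra infinite descent through $a_{22}d_2=a_{33}d_3$.

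The paper's argument is elementary and self-contained. It never actually uses that the points avoid $\mathcal{N}$, so it gives injectivity on all of $V^*$. Yours replaces that case-by-case arithmetic with a single support comparison. The price is dependence on the heavier Section 3 fibre lemmas and on the Betti hypothesis.

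\textbf{A step that needs repair.} Once the positive parts of $P$ and $Q$ agree, the negative coordinate is \emph{not} determined by $b$ in general. When $\varphi^{-1}(b)=\mathcal{F}\cup\{(0,0,\lambda_2,\lambda_3)\}$, or when $\mathcal{F}_1$ has several elements, several $d_1$-coefficients occur. There is also no bound $\lambda_1<b_{11}$ on the absolute value of a negative coordinate, contrary to your parenthetical in Step 3.

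What actually gives $Q=P$ is the minimality of $V$. Every cube $[[i,j,k]]$ of the initial collection has $i\ge 0$. So the regions associated to $(-\lambda_1,\lambda_2,\lambda_3)$ and $(-\mu_1,\lambda_2,\lambda_3)$ contain exactly the same cubes, and both points cannot lie in $V$. The paper uses this fact tacitly in its ``without loss of generality''.

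Separately, your qualifier ``unless one of the third coordinates is $0$'' is unnecessary. Two integers in $[0,a_{33})$ that differ by $\lambda a_{33}$ already force $\lambda=0$.
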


\begin{proof}
    Suppose that there exist distinct $(-\lambda_1,\lambda_2,\lambda_3)$ and $(-\lambda_1',\lambda_2',\lambda_3') \in V^* \setminus \mathcal{N}$ (where $\lambda_i > 0$) such that $\lambda_2d_2 + \lambda_3d_3 = \lambda_2'd_2 + \lambda_3'd_3$. Without loss of generality, suppose $\lambda_2 > \lambda_2'$ and $\lambda_3 < \lambda_3'$. Now, as $(\lambda_3'-\lambda_3)d_3 = (\lambda_2-\lambda_2')d_2 > 0$, we obtain $\lambda_3' > (\lambda_3'-\lambda_3) \geq a_{33}$. This contradicts $(-\lambda_1',\lambda_2',\lambda_3') \in V^*$, since $T = R \cap \{z \leq a_{33}\}$. 
    
   If there exist distinct $(\lambda_1,-\lambda_2,\lambda_3)$ and $ (\lambda_1',-\lambda_2',\lambda_3') \in V^* \setminus \mathcal{N}$ (where $\lambda_i > 0$) such that $\lambda_1d_1 + \lambda_3d_3 = \lambda_1'd_1 + \lambda_3'd_3$, we proceed analogously as above and derive a contradiction. 
    
   Suppose that there exist distinct $(\lambda_1,\lambda_2,-\lambda_3)$ and $ (\lambda_1',\lambda_2',-\lambda_3') \in V^* \setminus \mathcal{N}$ (where $\lambda_i > 0$) such that $\lambda_1d_1 + \lambda_2d_2 = \lambda_1'd_1 + \lambda_2'd_2$. Without loss of generality, suppose $\lambda_1 > \lambda_1'$ and $\lambda_2 < \lambda_2'$, which gives $\lambda_1 > (\lambda_1 - \lambda_1') \geq a_{11} = b_{11}$ (as $S$ is ordered). This contradicts $(\lambda_1,\lambda_2,-\lambda_3) \in V^*$, since $T \subseteq R \subseteq \{x \leq b_{11}, y \leq b_{22}, z \leq b_{33}\}$.

   Suppose that $S$ is primary and there exist $(-\lambda_1,\lambda_2,\lambda_3)$ and $(\lambda_1',-\lambda_2',\lambda_3') \in V^* \setminus \mathcal{N}$ (where $\lambda_i > 0$) such that $\lambda_2d_2 + \lambda_3d_3 = \lambda_1'd_1 + \lambda_3'd_3$. Without loss of generality, suppose $\lambda_3 \geq \lambda_3'$, which gives $\lambda_1' \geq a_{11}$, which contradicts  $(\lambda_1',-\lambda_2',\lambda_3') \in V^*$, since $T \subseteq \{x \leq a_{11}, y \leq a_{22}, z \leq a_{33}\}$. All other cases follow analogously. 

   Suppose that $S$ is secondary and there exist $(-\lambda_1,\lambda_2,\lambda_3)$ and $(\lambda_1',-\lambda_2',\lambda_3') \in V^* \setminus \mathcal{N}$ (where $\lambda_i > 0$) such that $\lambda_2d_2 + \lambda_3d_3 = \lambda_1'd_1 + \lambda_3'd_3$. Since $(\lambda_1',-\lambda_2',\lambda_3') \in V^*$, we have $\lambda_1' < b_{11} = a_{11}$. Thus we have
   \begin{equation}
       \lambda_2d_2 = \lambda_1'd_1 + (\lambda_3'-\lambda_3)d_3
   \end{equation}
   where $(\lambda_3'-\lambda_3) \geq 0$. This gives $\lambda_2 \geq a_{22}$, hence
   \begin{equation}
       (\lambda_2-a_{22})d_2 = \lambda_1'd_1 + (\lambda_3'-\lambda_3-a_{33})d_3
   \end{equation}
   where $(\lambda_2-a_{22}) \geq 0$. Since $a_{11} > \lambda_1' > 0$, we have $(\lambda_3'-\lambda_3-a_{33}) \geq 0$, which gives $(\lambda_2-a_{22}) \geq a_{22}$. This leads to an infinite descent, which is impossible. 
   All other cases follow analogously.
\end{proof}

Let $\rho$ be a minimal presentation of $S$ and let $\rho^* = \{(a,b) \in \mathbb{N}^{4} \times \mathbb{N}^{4}  \mid (a,b) \in \rho, |\textup{supp}(a) | = | \textup{supp}(b) | =2 \} $. To proceed, we require a result of Bresinsky \cite{Bresinsky1988}. 

\begin{lemma}\label{lemma_rho_bresinsky}
   \textup{\cite[Corollary 2]{Bresinsky1988}} There exists a minimal presentation $\rho$ such that $\eta(S) - |\rho^*| \leq 4$.
\end{lemma}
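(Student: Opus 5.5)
The plan is to compute $\eta(S)$ Betti element by Betti element, using Theorem~\ref{theorem_minimalpresentation}: each $b\in\textup{Betti}(S)$ contributes $(\#\mathcal{R}\text{-classes of }\varphi^{-1}(b))-1$ pairs to any minimal presentation. We are free to choose the representatives $v_i$ and the connecting pairs $\rho_b$. The idea is to choose them so that every Betti element coming from a $(2,2)$-type point contributes a pair in $\rho^*$, and then to show that the remaining Betti elements contribute at most $4$ pairs in total.

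\emph{Step 1: $(2,2)$-type Betti elements.} By Theorems~\ref{theorembettifind_primary} and~\ref{theorembettifind_secondary} we have $\textup{Betti}(S)\subseteq U$, respectively $\textup{Betti}(S)\subseteq U\cup\{b_{33}d_3\}$. By \eqref{type_assumption_on_V}, $U$ splits into $U_0$, the non-Betti images of $V^*$, and the images of the four special points. These images are $b_{11}d_1$, $b_{22}d_2$, $a_{33}d_3$ and $a_{00}d_0$.

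For $b\in U_0$ I would invoke Lemmas~\ref{lemma_catenary_primary} and~\ref{lemma_catenarycase1_primary} in the primary case, and Lemmas~\ref{lemma_catenary_lambda1_secondary} and~\ref{lemma_catenary_lambda23_secondary} in the secondary case. These show that $\varphi^{-1}(b)$ has exactly two $\mathcal{R}$-classes, and that one can pick one factorization from each class with support of size $2$, for instance $(\lambda_0,\lambda_1,0,0)$ and $(0,0,\lambda_2,\lambda_3)$ (the $\lambda=0$ members of $\mathcal{F}$, respectively $\mathcal{F}_1,\mathcal{F}_2$). So each $b\in U_0$ contributes exactly one pair, and it lies in $\rho^*$.

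\emph{Step 2: the residual Betti elements.} The remaining Betti elements lie in $\{a_{00}d_0,a_{11}d_1,a_{22}d_2,a_{33}d_3,b_{11}d_1,b_{22}d_2,b_{33}d_3\}$ with $b_{ii}d_i$ replaced by $a_{ii}d_i$ when $a_{ii}=b_{ii}$. This list has at most four or five distinct values, depending on the case. I would then bound the total number of $\mathcal{R}$-classes minus one over these elements. I would also check whether some of them coincide with elements of $U_0$; in that case their $\mathcal{R}$-classes are already described by Step 1 and cost nothing extra.

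For $a_{ii}d_i$, Lemma~\ref{lemma_catenary_minrelations} shows that $(a_{ii}e_i)$ forms its own class and every other factorization has $\mu_i=0$. So the extra classes come from factorizations supported on the other three indices. I would compare two such classes by subtracting them and using minimality of the $a_{jj}$, as in Lemma~\ref{lemma_point0uniqueness}, to show that at most one further class exists. The exception is when the situation degenerates to the secondary pattern $a_{00}d_0=a_{11}d_1$, $a_{22}d_2=a_{33}d_3$, where such coincidences merge Betti elements.

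In the secondary case this gives at most $4$ outright: one pair for $a_{00}d_0=a_{11}d_1$, one for $a_{22}d_2=a_{33}d_3$, and at most one each for $b_{22}d_2$ and $b_{33}d_3$. In the primary case the ordering $a_{11}=b_{11}$, $a_{22}=b_{22}$ leaves at most the four elements $a_{ii}d_i$, plus $b_{33}d_3$ only through $(a_{01},a_{02},a_{03})$-type coincidences. One then has to show that the pairs needed there total at most $4$.

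\emph{Main obstacle.} I expect the hardest step to be the primary-case count: showing that an $a_{ii}d_i$ never has three or more $\mathcal{R}$-classes unless some other residual element collapses onto it. My first attempt would be to mimic the argument of Lemma~\ref{lemma_bettiT_primary}. There, two distinct factorizations avoiding index $i$, with coordinates below the respective $a_{jj}$, force a contradiction with the minimality of some $a_{jj}$. If that fails in a corner case, I would fall back on quoting \cite[Corollary 2]{Bresinsky1988} directly, which is how the statement is attributed.
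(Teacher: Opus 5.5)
The paper does not prove this lemma; it imports it verbatim from \cite[Corollary 2]{Bresinsky1988}. Your reduction to the Section~3 machinery is a legitimate alternative. It is not circular, since Lemma~\ref{lemma_rho_bresinsky} is used only afterwards, in Lemma~\ref{lemma_betti_VNrho_primary}. Step~1 is also correct. Everything then rests on the claim that the Betti elements outside $U_0$ contribute at most $4$ pairs, and Step~2 only asserts this.

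\textbf{Primary case.} Here $b_{33}d_3$ plays no role: $\textup{Betti}(S)\subseteq U$, and since $b_{11}=a_{11}$ and $b_{22}=a_{22}$, the residual values are exactly the four $a_{ii}d_i$. The mechanism you propose does not work. Lemmas~\ref{lemma_point0uniqueness} and~\ref{lemma_bettiT_primary} get their contradictions from the bounds $<a_{jj}$ forced by membership in $V$. Factorizations of $a_{ii}d_i$ obey no such bounds, so subtracting two of them yields nothing. These factorizations need not be few; the point is that they are $\mathcal{R}$-linked.

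Let $I=\{m : a_{mm}d_m=a_{ii}d_i\}$. By Lemma~\ref{lemma_catenary_minrelations}, every factorization other than the $a_{mm}e_m$ with $m\in I$ is supported off $I$, a set of at most $3$ indices. Supports of size $\ge 2$ in such a set pairwise meet, so those factorizations form one class. Now take a factorization $c\,e_j$ with $j\notin I$, so $c\ge a_{jj}$.
\begin{itemize}
\item If $c>a_{jj}$, substitute a minimal relation of $d_j$. Its coefficients on $I$ vanish, again by Lemma~\ref{lemma_catenary_minrelations}, so this yields a factorization of support size $\ge 2$ containing $j$. Hence $c\,e_j$ joins that class.
\item If $c=a_{jj}$, then $j\in I$, a contradiction.
\end{itemize}
So $a_{ii}d_i$ has at most $|I|+1$ classes, and only $|I|$ when $|I|\ge 3$. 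The exceptional situations are therefore all coincidences $a_{ii}d_i=a_{jj}d_j$, not just the secondary pattern. Running through the coincidence patterns gives $1+1+1+1$, $2+1+1$, $\le 2+2$, $2+1$ or $3$ pairs, never more than $4$.

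\textbf{Secondary case.} ``At most one each for $b_{22}d_2$ and $b_{33}d_3$'' is likewise stated without argument. It is also stronger than what the paper proves. Lemma~\ref{lemma_betti_VNrho_secondary} only bounds their joint contribution by $4$, which would give $6$ rather than $4$, and the paper even leaves open whether they are Betti elements. The claim is true but needs proof, for example as follows.
\begin{itemize}
\item Since $b_{22}>a_{22}$, the factorization $(0,0,b_{22}-a_{22},a_{33})$ links $(0,0,b_{22},0)$ to every factorization with $\mu_2>0$ or $\mu_3>0$.
\item The remaining factorizations are supported on $\{0,1\}$. By the division argument in the proof of Lemma~\ref{lemma_catenary_lambda1_secondary}, they form a chain $(\mu_0+\lambda a_{00},\mu_1-\lambda a_{11},0,0)$.
\item Consecutive members of this chain share support unless the value equals $a_{00}d_0$. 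That is impossible, because $(0,0,b_{22},0)\notin\varphi^{-1}(a_{00}d_0)$.
\end{itemize}
Thus $b_{22}d_2$, and symmetrically $b_{33}d_3$, has at most two classes.

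With these two arguments supplied, your route becomes a complete, self-contained proof. It even sharpens Lemma~\ref{lemma_betti_VNrho_secondary} to $\eta(S)\le|U_0|+4$. As written, with the fallback of quoting Bresinsky, the only nontrivial step is left open.
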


\begin{lemma}\label{lemma_betti_VNrho_primary}
     If $S$ is primary, then $|U_0| + 4 \geq  \eta(S) \geq |U_0| +3 $. 
\end{lemma}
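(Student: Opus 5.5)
We use Theorem \ref{theorem_minimalpresentation}: $\eta(S)=\sum_{b\in\textup{Betti}(S)}(c(b)-1)$, where $c(b)$ is the number of $\mathcal{R}$-classes of $\varphi^{-1}(b)$. By Theorem \ref{theorembettifind_primary}, $\textup{Betti}(S)\subseteq U$. By \eqref{type_assumption_on_V}, $U$ is the union of $U_0$, the labels of points of $\mathcal{N}$ (not Betti by definition), and the four values $b_{11}d_1$, $b_{22}d_2$, $a_{33}d_3$, $a_{00}d_0$. Since $S$ is primary and ordered, $a_{11}=b_{11}$, $a_{22}=b_{22}$, and $a_{33}\le b_{33}$, so these are $a_{ii}d_i$ for $i=0,1,2,3$, up to the relabelling already built into $V$. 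Hence
$$\textup{Betti}(S)=U_0\cup\{a_{00}d_0,a_{11}d_1,a_{22}d_2,a_{33}d_3\},$$
where the $a_{ii}d_i$ are Betti by Lemma \ref{lemma_catenary_minrelations}. By the remark after Lemma \ref{lemma_catenarycase1_primary}, each $b\in U_0$ has exactly two $\mathcal{R}$-classes, and $|U_0|=|V^*\setminus\mathcal{N}|$ by Lemma \ref{lemma_U0_VN}.

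We must also handle coincidences between $U_0$ and the $a_{ii}d_i$. If some $b\in U_0$ equals some $a_{ii}d_i$, then $b$ has a factorization with support of size one, which by Lemma \ref{lemma_catenary_minrelations} is $a_{ii}e_i$. In Lemma \ref{lemma_catenary_primary} the two factorizations listed have support of size two, so this cannot happen there. In Lemma \ref{lemma_catenarycase1_primary} it can happen: $\mathcal{F}$ may contain $(0,\lambda_1+\ldots,0,0)$ or $(\lambda_0+\ldots,0,0,0)$. In that case $b$ is still counted as having two classes, and it coincides with $a_{00}d_0=a_{11}d_1$. We therefore need to track how the $a_{ii}d_i$ merge and how many classes each has.

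Write $A=\{a_{00}d_0,a_{11}d_1,a_{22}d_2,a_{33}d_3\}$. The contribution of $A\setminus U_0$ to $\eta(S)$ is between $3$ and $4$ plus $|U_0|$, after accounting for overlaps, and we argue both bounds below.

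\textbf{Lower bound.} Consider the classes in the fibres of the elements of $A$. Each $a_{ii}d_i$ has the singleton class $\{a_{ii}e_i\}$ and at least one other class. Distinct generators $i$ with the same value $a_{ii}d_i=a_{jj}d_j$ give distinct singleton classes. For a fixed value $b$ shared by a set $I$ of indices, $b$ contributes at least $|I|$ classes, counting the $|I|$ singletons, and at least $|I|+1$ classes unless every factorization has support of size one. Also, $|I|\le 3$ or so, since all four being equal contradicts minimal generation or primality in the ordering. Summing $c(b)-1$ over the values in $A\setminus U_0$, we get at least $4$ minus the number of distinct values of the form ``all factorizations are pure powers'' minus the overlaps with $U_0$. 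The primary hypothesis ($a_{11}=b_{0,11}$, $a_{22}=b_{0,22}$) forces $a_{11}d_1$ and $a_{22}d_2$ to have a factorization involving $d_0$ together with another generator. An overlap with $U_0$ only occurs via $a_{00}d_0=a_{11}d_1$ in the situation of Lemma \ref{lemma_catenarycase1_primary}; there the singleton classes still add at least one extra. Together these give total loss at most one, hence $\eta(S)\ge|U_0|+3$.

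\textbf{Upper bound.} We use Lemma \ref{lemma_rho_bresinsky}: there is a minimal presentation $\rho$ with $\eta(S)-|\rho^*|\le 4$. Each pair in $\rho^*$ has both sides with support of size two, so it lives at a Betti element having two factorizations with support of size two in distinct classes. Such an element cannot be any $a_{ii}d_i$ (by Lemma \ref{lemma_catenary_minrelations}, every non-trivial factorization there avoids $i$, and the singleton class has support of size one; one still has to check that no second pair of this kind occurs). So it lies in $U_0$, and contributes exactly one pair since it has exactly two classes. Hence $|\rho^*|\le|U_0|$. Conversely, every $b\in U_0$ from Lemma \ref{lemma_catenary_primary} contributes a $\rho^*$ pair. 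The remaining pairs, at most $4$, are those at the $a_{ii}d_i$ together with the $\mathcal{F}$-type elements of $U_0$. Absorbing the latter into the count of the former gives $\eta(S)\le|U_0|+4$.

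The main obstacle is this bookkeeping: the elements of $U_0$ coming from Lemma \ref{lemma_catenarycase1_primary} with $a_{00}d_0=a_{11}d_1$ produce non-$\rho^*$ pairs, so Lemma \ref{lemma_rho_bresinsky} alone does not bound the remainder by $4$ uniformly. The first thing to try is to prove directly that at most a bounded number of such elements exist, arguing as in Lemma \ref{lemma_bettiT_primary}. Failing that, the fallback is to count $\mathcal{R}$-classes of the four $a_{ii}d_i$ by hand using $\varphi^{-1}(a_{ii}d_i)$ and the primary conditions.
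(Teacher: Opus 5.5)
Your overall plan is the paper's: write $\textup{Betti}(S)=U_0\cup A$, note that each element of $U_0$ gives exactly one relation, count $\mathcal{R}$-classes at the $a_{ii}d_i$ for the lower bound, and use Lemma~\ref{lemma_rho_bresinsky} for the upper bound. The overlap bookkeeping you build on top of this, however, rests on an error.

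\textbf{$U_0$ and $A$ never overlap.} Every $b\in U_0$ has two factorizations with disjoint two-element supports covering $\{0,1,2,3\}$, e.g.\ $(\lambda_0,\lambda_1,0,0)$ and $(0,0,\lambda_2,\lambda_3)$. By Lemma~\ref{lemma_catenary_minrelations}, every factorization of $a_{ii}d_i$ other than $a_{ii}e_i$ has $i$-th coordinate $0$, so $b$ cannot equal any $a_{ii}d_i$. A pure power $(0,\mu,0,0)\in\mathcal{F}$ only says $b=\mu d_1$ with $\mu=\lambda_1+(\lambda_0/a_{00})a_{11}>a_{11}$. It does not make $b=a_{11}d_1$. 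So the union is disjoint, as in the paper.

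\textbf{The upper bound is already finished.} Once you have $|\rho^*|\le|U_0|$, Lemma~\ref{lemma_rho_bresinsky} gives $\eta(S)\le|\rho^*|+4\le|U_0|+4$ directly. It is irrelevant whether some $\rho_b$ with $b\in U_0$ is chosen outside $\rho^*$, so your ``main obstacle'' does not exist. The check you defer is immediate. Two factorizations in different $\mathcal{R}$-classes have disjoint supports, and two disjoint two-element subsets of $\{0,1,2,3\}$ together cover $i$. But only $a_{ii}e_i$ has positive $i$-th coordinate, and its support has size one.

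\textbf{The genuine gap is in the lower bound,} at the step limiting the loss at $A$ to one. Both facts you invoke there are false. Primarity does not force $a_{11}d_1$ or $a_{22}d_2$ to have a factorization mixing $d_0$ with another generator, and all four $a_{ii}d_i$ can coincide. For instance, $S=\langle 105,70,42,30\rangle$ is primary, $2d_0=3d_1=5d_2=7d_3=210$, and $\varphi^{-1}(210)$ consists of the four pure powers only.

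The missing argument is the following.
\begin{itemize}
\item Each $\{a_{ii}e_i\}$ is a singleton class. Hence $A$ contributes at least $4-p$ relations, where $p$ is the number of distinct values $v\in A$ whose fibre is exactly $\{a_{ii}e_i : a_{ii}d_i=v\}$.
\item Every fibre $\varphi^{-1}(a_{ii}d_i)$ has at least two elements, so each such $v$ is shared by at least two indices.
\item Therefore $p\ge2$ forces $a_{ii}d_i=a_{jj}d_j\neq a_{kk}d_k=a_{ll}d_l$, with every fibre consisting only of pure powers.
\item In that configuration, for every $m$ only the partner index $i$ of $m$ satisfies $a_{ii}=b_{m,ii}$. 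This contradicts primarity.
\end{itemize}
Hence $p\le1$ and $\eta(S)\ge|U_0|+3$. When all four $a_{ii}d_i$ are equal with only pure-power factorizations, there are four classes and hence three relations. This is exactly how the paper argues.
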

\begin{proof}
    By Lemma \ref{lemma_catenary_minrelations}, and Theorems \ref{theoremprocedure} and \ref{theorembettifind_primary}, we have
    \begin{equation}\label{lemma_betti_VNrho_primary_eq1}
        \textup{Betti}(S) = U_0 \sqcup \{a_{00}d_0, a_{11}d_1, a_{22}d_2, a_{33}d_3\}. 
    \end{equation}
    Lemma \ref{lemma_catenary_minrelations} tells us that $(a_{00},0,0,0)$ is the only element in its $\mathcal{R}$-class. The same is true for $(0,a_{11},0,0)$, $(0,0,a_{22},0)$, and $(0,0,0,a_{33})$, so the sets on the right-hand side
    of \eqref{lemma_betti_VNrho_primary_eq1} are indeed disjoint. 
    
    If there exists a factorization in $\bigcup_{i=0}^3 \varphi^{-1}(a_{ii}d_i)$ that is distinct from the four canonical ones, then $a_{00}d_0, a_{11}d_1, a_{22}d_2, a_{33}d_3$ together contribute at least three elements to $\rho$ by Lemma \ref{lemma_catenary_minrelations} and Theorem \ref{theorem_minimalpresentation}. Otherwise, since $S$ is not secondary, we have $a_{00}d_0= a_{11}d_1= a_{22}d_2= a_{33}d_3$, which has four $\mathcal{R}$-classes by Lemma \ref{lemma_catenary_minrelations}, so it contributes three elements to $\rho$ by Theorem \ref{theorem_minimalpresentation}.

    From Lemmas \ref{lemma_catenary_primary} and \ref{lemma_catenarycase1_primary} we know that every element of $U_0$ has exactly two $\mathcal{R}$-classes, so each contributes exactly one element to $\rho$, by Theorem \ref{theorem_minimalpresentation}. This, combined with the previous paragraph, gives $\eta(S) \geq |U_0| + 3$. 

    Suppose, for contradiction, that $\eta(S) - |U_0| > 4$. By \eqref{lemma_betti_VNrho_primary_eq1}, since $U_0$ contributes exactly $|U_0|$ elements to $\rho$, the elements $a_{00}d_0$, $a_{11}d_1$, $a_{22}d_2$, $a_{33}d_3$ must together contribute more than $4$ elements to $\rho$ --- and by Theorem \ref{theorem_minimalpresentation}, this holds for every minimal presentation $\rho$, since this count is determined only by the number of $\mathcal{R}$-classes. 
    By Lemma \ref{lemma_catenary_minrelations}, there do not exist $z_1,z_2 \in \varphi^{-1}(a_{ii}d_i)$ such that $|\textup{supp}(z_1)| = |\textup{supp}(z_2)| = 2$ and $\textup{supp}(z_1) \cap \textup{supp}(z_2) = \varnothing$. Thus in every minimal presentation $\rho$, there will be more than $4$ elements that are not in $\rho^*$. This gives a contradiction with Lemma \ref{lemma_rho_bresinsky}, therefore $\eta(S) \leq |U_0| + 4 $.
\end{proof}

\begin{lemma}\label{lemma_betti_VNrho_secondary}
    If $S$ is secondary, then $|U_0| + 6 \geq  \eta(S) \geq |U_0| + 2 $. 
\end{lemma}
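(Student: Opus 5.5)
We follow the pattern of Lemma \ref{lemma_betti_VNrho_primary}, and first pin down $\textup{Betti}(S)$ for a secondary $S$. By Theorem \ref{theorembettifind_secondary}, $\textup{Betti}(S) \subseteq U \cup \{b_{33}d_3\}$. By \eqref{type_assumption_on_V}, $U$ consists of $U_0$, the labels of the points in $\mathcal{N}$, and the values $b_{11}d_1$, $b_{22}d_2$, $a_{33}d_3$, $a_{00}d_0$. For secondary $S$ we have $a_{00}d_0 = a_{11}d_1 = b_{11}d_1$ and $a_{22}d_2 = a_{33}d_3$. Hence
\begin{equation*}
U_0 \sqcup \{a_{00}d_0, a_{22}d_2\} \subseteq \textup{Betti}(S) \subseteq U_0 \cup \{a_{00}d_0, a_{22}d_2, b_{22}d_2, b_{33}d_3\}.
\end{equation*}
The left inclusion uses Lemma \ref{lemma_catenary_minrelations} for $a_{00}d_0$ and $a_{22}d_2$, and the definition of $\mathcal{N}$ for $U_0$. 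Disjointness of $U_0$ from $\{a_{00}d_0,a_{22}d_2\}$ follows from the explicit factorization sets. Lemmas \ref{lemma_catenary_lambda1_secondary} and \ref{lemma_catenary_lambda23_secondary} give these sets: an element of $U_0$ has factorizations of mixed support, for example $(\lambda_0,\lambda_1,0,0)$ together with $(0,0,\lambda_2,\lambda_3)$. By contrast, $\varphi^{-1}(a_{00}d_0) = \{(a_{00},0,0,0),(0,a_{11},0,0)\}$ and $\varphi^{-1}(a_{22}d_2) = \{(0,0,a_{22},0),(0,0,0,a_{33})\}$.

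For the lower bound, we use the same two lemmas, which show that every element of $U_0$ has exactly two $\mathcal{R}$-classes. In $\mathcal{F}_1$ all members share index $1$ or $0$, and any two of them with different supports are linked through a factorization having both supports whenever that exists. In the degenerate case where $\mathcal{F}_1$ splits, it contains only $(\lambda_0,\lambda_1,0,0)$-type points, which already share support. So each element of $U_0$ contributes exactly one pair to $\rho$ by Theorem \ref{theorem_minimalpresentation}. The elements $a_{00}d_0$ and $a_{22}d_2$ each have two singleton $\mathcal{R}$-classes and contribute one pair each. This gives $\eta(S) \geq |U_0| + 2$.

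For the upper bound, the only remaining contributions come from $b_{22}d_2$ and $b_{33}d_3$, if these are Betti elements. The main obstacle is bounding how many pairs these two elements can contribute, since we do not know their factorization sets. I would imitate the end of the proof of Lemma \ref{lemma_betti_VNrho_primary} and use Lemma \ref{lemma_rho_bresinsky}. Choose $\rho$ with at most $4$ elements outside $\rho^*$, and recall that the number of pairs each Betti element contributes is independent of $\rho$. The pairs from $a_{00}d_0$ and $a_{22}d_2$ lie outside $\rho^*$, since their supports are singletons. Hence $b_{22}d_2$ and $b_{33}d_3$ contribute at most $2$ pairs outside $\rho^*$. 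It remains to bound their pairs inside $\rho^*$, which join factorizations $z_1,z_2$ with $|\textup{supp}(z_1)| = |\textup{supp}(z_2)| = 2$ and disjoint supports. One side of such a pair is $b_{22}d_2$ written with a factorization supported on two indices, and there are only three possible complementary support splittings. Moreover, $\varphi^{-1}(b_{22}d_2)$ contains the pure factorization $(0,0,b_{22},0)$ and one with $0$ in its support, which limits the splittings that can actually occur. I would aim to show that each of $b_{22}d_2$ and $b_{33}d_3$ contributes at most one pair in $\rho^*$, using the minimality of $b_{22}$ together with $a_{22}d_2 = a_{33}d_3$ as in Lemma \ref{lemmabettifind_secondary}. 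Combining these gives at most $4$ extra pairs, so $\eta(S) \leq |U_0| + 2 + 4 = |U_0| + 6$.
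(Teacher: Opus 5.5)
Your description of $\textup{Betti}(S)$ and your lower bound match the paper's proof. One small repair: your justification that each element of $U_0$ has exactly two $\mathcal{R}$-classes is garbled, since there is no ``degenerate case where $\mathcal{F}_1$ splits''. It suffices that $(\lambda_0,\lambda_1,0,0)\in\mathcal{F}_1$ has support $\{0,1\}$, which meets the support of every member of $\mathcal{F}_1$. The same holds for $(0,0,\lambda_2,\lambda_3)$ in $\mathcal{F}_2$, and supports in $\mathcal{F}_1$ and $\mathcal{F}_2$ are disjoint.

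For the upper bound you take a different route from the paper, and it works. The step you leave as an aim, that each of $b_{22}d_2$ and $b_{33}d_3$ contributes at most one pair to $\rho^*$, is true for a much simpler reason than the one you propose. You need neither the minimality of $b_{22}$ nor $a_{22}d_2=a_{33}d_3$. Suppose a Betti element $b$ has a pair $(v_i,v_j)\in\rho_b\cap\rho^*$. Then $v_i,v_j$ lie in different $\mathcal{R}$-classes, so their two-element supports are disjoint and together cover $\{0,1,2,3\}$. Every factorization of $b$ is nonzero because $b>0$, so it meets one of these supports. Hence $b$ has exactly two $\mathcal{R}$-classes and $|\rho_b|=1$. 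So any Betti element contributes at most one pair to $\rho^*$. Combine this with your budget: in Bresinsky's presentation (Lemma \ref{lemma_rho_bresinsky}), at most two non-$\rho^*$ pairs remain after $a_{00}d_0$ and $a_{22}d_2$. Together these show that $b_{22}d_2$ and $b_{33}d_3$ contribute at most $4$, as required.

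The paper does not use Lemma \ref{lemma_rho_bresinsky} here. It argues directly: if $b_{22}d_2$ contributed three or more pairs, it would have four $\mathcal{R}$-classes. That forces $\varphi^{-1}(b_{22}d_2)$ to consist of the four pure factorizations. The minimality of $b_{33}$ then gives $b_{22}d_2=b_{33}d_3$, a single element contributing at most $3$. The paper's argument is self-contained and uses the specific structure of $b_{22},b_{33}$. Yours imports Bresinsky's corollary, as the paper does in the primary case, but then needs no information about $b_{22}d_2,b_{33}d_3$: it bounds the contribution of any two additional Betti elements by $4$.
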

\begin{proof}
    By Lemma \ref{lemma_catenary_minrelations} and Theorems \ref{theoremprocedure} and \ref{theorembettifind_secondary}, we have
   \begin{equation}\label{lemma_betti_VNrho_secondary_eq1}
        U_0 \sqcup \{a_{00}d_0, a_{22}d_2\} \subseteq \textup{Betti}(S) \subseteq U_0 \cup \{a_{00}d_0, a_{22}d_2, b_{22}d_2, b_{33}d_3\}. 
    \end{equation}
     From Lemmas \ref{lemma_catenary_lambda1_secondary} and \ref{lemma_catenary_lambda23_secondary} we know that every element of $U_0$ has exactly two $\mathcal{R}$-classes, so it contributes exactly one element to $\rho$, by Theorem \ref{theorem_minimalpresentation}. We also have $((a_{00},0,0,0),(0,a_{11},0,0) ) \in \rho$ and $((0,0,a_{22},0),(0,0,0,a_{33})) \in  \rho$, hence $\eta(S) \geq |U_0| + 2$.

     We claim that $b_{22}d_2$ and $b_{33}d_3$ together contribute at most four elements to $\rho$. Otherwise, without loss of generality, suppose that $b_{22}d_2$ contributes at least three elements to $\rho$, so it has at least four $\mathcal{R}$-classes. The only way for this to be possible is $\varphi^{-1}(b_{22}d_2) = \{(b_{20},0,0,0)$, $(0,b_{21},0,0)$, $(0,0,b_{22},0)$, $(0,0,0,b_{23})\}$. Note that $b_{23} \geq b_{33}$. However, if $b_{23} > b_{33}$, then $(b_{30}$, $b_{31}$, $b_{32}$, $(b_{23}-b_{33})) \in \varphi^{-1}(b_{22}d_2)$, which is a contradiction, since $b_{30}, (b_{23}-b_{33}) > 0 $. Hence $b_{23}=b_{33}$ and $b_{22}d_2 = b_{33}d_3$. Therefore, in the current case, $b_{22}d_2$ and $b_{33}d_3$ together contribute at most three elements to $\rho$, by Theorem \ref{theorem_minimalpresentation}, since $b_{22}d_2 = b_{33}d_3$ has at most four $\mathcal{R}$-classes.

    Therefore, by \eqref{lemma_betti_VNrho_secondary_eq1}, we have $\eta(S) \leq |U_0| + 2 + 4 = |U_0| + 6$.   
\end{proof}

\begin{cor}\label{corollary_type_main}
     If $S$ is primary, then $4 t(S) +5 \geq \eta(S) \geq t(S) - 11$.  If $S$ is secondary, then $2 t(S) +9 \geq \eta(S) \geq t(S) - 4$. 
\end{cor}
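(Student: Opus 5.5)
The plan is to chain three facts already established: Theorem \ref{theorem_type1}, which bounds $|V^*|$ in terms of $t(S)$; Lemmas \ref{lemma_bettiT_primary} and \ref{lemma_bettiT_secondary}, which bound $|\mathcal{N}|$; and Lemmas \ref{lemma_betti_VNrho_primary} and \ref{lemma_betti_VNrho_secondary}, which bound $\eta(S)$ in terms of $|U_0|$. The link between $|V^*|$ and $|U_0|$ is Lemma \ref{lemma_U0_VN}. Together with $\mathcal{N}\subseteq V^*$, it gives
\[
|V^*| - |\mathcal{N}| = |U_0| \leq |V^*| .
\]

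\emph{Primary case.} For the upper bound, Lemma \ref{lemma_betti_VNrho_primary} gives $\eta(S) \leq |U_0| + 4$. Then $|U_0| \leq |V^*|$, and Theorem \ref{theorem_type1} gives $|V^*| \leq 4t(S)+1$. Hence $\eta(S) \leq 4t(S)+5$.

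For the lower bound, Lemma \ref{lemma_betti_VNrho_primary} gives $\eta(S) \geq |U_0| + 3$. Lemma \ref{lemma_bettiT_primary} gives $|\mathcal{N}| \leq 3$, so $|U_0| \geq |V^*| - 3$. Theorem \ref{theorem_type1} gives $|V^*| \geq t(S)-11$. Combining these, $\eta(S) \geq |V^*| \geq t(S) - 11$.

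\emph{Secondary case.} For the upper bound, Lemma \ref{lemma_betti_VNrho_secondary} gives $\eta(S) \leq |U_0| + 6$. Then $|U_0| \leq |V^*| \leq 2t(S)+3$, so $\eta(S) \leq 2t(S)+9$.

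For the lower bound, Lemma \ref{lemma_betti_VNrho_secondary} gives $\eta(S) \geq |U_0|+2$. Lemma \ref{lemma_bettiT_secondary} gives $|\mathcal{N}| \leq 1$, so $|U_0| \geq |V^*| - 1$. Theorem \ref{theorem_type1} gives $|V^*| \geq t(S) - 5$. Combining these, $\eta(S) \geq |V^*| + 1 \geq t(S)-4$.

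There is no real obstacle here, since all the substantive work sits in the cited results. The only thing to check is that each cited inequality applies to the right class (primary or secondary). The decomposition \eqref{type_assumption_on_V} is needed so that $V^*$ is exactly the set of $(2,2)$-type points used in both the $\mathcal{N}$ lemmas and Theorem \ref{theorem_type1}. Once that is confirmed, the result is pure arithmetic.
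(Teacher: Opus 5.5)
Your proposal is correct and coincides with the paper's proof. The paper uses exactly the same four chains of inequalities, built from Lemmas \ref{lemma_betti_VNrho_primary} and \ref{lemma_betti_VNrho_secondary}, Lemma \ref{lemma_U0_VN}, Lemmas \ref{lemma_bettiT_primary} and \ref{lemma_bettiT_secondary} (for the lower bounds only), and Theorem \ref{theorem_type1}, with the same arithmetic in each case, including $\eta(S)\geq |V^*|+1\geq t(S)-4$ for secondary $S$.
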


\begin{proof}
    Suppose that $S$ is primary. The lower bound for $\eta(S)$ follows from the chain of inequalities:
    \begin{equation}
        \eta(S) \geq |U_0| + 3 = |V^* \setminus \mathcal{N}| + 3 \geq |V^*|  \geq t(S) - 11
    \end{equation}
     which are the result of Lemmas \ref{lemma_betti_VNrho_primary}, \ref{lemma_U0_VN}, \ref{lemma_bettiT_primary}, and Theorem \ref{theorem_type1}, respectively. Moreover, the upper bound for $\eta(S)$ is due to the following chain of inequalities:
    \begin{equation}
        \eta(S) \leq |U_0| + 4 = |V^* \setminus \mathcal{N}| + 4 \leq |V^*| +4 \leq 4t(S) +5
    \end{equation}
     which are the result of Lemmas \ref{lemma_betti_VNrho_primary}, \ref{lemma_U0_VN}, and Theorem \ref{theorem_type1}, respectively.

     Suppose that $S$ is secondary. The lower bound for $\eta(S)$ is due to the following chain of inequalities:
    \begin{equation}
        \eta(S) \geq |U_0| + 2 = |V^* \setminus \mathcal{N}| + 2 \geq |V^*| +1 \geq t(S) - 4
    \end{equation}
     which are the result of Lemmas \ref{lemma_betti_VNrho_secondary}, \ref{lemma_U0_VN}, \ref{lemma_bettiT_secondary}, and Theorem \ref{theorem_type1}, respectively. Moreover, the upper bound for $\eta(S)$ is due to the following chain of inequalities:
    \begin{equation}
        \eta(S) \leq |U_0| + 6 = |V^* \setminus \mathcal{N}| + 6 \leq |V^*| +6 \leq 2t(S) +9
    \end{equation}
     which are the result of Lemmas \ref{lemma_betti_VNrho_secondary}, \ref{lemma_U0_VN}, and Theorem \ref{theorem_type1}, respectively. 
\end{proof}

For $t(S) \geq 2$, the inequality $4t(S) +5 \geq 2 t(S) + 9$ holds, so Corollary \ref{corollary_type_main} implies Theorem \ref{theorem_type_main} in this case. Recall that $t(S) = 1$ is equivalent to $S$ being symmetric \cite{Kunz1970}. This allows us to use the following result to resolve the case $t(S) = 1$.

\begin{thm}
    \textup{\cite{Bresinsky1975sym}} If $S$ is symmetric with $e(S)=4$, then $\eta(S) \in \{3,5\}$.
\end{thm}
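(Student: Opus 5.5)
The plan is to combine a parity statement coming from the Gorenstein property with an upper bound coming from the geometry of $R$ developed above.

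\textbf{Step 1 (parity).} By \cite{Kunz1970}, $S$ symmetric means that the semigroup ring $k[S]=k[x_0,\dots,x_3]/I_S$ is Gorenstein. Its codimension is $e(S)-1=3$. The Buchsbaum--Eisenbud structure theorem for codimension-three Gorenstein ideals then says that $I_S$ is minimally generated by the submaximal Pfaffians of an alternating $(2m+1)\times(2m+1)$ matrix. Hence the minimal number of generators of $I_S$, which is $\eta(S)$, is odd.

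\textbf{Step 2 (lower bound).} We have $\eta(S)\ge e(S)-1=3$, since $I_S$ has height $3$. Equality holds exactly in the complete intersection case. It therefore remains to rule out $\eta(S)\ge 7$.

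\textbf{Step 3 (upper bound from this paper).} With $t(S)=1$, Corollary~\ref{corollary_type_main} gives $\eta(S)\le 9$ in the primary case and $\eta(S)\le 11$ in the secondary case. So Steps 1 and 2 leave only the values $7,9,11$ to exclude, and the real content lies here.

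To sharpen the bound I would use the extra structure available when $t(S)=1$. Then $R$ has a single corner label, namely $F(S)+d_0$. The map $w\mapsto F(S)+d_0-w$ is an order-reversing involution of $\textup{Ap}(S,d_0)$. Geometrically, $R$ is "self-dual": the labels of $R$ are obtained from those of the box spanned by a corner by reflecting through that corner.

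By Lemma~\ref{lemma_type-corners_primary} we get $t'\le 2$, and by Lemma~\ref{lemma_type-corners_secondary} we get $t'\le 2$ as well. Lemma~\ref{lemma_type_suppthm_upperbound} then gives $|\mathcal S|\le 5$. I would redo the stair-cube count of Lemma~\ref{lemma_type_suppthm_stair} directly in this situation. The aim is to show that the self-duality forces each stair of $T$ in the coordinate planes to consist of at most two steps. This would give $|V^*|\le 2$ in the primary case. In the secondary case the bijection $f$ of Proposition~\ref{prop_secondary} should pin down $T$ further, and I expect it to force $|V^*|\le 1$.

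Feeding these bounds into Lemmas~\ref{lemma_U0_VN}, \ref{lemma_betti_VNrho_primary} and \ref{lemma_betti_VNrho_secondary} should give $\eta(S)\le 6$ in the primary case. In the secondary case, Lemma~\ref{lemma_betti_VNrho_secondary} alone only gives $\eta(S)\le 7$, because of the uncontrolled contributions of $b_{22}d_2$ and $b_{33}d_3$. There I would additionally use the involution to show that $b_{22}d_2$ and $b_{33}d_3$ cannot both be Betti elements with extra $\mathcal R$-classes. Combined with parity, this leaves only $\eta(S)\in\{3,5\}$.

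\textbf{Main obstacle.} The hard part is Step 3: translating the involution on the Apéry set into a rigid shape for $T$. A unique corner of $R$ does not mean $T$ is a box, because $T=R\cap\{z\le a_{33}\}$ can acquire up to one extra corner when $a_{33}<b_{33}$. My first attempt would handle this by case analysis on whether $a_{33}=b_{33}$. In the case $a_{33}=b_{33}$ we have $T=R$, and the self-duality should make the staircase profiles in the three coordinate planes mirror images of one another, so each has at most two steps.

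If this geometric route stalls, the fallback is Bresinsky's original argument \cite{Bresinsky1975sym}. It classifies symmetric non-complete-intersection semigroups with $e(S)=4$ by an explicit family of five binomial relations, which are the Pfaffians of a $5\times5$ alternating matrix. One then checks that no further minimal relations are possible.
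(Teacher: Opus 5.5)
\textbf{Steps 1--2 are fine, but Step 3 is missing.} Kunz's theorem together with the Buchsbaum--Eisenbud structure theorem shows that $\eta(S)$ is odd. Here you apply Buchsbaum--Eisenbud at the homogeneous maximal ideal, using that $\eta(S)$ is the minimal number of generators of $I_S$. Since $I_S$ has height $3$, $\eta(S)\ge 3$. With Corollary~\ref{corollary_type_main} at $t(S)=1$, this rigorously gives $\eta(S)\in\{3,5,7,9,11\}$. The paper itself gives no proof of the statement; it imports it from \cite{Bresinsky1975sym} to settle the case $t(S)=1$ of Theorem~\ref{theorem_type_main}.

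Excluding $7$, $9$ and $11$ is the entire content of Bresinsky's theorem, and Step~3 does not do it. The bounds $|V^*|\le 2$ (primary) and $|V^*|\le 1$ (secondary), and the claim that $b_{22}d_2$ and $b_{33}d_3$ cannot both contribute extra relations, are stated as aims (``should'', ``I expect'') with no argument. The structure theorem by itself only constrains the parity, so the bound has to come from the toric structure, and that is exactly what is missing. Your fallback, reproducing Bresinsky's classification, is not a proof step but the theorem itself. Its hard part is showing that a symmetric non-complete-intersection $S$ is presented by exactly five binomials of that shape.

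\textbf{The mechanism you sketch also fails as described.} Your ``self-dual'' picture of $R$ is false. Take $S=\langle 5,6,7,8\rangle$: it is symmetric with $F(S)=9$ and $\eta(S)=5$, it is primary, and $(d_0,d_1,d_2,d_3)=(5,6,8,7)$ is an admissible ordering. Here $\textup{Ap}(S,5)=\{0,6,7,8,14\}$, and $R$ consists of $[[0,0,0]]$, $[[1,0,0]]$, $[[0,1,0]]$, $[[0,0,1]]$, $[[1,1,0]]$ and $[[0,0,2]]$. Its two corners $[[1,1,0]]$ and $[[0,0,2]]$ are both labelled $14=F(S)+d_0$. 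Yet the box below the first misses the label $7$, and the box below the second misses $6$ and $8$.

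What symmetry actually gives is weaker: for each cube $C\in R$ there is \emph{some} corner $K\ge C$ with $K-C\in R$, and $K$ depends on $C$. It says nothing directly about $T$, on which $V^*$ depends; in this example $T$ already has two corners with different labels, $14$ and $7$.

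Finally, grant that each coordinate-plane staircase of $T$ has at most two stair cubes (your ``two steps''). The correspondence in the proof of Lemma~\ref{lemma_type_suppthm_stair} then yields at most one point of $V^*$ per plane, so only $|V^*|\le 3$. Lemmas~\ref{lemma_U0_VN} and \ref{lemma_betti_VNrho_primary} then give just $\eta(S)\le 7$, so even the primary case would not close as planned.
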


Since $5 \leq 9 = 4 \cdot 1 + 5$, this completes the proof of Theorem \ref{theorem_type_main}.

\section{Closing remarks}
The bounds of Theorem \ref{theorem_type_main} can most likely be improved. To improve the bound $\eta(S) \geq t(S)-11$, we propose the following conjecture.

\begin{conj}\label{conjecture}
    If $S$ is a numerical semigroup with $e(S) = 4$, then $\eta(S) \geq t(S) +1$.
\end{conj}
 
We expect that $\eta(S) \geq t(S) +1$ is the best possible bound, since no numerical semigroup with $e(S)=4$ and $\eta(S) \leq t(S)$ is currently known. In contrast, the situation where $\eta(S) = t(S) +1$ is quite common among numerical semigroups with embedding dimension four. For example, $S = \langle 77,85,89,131 \rangle$ has $e(S)=4$, $t(S)=3$, and $\eta(S)=4$. Likewise, $S = \langle 76,99,143,223 \rangle$ has $e(S)=4$, $t(S)=6$, $\eta(S)=7$, and $S = \langle 195,200,209,226 \rangle$ has $e(S)=4$, $t(S)=8$, $\eta(S)=9$. The equation $\eta(S) = t(S) +1$ is also satisfied for all numerical semigroups with $e(S)=3$ (Theorem \ref{theorem_embd3}).

\begin{remark}
    In higher embedding dimension, the inequality $\eta(S) \geq t(S) +1$ is false in general, already for embedding dimension five. For example, 
    $S=\langle 86,133,143,155,198 \rangle$ has $e(S)=5$ and $t(S)=\eta(S)=10$. Likewise, $S=\langle 161,177,196,207,225 \rangle$ has $e(S)=5$, $t(S)= 18$, $\eta(S)=16$, and (a more extreme example) $S = \langle 6072, 6307,7965, 8702, 9423 \rangle$ has $e(S)=5$, $t(S)=53$, $\eta(S)=40$.
\end{remark}

\begin{remark}
    The results of Section \ref{section_geometricprocedure}, together with the results of Section \ref{section_minimal_presentations} concerning primary semigroups, appeared earlier in the preprint \cite{Chomicz}.
\end{remark}

\section{Acknowledgments}
At the time of writing this paper, the author was a first-year student at Jagiellonian University in Kraków. The author thanks his tutor,
Tomasz Kowalczyk, for his support.
The author also thanks Alessio Moscariello and Alessio Sammartano for their comments.

\bibliographystyle{plain}
\bibliography{refs.bib}

@book{Rosales2009,
  author    = {J. C. Rosales and P. A. Garc{\'i}a-S{\'a}nchez},
  title     = {Numerical Semigroups},
  series    = {Developments in Mathematics},
  volume    = {20},
  publisher = {Springer},
  address   = {New York},
  year      = {2009}
}

@article{AGUILOGOST2015,
title = {{On the number of L-shapes in embedding dimension four numerical semigroups}},
journal = {Discrete Math.},
volume = {338},
number = {12},
pages = {2168-2178},
year = {2015},
issn = {0012-365X},
doi = {https://doi.org/10.1016/j.disc.2015.05.019},
url = {https://www.sciencedirect.com/science/article/pii/S0012365X15001922},
author = {F. Aguiló-Gost and P. A. García-Sánchez and D. Llena}
}

@article{Rosales1996MinimalRelation,
  author  = {Rosales, J. C.},
  title   = {{An algorithmic method to compute a minimal relation for any numerical semigroup}},
  journal = {Int. J. Algebra Comput.},
  volume  = {6},
  number  = {4},
  year    = {1996},
  pages   = {441--455}
}

@book{AssiDAnnaGarciaSanchez2020,
  author    = {Assi, A. and D'Anna, M. and Garc{\'i}a{-}S{\'a}nchez, P. A.},
  title     = {Numerical Semigroups and Applications},
  series    = {RSME Springer Series},
  volume    = {3},
  edition   = {2},
  year      = {2020},
  publisher = {Springer},
  isbn      = {978-3-030-54943-5},
  doi       = {10.1007/978-3-030-54943-5},
  url       = {https://link.springer.com/book/10.1007/978-3-030-54943-5}
}

@article{Bresinsky1988,
  author    = {Bresinsky, H.},
  title     = {{Binomial generating sets for monomial curves, with applications in $\mathbb{A}^4$}},
  journal   = {Rend. Sem. Mat. Univers. Politecn. Torino},
  volume    = {46},
  pages     = {353-370},
  year      = {1988}
}

@InProceedings{MoscarielloSammartano2025,
  author    = {Moscariello, A. and Sammartano, A.},
  title     = {Open Problems on Relations of Numerical Semigroups},
  booktitle = {Recent Progress in Ring and Factorization Theory},
  pages = {365-380},
  year      = {2025},
  address={Cham},
  publisher = {Springer Nature Switzerland}
}

@article{Chomicz,
    author = {Chomicz, K.} ,
    title = {{On numerical semigroups with embedding dimension four}} ,
    note = {arXiv:2604.25653v3}
}

@article{Kunz1970,
  author  = {Kunz, E.},
  title   = {The value-semigroup of a one-dimensional {G}orenstein ring},
  journal = {Proc. Amer. Math. Soc.},
  volume  = {25},
  year    = {1970},
  pages   = {748-751}
}

@article{Bresinsky1975sym,
  author  = {Bresinsky, H.},
  title   = {Symmetric semigroups of integers generated by 4 elements},
  journal = {Manuscripta Math.},
  volume  = {17},
  year    = {1975},
  pages   = {205-219}
}

@article{Elmacioglu2024,
  author  = {Elmacioglu, C. and Hilmer, K. and O{\textquoteright}Neill, C. and Okandan, M. and Park{-}Kaufmann, H.},
  title   = {On the cardinality of minimal presentations
of numerical semigroups},
  journal = {Algebraic Combinatorics},
  volume  = {7},
  number = {3},
  year    = {2024},
  pages   = {753-771}
}

@article{Froberg1987,
  author  = {Fr{\"o}berg, R. and Gottlieb, C. and H{\"a}ggkvist, R.},
  title   = {On numerical semigroups},
  journal = {Semigroup Forum},
  year    = {1987},
  volume  = {35},
  pages   = {63-83}
}

@article{MoscarielloSammartanominpresen2025,
author = {Moscariello, A. and Sammartano, A.},
title = {On minimal presentations of numerical monoids},
journal = {Bull. Lond. Math. Soc.},
volume = {57},
number = {3},
pages = {878-894},
doi = {https://doi.org/10.1112/blms.70005},
year = {2025}
}

@InProceedings{Stamate2018,
author="Stamate, D. I.",
title="Betti Numbers for Numerical Semigroup Rings",
booktitle="Multigraded Algebra and Applications",
year="2018",
publisher="Springer International Publishing",
address="Cham",
pages="133--157"
}

@article{BarucciDobbsFontana1997,
  author  = {Barucci, V. and Dobbs, D. E. and Fontana, M.},
  title   = {Maximality Properties in Numerical Semigroups and Applications to One-Dimensional Analytically Irreducible Local Domains},
  journal = {Memoirs of the Amer. Math. Soc.},
  volume  = {598},
  year    = {1997}
}

@article{Herzog1970,
  author  = {Herzog, J.},
  title   = {Generators and relations of abelian semigroups and semigroup rings},
  journal = {Manuscripta Math.},
  volume  = {3},
  pages = {175-193},
  year    = {1970}
}

@article{Bresinsky1975onprimeideals,
  author  = {Bresinsky, H.},
  title   = {On prime ideals with generic zero $x_i = t^{n_i}$},
  journal = { Proc. Amer. Math. Soc.},
  volume  = {47},
  pages = {329-332},
  year    = {1975}
}

\vspace{5pt}



\begin{small}

\noindent
Kazimierz Chomicz

\noindent
Institute of Mathematics

\noindent
Faculty of Mathematics and Computer Science

\noindent
Jagiellonian University

\noindent
ul. Łojasiewicza 6, 30-348 Kraków, Poland

\noindent
e-mail: kazimierz.chomicz@student.uj.edu.pl

\end{small}

\end{document}